\definecolor{green}{rgb}{0,0.8,0} 
\newcommand{\Blue}[1]{\begingroup\color{blue} #1\endgroup}
\newtheorem{theorem}{Theorem}[section]
\newtheorem{lemma}[theorem]{Lemma}
\newtheorem{proposition}[theorem]{Proposition}
\theoremstyle{definition}
\newtheorem{definition}[theorem]{Definition}
\theoremstyle{remark}
\newtheorem{remark}[theorem]{Remark}
\numberwithin{equation}{section}
\newcommand{\nrm}{\@ifstar{\nrmb}{\nrmi}}
\newcommand{\nrmi}[1]{\Vert{#1}\Vert}
\newcommand{\nrmb}[1]{\left\Vert{#1}\right\Vert}
\newcommand{\abs}{\@ifstar{\absb}{\absi}}
\newcommand{\absi}[1]{\vert{#1}\vert}
\newcommand{\absb}[1]{\left\vert{#1}\right\vert}
\newcommand{\brk}{\@ifstar{\brkb}{\brki}}
\newcommand{\brki}[1]{\langle{#1}\rangle}
\newcommand{\brkb}[1]{\left\langle{#1}\right\rangle}
\newcommand{\set}{\@ifstar{\setb}{\seti}}
\newcommand{\seti}[1]{\{#1\}}
\newcommand{\setb}[1]{\left\{ #1\right\}}
\newcommand{\VERT}[1]{{\left\vert\kern-0.25ex\left\vert\kern-0.25ex\left\vert #1 
    \right\vert\kern-0.25ex\right\vert\kern-0.25ex\right\vert}}
\DeclareMathOperator{\sgn}{sgn\,}
\DeclareMathOperator{\supp}{supp}
\newcommand{\aleq}{\lesssim}
\newcommand{\ud}{\mathrm{d}}
\newcommand{\rd}{\partial}
\newcommand{\0}{\emptyset}
\newcommand{\alp}{\alpha}
\newcommand{\gmm}{\gamma}
\newcommand{\dlt}{\delta}
\newcommand{\Tht}{\Theta}
\newcommand{\omg}{\omega}
\newcommand{\bfT}{{\bf T}}
\newcommand{\bfdlt}{\boldsymbol{\delta}}
\newcommand{\bfeta}{\boldsymbol{\eta}}
\newcommand{\bbP}{\mathbb P}
\newcommand{\bbR}{\mathbb R}
\newcommand{\bbS}{\mathbb S}
\newcommand{\bbZ}{\mathbb Z}
\newcommand{\calA}{\mathcal A}
\newcommand{\calB}{\mathcal B}
\newcommand{\calC}{\mathcal C}
\newcommand{\calD}{\mathcal D}
\newcommand{\calE}{\mathcal E}
\newcommand{\calF}{\mathcal F}
\newcommand{\calG}{\mathcal G}
\newcommand{\calH}{\mathcal H}
\newcommand{\calL}{\mathcal L}
\newcommand{\calM}{\mathcal M}
\newcommand{\calN}{\mathcal N}
\newcommand{\calO}{\mathcal O}
\newcommand{\calP}{\mathcal P}
\newcommand{\calQ}{\mathcal Q}
\newcommand{\calR}{\mathcal R}
\newcommand{\calS}{\mathcal S}
\newcommand{\calT}{\mathcal T}
\newcommand{\calX}{\mathcal X}
\newcommand{\calY}{\mathcal Y}
\newcommand{\calZ}{\mathcal Z}
\newcommand{\frkf}{\mathfrak f}
\newcommand{\uX}{\underline{X}}					
\newcommand{\secondff}{ {\mathrm{I\!I}} }				
\newcommand{\rsmet}{\mathring{\slashed{g}}}			
\newcommand{\ucalC}{\underline{\calC}}				
\newcommand{\uH}{\underline{H}}					
\newcommand{\uZ}{\underline{\calZ}}				
\title[]{Stability of the $3$-dimensional catenoid \\ for the hyperbolic vanishing mean curvature equation}
\definecolor{green}{rgb}{0,0.8,0} 
\newcommand{\pmat}[1]{\begin{pmatrix} #1 \end{pmatrix}}
\newcommand{\Del}[1]{}
\newcommand{\mand}{{\ \ \text{and} \ \  }}
\newcommand{\mif}{{\ \ \text{if} \ \ }}
\newcommand{\angles}[2]{\langle #1,#2\rangle}
\newcommand{\jap}[1]{\langle #1\rangle}
\newcommand{\zed}{\mathfrak{z}}
\newcommand{\txtg}{{\text g}}
\newcommand{\bfOmega}{\boldsymbol{\Omega}}
\newcommand{\abar}{{\underline a}}
\renewcommand{\hbar}{{\underline h}}
\newcommand{\ellbar}{{\underline \ell}}
\newcommand{\Hbar}{{\underline H}}
\newcommand{\Lbar}{{\underline L}}
\newcommand{\Xbar}{{\underline X}}
\newcommand{\alphabar}{\underline{\alpha}}
\newcommand{\betabar}{\underline{\beta}}
\newcommand{\gammabar}{\underline{\gamma}}
\newcommand{\xibar}{\underline{\xi}}
\newcommand{\pibar}{\underline{\uppi}}
\newcommand{\varphibar}{\underline{\varphi}}
\newcommand{\dota}{{\dot a}}
\newcommand{\dotf}{{\dot f}}
\newcommand{\dotu}{{\dot u}}
\newcommand{\dotv}{{\dot v}}
\newcommand{\dotF}{{\dot F}}
\newcommand{\dotK}{{\dot K}}
\newcommand{\dotP}{{\dot P}}
\newcommand{\frakf}{\mathfrak f}
\newcommand{\frakm}{\mathfrak m}
\newcommand{\tilf}{{\tilde{f}}}
\newcommand{\tilg}{{\tilde{g}}}
\newcommand{\tilk}{{\tilde{k}}}
\newcommand{\tilq}{{\tilde{q}}}
\newcommand{\tilr}{{\tilde{r}}}
\newcommand{\tilt}{{\tilde{t}}}
\newcommand{\tilC}{{\tilde{C}}}
\newcommand{\tilF}{{\tilde{F}}}
\newcommand{\tilH}{{\tilde{H}}}
\newcommand{\tilM}{{\tilde{M}}}
\newcommand{\tilN}{{\tilde{N}}}
\newcommand{\tilP}{{\tilde{P}}}
\newcommand{\tilR}{{\tilde{R}}}
\newcommand{\tilS}{{\tilde{S}}}
\newcommand{\tilV}{{\tilde{V}}}
\newcommand{\scB}{{\mathscr{B}}}
\newcommand{\scC}{{\mathscr{C}}}
\newcommand{\scE}{{\mathscr{E}}}
\newcommand{\scQ}{{\mathscr{Q}}}
\newcommand{\ringa}{{\mathring a}}
\newcommand{\ringb}{{\mathring b}}
\newcommand{\ringc}{{\mathring c}}
\newcommand{\ringN}{{\mathring N}}
\newcommand{\ringpi}{\mathring{\uppi}}
\newcommand{\ringSigma}{\mathring{\Sigma}}
\newcommand{\ringPhi}{\mathring{\Phi}}
\newcommand{\ringpsi}{\mathring{\psi}}
\newcommand{\ringepsilon}{\mathring{\epsilon}}
\newcommand{\vecf}{{\vec f}}
\newcommand{\vecu}{{\vec u}}
\newcommand{\vecv}{{\vec v}}
\newcommand{\vecF}{{\vec F}}
\newcommand{\vecG}{{\vec G}}
\newcommand{\vecH}{{\vec H}}
\newcommand{\vecK}{{\vec K}}
\newcommand{\vecN}{{\vec N}}
\newcommand{\vecO}{{\vec O}}
\newcommand{\vecP}{{\vec P}}
\newcommand{\vecZ}{{\vec Z}}
\newcommand{\vecphi}{\vec{\phi}}
\newcommand{\vecfy}{\vec{\varphi}}
\newcommand{\vecPhi}{\vec{\Phi}}
\newcommand{\vecpsi}{\vec{\psi}}
\newcommand{\vecPsi}{\vec{\Psi}}
\newcommand{\vecomega}{\vec{\omega}}
\newcommand{\vecOmega}{\vec{\Omega}}
\newcommand{\barcalC}{{\underline{\calC}}}
\newcommand{\barcalH}{{\underline{\calH}}}
\newcommand{\ringsg}{\mathring{\slashed{g}}}
\newcommand{\ringsDelta}{{\mathring{\slashed{\Delta}}}}
\newcommand{\sDelta}{{\slashed{\Delta}}}
\newcommand{\snabla}{{\slashed{\nabla}}}
\newcommand{\ringsnabla}{{\mathring{\slashed{\nabla}}}}
\newcommand{\dotxi}{{\dot{\xi}}}
\newcommand{\ddotxi}{{\ddot{\xi}}}
\newcommand{\tilpsi}{{\tilde{\psi}}}
\newcommand{\fy}{\varphi}
\newcommand{\tilrho}{{\tilde{\rho}}}
\newcommand{\tilvarphi}{{\tilde{\varphi}}}
\newcommand{\dotell}{{\dot{\ell}}}
\newcommand{\dotphi}{{\dot{\phi}}}
\newcommand{\dotpsi}{{\dot{\psi}}}
\newcommand{\dotfy}{{\dot{\fy}}}
\newcommand{\ddotell}{\ddot{\ell}}
\newcommand{\dotwp}{{\dot{\wp}}}
\newcommand{\ringpartial}{{\mathring{\partial}}}
\newcommand{\far}{{\mathrm{far}}}
\newcommand{\near}{{\mathrm{near}}}
\newcommand{\pert}{{\mathrm{pert}}}
\newcommand{\main}{{\mathrm{main}}}
\newcommand{\Int}{{\mathrm{int}}}
\newcommand{\Ext}{{\mathrm{ext}}}
\newcommand{\tilchi}{{\tilde{\chi}}}
\newcommand{\callP}{\calP}
\newcommand{\temp}{{\mathrm{temp}}}
\newcommand{\tilcalP}{{\widetilde{\calP}}}
\newcommand{\tilDelta}{{\tilde{\Delta}}}
\newcommand{\tilbfOmegabar}{{\tilde{\underline{\bfOmega}}}}
\renewcommand{\min}{{\mathrm{min}}}
\newcommand{\hyp}{{\mathrm{hyp}}}
\newcommand{\flatt}{{\mathrm{flat}}}
\newcommand{\Err}{{\mathrm{Err}}}
\newcommand{\ext}{{\mathrm{ext}}}
\newcommand{\tilpartial}{{\tilde{\partial}}}
\newcommand{\tiluptau}{{\tilde{\uptau}}}
\newcommand{\tiluprho}{{\tilde{\uprho}}}
\newcommand{\tiluptheta}{{\tilde{\uptheta}}}
\newcommand{\vecbfOmega}{{\vec{\bfOmega}}}
\newcommand{\vecUpomega}{{\vec{\Upomega}}}
\newcommand{\RbfT}{{\bfT}}
\newcommand{\tilupsi}{{\tilde{\uppsi}}}
\newcommand{\tilupphi}{{\tilde{\upphi}}}
\newcommand{\tilupfy}{{\tilde{\upvarphi}}}
\newcommand{\upfy}{\upvarphi}
\newcommand{\euc}{{\mathrm{euc}}}
\newcommand{\cat}{{\mathrm{cat}}}
\newcommand{\bfUpomega}{{\boldsymbol{\Upomega}}}
\newcommand{\bfupsigma}{{\boldsymbol{\upsigma}}}
\newcommand{\trap}{{\mathrm{trap}}}
\newcommand{\Reigenfunctioncutoffscale}{{R_\ctf}}
\newcommand{\callO}{\calO}
\newcommand{\bsUpsigma}{{\boldsymbol{\Upsigma}}}
\newcommand{\barbsUpsigma}{{\underline{\bsUpsigma}}}
\newcommand{\tilbsUpsigma}{{\tilde{\boldsymbol{\Upsigma}}}}
\newcommand{\HHbar}{\Hbar}
\newcommand{\glbl}{{\mathrm{glbl}}}
\newcommand{\calPstato}{\calP_h^\mathrm{stat}}
\newcommand{\stat}{{\mathrm{stat}}}
\newcommand{\vectilupphi}{\vec{\tilde{\upphi}}}
\newcommand{\fybar}{\varphibar}
\newcommand{\vecvarepsilon}{{\vec{\varepsilon}}}
\newcommand{\NT}{2}
\newcommand{\ctf}{{\mathrm{ctf}}}
\newcommand{\mrI}{{\mathrm{I}}}
\newcommand{\Dar}{D_{\mathrm{rbx}}}
\newcommand{\tiltilS}{{\tilde{\tilS}}}
\newcommand{\tiltilk}{{\tilde{\tilk}}}
\newcommand{\Rled}{{\tilR}}
\newcommand{\dottilF}{{\dot{\tilF}}}
\newcommand{\bsupvarepsilon}{{\boldsymbol{\upvarepsilon}}}
\newcommand{\tiltilP}{\tilde{\raisebox{0pt}[0.85\height]{$\tilde{P}$}}}
\begin{document}

\begin{abstract}
We prove that the $3$-dimensional catenoid is asymptotically stable as a solution to the hyperbolic vanishing mean curvature equation in Minkowski space, modulo suitable translation and boost (i.e., modulation) and with respect to a codimension one set of initial data perturbations. The modulation and the codimension one restriction on the initial data are necessary (and optimal) in view of the kernel and the unique simple eigenvalue, respectively, of the stability operator of the catenoid. 

The $3$-dimensional problem is more challenging than the higher (specifically, $5$ and higher) dimensional case addressed in the previous work of the authors with J.~L\"uhrmann, due to slower temporal decay of waves and slower spatial decay of the catenoid. To overcome these issues, we introduce several innovations, such as a proof of Morawetz- (or local-energy-decay-) estimates for the linearized operator with slowly decaying kernel elements based on the Darboux transform, a new method to obtain Price's-law-type bounds for waves on a moving catenoid, as well as a refined profile construction designed to capture a crucial cancellation in the wave-catenoid interaction. In conjunction with our previous work on the higher dimensional case, this paper outlines a systematic approach for studying other soliton stability problems for $(3+1)$-dimensional quasilinear wave equations.
\end{abstract}

\thanks{S.-J.~Oh was supported by a Sloan Research Fellowship and a NSF CAREER Grant DMS-1945615. S. Shahshahani was supported by the Simons Foundation grant 639284. The authors thank J. L\"uhrmann and N. Tang for helpful discussions, and The Erwin Schr\"odinger Institute for their hospitality during a visit where part of this research was conducted.}

\author{Sung-Jin Oh}%
\author{Sohrab Shahshahani}%
\maketitle
\tableofcontents

\section{Introduction}
The \emph{hyperbolic vanishing mean curvature equation} (HVMC equation) is a quasilinear wave equation that is the Lorentzian generalization of the minimal hypersurface equation. Catenoids, which are among the simplest non-flat minimal surfaces in Euclidean space, furnish time-independent solutions of the HVMC equation. The  nonlinear asymptotic stability of catenoids as solutions to the HVMC in $\bbR^{1+(n+1)}$ was studied in \cite{LuOS1} for $n\geq 5$.  In this work we extend the analysis of \cite{LuOS1} to dimension $n=3$. We prove the nonlinear asymptotic stability, up to suitable translations and Lorentz boosts in the ambient space, of the $3$ dimensional catenoid with respect to a ``codimension-$1$'' set of initial data perturbations without any symmetry assumptions. The codimension condition is sharp in view of the unstable mode of the linearized operator for the HVMC equation. The need for adjusting the translation and Lorentz boost parameter (that is, \emph{modulation}) comes from the Lorentz invariance of the problem which leads to the presence of a non-trivial kernel for the linearized operator. Our result also extends the pioneering work of Donninger--Krieger--Szeftel--Wong \cite{DKSW}, which considers the same problem in radial symmetry for $n \geq 2$, albeit for~$n=3$.

Compared to higher dimensions, the $n=3$ case entails a number of essential challenges that will be discussed in detail below. Most significantly, to deal with the slow spatial decay of the catenoid metric to the flat metric, we introduce a refined profile construction for the modulated catenoid. Our refined profile construction incorporates the leading order contribution from this slow spatial decay into the profile and captures a crucial cancellation between the modulated catenoid and the remainder. In addition the slow spatial decay calls for modifications in the proof of local energy decay. Finally, the slower time decay\footnote{Here and elsewhere in this paper, the time decay rate is with respect to an asymptotically null foliation (alternatively, in terms of the time decay in a compact spatial region).} of waves in lower dimensions calls for a new scheme for the proof of time decay for the perturbation. To this end we build on ideas and tools developed in the literature surrounding late time tail estimates for linear waves, for instance in the context of Price's law. Many of the challenges in the present work and \cite{LuOS1} are brought on by the quasilinear nature of the evolution equations. We are hopeful that the innovations introduced in this paper, in conjunction with those from \cite{LuOS1}, will have further applications in the study of stability of solitons in other quasilinear hyperbolic equations on $(3+1)$-dimensional spacetimes, for instance for well-known topological solitons such as the Skyrmion for the Skyrme model.

\subsection{Stability problems for the hyperbolic vanishing mean curvature equation}
We begin by describing the hyperbolic vanishing mean curvature equation. 
Let $(\bbR^{1+4},\bfeta)$ be the $1+4$ dimensional Minkowski space with the standard metric
\begin{align*}
\begin{split}
\bfeta=-\ud X^0\otimes \ud X^0+\ud X^1\otimes \ud X^1+\dots + \ud X^{4}\otimes \ud X^{4}.
\end{split}
\end{align*}
Let $\calM$ be a $4$ dimensional connected orientable manifold without boundary. We consider embeddings $\Phi:\calM\to \bbR^{1+4}$ such that the pull-back metric $\Phi^\ast\bfeta$ is Lorentzian (that is, $\Phi(\calM)$ is timelike), and which satisfy 
\begin{align}\label{eq:HVMC1}
\begin{split}
\Box_{\Phi^\ast \bfeta}\Phi=0.
\end{split}
\end{align}
The vector $\Box_{\Phi^\ast \bfeta}\Phi$  is the \emph{mean curvature vector} of the hypersurface $\Phi(\calM)$ in $\bbR^{1+4}$, and equation~\eqref{eq:HVMC1} is the requirement that this hypersurface have vanishing mean curvature (VMC).  Embeddings satisfying these requirements are called \emph{(timelike) maximal} and equation~\eqref{eq:HVMC1} is referred to as the HVMC equation. When there is no risk of confusion, by a slight abuse of notation, we will identify $\calM$ with its image $\Phi(\calM)$ and simply refer to $\calM$ as a hypersurface of $\bbR^{1+4}$. The HVMC equation is the hyperbolic analogue of the elliptic minimal surface equation (or the parabolic mean curvature flow). Variationally, it asrises as the Euler-Lagrange equations for the area functional 
\begin{align}\label{eq:area1}
\begin{split}
\calA(\Phi)=\int_{\calM}\sqrt{|\det \Phi^\ast\bfeta|}.
\end{split}
\end{align}

The Cauchy problem for \eqref{eq:HVMC1} can be described as follows. Given a coordinate patch $U\subseteq \calM$ with coordinates $s=(s^0,\dots,s^3)$, let $U_0:=\{s\in U~\vert~ s^0=0\}\subset U$. Consider two functions $\Phi_0,\Phi_1\colon U_0\to \bbR^{1+4}$ such that $\Phi_0$ is an embedding, $\Phi_0^\ast \bfeta$ is Riemannian, and the metric
\begin{align*}
g_{\mu\nu}:=\begin{cases}
\bfeta(\partial_{\mu}\Phi_0,\partial_\nu\Phi_0),\quad &\mu,\nu=1,2,3\\
\bfeta(\Phi_1,\partial_\nu\Phi_0),\quad &\mu=0,\nu=1,2, 3\\
\bfeta(\partial_\mu\Phi_0,\Phi_1),\quad &\mu=1,2, 3, \nu=0\\
\bfeta(\Phi_1,\Phi_1),\quad &\mu=\nu=0
\end{cases}
\end{align*}
satisfies $\sup_{U_0}\det g<0$. We seek a neighborhood $V\subseteq U$ of $U_0$ such that there is a timelike embedding $\Phi\colon V\to \bbR^{1+4}$ satisfying \eqref{eq:HVMC1}, as well as $\Phi\vert_{U_0}=\Phi_0$ and $\partial_{0}\Phi\vert_{U_0}=\Phi_1$. It is shown in \cite{AC2} that this problem admits a solution $\Phi$ and that any two solutions $\Phi$ and $\Psi$ are related by a diffeomorphism. In the present work we are interested in manifolds $\calM$ that can be written as direct products $\calM=\bbR\times M$. In this case we use $(t,x)$ to denote points in $\bbR\times M$. Given $\Phi_0:M\to \{0\}\times \bbR^{4}\subseteq \bbR^{1+4}$ and a family of future directed timelike vectors $\Phi_1:M\to \bbR^{1+4}$, by finite speed of propagation and standard patching arguments, the result of \cite{AC2} implies the existence of an interval $I\ni 0$, and a unique solution $\Phi\colon I\times M\to\bbR^{1+4}$ to \eqref{eq:HVMC1} such that $\Phi(t,M)\subseteq \{t\}\times\bbR^{4}$, $\Phi(0)=\Phi_0$, and $\partial_t\Phi(0)=\Phi_1$.

We next turn to questions related to global (in time) dynamics of solutions to \eqref{eq:HVMC1}. In the context of the Cauchy problem formulated on $\bbR\times M$, one can ask if the local solution extends from $I\times M$ to all of $\bbR\times M$, and if so, how it behaves as $t\to\pm\infty$. A special class of maximal hypersurfaces for which the global dynamics are easily described are the products of  Riemannian VMC surfaces in $\bbR^{4}$ with $\bbR$. We refer to such product solutions as \emph{stationary} solutions.  More precisely, if $\Phi_0\colon M\to\bbR^{4}$ is a Riemannian embedding with vanishing mean curvature, then $\Phi\colon \bbR\times M\to\bbR^{1+4}$ given by $\Phi(t,x)=(t,\Phi_0(x))$ satisfies \eqref{eq:HVMC1} with $\Phi(0)=\Phi_0$ and $\partial_t\Phi(0)=(1,0)$.  

A natural question regarding the long time dynamics of solutions of \eqref{eq:HVMC1} is the stability of stationary solutions. The simplest case is when $\Phi_0$ is a linear embedding of a hyperplane in $\bbR^{4}$. It was proved in \cite{B1} that small perturbations of a hyperplane solution lead to global solutions which decay back to a hyperplane. A similar result in two dimensions was later proved in \cite{Lin1}. Analytically, the results in \cite{B1,Lin1} amount to proving global existence and decay estimates for solutions to a system of quasilinear wave equations with small initial data on Minkowski space. From this point of view, hyperplanes can be thought of as the zero solution to~\eqref{eq:HVMC1}.

The first stability result for a non-flat stationary solution of \eqref{eq:HVMC1} is contained in \cite{DonKrie1,DKSW} for the Lorentzian catenoid. The Riemannian catenoid is a VMC surface of revolution in $\bbR^{n+1}$ (see Section~\ref{subsec:Riemcat} for a more detailed description), and the Lorentzian catenoid is the corresponding stationary solution. The authors in \cite{DKSW} consider radial perturbations of the $(1+2)$ dimensional Lorentzian catenoid that satisfy an additional discrete symmetry\footnote{This is an important technical assumption, which avoids the resonances of the linearized operator in dimension two.}. Their main result asserts that if the initial data belong to a codimension one subset in an appropriate topology, then the corresponding solution can be extended globally and converges to a Lorentzian catenoid as $t\to\infty$. The codimension one restriction on the initial data is necessary and sharp (see the comments following Theorem~\ref{thm:main-0}). A similar result for radial perturbations of the Lorentzian helicoid was subsequently obtained in \cite{Marchali1}.  From a PDE point of view, \cite{DKSW,Marchali1} establish the codimension one (asymptotic) stability of a time-independent solution to a quasilinear wave equation on Minkowski space under radial symmetry. In \cite{LuOS1} the authors proved the codimension one stability of the $(1+n)$ dimensional Lorentzian catenoid in dimensions $n\geq 5$ without any symmetry restrictions on the perturbations. In the present work we extend the result of \cite{LuOS1} to $n=3$. For a more complete account of the literature we refer the reader to \cite[Section~1.10.1]{LuOS1}.

\subsection{The Riemannian catenoid $\ucalC$}\label{subsec:Riemcat}
Consider the  Euclidean space $(\bbR^{4}, \bfdlt)$, and let $\uX := (X', X^{4}) \in \bbR^{4}$ be the rectangular coordinates, where $X' = (X^{1}, X^2, X^{3}) \in \bbR^{3}$ denotes the first $3$ coordinates. The \emph{$3$-dimensional catenoid $\ucalC$} is defined as
\begin{equation} \label{eq:catenoid}
	\ucalC := \set{(X', X^{4}) \in \bbR^{4} : \abs{X'}^{2} = \frkf^{2}(X^{4})},
\end{equation}
where $\frkf$ is the unique solution to the ODE
\begin{equation} \label{eq:catenoid-f}
\frac{\frkf''}{(1+(\frkf')^{2})^{\frac{3}{2}}} - \frac{2}{\frkf(1+(\frkf')^{2})^{\frac{1}{2}}} = 0, \qquad \frkf(0) = 1, \quad \frkf'(0) = 0.
\end{equation}
By definition, $\ucalC$ is  \emph{invariant under rotations about the $X^{4}$-axis}, and is \emph{even with respect to the hyperplane $\set{X^{4} = 0}$}. Equation \eqref{eq:catenoid-f} is the condition that $\ucalC$ has \emph{vanishing mean curvature}, i.e., $\ucalC$ is a \emph{minimal hypersurface} (provided, of course, that $\ucalC$ is an embedded submanifold; see its verification below). Upon solving \eqref{eq:catenoid-f}, it follows that $\frkf(X^{4})$ is smooth, even, and strictly increasing from $1$ to $+\infty$ on the interval $X^{4} \in (0, S)$, where
\begin{equation}\label{eq:Sendpointdef1}
S = \int_{1}^{\infty} \frac{\ud \frkf}{(\frkf^{4} + 1)^{\frac{1}{2}}}<\infty.
\end{equation}
By the above properties of $\frkf$, it follows that $\ucalC$ is a smooth embedded $3$-dimensional hypersurface in $\bbR^{4}$ situated between, and asymptotic to, the two parallel hyperplanes
\begin{equation*}
\Pi_{\pm \infty} := \set{(X', X^{4}) \in \bbR^{4} : X^{4} = \pm S}.
\end{equation*}
The catenoid can be parameterized as
\begin{align}\label{eq:Fdef}
\begin{split}
F:I\times \bbS^2\to\bbR^4,\qquad F(\zed,\omega)=(f(\zed)\Theta(\omega),\zed),
\end{split}
\end{align}
where $\Theta:\bbS^2\to \bbR^3$ is the usual embedding of the unit sphere. We introduce the \emph{polar coordinates} $(\rho, \omg) \in \bbR \times \bbS^{2}$ on $\ucalC$, given by (the inverse of) the parametrization
\begin{equation}\label{eq:Zdef1}
	X' = \brk{\rho} \Tht(\omg), \quad X^{4} = \frac{\rho}{\abs{\rho}} \frkf^{-1}(\brk{\rho}),
\end{equation}
where $\frkf^{-1} : (1, \infty) \to (0, S)$ denotes the inverse of $\left. \frkf \right|_{(0, S)}$ and $\jap{\cdot}:=\sqrt{1+|\cdot|^2}$. These coordinates are smooth everywhere on $\ucalC$, including on the \emph{collar} (where $\rho = 0$, or equivalently, $\ucalC \cap \set{X^{4} = 0}$). The metric on $\ucalC$ in these coordinates takes the form
\begin{equation*}
	g_{\ucalC} = \frac{ \brk{\rho}^{2}}{\brk{\rho}^{2}+1 } \ud \rho \otimes \ud \rho
	+ \brk{\rho}^{2} \rsmet_{ab} \ud \omg^{a} \otimes \ud \omg^{b},
\end{equation*}
where $\rsmet$ is the metric of the unit round sphere $\bbS^{2}$. For the Lorentzian catenoid $\calC:=\bbR\times \barcalC$, and with $t$ denoting the time variable in $\bbR$, the Lorentzian metric becomes
\begin{align*}
\begin{split}
g\equiv g_\calC=-\ud t\otimes \ud t+ \frac{ \brk{\rho}^{2}}{\brk{\rho}^{2}+1 } \ud \rho \otimes \ud \rho
	+ \brk{\rho}^{2} \rsmet_{ab} \ud \omg^{a} \otimes \ud \omg^{b}.
\end{split}
\end{align*}

From the second variation of the area functional one can see that the stability, or  linearized, operator for the catenoid as a minimal surface is  $\HHbar:=\Delta_\barcalC+ |\secondff|^2$ (respectively, $\Box_\calC+|\secondff|^2$ in the Lorentzian case), where $\Delta_\barcalC$ denotes the Laplacian on $\barcalC$ (respectively, $\Box_\calC=-\partial_t^2+\Delta_\barcalC$ denotes the d'Alembertian on $\calC$). See for instance \cite{Tam-Zhou,F-CS}. It is known (cf. \cite{F-CS,Tam-Zhou}) that $\HHbar$ admits a unique positive eigenvalue, indicating the instability of the catenoid as a minimal surface:
\begin{align*}
\begin{split}
\HHbar \varphibar_\mu=\mu^2\varphibar_\mu.
\end{split}
\end{align*}
On the other hand, since every translation of $\barcalC$ in the ambient $\bbR^{4}$ is another minimal surface (another catenoid), by differentiating in the translation parameter one obtains $4$ zero modes of $\HHbar$. Explicitly, in the $(\rho,\omega)$ coordinates above, these are given by
\begin{align*}
\begin{split}
\fybar_j=\frac{\Theta^j(\omega)}{\jap{\rho}^{2}},\quad 1\leq j\leq 3,
\end{split}
\end{align*}
corresponding to translations in the direction $\frac{\partial}{\partial X^j}$ respectively. The zero mode $\fybar_{4}=\frac{\rho\sqrt{\jap{\rho}^{2}+1}}{\jap{\rho}^{2}}$ corresponding to translation in the direction of the axis of symmetry does not belong to $L^2$, which is why we have not included it in the list above. In the Lorentzian case, the Lorentz boosts of the ambient $\bbR^{1+4}$ give the additional zero modes $t\fybar_j$ of $\Box_\calC+|\secondff|^2$, which, for $1\leq j\leq 3$.\footnote{Ambient rotations about the axis of symmetry map $\barcalC$ to itself, so differentiation along the rotation parameter yields the trivial zero mode $\fybar=0$ for $\HHbar$. Similarly for translations along the time axis in the Lorentzian case. Scaling changes the value of $S$ in \eqref{eq:Sendpointdef1} and differentiation in the scaling parameter yields a zero solution which is neither an eigenfunction nor a resonance.} We note in passing that the normal $\nu$ to $\barcalC\subseteq \bbR^{4}$ satisfies (see \cite[sections~1.2 and~2.1.8]{LuOS1})
\begin{align*}
\begin{split}
\nu^i= \fybar_i,\qquad i= 1,2,3.
\end{split}
\end{align*}
For this reason we sometimes use $\nu^i$ to denote the kernel elements of $\Hbar$.

The implication of such a non-trivial kernel (arising from symmetries) for $\Box_\calC+|\secondff|^2$ for the stability problem is that a perturbation of the initial data for the catenoid may lead to a solution that approach a boosted and translated catenoid. We next give a more explicit description of the boosted and translated catenoids. For any $\ell_0\in \bbR^3\backslash\{0\}$ let $P_{\ell_0}$ denote the orthogonal projection in the direction of $\ell_0$ ($P_{\ell_0} v=(|\ell_0|^{-2}\ell_0\cdot v)\ell_0$), and $P_{\ell_0}^\perp$ the orthogonal projection to the complement. Here and below, by a slight abuse of notation, we  view $\bbR^3$ as a subset of $\bbR^{4}$ using the embedding  $X'\mapsto (X',0)^\intercal$. The corresponding ambient Lorentz boost $\Lambda_{\ell_0}$, with $0<|\ell_0|<1$, is defined by
\begin{align}\label{eq:Lambdadef1}
\begin{split}
\Lambda_{\ell_0}=\pmat{\gamma&-\gamma\ell_0^\intercal\\-\gamma \ell_0&A_{\ell_0}},\qquad A_{\ell_0}= \gamma P_{\ell_0}+P_{\ell_0}^\perp,\quad \gamma=\frac{1}{\sqrt{1-|\ell_0|^2}},
\end{split}
\end{align}
and the inverses of $\Lambda_{\ell_0}$ and $A_{\ell_0}$ are
\begin{align*}
\begin{split}
\Lambda_{\ell_0}^{-1}=\Lambda_{-\ell_0},\qquad A_{\ell_0}^{-1}=\gamma^{-1}P_{\ell_0}+P_{\ell_0}^\perp.
\end{split}
\end{align*}
In what follows, Lorentz boosts are always with respect to a direction vector of length strictly less than one. The Lorentzian catenoid boosted by $\ell_0\in \bbR^3$ and translated by $a_0\in \bbR^3$ is  the following HVMC submanifold of $\bbR^{1+4}$,
\begin{align*}
\begin{split}
\calC_{a_0,\ell_0}:=\{\Lambda_{-\ell_0} X~\vert~ X\in \calC\}+\pmat{0\\a_0}.
\end{split}
\end{align*}
\subsection{First Statement of the main theorem}\label{sec:introfirststatement}
We are now ready to give a first formulation of the main result of this paper. For a manifold $\calX$, we will use the notation $\calT_p\calX$ to denote the tangent space at $p\in \calX$. We also use the parameterization $F\colon I\times \bbS^{2}\to \bbR^{4}$ introduced in \eqref{eq:Fdef}. By a slight abuse of notation we often identify $\barcalC$ with $I\times \bbS^{2}$ and view functions on $I\times \bbS^{2}$ as functions on $\barcalC$.

\begin{theorem}\label{thm:main-0} Let $\Phi_0\colon I\times \bbS^{2} \to\{0\}\times\bbR^{4}$ be an embedding and~$\Phi_1\colon I\times \bbS^{2}\to \bbR^{1+4}$ be a family of future directed timelike vectors such that $\Phi_0=F$ and $\Phi_1=(1,0)$ outside of a compact set. Suppose $\Phi_0$ and $\Phi_1$ belong to an appropriate codimension-1 subset in a suitable topology, and are sufficiently close to $F$ and $(1,0)$, respectively, in this topology. Then there is a unique complete timelike VMC hypersurface $\calM$ in $\bbR^{1+4}$ such that $\calM\cap \{X^0=0\}=\Phi_0(\barcalC)$ and $\calT_{\Phi_0(p)}\calM$ is spanned by $\ud_p \Phi_0(\calT_p\barcalC)$ and $\Phi_1(p)$ for any $p\in I \times \bbS^{2}$. Moreover, there exist $a_0,\ell_0\in \bbR^3$ such that the ambient Euclidean distance between $\calM\cap\{X^0=t\}$ and $\calC_{a_0,\ell_0}\cap \{X^0=t\}$ tends to zero as $t\to\infty$.
\end{theorem}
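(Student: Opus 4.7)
The plan is to reduce the geometric problem to a quasilinear wave equation for a scalar perturbation on a time-dependent background catenoid, fix that background by modulation, and close a bootstrap built on a refined-profile ansatz, Morawetz estimates via a Darboux transform, and Price's-law-type pointwise decay.

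First, I would represent $\calM \cap \{X^{0} = t\}$ as a normal graph over a \emph{modulated catenoid} $\calC_{a(t),\ell(t)}$, with scalar height function $\phi(t,\rho,\omg)$ on $\bbR_{t} \times \barcalC$. Inserting this ansatz into~\eqref{eq:HVMC1} and computing the mean curvature of the graph yields a schematic equation
\begin{align}
    (\Box_{\calC_{a(t),\ell(t)}} + \abs{\secondff}^{2}) \phi = \calN(\phi, \partial\phi, \partial^{2}\phi) + \calS(\dot a, \ddot a, \dot\ell, \ddot\ell),
\end{align}
in which $\calN$ is at least quadratic in $\phi$ and $\calS$ collects source terms generated by the motion of the profile. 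The parameters $(a(t),\ell(t))$ are fixed dynamically by requiring that $\phi(t,\cdot)$ and $\partial_{t}\phi(t,\cdot)$ be $L^{2}(\barcalC)$-orthogonal to the three translational zero modes $\fybar_{j}$, $1 \le j \le 3$, of $\HHbar$; this eliminates the translational kernel and produces an implicit ODE system for $(a,\ell)$ solvable in the small-data regime.

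Second, because the catenoid metric approaches Euclidean only at the slow rate $\jap{\rho}^{-1}$ when $n=3$, the crude source term $\calS$ (and part of $\calN$) has a spatial tail incompatible with the temporal decay we wish to prove. To counteract this I would replace the exact modulated catenoid by a \emph{refined profile} $\calC^{\mathrm{prof}}_{a,\ell}$: a higher-order correction of $\calC_{a,\ell}$ whose defining feature is that the leading slowly-decaying contribution of the wave--catenoid interaction is algebraically cancelled, leaving a faster-decaying remainder suitable for a Price's-law-type analysis. This refined profile is the main new structural ingredient relative to the higher-dimensional case in \cite{LuOS1}.

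Third, I would close a bootstrap controlling $\phi$ and $(a,\ell)$ simultaneously. The static stability operator $\HHbar$ has a unique positive eigenvalue $\mu^{2}$ with eigenfunction $\fybar_{\mu}$, so the codimension-one restriction on the data would be produced by a Brouwer-type topological shooting argument on the scalar projection of the data onto $\fybar_{\mu}$, ensuring that the unstable coefficient stays globally small and decays. The main analytic inputs are (i)~a Morawetz / integrated-local-energy-decay estimate for $\Box_{\calC_{a,\ell}} + \abs{\secondff}^{2}$ modulo its kernel, proved by first conjugating away the slowly-decaying kernel elements via a Darboux transform and then running a positive-commutator argument on the transformed operator; and (ii)~Price's-law-type pointwise decay of $\phi$ along an asymptotically null foliation, obtained by commuting with vector fields adapted to the moving catenoid and exploiting the refined-profile cancellation. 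Feeding (i)--(ii) back into $\calN$ and into the modulation ODEs for $(\dot a, \dot\ell)$ closes the bootstrap and yields $\dot a, \dot\ell \to 0$, whence $\calM \cap \{X^{0} = t\}$ converges in the stated sense to some $\calC_{a_{\infty},\ell_{\infty}}$. The main obstacle, and what distinguishes $n=3$ sharply from $n \ge 5$, is that these three ingredients cannot be decoupled from the slow spatial asymptotics of the catenoid: the refined profile must be engineered so that the Darboux transform in (i) still removes kernel growth, and the commutator hierarchy in (ii) must simultaneously accommodate the residual slow tail and the quasilinear wave--catenoid interaction, so that all three estimates are mutually consistent inside a single bootstrap.
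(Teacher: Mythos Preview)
Your broad strategy---modulation, shooting on the unstable mode, Darboux-assisted ILED, and a Price's-law endgame---matches the paper, but two structural choices in your proposal would break the argument.

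First, representing $\calM \cap \{X^0 = t\}$ as a normal graph over $\calC_{a(t),\ell(t)}$ along the constant-$t$ foliation does not work even in the higher-dimensional case of \cite{LuOS1}: the source $\calS$ generated by the moving profile decays too slowly in $\rho$ at spatial infinity to close the $r^p$ estimates, and the paper calls this ``fatal.'' Instead the profile $\calQ = \cup_\sigma \Sigma_\sigma$ is built along a foliation whose leaves are flat near the moving center but \emph{asymptotically null} at infinity (Section~\ref{subsec:mainprofile}); the gauge vector $N$ is only ``almost normal'' and becomes $\partial/\partial X^4$ in the far region. The orthogonality conditions are imposed in a first-order symplectic formulation against \emph{truncated} eigenfunctions $\vecZ_j = \chi_{\leq R_\ctf}\vecfy_j$ of compact support, not against the exact $\fybar_j$; this is what makes modulation compatible with the null foliation, but it also forces the remainder to appear \emph{linearly} in the ODEs for $\dotwp$.

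Second---and this is the core $n=3$ issue---your description of the refined profile as an algebraic higher-order correction to $\calC_{a,\ell}$ misses the mechanism. The source term in the equation for $\phi$ decays like $|\dotwp|\jap{\rho}^{-3}$, and a wave on $\bbR^{1+3}$ forced by $\jap{\tau}^{-\alpha}\jap{\rho}^{-3}$ decays at best like $\tau^{-2}$ regardless of $\alpha$; since $\phi$ feeds linearly into $\dotwp$ (via the truncation above) and one needs $\dotwp$ twice integrable, a single scalar $\phi$ cannot close. The paper's refined profile $P$ is not an explicit correction but the \emph{solution of a linear wave equation} $\calP_h P = g_\Ext + \txtg_1 + \partial_\uptau \txtg_2$ (Section~\ref{sec:modprof}), where $g_\Ext$ is the slow source cut off to the exterior so that in the interior $(\partial_t - M)\vecP \approx 0$ and $P$ drops out of the modulation ODEs. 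The auxiliary terms $\txtg_1,\txtg_2$ are then chosen by an integration-by-parts argument so that $P$ satisfies approximate orthogonality and hence itself decays; crucially only $\partial_\uptau \txtg_2$, not $\txtg_2$, enters the equation for $\varepsilon = \phi - P$, so the resulting errors have improved time decay. The remainder $\varepsilon$ then sees a source with $\jap{\rho}^{-4}$ spatial decay, for which the late-time-tail analysis (Section~\ref{sec:tails}) gives the twice-integrable rate $\tau^{-9/4+\kappa}$. The Darboux step in the ILED exploits in addition that the dangerous interior part of the source has the structure $\dotwp\,\fybar_i$, which $\Dar$ annihilates; the resulting large $R_\ctf$-factors are absorbed by a careful balance $R_\ctf^{2+\alpha/2} R^{-1+\alpha/2}\ll 1$ against the foliation-transition radius $R$. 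Without this PDE construction of $P$ and its engineered orthogonality, the circular dependence $\varepsilon \to \dotwp \to \hbox{source} \to \varepsilon$ cannot be broken.
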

A more precise version of this result will be stated in Theorem~\ref{thm:main} below. A few remarks are in order.
\begin{itemize}[leftmargin=*]
  \item[{\bf{1.}}] In view of the growing mode of the stability operator  (see Section~\ref{subsec:Riemcat}), the codimension-1 restriction on the data in Theorem~\ref{thm:main} is optimal. However, we do not pursue the question of uniqueness or regularity of the codimension-1 set in the initial data topology. See Remark~\ref{rem:codim1} for further discussion.
  \item [{\bf{2.}}] As part of the proof of Theorem~\ref{thm:main-0} we derive ODEs that track the translation and boost parameters. See Section~\ref{sec:eqnsofmotion}.
\item [{\bf{3.}}] The assumption $\Phi_0=F$ and $\Phi_1=(1,0)$ outside a compact set can be replaced by sufficient decay at spatial infinity. Indeed, outside an ambient cone $\scC$ with vertex at $(-R,0)$, with $R$ sufficiently large, the problem reduces to a quasilinear wave equation on Minkowski space. By finite speed of propagation, this problem can be analyzed separately in this region, for instance using the vectorfield $(t-r)^{p} \partial_{t}$. This will lead to suitably decaying and small data on the cone $\scC$ which can be taken as the starting point of the analysis in this paper. Note that in this region the distance between $\calC_{a_j,\ell_j}\cap\{X^0=t\}$ for any $a_j,\ell_j$, $j=1,2$, with $|\ell_j|<1$ decays to zero as $t\to\infty$ in view of the strong asymptotic decay of the catenoid metric to the flat metric.
\end{itemize}
\subsection{Overall scheme and main difficulties} \label{subsec:ideas-outline}
We begin with a description of the overall scheme of the proof. We then outline the main difficulties in carrying out this scheme. This discussion also serves as a brief summary of the main achievements in \cite{LuOS1} on which we rely in this paper. 

\subsubsection{Overall scheme}\label{subsubsec:introoverallscheme} The overall scheme of our proof of Theorem~\ref{thm:main-0} is as follows:
\begin{enumerate}
\item {\it Decomposition of solution.} The basic idea is to make the (formal) decomposition
\begin{equation} \label{eq:basic-decomp}
	\hbox{(Solution)} = \underbrace{\calQ}_{\hbox{profile}} \hbox{``}+\hbox{''} \underbrace{\psi}_{\hbox{perturbation}}
\end{equation}
and show that the perturbation $\psi$ decays to zero as $t \to \infty$ in a suitable sense. In the absence of any obstructions, the profile $\calQ$ would be the object that we wish to prove the asymptotic stability of -- the catenoid in our case. However, as discussed earlier, the linearized HVMC equation around the catenoid, $(-\partial_{t}^{2} + \underline{H}) \psi = 0$, admits non-decaying solutions. These come from the kernel, generated by the translation symmetry, and positive eigenvalue of $\uH$. To avoid these obstructions, we employ the ideas of \emph{modulation} and \emph{shooting}. 

\smallskip

\item {\it Modulation.} To ensure transversality to the $6$-dimensional family of non-decaying solutions corresponding to the generalized kernel if $-\partial_t^2+\uH$ we impose $6$ orthogonality conditions on $\psi$ at each time. To compensate for such a restriction, we allow the profile $\calQ$ to depend on $6$ time-dependent parameters: the \emph{position} vector $\xi(\sigma) = (\xi^{1}, \xi^2, \xi^{2})(\sigma)$, and the \emph{velocity} (or \emph{boost}) vector $\ell(\sigma) = (\ell^{1}, \ell^2, \ell^{3})(\sigma)$.  Here $\sigma$ is a foliation parameters whose leaves will represent an appropriate notion of time for our problem. The \emph{approximate solution} (or \emph{profile}) $\calQ = \calQ(\xi(\cdot), \ell(\cdot))$ to HVMC then represents ``a moving catenoid at position $\xi(\sigma)$ with velocity $\ell(\sigma)$ at each time $\sigma$''. Appropriate choices of the profile $\calQ$, the foliation $\sigma$, and the $6$ orthogonality conditions would lead, upon combination with the HVMC equation, to $6$ equations that dictate the evolution of $(\xi(\sigma), \ell(\sigma))$ in terms of $\psi$.

\smallskip

\item {\it Shooting argument.} To avoid the exponential growth stemming from the positive eigenvalue of $\uH$, we further decompose $\psi$ as
\begin{equation*}
	\psi = a_{+}(\sigma) Z_{+} + a_{-}(\sigma) Z_{-} + \phi,
\end{equation*}
where $Z_{+}$ and $Z_{-}$ are uniformly bounded functions, $a_{+}(\sigma)$ and $a_{-}(\sigma)$ obey ODEs in $\sigma$ with growing and damping linear parts, respectively, and $\phi$ obeys $8$ orthogonality conditions so as to be transversal (to a sufficient extent) to all possible linear obstructions to decay at each time. By analyzing the ODE for $a_{-}$, the modulation equations for $(\xi, \ell)$, and the wave equation for $\phi$, we will show that $\dot{\xi} - \ell$, $\dot{\ell}$ and $\psi$ decay as long as the unstable mode $a_{+}(\sigma)$ satisfies the so-called \emph{trapping assumption}, which roughly says that $a_{+}(\sigma)$ decays in time. We then employ a topological \emph{shooting argument} to select a family of initial data -- which is codimension $1$ in the sense described below in Theorem~\ref{thm:main} and Remark~\ref{rem:codim1} -- such that $a_{+}$ indeed continues to satisfy the trapping assumption for all times $\sigma \geq 0$.

\smallskip

\item {\it Integrated local energy decay and vectorfield method.} Finally, we study the quasilinear wave equation satisfied by $\phi$ which satisfies $8$ orthogonality conditions. Under suitable bootstrap assumptions (to handle nonlinear terms) and the trapping assumption for $a_{+}$, we prove the pointwise decay of $\phi$ via the following steps:
\begin{align*}
&\hbox{(transversality to linear obstructions)}  \\
&\Rightarrow \hbox{(uniform boundedness of energy and integrated local energy decay)} \\
& \Rightarrow 
\hbox{(pointwise decay)}
\end{align*}
Here, integrated local energy decay (ILED) estimates refer to, roughly speaking, bounds on integrals of the energy density on spacetime cylinders for finite energy solutions. They are a weak form of dispersive decay. These have the advantage of being $L^{2}$-based and hence being amenable to a wide range of techniques, such as the vectorfield method, Fourier transform, spectral theory, etc. A powerful philosophy, that has recently arisen in works \cite{DR1, Tat2, MTT, OlSt} related to the problem of black hole stability, is to view integrated local energy decay as a key intermediate step for obtaining stronger pointwise decay (see also \cite{Tat1, MeTa, RodSch1} for papers in the related context of global Strichartz estimates). Specifically, in our proof we adapt the $r^{p}$-method of Dafermos--Rodnianski \cite{DR1}, extended by Schlue \cite{Schlue1} and Moschidis \cite{Moschidis1}.

The scheme described at the level of generality here is the same as that in higher dimensions from \cite{LuOS1}. There are however several important differences in our construction of the modified profile and execution of the ILED and pointwise decay estimates. These are discussed in Sections~\ref{subsubsec:intromaindiff},~\ref{sec:refprofintro1},~\ref{subsec:ILEDintro}, and~\ref{subsec:tailintro} below.

\end{enumerate}

\subsubsection{Main difficulties}\label{subsubsec:intromaindiff} There are several significant challenges in implementing the above scheme that are also present in higher dimensions:
\begin{itemize}
\item {\it (Quasilinearity)} First and foremost, the hyperbolic vanishing mean curvature equation is \emph{quasilinear}. In particular, in the proof of ILED we face second order perturbations of $-\partial_t^2+\uH$. Furthermore, since the highest order term is nonlinear, at various places in the proof we need to be careful to avoid any derivative losses. 

\item {\it (Gauge choice)} Another basic point about HVMC is that it is an equation for a geometric object, namely a hypersurface in $\bbR^{1+4}$. Hence, we need to fix a way of describing the hypersurface by a function to perform any analysis.

\item {\it (Profile and foliation construction)} In order for the above scheme to work, it is crucial for the profile $\calQ$, representing a moving catenoid at position $\xi(\sigma)$ with velocity $\ell(\sigma)$ at each time $\sigma$, to solve the HVMC equation up to an adequately small error. Unfortunately, the obvious construction based on the standard $t = X^{0}$-foliation would lead to an inappropriately large error. The key issue is the inaccuracy of the construction in the far-away region, which is fatal due to the slow decay of the catenoid  towards the flat hyperplane. As we will see, we are led to consider a different foliation $\sigma$ consisting of \emph{moving asymptotically null leaves}.
\item {\it (Proof of integrated local energy decay)}  Proving ILED for the solution $\phi$ to the quasilinear wave equation satisfying our orthogonality conditions, however, is met with several difficulties, such as (i) quasilinearity, (ii) existence of a trapped null geodesic (traveling around the collar $\set{\rho = 0}$ in case of $\calC$), (iii) existence of zero and negative eigenvalues of $\underline{H}$ (what we referred to as linear obstructions to decay) and (iv) nonstationarity of the profile $\calQ$.

\item {\it (Modulation theory and vectorfield method)} Standard modulation theory \cite{Stuart1, Weinstein1} is based on the standard $t = X^{0}$-foliation, whose leaves are flat spacelike hypersurfaces; the method needs to be adapted to the foliation $\sigma$ used in our profile construction. Similarly, we need to adapt the $r^p$ vectorfield method to our foliation $\sigma$ of moving asymptotically null leaves. The presence of linear obstructions to decay also needs to be incorporated.
\end{itemize}

As mentioned earlier, these difficulties are already present in higher dimensions, considered in \cite{LuOS1}. We now describe the main new challenges that are specific to three dimensions. The heart of the problem is that our scheme requires having a twice integrable decay rate for the parameter derivatives $\dotell$ and $\dotxi-\ell$. First, such a strong decay rate is needed to guarantee the existence of the final translation and boost parameters $a_{0}, \ell_{0} \in \bbR^{3}$ (so that our solution tends to $\calC_{a_{0}, \ell_{0}}$ as $t \to \infty$), which is one of the key objectives of our main theorem. Moreover, this decay requirement also plays a crucial role in our analysis. More specifically, to prove ILED we need to use the orthogonality conditions satisfied by the perturbation $\phi$, and this relies on  freezing the coefficients of the linearized operator at the final bootstrap time. In this context, the twice integrable decay rate is used to control the resulting errors.\footnote{It is conceivable that if one does not aim to ensure the existence of the final translation and boost parameters $a_{0}, \ell_{0} \in \bbR^{3}$, then the global existence part of our main theorem may be proved with less stringent spatial decay assumptions on the initial perturbation, but with a weaker control on the parameter evolution and the late-time tail of the perturbation. The techniques developed in the recent work \cite{DaHoRoTa2} may be relevant for this direction.} 

To obtain a twice integrable decay rate for the parameter derivatives, our scheme in turn requires the remainder $\phi$ to decay at such a rate. Indeed, to obtain orthogonality conditions that are consistent with our foliation of the domain that is adapted to the moving center of the boosted and translated catenoids, we use truncated versions of the exact eigenfunctions of the linearized operator; see \cite[Sections~1.4--1.7]{LuOS1}  for a more detailed discussion of this point. A consequence of not using exact eigenfunctions is that in the resulting modulation equations, the parameter derivatives depend linearly on $\phi$, leading to the requirement that $\phi$ should decay at a twice-integrable rate.

When the spatial dimension is at least five, the $r^p$ method already gives a twice integrable decay rate for $\phi$ as desired. However, this scheme runs into a few difficulties in dimension three:
\begin{enumerate}
\item The $r^p$ method as in \cite{Moschidis1} at best gives the decay rate $\tau^{-\frac{3}{2}}$ for $\phi$, which is not twice integrable.
\item The eigenfuctions in the kernel of the linearized operator have spatial decay $|\rho|^{-2}$ and do not belong to the dual local energy space. This creates technical issues when: (i) trying to ``project away" from these modes in the proof of ILED, and (ii) gaining smallness from the decay of the eigenfunctions when using truncated versions of the eigenfunctions. 
\item Since the profile is not an exact solution, the equation for $\phi$ contains a source term, that is, a term that is independent of $\phi$. The slow, $|\rho|^{-3}$, spatial decay of this source term in dimension three affects the $\tau$ decay rate for $\phi$. In fact, even for the wave equation on $\bbR^{1+3}$, a source term with spatial decay $|\rho|^{-3}$ restricts the interior time decay to $\tau^{-2}$, which is not twice integrable. This can be seen for instance by integration along characteristics in the radial case. Therefore, even if we can improve the time decay beyond what is given by the $r^p$ method, we cannot expect a twice integrable rate for $\phi$. To resolve this issue, we further decompose $\phi$ as $\phi=P+\varepsilon$ such that $P$ ``solves away" the contribution of the slowly decaying part of the source term. This needs to be done in a way that: (i) $\varepsilon$ satisfies an equation for which we expect a twice integrable decay rate, and (ii) there is a cancellation in the modulation ODEs, in the sense that the slow time decay of $P$ does not contribute. We will refer to $P$ as the \emph{refined} or \emph{modified} profile.
\end{enumerate}
In the foregoing discussion we have used $\tau$ to denote time parameter and $\rho$ to denote the radial variable measured from the center of the moving catenoid. We will continue to use this notation in the remainder of the introduction. In Sections~\ref{subsec:mainprofile}--\ref{subsec:tailintro} below we discuss the resolutions to the challenges outlined in this section.
\subsection{Main profile: construction of the moving catenoid}\label{subsec:mainprofile}
We recall the construction of the main catenoid profile from \cite{LuOS1}, but with a slightly different normalization. The different normalization does not play a fundamental role but yields slightly nicer formulas in some computations (in particular, in the notation of \cite[Section~4.1]{LuOS1}, $\frac{\ud\sigma}{\ud\tau}=1$ instead of $\frac{\ud\sigma}{\ud\tau}=1-\gamma'R$ and there is no need to introduce $\eta$ as a modification of~$\xi$; see Section~\ref{subsubsec:2ndordereq} below). Let $\xi$ and $\ell$ be given curves with $|\dotxi|, |\ell|<1$, and let $R\gg1$ be a fixed constant. For future applications we also assume that $|\dotell|$, $|\dotxi|$, and $|\ell|$ are sufficiently small, although the smallness is not necessary for the profile construction. Recall that
\begin{align}\label{eq:calCsigmadef1}
\begin{split}
\calC_\sigma = \{\Lambda_{-\ell(\sigma)}X~\vert~X\in \calC\}+ \pmat{0\\\xi(\sigma)-\sigma\ell(\sigma)}.
\end{split}
\end{align}
We define the profile to be
\begin{align*}
\begin{split}
\calQ := \cup_\sigma \Sigma_\sigma,\qquad \Sigma_\sigma:=\calC_\sigma\cap\bsUpsigma_\sigma,
\end{split}
\end{align*} 
where $\cup_\sigma \bsUpsigma_\sigma$ is a foliation of the interior of the cone
\begin{align*}
\begin{split}
\scC_{-R}=\{X\in\bbR^{1+4}\vert X^0+R-2=|\Xbar|\},
\end{split}
\end{align*}
which we now describe. To  define $\bsUpsigma_\sigma$ let $\calH_\sigma$ denote the reference hyperboloids
\begin{align*}
\begin{split}
\calH_\sigma = \{y = (y^0,y',y^4)~\vert ~y^0-\gamma^{-1}(\sigma)\sigma=\sqrt{1+|y'|^2})\}.
\end{split}
\end{align*}
In general we denote the restriction to $\{X^{4}=S\}$ by an underline, so for instance
\begin{align*}
\begin{split}
\barcalH_\sigma = \calH_\sigma\cap\{y^{4}=S\}.
\end{split}
\end{align*}
The hyperboloids adapted to $\ell$ and $\xi$ are (here $\xi$, $\ell$, and $\gamma$ are evaluated at $\sigma$)
\begin{align*}
\begin{split}
\tilbsUpsigma_\sigma=\Lambda_{-\ell}\calH_\sigma+(-R,\xi-\sigma\ell)^\intercal=\{X~\vert~X^0-\sigma+R=\sqrt{1+|X'-\xi|^2}\}.
\end{split}
\end{align*}
It follows from the fact that $\xi$ is timelike that $\cup_\sigma\tilbsUpsigma$ provides a foliation of a region containing $\scC_{-R}$ (see \cite[Remark 1.3]{LuOS1}). To arrive at $\bsUpsigma_\sigma$ we modify the hyperboloids $\tilbsUpsigma_\sigma$ as follows so they have flat interiors. First let $\frakm:\bbR\to\bbR$ be a smoothed out function of the minimum function such that for some small $\delta_1>0$
\begin{align*}
\begin{split}
\frakm(x,y)=\min(x,y)\mand |x-y|>\delta_1.
\end{split}
\end{align*}
We assume that all derivatives of $\frakm$ are bounded. Then define $\sigma,\sigma_\temp:\{-S\leq X^4\leq S\}\to \bbR$ as
\begin{align*}
\begin{split}
&\sigma_\temp(X)=\sigma'\mif X\in \tilbsUpsigma_{\sigma'},\\
&\sigma(X)=\frakm(X^0,\sigma_\temp(X)).
\end{split}
\end{align*}
With these definitions we let
\begin{align}\label{eq:bsUpsugmasigmadef1}
\begin{split}
\bsUpsigma_{\sigma'}=\{X~\vert~\sigma(X)=\sigma'\},\quad \barbsUpsigma_{\sigma'}=\bsUpsigma_{\sigma'}\cap \{X^4=S\}.
\end{split}
\end{align}
If $\frakm$ is suitably chosen, $\cup_\sigma \bsUpsigma$ also provides a foliation of a region containing $\scC_{-R}$. The hyperboloidal, where $X^0\geq \sigma_\temp+\delta_1$, and flat, where $\sigma_\temp\geq X^0+\delta_1$, parts of of the leaves are denoted by $\bsUpsigma_\sigma^\hyp$, $\barbsUpsigma_\sigma^\hyp$ and $\bsUpsigma_\sigma^\flatt$, $\barbsUpsigma_\sigma^\flatt$ respectively. We will often informally refer to the region inside a large compact set, usually contained in the flat part of the foliation, as the interior, and to the complement of this region as the exterior. The region where $\sigma_\temp-\delta_1\leq X^0\leq \sigma_\temp+\delta_1$ is referred to as the transitioning region.

To describe the VMC hypersurface we need to fix a gauge. Let $\chi$ be a cutoff function that is equal to one on $\calC_\flatt:=\calC_\sigma\cap\bsUpsigma_\sigma^\flatt$ and equal to zero in $\calC_\hyp:=\calC_\sigma\cap\bsUpsigma_\sigma^\hyp$. If $p\in \Sigma_\sigma$ is given by $p = \Lambda_{-\ell(\sigma)}q+(0,\xi(\sigma)-\sigma\ell(\sigma))^\intercal$ for some $q\in \calC$, let $n_\wp(p)=\Lambda_{-\ell(\sigma)}n(q)$ where $n(q)$ is the normal to $\calC$ at $q$. With $\tilN_\Int$ denoting the normal to $\Sigma_\sigma$ viewed as a subspace of $\bsUpsigma_\sigma$, let  
\begin{align*}
\begin{split}
\tilN=\chi \tilN_\Int+(1-\chi)\frac{\partial}{\partial X^4},
\end{split}
\end{align*}
and defined $N$ to be the unique vector that is parallel to $\tilN$ and satisfies $\bfeta(n_\wp,N)=1$:
\begin{align}\label{eq:Ndef1}
\begin{split}
N\parallel \tilN\quad\mand\quad \bfeta(n_\wp,N)=1.
\end{split}
\end{align} The perturbation $\psi:\cup_\sigma\Sigma_\sigma\to\bbR$ is then defined by the requirement that
\begin{align}\label{eq:psidef1}
\begin{split}
p+\psi(p)N(p)\in \calM,\qquad \forall p \in \cup\Sigma_\sigma.
\end{split}
\end{align}
The fact that this requirement determines $\psi$ uniquely is proved in \cite[Lemma 2.2]{LuOS1}. Note that the \emph{almost normal} vector $N$ is chosen so that in the hyperboloidal part of the foliation $\calM$ is parameterized as a graph over the asymptotic hyperplanes of the catenoid. We refer the reader to \cite[Section~1.5]{LuOS1} for more motivation behind our gauge choice.
\subsection{Refined profile construction}\label{sec:refprofintro1}
Consider the model equation
\begin{align}\label{eq:modelprofile1}
\begin{split}
\calP\phi = g+\calN,
\end{split}
\end{align}
where $g$ denotes a source term with spatial decay $|\rho|^{-3}$ and twice integrable time decay $\tau^{-2-\gamma}$. As shown in Lemma~\ref{lem:calG01} this is a good approximation to our problem. $\calN$ contains the nonlinearity and source terms with twice integrable time decay and spatial decay $O(|\rho|^{-4})$. $\calP$ is the main linear operator which we take to be of the form
\begin{align*}
\begin{split}
\calP\phi=\frac{1}{\sqrt{|h|}}\partial_\mu (\sqrt{|h|}(h^{-1})^{\mu\nu}\partial_\nu\phi)+V\phi,
\end{split}
\end{align*}
with $h$ the expression for the catenoid metric in some (non-geometric) global coordinates. See Section~\ref{subsec:prelimvfs}. In our applications, the source term $g$ appears because the modulated catenoid is not an exact solution of the HVMC equation. The failure is because $\dotxi-\ell$ and $\dotell$ are not zero, although they decay at a twice integrable rate in our bootstrap assumptions. We will use $\dotwp$ to denote the parameter derivatives $\dotxi-\ell$ and $\dotell$ in the rest of this discussion. For simplicity of exposition, we assume that $\uH$ has a one dimensional kernel, spanned by $\fybar$, and no positive eigenvalues. We also assume that the coefficients of $h$ are independent of time. Our simplified discussion under these assumptions captures the main ideas of the general case which is carried out in Section~\ref{sec:modprof}. As already mentioned, the best time decay we can expect for $\phi$ in \eqref{eq:modelprofile1} is $\tau^{-2}$. Indeed, even in the Minkowski case
\begin{align*}
\begin{split}
\Box_m\phi=\jap{\tau}^{-2-\gamma}\jap{\rho}^{-3},
\end{split}
\end{align*}
it can be seen by integration along characteristics that the time decay of the solution is $\tau^{-2}$ even if $\gamma$ is very large. On the other hand, without further modifications to the scheme in \cite{LuOS1}, $\phi$ appears linearly in the ODEs for $\xi$ and $\ell$ so $\dotwp$ cannot decay faster than $\tau^{-2}$, and the bootstrap assumptions cannot be closed. To remedy this, we further decompose $\phi$ into a refined profile $P$ and a remainder $\varepsilon$, $\phi=P+\varepsilon$, as follows. Our goal is to define $P$ so that it solves away the contribution of $g$ but such that it does not appear in the parameter ODEs. For this we need the analogue of \eqref{eq:modelprofile1} in first order formulation which we write as follows
\begin{align}\label{eq:modelprofile2}
\begin{split}
(\partial_t-M)\vecphi=\vecF_0+\vecF_1.
\end{split}
\end{align}
Here $\vecphi=(\phi,\dotphi)^\intercal$ is the vector form of $\phi$ and $M$ denotes the spatial part of the main operator. $\vecF_0$ is the main source term, corresponding to $g$, and $\vecF_1$ corresponds to $N$. In the interior $\vecF_0$ has the form
\begin{align*}
\begin{split}
\vecF_0\approx \pmat{\fybar\\0}(\dotxi-\ell)+\pmat{0\\\fybar}\dotell.
\end{split}
\end{align*}
The parameter ODEs are derived by using \eqref{eq:modelprofile2} to compute $\bfOmega(\vecF_0,\vecZ_1)$ and $\bfOmega(\vecF_0,\vecZ_2)$ where $\vecZ_1=(\chi_{\leq R_\ctf}\fybar,0)^\intercal$, $\vecZ_1=(0,\chi_{\leq R_\ctf}\fybar)^\intercal$, and $\bfOmega((u,\dotu)^\intercal,(v,\dotv^\intercal))=\int(u\dotv-v\dotu)$. Because of the form of $F_0$ the coefficients of $\dotxi-\ell$ and $\dotell$ are strictly positive if $R_\ctf$ is large (almost $\|\fybar\|_{L^2}^2$), that is, $\bfOmega(\vecF_0,\vecZ_1)\approx \dotell$ and $\bfOmega(\vecF_0,\vecZ_2)\approx \dotxi-\ell$. This suggests that the refined profile should be defined such that $\calP P=g$ in the exterior, but $\calP P=0$ in the interior. The latter condition will lead to $(\partial_t-M)\vecP\approx 0$ in the interior, so the non-vanishing of the coefficients of $\dotxi-\ell$ and $\dotell$ is preserved. Here $\vecP$ denotes the vector form of the refined profile $P$. That is, as a first guess the profile can be defined as a solution to the wave equation $\calP P = g_\Ext$, where $g_\Ext$ denotes a truncation of $g$ to the exterior. In the second order formulation, this yields the equation
\begin{align}\label{eq:modelprofile3}
\begin{split}
\calP\varepsilon = g-g_\Ext + \calN,
\end{split}
\end{align}
where on the right-hand side every appearance of $\phi$ is replaced by $P+\varepsilon$. In the first oder formulation we get (with $\vecP\approx (P,\partial_\tau P)^\intercal$)
\begin{align}\label{eq:modelprofile4}
\begin{split}
(\partial_t-M)\vecP\approx 0,\qquad\mathrm{in~}\supp \vecZ_i.
\end{split}
\end{align}
Combining \eqref{eq:modelprofile2} and \eqref{eq:modelprofile4} and letting $\vecvarepsilon=\vecphi-\vecP$ we get 
\begin{align*}
\begin{split}
\bfOmega(\calF_0,\vecZ_i)\approx\bfOmega((\partial_t-M)\vecvarepsilon,\vecZ_i)-\bfOmega(\vecF_1,\vecZ_i)\approx \partial_t\bfOmega(\vecvarepsilon,\vecZ_i)-\bfOmega(M\vecvarepsilon,\vecZ_i)-\bfOmega(\vecF_1,\vecZ_i).
\end{split}
\end{align*}
Since $M$ has the structure that $\bfOmega(M\vecu,\vecv)=-\bfOmega(\vecu,M\vecv)$, $M\vecZ_1=o_{R_\ctf}(1)$, and $M\vecZ_2=\vecZ_1+o_{R_{\ctf}}(1)$, by imposing the orthogonality conditions $\bfOmega(\vecvarepsilon,\vecZ_i)\approx0$, we arrive at the schematic ODEs
\begin{align*}
\begin{split}
\dotxi-\ell\approx o_{R_\ctf}(\varepsilon)+\mathrm{nonlinear~terms},\quad \dotell\approx o_{R_\ctf}(\varepsilon)+\mathrm{nonlinear~terms}.
\end{split}
\end{align*}
Note that $P$, which cannot decay at a twice integrable rate as discussed above, does not appear linearly in these ODEs. Therefore, if we can show that the remainder $\varepsilon=\phi-P$ has twice integrable decay, we can still hope to close the bootstrap assumptions and obtain a twice integrable decay rate for $\dotwp$. The problem with this definition is that since $\uH$ has a non-trivial kernel, $P$ may not decay in time (in fact, it may grow). This yields non-decaying terms on the right-hand side of \eqref{eq:modelprofile3} so we cannot hope to prove favorable decay estimates for $\varepsilon$.  To rectify this, we have to guarantee that $P$ satisfies a suitable orthogonality condition of the form $\int P W\approx 0$ for all $\tau$, where $W$ is an approximation to $\fybar$, for instance $W=\chi_{\leq R_\ctf}\fybar$. To ensure this condition we modify the defining equation for $P$ as $\calP P=g_\Ext+\tilg$, where $\tilg$ is to be chosen so that the desired orthogonality condition is satisfied. Note, however, that the introduction of $\tilg$ results in errors in equations \eqref{eq:modelprofile3} and \eqref{eq:modelprofile4} and we need to make sure that these errors still allow for twice integrable decay rates for $\varepsilon$ and $\dotwp$. In particular, since $P$ itself can decay at best at the rate $\tau^{-2}$, a linear appearance of $P$ or its spatial derivatives in these equations would be a problematic. We now briefly explain how to choose $\tilg$ so that the only linear appearances of $P$ in \eqref{eq:modelprofile3} and \eqref{eq:modelprofile4} involve $\partial_\tau P$, which we can hope to have better decay. Let $\tilg=\txtg_1+\partial_\tau\txtg_2$ so that$P$ satisfies
\begin{align}\label{eq:modelprofile4alt}
\begin{split}
\calP P = g_\Ext+\txtg_1+\partial_\tau \txtg_2.
\end{split}
\end{align}
We also require that $P$ have vanishing initial data. Let $W$ and $Y$ be approximations to $\fybar$ to be specified. Then using the relation $\partial_\mu(\sqrt{|h|}(h^{-1})^{\mu\nu}(Y\partial_\nu P-P\partial_\nu Y))=\sqrt{|h|}(Y\calP P - P\calP Y)$ we get
\begin{equation}\label{eq:modelprofile5}
\int \big[ (h^{-1})^{0\nu}(Y\partial_\nu P- P\partial_\nu Y) -\txtg_2 Y\big]\sqrt{|h|}\ud x\Big\vert_{\tau_1}^{\tau_2}=\int_{\tau_1}^{\tau_2}\int (g_\Ext Y+\txtg_1 Y-P\calP Y)\sqrt{|h|}\ud x \ud \tau.
\end{equation}
Here we have used the notation $\sqrt{|h|}\ud x$ for the volume form on constant $\tau$ surfaces. Similarly, integrating $\partial_\tau (PW\sqrt{|h|})=(W\partial_\tau P+P\partial_\tau W)\sqrt{|h|}$ we get
\begin{align}\label{eq:modelprofile6}
\begin{split}
\int P W\sqrt{|h|}\ud x\Big\vert_{\tau_1}^{\tau_2}=\int_{\tau_1}^{\tau_2}\int (W\partial_\tau P+P\partial_\tau W)\sqrt{|h|}\ud x \ud\tau.
\end{split}
\end{align}
Note that except for the factor of $\txtg_2$, the integrand on the left-hand side of \eqref{eq:modelprofile5} is approximately (that is up to small but not decaying linear terms $P$) equal to the integrand on the right-hand side of \eqref{eq:modelprofile6}. The idea now is to choose $\txtg_1$ such that the right-hand side of \eqref{eq:modelprofile5} vanishes. Since $P$ has zero initial data, it follows that the left-hand side of \eqref{eq:modelprofile5} must vanish. But then we choose $\txtg_2$ so that the integral on the left-hand side of \eqref{eq:modelprofile5} is equal to the spatial integral on the right-hand side of \eqref{eq:modelprofile6}. This would imply that $\int PW \sqrt{|h|}\ud x$ must also vanish as desired. Note that by choosing $Y$ to be better approximations of the eigenfunctions than $W$, in the sense that it agree with $\fybar$ on $\{|\rho|\lesssim \tau\}$, we can ensure that $\txtg_1$ has a twice integrable decay rate. On the other hand $\txtg_2$ is only at the level of a small, but not decaying, multiple of $P$. But since only $\partial_\tau\txtg_2$, and not $\txtg_2$ itself, appears in \eqref{eq:modelprofile4alt} the only linear errors in $P$ are accompanied by a time derivative as we had hoped. See Section~\ref{sec:modprof} for a more detailed and accurate version of this argument.
\begin{remark}
Note that we have chosen $Y$ to have growing support in $\tau$, while $W$ and $\vecZ_i$ are assumed to have compact support in $\{|\rho|\lesssim R_\ctf\}$. This is because in our scheme of defining orthogonality conditions for the parameters and proving ILED it is important that the defining test functions have compact support. Indeed, the relation $R_\ctf\ll R\ll|\ell|$ is essential in our argument (see for instance Section~\ref{subsec:ILEDintro}). On the other hand, we are always free to multiply the equation with a function with non-compact support, which is why we can allow $Y$ to have growing support in $\tau$.
\end{remark}
\subsection{Uniform boundedness of energy, ILED, and vectorfield method}\label{subsec:ILEDintro}
As mentioned above one of the main challenges in the proof of ILED and vectorfield estimates is the moving center of the catenoid. Since this difficulty was already addressed in \cite{LuOS1} we refer the reader to \cite[Sections~1.7, 1.8]{LuOS1} for the related discussion. We also mention that for the uniform boundedness of energy and ILED we need the solution to satisfy suitable orthogonality conditions. These conditions are imposed by first passing to a first order formulation for the equation. This has the advantage of being a systematic approach to arriving at first order ODEs for the parameters. In this work we borrow the first order formulation developed in \cite{LuOS1}, and we refer the reader to \cite[Section~1.6]{LuOS1} for a discussion of the related challenges, including the smoothing procedure for the parameters. Here we address new issues that arise due to the slow spatial decay of the eigenfunctions. The main issue is already present in the proof of ILED for 
\begin{equation}\label{eq:ILEDintro1}
(-\partial_t^2+\uH)u=f
\end{equation}
on the product Lorentzian catenoid, so we focus on this case. Specifically, since the kernel elements decay as $|\rho|^{-2}$, they do not belong to the spatial part of the dual local energy norm (see~\eqref{eq:LEnormdef1}). Therefore, we cannot use the spectral projection to the continuous part of the spectrum of $\uH$ as in \cite{LuOS1} to deal with them in the proof of ILED. Instead we work with the new unknown $v=\Dar u$ which satisfies an equation of the form $(-\partial_t^2+\tilde{\uH})v=\tilf$, where $\tilde{\uH}$ has trivial kernel (more precisely, we pass to the transformed variable $v$ only as an intermediate step in the proof). Here $\Dar$ is the Darboux transform defined in~\eqref{eq:Darprod1}. The proof of ILED for $v$ then proceeds as in \cite{LuOS1}, which in the product case was inspired by \cite{MMT1,MST}. But, in transferring the resulting estimate to $u$ we incur large errors of the form $R_\ctf^m$ for some power $m$. See~\eqref{eq:ILEDprod2}. On the other hand, an important feature of our scheme is that the parameter derivatives enter the equation for $\varepsilon$ linearly (due to the failure of the profile to be an exact solution) and $\varepsilon$ enters the parameter ODEs linearly (because we use truncated eigenfunctions in imposing orthogonality conditions). The circularity is broken because the linear appearance of $\varepsilon$ in the parameter ODEs comes with a small factor in terms of inverse powers of $R_\ctf$.  This gain is not sufficient to compensate for the loss in inverting the Darboux transformation. The key observation is that the part $f$ in \eqref{eq:ILEDintro1} that carries the linear contribution of the parameters, let us call it $f_2$, appears in the right-hand side of the ILED estimate~\eqref{eq:ILEDprod2} (and using the notation there) as
\begin{equation}\label{eq:ILEDintro2}
\|f_2\|_{LE^\ast}+R_{\ctf}^{m}\|\Dar f_2\|_{LE^\ast}. 
\end{equation}
That is, the contribution that gets multiplied by a growing factor of $R_\ctf$ is $\Dar f_2$. Now from the discussion in Section~\ref{sec:refprofintro1} we know that $f_2$ is essentially of the form $\dotwp \fybar_i$, where $\dotwp$ denotes a parameter derivative, and the Darboux transform is designed so that $\Dar \fybar_i=0.$\footnote{This is analogous to the vanishing of $\bbP\fybar_i$ where $\bbP$ is the spectral projection to the continuous spectrum of $\uH$.} But, to be precise, as discussed in Section~\ref{sec:refprofintro1} (see also Lemma~\ref{lem:calG01}) this structure for $f_2$ is valid only in the flat part of the foliation where $|\rho|\leq R-C$, for some absolute constant $C$, while $f_2$ has stronger spatial decay in the hyperboloidal part of foliation defined in Section~\ref{subsec:mainprofile}. Recall that $R$ denotes the radius at which the foliation transitions from flat to hyperboloidal. Going back to \eqref{eq:ILEDintro2} we conclude that this contribution is bounded by
\begin{align*}
\begin{split}
\|\dotwp\|_{L^2_\uptau}(R^{a}+R_{\ctf}^m R^{-b})
\end{split}
\end{align*}
for some positive exponents $a$, $b$, and $m$. Now the point is that $R\gg1$ can be chosen much larger than $R_\ctf\gg1$, and that the exponent of $R^a$ can be chosen to be arbitrarily small (in fact, if we work with scale invariant local energy spaces, we would have $\log R$ instead of $R^a$). On the other hand, from the ODEs for the parameters we can prove that $\|\dotwp\|_{L^2_\tau}$ is bounded by $R_\ctf^{-c}$ times the local energy norm of the solution. Therefore, by choosing the right balance between  $R$ and $R_\ctf$ we are able to close the energy boundedness and ILED estimates. The right balance between $R$ and $R_\ctf$ for us turns out to be such that $R^{\frac{\alpha}{2}}R_\ctf^{-1+\frac{\alpha}{2}},R_\ctf^{2+\frac{\alpha}{2}}R^{-1+\frac{\alpha}{2}}\ll1$, where $0<\alpha\ll1$ is the defining exponent in the local energy norm in \eqref{eq:LEnormdef1}.
\subsection{Improved time decay for $\varepsilon$}\label{subsec:tailintro}
In the Section~\ref{sec:refprofintro1} we explained that $P$ cannot decay faster than $\tau^{-2}$. Here we describe how we obtain a twice integrable decay rate for $\varepsilon$. Consider the equation
\begin{align}\label{eq:taildecayintro0}
\begin{split}
\calP \varepsilon=f
\end{split}
\end{align}
where $f$ is assumed to have sufficient decay in $|\rho|$ and $\tau$. We first describe the general scheme which is essentially from \cite{MTT} (see also \cite{LuOh}). We assume that we have already established some suboptimal decay rates for $\varepsilon$, for instance using the $r^p$ vectorfield method. For the sake of the current discussion let us suppose that we already have $|\varepsilon|+|\tau\partial_\tau\varepsilon|\lesssim \tau^{-\frac{5}{4}}$. We then first consider the exterior where we view the difference between the Minkowski wave operator $\Box_m$ and $\calP$ perturbatively and conclude from \eqref{eq:taildecayintro0} that
\begin{align*}
\begin{split}
\Box_m\chi_{\geq \rho_\ast} \varepsilon = \chi_{\geq\rho_\ast}f+(\Box_m-\calP)(\chi_{\geq \rho_\ast}\varepsilon)+[\calP,\chi_{\geq\rho^\ast}]\varepsilon.
\end{split}
\end{align*}
Here $\rho_\ast$ is a large constant and we are restricting attention to one asymptotic end. Using this equation and the known decay estimates on $\varepsilon$ we can prove that
\begin{align}\label{eq:taildecayintro1}
\begin{split}
|\chi_{\geq\rho_\ast}\varepsilon|+|\chi_{\geq\rho_\ast}\tau\partial_\tau\varepsilon|\lesssim \rho^{-1}\tau^{-\frac{5}{4}}.
\end{split}
\end{align}
In the \cite{MTT} scheme this would be achieved by using the positivity of the fundamental solution to $\Box_m$ in spatial dimension three, and Sobolev estimates on $\bbS^2$, to reduce the problem to the radial case and then using integration along characteristics. Here, we use Lemma~\ref{eq:Boxmhugens1} instead (which is inspired by the intermediate zone analysis in \cite{LuOh}), which in particular has the advantage that it easily applies to our scenario where spatial decay is measured from the moving center $\xi(\tau)$. In both cases, a key role is played by the sharp Huygens principle and it is important that $f$ has faster than $|\rho|^{-3}$ spatial decay. Note that \eqref{eq:taildecayintro1} already gives us a twice integrable decay rate in the region $\{|\rho|\gtrsim\tau\}$. For the region $\{|\rho|\lesssim \tau\}$ we use the elliptic estimates from Proposition~\ref{prop:uH}, specifically equation~\eqref{eq:uH-wSob}, and the Sobolev estimate $\|u\|_{L^\infty}\lesssim \|u\|_{\ell^\infty \calH^{s,-\frac{3}{2}}}$, $s>\frac{3}{2}$ (see Definition~\ref{def:w-Sob}). We then need to estimate the right-hand side of equation~\eqref{eq:uH-wSob}, with $\phi$ replaced by $\chi_{\lesssim \tau}\varepsilon$. Here to apply Proposition~\ref{prop:uH} we move the terms with time derivatives in $\calP (\chi_{\lesssim \tau}\varepsilon)$ to the right-hand side of the equation. The contribution of $f$ in \eqref{eq:taildecayintro0} then gives the desired estimate as long as $f$ has sufficient $\tau$ and $\rho$ decay. The commutator $[\calP,\chi_{\lesssim \tau}]$ and the terms in  $\calP (\chi_{\lesssim \tau}\varepsilon)$ which come with extra time derivatives can be bounded using the estimates for $\chi_{\gtrsim \rho_\ast}\varepsilon$ established in the first step and the improved time decay of $\partial_\tau\varepsilon$ which we had assumed from the $r^p$ vectorfield method.

Besides the extra care needed to carry out this scheme in the presence of a moving center, there are a few more points that deserve extra attention. First, in view of the discussion in Section~\ref{sec:refprofintro1}, in the region $\{|\rho|\leq R+C\}$ the part of $f$ in \eqref{eq:taildecayintro0} containing the source term is approximately of the form $\dotwp \fybar_i$. In the notation of Proposition~\ref{prop:uH} we treat this part of $f$ as $f_1$ in estimate~\eqref{eq:uH-wSob} and use the bootstrap assumptions on $\dotwp$. The circularity is broken by the same reason as for the ILED estimate, where we use $\Dar\calS_1\fybar_i=0$ and the balance $R^{\frac{\alpha}{2}}R_\ctf^{-1+\frac{\alpha}{2}},R_\ctf^{2+\frac{\alpha}{2}}R^{-1+\frac{\alpha}{2}}\ll1$. Similarly the contribution of the last term on the right-hand side of \eqref{eq:uH-wSob} is estimated by a small multiple of $\|\varepsilon\|_{L^\infty}$ using our orthogonality conditions. Finally, $\partial_\tau P$ appears to linear order, with no extra smallness, through $\partial_\tau \txtg_2$ (see Section~\ref{sec:refprofintro1}) on the right-hand side of \eqref{eq:taildecayintro0}. Here we crucially use the fact that we have already closed the vectorfield estimates for $\varepsilon$ and $P$. In particular $\partial_\tau P$ already has twice integrable decay rate coming from the $r^p$ vectorfield estimates.
\begin{remark}
In \cite{Moschidis1} the $r^p$ vectorfield method already gives the decay rate $\tau^{-\frac{3}{2}}$ in spatial dimension~$3$. In principle we can obtain this decay rate for $P$ and $\varepsilon$ using the $r^p$ vectorfield arguments in this paper. However, to avoid technical complications we have only proved the weaker decay rate $\tau^{-\frac{5}{4}+\frac{\kappa}{2}}$ where $\kappa$ is an arbitrary fixed small constant. See \cite[Section~1.8]{LuOS1} for an explanation of this restriction.
\end{remark}
\subsection{Second statement of the main theorem}
To state the main theorem we consider two functions
\begin{align}\label{eq:initialdata1}
\begin{split}
&\psi_0,\psi_1\in C^\infty_0(\barcalC),\qquad \supp~ \psi_0,\psi_1\subseteq \barcalC\cap\{|\Xbar|<R/2\},\\
&\sum_{j=0}^M(\|\jap{\rho}^{1+j}\partial_\Sigma^{1+j}\psi_0\circ F\|_{L^2(\sqrt{|g_\barcalC|}\ud\omega\ud\rho)}+\|\jap{\rho}^{1+j}\partial_\Sigma^j\psi_1\circ F\|_{L^2(\sqrt{|g_\barcalC|}\ud\omega\ud\rho)}\leq 1.
\end{split}
\end{align} 
Here $M$ is a fixed large constant (corresponding to the number of derivatives we commute), and $\partial_\Sigma$ denotes either $\partial_\rho$ or $\jap{\rho}^{-1}\partial_\omega$, with $\partial_\omega$ a unit size derivative on $\bbS^2$. Using the notation introduced in Sections~\ref{subsec:Riemcat} and~\ref{subsec:mainprofile}, consider
\begin{align}\label{eq:initialdata2}
\begin{split}
\Phi_0[\psi_0]= F+(\psi_0\circ F) N\circ F,\qquad \Phi_1[\psi_1]=(1,0)+(\psi_1\circ F)N\circ F.
\end{split}
\end{align}
We also let $\tilvarphi_\mu:=\chi \varphibar_\mu$ where the cutoff function $\chi$ is equal to one on $\barcalC\cap\{|\Xbar|<R/3\}$ and supported on $\barcalC\cap\{|\Xbar|<R/2\}$. The codimension one condition in Theorem~\ref{thm:main} below can be interpreted as follows. First we consider functions in the ball of radius $\epsilon$ in the norm \eqref{eq:initialdata1} that satisfy a codimension one condition. This condition is the vanishing of a certain functional given in \eqref{eq:codim1}. As the exact form of this functional is a bit complicated to state at this point, we defer this until Section~\ref{sec:bootstrap}. The initial data for Theorem~\ref{thm:main} are then parameterized, by a function $b_0$, as a graph over this codimension one set. If $b_0$ is a $C^1$ function then the resulting set can be viewed as a codimension one manifold in the topology of \eqref{eq:initialdata1}. But, we do not investigate the regularity of the function $b_0$ in this work. 
\begin{theorem}\label{thm:main}
Consider $\Phi_0$, $\Phi_1$ as in \eqref{eq:initialdata1}, \eqref{eq:initialdata2}, and assume that $(\psi_0,\psi_1)$ satisfy \eqref{eq:codim1}. If $\epsilon\geq0$ is sufficiently small, then there exist $b_0\in\bbR$ with $|b_0|\lesssim 1$ and  $\Phi:\bbR\times I \times \bbS^{2}\to \bbR^{1+4}$ satisfying \eqref{eq:HVMC1}, such that $\Phi\vert_{\{t=0\}}=\Phi_0[\epsilon(\psi_0+b_0\tilvarphi_\mu)]$ and $\partial_t\Phi\vert_{\{t=0\}}=\Phi_1[\epsilon(\psi_1-\mu b_0\tilvarphi_\mu)]$. Moreover, there exist $\ell,\xi:\bbR\to \bbR^{3}$ satisfying  $|\ell|,|\dotxi|\lesssim \epsilon$ and
\begin{align*}
\begin{split}
|\dotell(\sigma)|, |\dotxi(\sigma)-\ell(\sigma)|\to0\qquad \mathrm{as}~\sigma\to\infty,
\end{split}
\end{align*}
such that the image of $\Phi$ can be parameterized as
\begin{align*}
\begin{split}
\cup_\sigma\Sigma_\sigma\ni p\mapsto p+\psi(p)N(p),
\end{split}
\end{align*}
with $\|\psi\|_{L^\infty(\Sigma_\sigma)}\to0$ as $\sigma\to\infty$. More precisely, there exists a positive $\kappa\ll1$ such that
\begin{align*}
\begin{split}
|\dotell(\sigma)|,|\dotxi(\sigma)-\ell(\sigma)|\lesssim \epsilon \sigma^{-\frac{9}{4}+\frac{\kappa}{2}},\qquad \|\psi\|_{L^\infty(\Sigma_\sigma)}\lesssim \epsilon \sigma^{-\frac{5}{4}+\frac{\kappa}{2}},\qquad \mathrm{as}~\sigma\to\infty.
\end{split}
\end{align*}
\end{theorem}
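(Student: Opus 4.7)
The plan is to implement the four-step scheme outlined in Section \ref{subsec:ideas-outline} with all the innovations specific to the $3$-dimensional case. First, I would set up the decomposition of the solution. Using the moving foliation $\bsUpsigma_\sigma$ of flat/hyperboloidal leaves adapted to time-dependent translation $\xi(\sigma)$ and boost $\ell(\sigma)$ parameters, I parametrize the unknown hypersurface $\calM$ as a graph $\psi : \cup_\sigma \Sigma_\sigma \to \bbR$ over the moving profile $\calQ = \cup_\sigma (\calC_\sigma \cap \bsUpsigma_\sigma)$ via the almost-normal vector $N$ from \eqref{eq:Ndef1}. Substituting into the HVMC equation yields a quasilinear wave equation of the schematic form $\calP \psi = g(\dot\xi - \ell, \dot\ell) + \calN(\psi, \partial \psi)$, where $g$ contains the first-order source arising from the failure of the moving catenoid to solve \eqref{eq:HVMC1} exactly, and $\calN$ collects genuinely nonlinear contributions. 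I then pass to a first-order formulation $(\partial_\tau - M)\vec\psi = \vec F_0 + \vec F_1$ as in \cite{LuOS1} and impose six orthogonality conditions $\bfOmega(\vec\psi, \vec Z_i) = 0$ built from truncated kernel elements $\chi_{\leq R_{\ctf}}\fybar_j$, which, together with the HVMC equation, determine the modulation ODEs for $(\xi, \ell)$. To neutralize the unstable eigenvalue $\mu$ of $\uH$, I further split $\psi = a_+(\sigma) Z_+ + a_-(\sigma) Z_- + \phi$ and impose two further orthogonality conditions on $\phi$.

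Next, the core analytic step is a bootstrap argument on a time interval $[0, \sigma_*]$ in which I simultaneously control: the parameter derivatives $\dot\wp := (\dot\xi - \ell, \dot\ell)$ at rate $\sigma^{-9/4 + \kappa/2}$; the remainder $\phi$ in a pointwise and local-energy sense; and the unstable coefficient $a_+$ via a trapping assumption $|a_+(\sigma)| \lesssim \epsilon e^{-\mu\sigma}$. I would invoke the decomposition $\phi = P + \varepsilon$ with the refined profile $P$ constructed as in Section \ref{sec:refprofintro1}: solve $\calP P = g_{\Ext} + \txtg_1 + \partial_\tau \txtg_2$ with zero data, where $\txtg_1, \txtg_2$ are chosen (using the duality identities \eqref{eq:modelprofile5}, \eqref{eq:modelprofile6}) to enforce $\int P W \sqrt{|h|} \ud x \approx 0$ and to ensure that $P$ appears in the $\varepsilon$-equation only through $\partial_\tau P$. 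Crucially, this forces $P$ to drop out of the leading part of the modulation ODEs, so that $\dot\wp$ depends linearly only on $\varepsilon$ (with a small prefactor $o_{R_{\ctf}}(1)$) plus genuinely nonlinear terms. The shooting argument of \cite{LuOS1} is then applied verbatim to select the codimension-one parameter $b_0 \in \bbR$ of size $|b_0| \lesssim 1$ enforcing the trapping condition on $a_+$.

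The dispersive analysis proceeds in three interlocking layers. For integrated local energy decay and uniform energy boundedness, I conjugate the linearized operator by the Darboux transform $\Dar$ to reach the perturbed operator $\td{\uH}$ with trivial kernel — the $|\rho|^{-2}$ decay of $\fybar_j$ places the kernel outside the dual local energy space $LE^*$, so the spectral projection used in \cite{LuOS1} is unavailable. Inverting $\Dar$ incurs a loss $R_{\ctf}^m$; the key cancellation is that the part $f_2 \approx \dot\wp \fybar_j$ of the inhomogeneity satisfies $\Dar f_2 = 0$ in the flat region $\{|\rho| \leq R - C\}$, leaving only a contribution of size $R^a \|\dot\wp\|_{L^2_\tau} + R_{\ctf}^m R^{-b} \|\dot\wp\|_{L^2_\tau}$. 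Choosing $R \gg R_{\ctf} \gg 1$ with $R^{\alpha/2} R_{\ctf}^{-1+\alpha/2}, R_{\ctf}^{2+\alpha/2} R^{-1+\alpha/2} \ll 1$ closes these estimates. A modified $r^p$ argument of Dafermos--Rodnianski/Schlue/Moschidis type, adapted to the moving foliation and to the orthogonality conditions, then upgrades ILED to the preliminary rate $|\varepsilon|, |P| \lesssim \sigma^{-5/4 + \kappa/2}$. To reach twice-integrable decay on $\varepsilon$, I follow the scheme of Section \ref{subsec:tailintro}: in the exterior $\{\rho \geq \rho_*\}$ treat $\calP - \Box_m$ perturbatively and use the sharp Huygens principle (via a variant of Lemma \ref{eq:Boxmhugens1}) to gain an extra $\rho^{-1}$, while in the interior $\{|\rho| \lesssim \tau\}$ apply the weighted elliptic estimate \eqref{eq:uH-wSob} combined with the Sobolev embedding $\|u\|_{L^\infty} \lesssim \|u\|_{\ell^\infty \calH^{s,-3/2}}$ for $s > 3/2$, using the improved $\partial_\tau \varepsilon$ decay from the $r^p$ step to absorb commutator and time-derivative errors.

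The main obstacle is closing the loop between the twice-integrable decay of $\dot\wp$ and that of $\varepsilon$. The circularity ($\dot\wp$ depends linearly on $\varepsilon$; $\varepsilon$'s equation contains source terms proportional to $\dot\wp$) is not broken automatically because the best $r^p$ rate available in $3+1$ dimensions is $\sigma^{-5/4 + \kappa/2}$, which is not twice integrable, and the slow $|\rho|^{-3}$ decay of the catenoid-induced source forbids $P$ itself from decaying faster than $\sigma^{-2}$. The delicate point is therefore the simultaneous calibration of three small parameters: the truncation radius $R_{\ctf}$ of the test functions $\vec Z_i$, the transition radius $R$ between flat and hyperboloidal leaves, and the weight $\alpha$ in the local-energy norm. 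Verifying that the orthogonality condition enforced on $P$ via $\txtg_1, \txtg_2$ is truly consistent with the full nonlinear problem — in particular that $\partial_\tau \txtg_2$ contributes only a harmless linear-in-$\partial_\tau P$ term when re-inserted into the $\varepsilon$-equation — and that all error terms fit within the $R$–$R_{\ctf}$ budget, is the most intricate part of the argument. Once this balance is achieved, iterating the bootstrap on $[0, \sigma_*]$ with $\sigma_* \to \infty$ produces the global solution and the stated rates, and integration of $\dot\ell$ and $\dot\xi - \ell$ at a twice-integrable rate yields the asymptotic translation $a_0$ and boost $\ell_0$ of Theorem \ref{thm:main-0}.
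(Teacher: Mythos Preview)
Your proposal follows essentially the same approach as the paper: reduce Theorem~\ref{thm:main} to the bootstrap Propositions~\ref{prop:bootstrappar1},~\ref{prop:bootstrapphi1},~\ref{prop:boostrapvareptail}, which are proved via the refined profile construction, the Darboux-based ILED, the $r^p$ method, and the Huygens/elliptic tail scheme, and then close with a shooting argument to select $b_0$. One correction: the trapping assumption on $a_+$ is \emph{polynomial}, namely $|a_+(\sigma)| \lesssim \delta_\wp \epsilon \sigma^{-9/4+\kappa}$ (cf.~\eqref{eq:a+trap}, \eqref{eq:a+b1}), not $|a_+(\sigma)| \lesssim \epsilon e^{-\mu\sigma}$; an exponential trapping hypothesis would not match the decay scale of the forcing terms in the $a_+$ equation and the shooting argument as stated would not close.
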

More precise decay estimates on $\psi$ and the parameters can be found in Propositions~\ref{prop:bootstrappar1},~\ref{prop:bootstrapphi1}, and~\ref{prop:boostrapvareptail}. We now make a few remarks about Theorem~\ref{thm:main}.
\begin{remark}\label{rem:parametersintro1}
It follows from the decay rate of $\dotell$ and $\dotxi-\ell$ that there exist $\abar,\ellbar\in\bbR^n$ such that $\ell(\tau)\to\ellbar$ and $\xi(\tau)\to \abar+ \ellbar \tau$ as $\tau\to\infty$. In this sense our theorem implies that the solution approaches a fixed, boosted and translated Lorentzian catenoid. The differential equations governing the evolution of the parameters are derived in Section~\ref{sec:eqnsofmotion}.
\end{remark}
\begin{remark}\label{rem:codim1}
As discussed earlier the codimension one condition of the data in Theorem~\ref{thm:main} is optimal, but we do not pursue the question of regularity of $b_0$.  As a result, we cannot infer that the set of initial data, considered in Theorem~\ref{thm:main} form a codimension one \emph{submanifold} in any topology. See also \cite[Remark~1.7]{LuOS1} for further discussion.
\end{remark}
\begin{remark}
In our proof we decompose $\psi$ further into a \emph{refined} profile $P$ and a remainder as $\varepsilon$ as $\psi\approx P+\varepsilon$. The refined profile contains the leading contribution of $\psi$ and we prove the decay rate $\|P\|_{L^\infty(\Sigma_\sigma)}\lesssim \epsilon \sigma^{-\frac{5}{4}+\frac{\kappa}{2}}$. See Proposition~\ref{prop:bootstrapphi1}. For the remainder we prove the improved decay $\|\varepsilon\|_{L^\infty(\Sigma_\sigma)}\lesssim \epsilon \sigma^{-\frac{9}{4}+\frac{\kappa}{2}}$. See Proposition~\ref{prop:boostrapvareptail}. We expect that by first improving the decay of higher order time derivatives, our methods can give the almost sharp rates $\|P\|_{L^\infty(\Sigma_\sigma)}\lesssim \epsilon \sigma^{-2+\cdot}$ and $\|\varepsilon\|_{L^\infty(\Sigma_\sigma)}\lesssim \epsilon \sigma^{-3+\cdot}$.
\end{remark}

\subsection{Further discussions}
Here we briefly discuss some related works and refer the reader to \cite[Section~1.10]{LuOS1} for a more in depth account. Concerning the HVMC equation, \cite{AC2,Ettinger, AIT21,Wong1} address questions related to the local well-posedness, and \cite{B1, Lin1, Stefanov11, Wong17} consider the nonlinear stability of hyperplanes under the HVMC evolution. As discussed earlier, within radial symmetry, the nonlinear stability of the Lorentzian catenoid was studied in \cite{KL1, DKSW} and that of the Lorentzian helicoid in \cite{Marchali1}.
The nonlinear stability of simple planar traveling wave solutions was established in \cite{AW20}. For the study of singularity formation, we refer to \cite{NT13, JNO15, Wong18, BMP1}. 
For further discussion on the physical significance of the HVMC equation, see \cite{AC1, AC2, Hoppe13}.
The global stability problems for Lorentzian constant positive mean-curvature flow has also been explored in \cite{Wong2}.

While the underlying PDEs are significantly different, our main result can be formally compared to recent remarkable works \cite{DHRT1,KlSz1,KlSz2, KlSz-GCM1, KlSz-GCM2, GiKlSz1, GiKlSz2, Sh} on the nonlinear asymptotic stability of Kerr and Schwarzschild black holes. These black holes are stationary solutions to the vacuum Einstein equation, a (3+1)-dimensional quasilinear wave equation. Our problem is simpler in several ways, including the gauge choice (compare our choice described in Section~\ref{subsec:mainprofile} with \cite{DHRT1, KlSz1, KlSz2}) and the analysis of the linearized problem (compare the discussion in Section~\ref{subsec:Riemcat} with \cite{HHV1,DHR1,ABBM1,HKW1}). Nevertheless, in this paper and \cite{LuOS1} we satisfactorily resolve a key issue that is shared by many soliton stability problems, but not with the black hole stability problem -- this is the issue of \emph{modulation of the translation and boost parameters}. In our problem, the stationary solution is defined on a natural ambient spacetime, and it is an important problem to track the evolution of these parameters in relation to this spacetime. In general relativity, there is no such ambient spacetime, and the analogous issue in the black hole stability problem is addressed through gauge choice. Given the prevalence of this issue in soliton stability problems, we hope that our ideas can be applied to other related problems.

Finally, there is an extensive literature on the stability of solitons for \emph{semilinear} dispersive equations. For those interested, we recommend the survey articles by Kowalczyk--Martel--Mu\~{n}oz  \cite{KMM17} and Tao \cite{Tao09}. Our focus in this paper and \cite{LuOS1} is on the challenges posed by the \emph{quasilinearity} of the equation. This focus, in turn, is motivated by the conjectured asymptotic stability of certain celebrated topological solitons, such as the Skyrmion for the Skyrme model \cite{MantonSutcliffe}.

\subsection{Outline of the Paper}\label{subsec:introremainingoutline}
The remainder of this paper is organized as follows. Section~\ref{sec:prelim} contains the notation and some preliminary results including a notation list in Section~\ref{sec:notation}, a discussion of the local theory in Section~\ref{sec:LWP}, and the proof of the ILED for the product catenoid and various elliptic estimates in Section~\ref{sec:ILEDproduct}. In Section~\ref{sec:profile1} we record the form of the equation in the first and second order formulations. We also introduce the different coordinate systems used in the rest of the paper and write out the form of the main operators in these coordinates. In Section~\ref{sec:modprof} we introduce the modified profile $P$. Section~\ref{sec:finaldecomp} contains the final decomposition of the solution in terms of the main profile, the modified profile, and the remainder. Here we also impose the orthogonality conditions on the remainder and derive the modulation equations for the parameters. The bootstrap assumptions are stated in Section~\ref{sec:bootstrap}, where, in Propositions~\ref{prop:bootstrappar1},~\ref{prop:bootstrapphi1}, and~\ref{prop:boostrapvareptail} we give more precise decay estimates than the ones given in Theorem~\ref{thm:main}. In this section we also show how Theorem~\ref{thm:main} follows from the bootstrap propositions. The analysis of the modulation ODEs leading to the proof of Proposition~\ref{prop:bootstrappar1} are contained in Section~\ref{sec:ODEanalysis}. The ILED estimate is proved in Section~\ref{sec:ILED} and the $r^p$ vectofield estimates and the proof of Proposition~\ref{prop:bootstrapphi1} are contained in Section~\ref{sec:VF}. Finally, the improved late time tail estimates of Proposition~\ref{prop:boostrapvareptail} are proved in Section~\ref{sec:tails}. In Appendix~\ref{sec:Appendix} we record various relevant computations for the Minkowski metric $m$. With the exception of Section~\ref{sec:Appendixtail},  which contains a key ingredient of the proof of Proposition~\ref{prop:boostrapvareptail}, the results in Appendix~\ref{sec:Appendix} are borrowed from \cite{LuOS1}.

\section{Preliminaries}\label{sec:prelim}

\subsection{Notation and conventions}\label{sec:notation}
Here we collect some of the notation and conventions that are used repeatedly in this work. This is meant as a reference for the reader, and some of the precise definitions will appear only later in the paper. Some of the notation and conventions which are used more locally in various parts of the paper do not appear in this list.
 \subsubsection{\underline{The profile and the main variables}} $\barcalC$ denotes the Riemannian catenoid with its standard embedding in $\bbR^{4}$, and $\calC = \bbR \times \underline{\calC}$ the product Lorentzian catenoid in $\bbR^{1+4}$. $\xi$ and $\ell$ denote the translation and boost parameters, respectively, and in our applications always satisfy $|\ell|,|\dotxi|\ll1$. The main profile for the boosted and translated catenoid is $\calQ:=\cup_\sigma\Sigma_\sigma$, $\Sigma_\sigma=\calC_\sigma\cap\Sigma_\sigma$, where  $\calC_\sigma$ and $\bsUpsigma_\sigma$ are defined in \eqref{eq:calCsigmadef1} and \eqref{eq:bsUpsugmasigmadef1}.  We will denote the flat and hyperboloidal parts of the profile by $\calC_{\flatt}:=\{X^0\geq \sigma_\temp(X)+\delta_1\}$ and $\calC_{\hyp}:=\{X^0\leq \sigma_\temp(X)-\delta_1\}$ respectively. Here $\delta_1$ is a fixed constant and $\sigma_\temp$ is defined in Section~\ref{subsec:mainprofile}. We often refer to the region inside a large compact set  as the interior and to the complement as the exterior. The perturbation $\psi:\cup_\sigma\Sigma_\sigma\to \bbR$ is defined in \eqref{eq:psidef1}, where $N:\cup_\sigma\Sigma_\sigma\to \bbR^{1+4}$ denotes the almost normal vector to the profile and is defined in \eqref{eq:Ndef1}. In the first order formulation introduced in Section~\ref{subsubsec:firstorder}, the vector form of the perturbation is denoted by $\vecpsi=(\psi,\dotpsi)^\intercal$, where $\dotpsi$ is the momentum variable and roughly corresponds to the time derivative of $\psi$. The linearized operator for the vanishing mean curvature equation about the catenoid solution is denoted by $\uH:=\Delta_\barcalC+|\secondff|^2$ (see Section~\ref{subsec:Riemcat}) where $\Delta_\barcalC$ is the Laplacian on $\calC$ and $\secondff$, which we sometimes also denote by $V$,  is the second fundamental form. $\uH$ has a three dimensional kernel and one positive eigenvalue $\mu^2$. The corresponding eigenfunctions are denoted by $\fybar_i$, $i=1,2,3$ and $\fybar_\mu$. In the first order formulation there are two projection coefficients associated with $\mu$: $a_{+}$ denotes the unstable (growing mode) coefficient, and $a_{-}$ the stable (decaying mode) coefficient. The remainder, after subtracting the contribution of the corresponding eigenfunctions from $\vecpsi$ is denoted by $\vecphi$ at the vector level (in the first order formulation) and by $\phi$ at the scalar level (see Section~\ref{subsubsec:firstorder}). The refined profile $P$, with the corresponding vector form $\vecP$, are defined in Section~\ref{sec:modprof} and capture the most slowly decaying (in time) part of the perturbation. Accordingly we write $\varepsilon=\phi-P$.

 \subsubsection{\underline{Parameter derivatives}} We will use $\dotwp$ to denote the parameter derivatives $\dotell$ and $\dotxi-\ell$. When used as a vector, $\dotwp=(\dotell,\dotxi-\ell)^\intercal$ in that order. When used schematically, for instance in estimates or to denote dependence on parameter derivatives, the order will not be important, so for example $O(\dotwp)$ denotes terms that are bounded by $|\dotell|$ or $|\dotxi-\ell|$. The distinction will be clear from the context. More generally, $\dotwp^{(k)}$ denotes a total of $k$ derivatives of the parameters, so for instance $\dotwp^{(2)}$ could be any of $\ddotell$, $|\dotxi-\ell|^2$, $\ddotxi-\dotell$, etc. $\dot{\wp}^{(\leq k)}$ denotes a total of up to $k$, but at least one, parameter derivatives. $\wp^{(\leq k)}$ denotes a total of up to $k$ parameter derivatives, but possibly also an undifferentiated $\ell$. We sometimes also use the notation $\wp$ for $\ell$. Note that $\xi$ itself cannot be written as $\wp$ ($\xi$ is expected to grow linearly in time), but $\dotxi$ can be written as $\dotxi=\dotxi-\ell+\ell$, which is a sum of terms of the form $\dotwp$ and $\wp$. A similar notation is used for $\dota_{\pm}^{(k)}$, $a_{\pm}^{(\leq k)}$, etc. 
 \subsubsection{\underline{Constants}} $\epsilon$ is the smallness parameter for the size of the initial perturbation.  $\kappa$ is a small positive absolute constant which arises in the decay rates in the bootstrap argument; see Section~\ref{sec:bootstrap}. In our estimates, the parameters and the perturbation $\varepsilon$ depend linearly on each other . What breaks the circularity is that the linear appearance of the $\varepsilon$ in the estimates for the parameter derivatives is always accompanied by a small (but not decaying) constant. This small constant is denoted by $\delta_\wp$ in the bootstrap assumptions of Section~\ref{sec:bootstrap}. The final time of the bootstrap interval is denoted by $\tau_f$. There are also a few large radii that appear in our arguments. $R\gg1$ is  a large constant such that the initial data are supported in $\barcalC\cap\{|\Xbar|<R/2\}$; see \eqref{eq:initialdata1}. More importantly, the transition region from the flat  to hyperboloidal parts of the foliation happens in the region $R-C\leq |\Xbar| \leq R+C$ for some absolute constant $C$; see Section~\ref{subsec:mainprofile}. The constant $R_\ctf\gg1$ is a constant such that the support of the test functions, which we use as proxies for the eigenfunctions $\fybar_i$ of $\uH$ is contained in $\{|\rho|\leq R_\ctf\}$. We will choose $R_\ctf$ such that $R^{\frac{\alpha}{2}}R_\ctf^{-1+\frac{\alpha}{2}},R_\ctf^{2+\frac{\alpha}{2}}R^{-1+\frac{\alpha}{2}}\ll1$. Here the constant $0<\alpha\ll1$ is used in the definition of the local energy norm. See equation~\eqref{eq:LEnormdef1}. The size of the data, $\epsilon$, is considered small relative to any inverse power of $\Reigenfunctioncutoffscale$. In particular, since in view of the bootstrap assumptions in Section~\ref{sec:bootstrap} we have $|\ell|\lesssim \epsilon$, quantities such as $(\Reigenfunctioncutoffscale)^{m_1}R^{m_2}|\ell|$ are considered small, for any powers $0<m_i<100$. The smallness of the constant $\delta_\wp$ above is in terms of $\ell$ and inverse powers of $\Reigenfunctioncutoffscale$ and $R$. 
 
 \subsubsection{\underline{Row and columns vectors}} When there is no risk of confusion, we identify row and column vectors. For instance, we use both $(\psi,\dotpsi)$ and $(\psi,\dotpsi)^\intercal$ for $\vecpsi$.
 
 \subsubsection{\underline{Coordinates, derivatives, and vectorfields}}\label{subsec:prelimvfs} We use four sets of coordinate systems: $(t,\rho,\omega)$ are used in the interior. $(\tau,r,\theta)$ are used in the exterior. $(\uptau,\uprho,\uptheta)$ are referred to as the non-geometric global polar coordinates and are global coordinates that agree with $(t,\rho,\omega)$ and $(\tau,r,\theta)$ in the respective regions. $(\tiluptau,\tiluprho,\tiluptheta)$ are referred to as the geometric global coordinates, and are define with respect to the values of the parameters at a fixed time. Their purpose is that the main part of the linear operator with coefficients defined with respect to the fixed-time parameters takes the product form $-\partial_\uptau^2+\uH$ in these coordinates. The relevant properties of these coordinates are recorded in Section~\ref{subsubsec:2ndordereq}. $\RbfT$ denotes the global almost stationary vectorfield, which in terms of the global non-geometric coordinates is given by $\partial_{\uptau}$. In general $\partial$ denotes arbitrary derivatives that have size of order one, and $\partial_\Sigma$ the subset of these derivatives that are tangential to the leaves of the foliations. In the exterior region, $\tilpartial_\Sigma$ denotes derivatives which can be written as a linear combination of $\partial_\Sigma$ and $r^{-1}\RbfT$, with coefficients of size of order one. In general we denote the number of derivatives by a superscript. For instance $\partial_\Sigma^{\leq k}$ means up to $k$ tangential derivatives. There are also a few commutator and multiplier vectorfields which are used in the exterior in Section~\ref{sec:VF} in the context of proving decay estimates for the perturbation. The precise definitions are given in Appendix~\ref{subsec:appMink}, but we give a brief description here: $L$ and $\Lbar$ are the outgoing and incoming almost null vectorfields. $\Omega$ is the rotation vectorfield. $T$, which is comparable and almost colinear with $\RbfT$, is defined by $T=\frac{1}{2}(L+\Lbar)$. In the exterior region where these vectorfields are defined we use $\tilde{r}L$, $\Omega$, and $T$ as commutators, and use  $X^k$ (when $k=1$ we simply write $X$) to denote an arbitrary string of $k$ such vectorfields. Here $\tilr$ is a geometric radial variable introduced in Appendix~\ref{subsec:appMink}. 
 
 \subsubsection{\underline{Volume forms}} In general we use $\ud V$ to denote the induced volume form from the ambient space $\bbR^{1+4}$. If there is any risk of confusion we use a subscript to denote the subset on which the volume form is induced (for instance $\ud V_S$ for the subset $S$). When working in a fixed set of coordinates we sometimes write out the volume form explicitly. In the exterior region, it is sometimes more convenient to use the coordinate volume form for the Minkowski metric rather than the geometric induced one. It will be clear from the bootstrap assumptions that these two volume forms are comparable and therefore various norms defined with respect to them are equivalent. The volume form on the standard unit sphere will be denoted by $\ud\theta$ or $\ud S$ interchangeably (or $\ud \omega$, $\ud \uptheta$, etc, depending on the coordinate system we are using). Sometimes we also use $\ud x$ or $\ud y$ for the volume form on $\Sigma_\tau$.
\subsubsection{\underline{Cutoffs}} We use the notation $\chi$ for smooth cutoff functions defined on $\cup_\sigma\Sigma_\sigma$ and taking values in $[0,1]$. We may denote the set on which $\chi$ is equal to one by a subscript. For instance $\chi_S$ is equal to one on $S$ and equal to zero outside of a neighborhood of $S$ (we will make the support more precise when needed). For a positive number $c$, $\chi_{\geq c}$ denotes a cutoff which is one in the region $\{|\uprho|\geq c\}$ and equal to zero outside of $\{|\uprho|\geq \frac{c}{2}\}$. Here $\uprho$ is the radial coordinate from the global non-geometric coordinates in Section~\ref{subsubsec:2ndordereq}. $\chi_{<c}$, $\chi_{a \leq \cdot \leq b}$, etc., are defined similarly.
\subsubsection{\underline{The normal and decay of eigenfunctions}}\label{subsubsec:normal} For the standard Riemannian catenoid as described in Section~\ref{subsec:Riemcat}, and with the notation used there, the normal vector is given by 
\begin{align*}
\begin{split}
\nu\equiv \nu(z,\omega)=(\frac{\Theta(\omega)}{\jap{z}^{2}},\sqrt{1-\jap{z}^{-4}}).
\end{split}
\end{align*}
As mentioned in Section~\ref{subsec:Riemcat}, the first three components, $\nu^i=\frac{\Theta^i}{\jap{z}^{2}}$, $i=1,2,3$,
appear as eigenfunctions of the main linearized operator \Hbar. It is useful to keep in mind that these have decay $\jap{z}^{-2}$ and satisfy
\begin{align*}
\begin{split}
\int \nu^i\nu^j \sqrt{|g_\barcalC|}\ud \omega\ud z = C\delta^{ij},
\end{split}
\end{align*}
where $\delta^{ij}$ is the Kronecker delta and $C$ is a constant of order one. We also remark that since the metric is asymptotically flat, the eigenfunction $\varphibar_\mu$ from Section~\ref{subsec:Riemcat} is exponentially decaying.
\subsubsection{\underline{Two asymptotic ends}} Many of the estimates and identities in this work are derived only near one of the asymptotic ends of the solution. In all cases, the other asymptotic end can be treated in exactly the same way, possibly with a change of overall sign. This remark applies in particular to many of the vectorfied identities and estimates, for instance in Section~\ref{sec:VF}. 
 \subsubsection{\underline{The Darboux transform}} As mentioned earlier the elements in the kernel of $\uH$ do not belong to the spatial part of the dual local energy norm. To deal with this we use the Darboux transform to pass to an operator with trivial kernel. The Darboux transformation is denoted by $\Dar$ and is defined in \eqref{eq:Darprod1} for the product catenoid and in \eqref{eq:Dardef1} in the general case.
 \subsubsection{\underline{Exterior parametrization over a hyperplane}}\label{subsubsec:calOnotation} Outside a large compact set, we can parameterize each asymptotic end of the solution as a graph over a hyperplane (for instance the hyperplanes $\{X^{n+1}=\pm S\}$). The function giving this parameterization for the Riemannian catenoid is denoted by $Q$. We use $Q_\wp$ to denote the corresponding function when taking into account the boost and translation parameters, although when there is no risk of confusion we drop $\wp$ from the notation and simply write $Q$. See Section~\ref{subsubsec:2ndordereq}. In this region, the catenoid and Minkowski metrics are close. The latter is denoted by $m=-\ud x^0\otimes\ud x^0+\sum_{i=1}^3\ud x^i\otimes \ud x^i$. Expressions for $m$ in various coordinates are recorded in Appendix~\ref{subsec:appMink}.

\subsubsection{\underline{The $O$ and $\calO$ notation}} The notation $f=O(g)$ is used as usual to mean $|f|\leq C|g|$ for some constant $C$. The notation $f=o_{\alpha}(g)$ is also used in the usual way to mean that $|f|/|g|$ goes to zero as the parameter $\alpha$ approaches a limiting value which will be clear from the context (usually zero or infinity). We will also use the notation $\calO$ in the following way. An error term of the form $\calO(f)$ is still bounded by $|f|$ after any number of differentiations by $\tilr L$ or $\Omega$ in the exterior (see Section~\ref{subsec:prelimvfs}), and by $\sum_{j\leq k}|\dotwp^{(j)}\RbfT^{k-j}f|$ after $k$ differentiations by $\RbfT^k$ globally. For instance, an error that is denoted by $\calO(\dotwp)$ will still be bounded by $\calO(\dotwp)$ after applications of $\tilr L$ and $\Omega$ in the exterior, and by $\calO(\dotwp^{(k+1)})$ after $k$ applications of $\RbfT$ globally. Even though we start using this notation already in Section~\ref{sec:profile1}, the corresponding properties of these error terms follow only after the bootstrap estimates are stated in Section~\ref{sec:bootstrap}. In general, the dependence on parametersis denoted by a subscript in the $O$ and $\calO$ notation, for instance as $O_{R_\ctf}(|\ell|)$.

\subsection{Local existence}\label{sec:LWP}
We recall the local theory for the HVMC from \cite{LuOS1}, which in turn was based on \cite{AC1,AC2,Luk1}. To state the needed results we introduce some notation. Recall the foliation $\cup_\sigma(\calC_\sigma\cap\bsUpsigma_\sigma)$ from Section~\ref{subsec:mainprofile}. Given $|\ell_0|<1$ and $\xi_0$ let $\xi(\tau)=\xi_0+\ell_0\tau$ and denote the corresponding submanifolds of $\bbR^{1+4}$ by $\mathring\calC_\sigma(\ell_0,\xi_0)$ and $\mathring\bsUpsigma_\sigma(\ell_0,\xi_0)$. Similarly, let $\ringN_{\ell_0,\xi_0}$ denote the corresponding almost normal vector from Section~\ref{subsec:mainprofile}. We equip each leaf of $\ringSigma_\sigma(\ell_0,\xi_0):=\mathring\calC_\sigma(\ell_0,\xi_0)\cap\mathring\bsUpsigma_\sigma(\ell_0,\xi_0)$ with the Riemannian metric induced from the ambient space and let $\calD_0^\tau:=\cup_{\sigma\in[0,\tau)}\ringSigma_\sigma(\ell_0,\sigma_0)$. The coordinate $\rho$ is used to denote the distance along $\ringSigma_\sigma(\ell_0,\xi_0)$ to $\Xbar=\xi_0+\sigma\ell_0$ with respect to the induced metric. Size one derivatives tangential to $\ringSigma_\sigma(\ell_0,\xi_0)$ are denote by $\ringpartial_\Sigma$ and the restriction of $\frac{\partial}{\partial X^0}+\ell_0$ to $\calD_0^\tau(\ell_0,\xi_0)$ is denoted by $T$. The following  result follows the standard theory. 
\begin{proposition}\label{prop:LWP}
Let  $\ringPhi_0(p)=p+\ringepsilon\ringpsi_0 \ringN_{\ell_0,\xi_0}$, $\ringPhi_1= (1,\ell_0)+\ringepsilon\ringpsi_1 \ringN_{\ell_0,\xi_0}$, where $\ringpsi_j$ are smooth functions on $\ringSigma_0(\ell_0,\xi_0)$ with $\|\jap{\rho}^{k-1}\ringpartial_\Sigma^k\ringpsi_0\|_{L^2(\Sigma_0(\ell_0,\xi_0))}$ and $\|\jap{\rho}^{k-2}\ringpartial_\Sigma^{k-1}\ringpsi_1\|_{L^2(\Sigma_0(\ell_0,\xi_0))}$ finite for~$k\leq M$, $M$ sufficiently large. If $|\ell_0|$ and $\ringepsilon>0$ are sufficiently small, then there exists $\tau\gtrsim1$ and a unique smooth function $\ringpsi:\calD_0^{\tau}(\ell_0,\xi_0)\to\bbR$, such that $\ringPhi:\calD_0^{\tau}(\ell_0,\xi_0)\to\bbR^{1+4}$ defined by
\begin{align}\label{eq:lwppar}
\begin{split}
\ringPhi(p)= p+\ringpsi(p)\ringN_{\ell_0,\xi_0}(p)
\end{split}
\end{align} 
satisfies \eqref{eq:HVMC1}, and $\ringpsi(0)=\ringepsilon\ringpsi_0$, $T\ringpsi(0)=\ringepsilon\ringpsi_1$. Moreover, given $\ell$ and $\xi$ with $|\ell|$, $|\dotxi|$, and $|\xi(0)-\xi_0|$ sufficiently small, the condition \eqref{eq:psidef1} for $\sigma\in [0,\tau_0]$ determines $\psi$ uniquely and gives another parameterization of $\calM$.
\end{proposition}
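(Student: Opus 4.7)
The plan is to reduce the geometric HVMC problem to a quasilinear wave equation for the scalar perturbation $\ringpsi$ and then invoke standard well-posedness theory (as in \cite{AC1,AC2,Luk1}), essentially following the analogous construction in \cite{LuOS1}. First I would substitute the graph ansatz \eqref{eq:lwppar} into \eqref{eq:HVMC1}. Writing $\ringPhi(p)=p+\ringpsi(p)\ringN_{\ell_0,\xi_0}(p)$ and pulling back $\bfeta$, the induced metric takes the form $\ringPhi^{\ast}\bfeta=g_{\ringSigma}+\ringepsilon\,\mathfrak{h}(\ringpsi,\ringpartial\ringpsi)$ where $g_{\ringSigma}$ is the Lorentzian metric on $\cup_{\sigma}\ringSigma_{\sigma}(\ell_0,\xi_0)$ and $\mathfrak{h}$ is a smooth function vanishing to first order at $(0,0)$. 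Provided $|\ell_0|$ and $\|\ringpsi\|_{C^1}$ remain small, $\ringPhi^{\ast}\bfeta$ is Lorentzian and the HVMC equation \eqref{eq:HVMC1} becomes, after projecting onto $\ringN_{\ell_0,\xi_0}$ and identifying the tangential directions as gauge, a quasilinear wave equation of the schematic form
\[
g^{\mu\nu}(\ringpsi,\ringpartial\ringpsi)\,\ringpartial_\mu\ringpartial_\nu\ringpsi+|\secondff|^2\ringpsi+\calN(\ringpsi,\ringpartial\ringpsi)=0,
\]
with $g^{\mu\nu}(0,0)=(g_{\ringSigma}^{-1})^{\mu\nu}$ hyperbolic and $\calN$ at least quadratic.

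Next I would run a standard Picard iteration in weighted energy spaces adapted to the asymptotic cylindrical ends of the catenoid. The natural norms are precisely those in the hypothesis: $\sum_{k\leq M}\|\jap{\rho}^{k-1}\ringpartial_{\Sigma}^{k}\ringpsi_0\|_{L^2(\ringSigma_0)}+\|\jap{\rho}^{k-2}\ringpartial_{\Sigma}^{k-1}\ringpsi_1\|_{L^2(\ringSigma_0)}$, whose weights match the spatial fall-off required so that the energies $\sum_{k\leq M}\|\jap{\rho}^{k-1}\ringpartial_{\Sigma}^{k}\ringpsi(\sigma)\|_{L^2(\ringSigma_\sigma)}+\|\jap{\rho}^{k-2}\ringpartial_{\Sigma}^{k-1}T\ringpsi(\sigma)\|_{L^2(\ringSigma_\sigma)}$ can be propagated by the multiplier $T$ along the foliation $\{\ringSigma_\sigma(\ell_0,\xi_0)\}$. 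Commuting with $\ringpartial_{\Sigma}$ (with appropriate $\jap{\rho}$-weights) yields closed energy estimates on a time interval $\tau\gtrsim 1$ independent of $\ringepsilon$ once $\ringepsilon$ is small enough. Uniqueness follows from the same energy identity applied to the difference of two solutions, and smoothness from propagation of regularity combined with finite speed of propagation in the asymptotic regions.

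For the second assertion, given $(\ell,\xi)$ with $|\ell|,|\dotxi|,|\xi(0)-\xi_0|$ small, the leaves $\Sigma_\sigma$ and vectors $N(p)$ defined from $(\ell,\xi)$ are $C^{k}$-close to $\ringSigma_{\sigma}(\ell_0,\xi_0)$ and $\ringN_{\ell_0,\xi_0}$ on the slab $\sigma\in[0,\tau]$. Since $\calM$ is a smooth timelike hypersurface given already as $\{p+\ringpsi(p)\ringN_{\ell_0,\xi_0}(p):p\in\cup_\sigma\ringSigma_\sigma\}$, for each $p\in\Sigma_\sigma$ the ray $s\mapsto p+sN(p)$ intersects $\calM$ transversely at $s=0$ to leading order (because $\bfeta(n_{\wp},N)=1$ by \eqref{eq:Ndef1} forces $N$ to have a component along the direction used for the graph parametrization). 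The implicit function theorem then produces a unique small $\psi(p)$ solving \eqref{eq:psidef1}, smoothly depending on $p$; this is precisely \cite[Lemma~2.2]{LuOS1}.

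The main obstacle is the interplay between the geometric gauge choice and the weighted spaces: one must verify that the reduction to a scalar wave equation, carried out with respect to the almost-normal $\ringN_{\ell_0,\xi_0}$ (which transitions between the hyperboloidal and flat parts of $\bsUpsigma_\sigma$ via the cutoff $\chi$ in Section~\ref{subsec:mainprofile}), does not introduce coefficients that degenerate at the transition region or destroy the weighted energy structure. Because this gauge is exactly the one designed in \cite{LuOS1} and $|\ell_0|$ is small enough that the transition region is uniformly controlled, this is handled by straightforward modification of the argument there; no new analytic difficulty arises beyond bookkeeping of the weights.
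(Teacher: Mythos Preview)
Your proposal is correct and aligns with the paper's treatment: the paper does not give a detailed proof but simply states that the result ``follows the standard theory'' (referring to \cite{AC1,AC2,Luk1} and the analogous construction in \cite{LuOS1}), and notes that the last statement follows from \cite[Lemma~2.2]{LuOS1}. Your sketch of reducing to a scalar quasilinear wave equation, running weighted energy estimates, and invoking the implicit function theorem for the reparametrization is exactly the standard argument being alluded to.
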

Note that the last statement in the proposition follows from \cite[Lemma~2.2]{LuOS1}.
\subsection{ILED for the product catenoid and elliptic estimates}\label{sec:ILEDproduct}
In this section we study ILED and uniform boundedness of the energy for the product catenoid. Specifically, we are interested in proving ILED and energy estimates for the equation
\begin{align}\label{eq:ILEDprod1}
\begin{split}
(\Box+V)\psi=g+f.
\end{split}
\end{align}
The reason for writing the right-hand side as a sum is that we will estimate the two terms in different norms. See Remark~\ref{rem:LEDproductrhsdivision}. Since we are only interested in the product catenoid in this section, the notation we use is independent of the rest of the paper. 
The only difference with the  high dimensional case considered in \cite{LuOS1} is that the eigenfunctions corresponding to the zero mode of the linear operator do not belong to the spatial part of the $LE^\ast$ norm. Therefore, in the ILED estimate we cannot simply consider the $L^2$ orthogonal projection to the complement of the kernel of the linear operator. Instead, we use a Darboux  transformation to circumvent the zero eigenfunctions. 

Consider the operator $\Box+V$ where $\Box=-\partial_t^2+\Delta$ is the wave operator associated to the product catenoid with metric
\begin{align*}
\begin{split}
g=-\ud t\otimes \ud t +\frac{\jap{\rho}^2}{\jap{\rho}^2+1}\ud\rho\otimes\ud\rho+\jap{\rho}^2\ringsg_{ab}\ud\omega^a\otimes \ud\omega^b,
\end{split}
\end{align*}
and $V$ is the second fundamental form of the standard embedding of the Riemannian catenoid. $V$ is a smooth, time independent, radial potential satisfying $|V|\lesssim \jap{\rho}^{-6}$. Recall from Section~\ref{subsec:Riemcat} that the operator $\Hbar:=\Delta+V$ has one simple positive eigenvalue with a radial exponentially decaying eigenfunction $\fybar_\mu$, $\Hbar\fybar_\mu=\mu^2\fybar_\mu$, and a three dimensional $L^2$ (with respect to $\sqrt{|g|}\ud\rho\ud\omega$) kernel spanned by the eigenfunctions $\nu^i=\frac{\Theta^i(\omega)}{\jap{\rho}^2}$, $i=1,2,3$. We also claim that that $\Hbar$ has no \emph{threshold resonances}. Recall from \cite{LuOS1} that a threshold resonance refers to a solution $\fy$ of $\Hbar\fy=0$ which is not in $L^2$ but belongs to the local energy space $\calL\calE_{\sharp}^1$ defined in the proof of Proposition~3.5 in \cite{LuOS1}.
\begin{lemma}
The operator $\Hbar$ does not have a threshold resonance.
\end{lemma}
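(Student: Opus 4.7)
My plan is to rule out threshold resonances by expanding an arbitrary solution $\fy$ of $\Hbar \fy = 0$ in spherical harmonics on $\bbS^2$, $\fy = \sum_{\ell,m} R_{\ell,m}(\rho) Y_\ell^m(\omega)$, and analyzing the resulting autonomous second order radial ODE in each angular sector. Since the induced metric on $\barcalC$ approaches the flat Euclidean metric on $\bbR^3$ polynomially fast and $|V| \lesssim \jap{\rho}^{-6}$, a standard asymptotic analysis (variation of parameters against the flat radial model) shows that each radial solution admits, as $\rho \to \pm\infty$, an expansion
\begin{equation*}
R_{\ell,m}(\rho) = A_\pm |\rho|^\ell + B_\pm |\rho|^{-\ell-1} + o(|\rho|^{-\ell-1})
\end{equation*}
for constants $A_\pm, B_\pm$ depending on the solution and on the end. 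For each $\ell \geq 1$ the growing branch $|\rho|^\ell$ fails to lie in $\calL\calE^1_\sharp$, while $|\rho|^{-\ell-1}$ is square integrable at infinity (since $\int |\rho|^{-2\ell-2} |\rho|^{2} d|\rho|$ converges near $\infty$); hence any element of $\calL\calE^1_\sharp$ living in these angular sectors must satisfy $A_+ = A_- = 0$, so it belongs to $L^2$ and is one of the known $L^2$ kernel elements from Section~\ref{subsec:Riemcat}. The only remaining case is $\ell = 0$.

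In the radial sector the two asymptotic behaviors are $1$ and $|\rho|^{-1}$, both bounded, neither in $L^2$ at infinity, while the $|\rho|^{-1}$ branch does lie in $\calL\calE^1_\sharp$. So the question reduces to whether any radial zero mode decays to zero at both ends. The plan is to exhibit an explicit basis of the two-dimensional radial kernel. The first basis element is the axis-translation Jacobi field $\fybar_4 = \rho \sqrt{\jap{\rho}^2+1}/\jap{\rho}^2$, odd in $\rho$, with $\fybar_4 \to \pm 1$ as $\rho \to \pm\infty$. For the second I will use the dilation Jacobi field $X \cdot \nu$: invariance of the minimal surface equation under ambient rescaling $X \mapsto \lambda X$ implies $\Hbar(X \cdot \nu) = 0$, and using \eqref{eq:Zdef1} together with $\nu = (\Theta/\jap{\rho}^2, \fybar_4)$ a direct computation gives
\begin{equation*}
X \cdot \nu = \frac{1}{\jap{\rho}} + z(\rho)\, \fybar_4, \qquad z(\rho) := \sgn(\rho)\, \frkf^{-1}(\jap{\rho}).
\end{equation*}
Since $z$ and $\fybar_4$ are both odd in $\rho$, the function $X \cdot \nu$ is even; it equals $1$ at $\rho = 0$ while $\fybar_4(0) = 0$, so $\fybar_4$ and $X \cdot \nu$ are linearly independent and together span the radial kernel. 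By \eqref{eq:Sendpointdef1} one has $z(\rho)\fybar_4(\rho) \to S$ at both ends and $1/\jap{\rho} \to 0$, so $X \cdot \nu \to S > 0$ as $\rho \to \pm\infty$.

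Every radial zero mode is therefore of the form $\alpha \fybar_4 + \beta (X \cdot \nu)$, with asymptotic values $\alpha + \beta S$ at $+\infty$ and $-\alpha + \beta S$ at $-\infty$. A bounded but non-decaying zero mode is excluded from $\calL\calE^1_\sharp$ by the same integration-at-infinity argument used for $\ell \geq 1$, so any element of $\calL\calE^1_\sharp \setminus L^2$ in this sector forces both asymptotic constants to vanish; together with $S > 0$ this gives $\alpha = \beta = 0$. Combined with the $\ell \geq 1$ analysis, this rules out every threshold resonance. The main obstacle, and really the only nontrivial point, is identifying the second radial kernel element: the dilation Jacobi field $X \cdot \nu$ works precisely because it tends to the \emph{same} constant $S$ at both ends, so no cancellation with the odd mode $\fybar_4$ can produce a jointly decaying combination, bypassing any direct ODE analysis.
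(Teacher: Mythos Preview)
Your proposal is correct and follows essentially the same strategy as the paper: decompose into spherical harmonics, dispose of $\ell \geq 1$ by asymptotic analysis (the paper phrases this as a perturbative argument off $\Delta_{\bbR^3}$), and for $\ell = 0$ exhibit the two explicit radial zero modes coming from axis translation ($\fybar_4$) and ambient dilation. The only cosmetic difference is how the dilation mode is produced: you write it directly as the Jacobi field $X \cdot \nu$, while the paper parametrizes the one-parameter family of rescaled catenoids as normal graphs over $\barcalC$ and differentiates in $\lambda$ at $\lambda = 1$, obtaining the same function (up to an overall sign depending on the orientation of $\nu$). Both computations yield an even function tending to $\pm S \neq 0$ at the two ends, and the concluding linear-algebra step is identical.
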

\begin{proof}
By decomposing into spherical harmonics, we see that if $\Hbar$ has a threshold resonance, then the radial part of $\Hbar$ must also have a threshold resonance. Indeed, as in the higher dimensional case studied in \cite{LuOS1}, in view of the fast decay of $\Hbar$ to $\Delta_{\bbR^3}$ for large $\rho$,  and the repulsive potential $\ell(\ell+1)r^{-2}$ from the angular Laplacian, the projection onto the $\ell\neq0$ harmonics would already belong to $L^2$ by a perturbative analysis for large $\rho$. For the radial problem $\ell=0$ we demonstrate the existence of two asymptotically constant radial solutions $\fy_{\mathrm{odd}}$ and $\fy_{\mathrm{even}}$ to $\Hbar\fy=0$, such that $\fy_{\mathrm{odd}}$ is odd and $\fy_{\mathrm{even}}$ is even. Since any other radial solution $\fy$ of $\Hbar\fy=0$ must be a linear combination of these two solutions, we see that $\fy$ must be asymptotically constant at least at one of the ends $\rho\to\pm\infty$. Since asymptotically constant functions do not belong to the local energy space $\calL\calE_{\sharp}^1$  (see also Definition 2.8 and the remarks following it in \cite{MST}), this completes the proof. The functions $\fy_{\mathrm{even}}$ and $\fy_{\mathrm{odd}}$ correspond to scaling in the ambient space and translation symmetry along the axis of rotation, respectively. Indeed, $\fy_{\mathrm{odd}}$ was already calculated as $\fybar_4$ in Section~\ref{subsec:Riemcat} to be $\fy_{\mathrm{odd}}=\frac{\rho\sqrt{\jap{\rho}^2+1}}{\jap{\rho}^2}$. To compute the zero mode corresponding to scaling in the ambient space, we write $R$ for the radial function in the $X'=(X^1,X^2,X^3)$ coordinates. The catenoid solution $\barcalC$, is parameterized as $(R=\jap{\rho},X=Z(\rho))$ with $Z(\rho)=\sgn(\rho)\frakf^{-1}(\jap{\rho})$ (see \eqref{eq:Zdef1}). This solution is a member, corresponding to $\lambda=1$, of the larger family $\barcalC_\lambda$ given by $(R=\lambda\jap{\rho},X=\lambda Z(\rho))$. We want to parameterize $\barcalC_\lambda$ as a graph over $\barcalC$, differentiate in $\lambda$, and set $\lambda=1$. Since the normal direction to $\barcalC$ is $(\frac{1}{\jap{\rho}^2},-\frac{\rho\sqrt{\jap{\rho}^2+1}}{\jap{\rho}^2})$ we set $(\lambda\jap{\rho},\lambda Z(\rho))=(\jap{\tilrho}+\frac{\phi}{\jap{\tilrho}^2},Z(\tilrho)-\frac{\tilrho\sqrt{\jap{\tilrho}^2+1}}{\jap{\tilrho}^2}\phi)$, where we view $\tilrho$ and $\phi$ as functions of $\rho$ and $\lambda$, with $\tilrho\vert_{\lambda=1}=\rho$ and $\phi\vert_{\lambda=1}=0$. Differentiating both components of the equality and solving for $\fy_{\mathrm{even}}:=\frac{\ud}{\ud\lambda}\vert_{\lambda=1}\phi$ we obtain $$\fy_{\mathrm{even}}=\frac{\jap{\rho}Z'-\frac{\rho}{\jap{\rho}}Z }{\frac{Z'}{\jap{\rho}^2}+\frac{\rho^2\sqrt{\jap{\rho}^2+1}}{\jap{\rho}^3}}.$$
Recalling that $Z$ is odd and asymptotically constant and $Z'=\frac{1}{ \jap{\rho}\sqrt{\jap{\rho}^2+1} }$ (see \cite[Section~1.2]{LuOS1}), we see that $\fy_{\mathrm{even}}$ is even and asymptotically constant as desired.
\end{proof}

 We use the notation $\bbP_c\psi:=\psi-\angles{\psi}{\fybar_\mu}\fybar_\mu$. Note that this is not the projection to the continuous spectrum, as we have not projected away from the kernel of $\Hbar$. As a proxy for $\nu^i$ we use $Z_i:=\chi_{R_\ctf}\nu^i$, where $\chi_{R_\ctf}$ is a radial cutoff such that $\chi_{R_\ctf}\equiv 1$ for $|\uprho|\leq R_\ctf$ and $\chi_{R_\ctf}\equiv0$ for $|\uprho|\geq 2R_\ctf$. Here $R_\ctf\gg1$ is a large constant to be fixed. We use $L^p_x$ to denote the $L^p$ norm on the constant $t$ hypersurfaces with respect to the volume form induced by $g$. The corresponding $L^2_x$ pairing is denoted by $\angles{\cdot}{\cdot}$. We will use the notation $L_t^pL_x^q[t_1,t_2]$ to indicate that the $L_t^p$ norm is calculated over the time interval $[t_1,t_2]$. More generally, for a domain $\calD$ in space or time, we write $L_t^pL_x^q(\calD)$ to denote the restriction of the integrations to that domain. We use $\ringsDelta$ to denote the Laplacian on the round sphere of radius one with its standard metric and write $\sDelta=\jap{\rho}^{-2}\ringsDelta$, and similarly for $\ringsnabla$ and $\snabla$. The relevant norms are defined as
\begin{align}\label{eq:LEnormdef1}
\begin{split}
&\|\phi\|_{LE(I)}^2:=\|\jap{\rho}^{-\frac{1}{2}(1+\alpha)}\partial_\rho\phi\|_{L_{t,x}^2(I)}^2+\|\rho\jap{\rho}^{-\frac{1}{2}(3+\alpha)}\partial_t\phi\|_{L_{t,x}^2(I)}^2\\
&\phantom{\|\phi\|_{LE(I)}^2:=}+\|\rho\jap{\rho}^{-\frac{3}{2}}\snabla\phi\|_{L_{t,x}^2(I)}^2+\|\rho\jap{\rho}^{-\frac{1}{2}(5+\alpha)}\phi\|_{L_{t,x}^2(I)}^2,\\
&\|f\|_{LE^\ast(I)}^2:=\|\jap{\rho}^{\frac{1}{2}(1+\alpha)}f\|_{L_{t,x}^2(I)}^2,
\end{split}
\end{align}
where $0<\alpha\ll1$ is an arbitrary fixed constant, and $I$ is a finite or infinite time interval. The spatial parts of the $LE$ and $LE^\ast$ norms will be denoted by $\|\cdot\|_{LE_x}$ and $\|\cdot\|_{LE^\ast_x}$ respectively. For the energy norm we use the notation
\begin{align*}
\begin{split}
\|\phi\|_E^2= \|\partial_t\phi\|_{L^2_x}^2+\|\partial_x\phi\|_{L^2_x}^2,
\end{split}
\end{align*}
We will also use the notation $\|\jap{\partial_t}g\|_{LE^\ast}^2=\|g\|_{LE^\ast}^2+\|\partial_tg\|_{LE^\ast}^2$. 

Before stating the main result we pause to introduce the Darboux transform. Let $\calS_j$ denote the $L^2$ projection (with respect to the $L^2$ norm on the standard unit sphere) onto the space $\calY_j$ of the eigenfuctions of the $\bbS^2$ Laplacian $\ringsDelta$  with eigenvalue $-j(j+1)$. Note that $\nu^j(\rho,\omega)= \nu_0(\rho)\Theta^j(\omega)$ where $\nu_0=\jap{\rho}^{-2}$ and $\Theta^j\in\calY_{1,j}$ and $\calY_1=\oplus_{j=1}^3\calY_{1,j}$ as usual. Given a function $f(\rho,\omega)$ we define $\Dar f$ by
\begin{align}\label{eq:Darprod1}
\begin{split}
\calS_j\Dar f=\begin{cases}\calS_jf,\quad &j\neq 1\\\calS_1\Dar f= \nu_0\partial_\rho(\nu_0^{-1}\calS_1 f) \quad&j=1\end{cases}.
\end{split}
\end{align}

We can now state the main result of this section.
\begin{proposition}\label{prop:ILEDprod1}
Suppose $\psi$ satisfies \eqref{eq:ILEDprod1} for $t\in[t_1,t_2]$. Then
\begin{align}\label{eq:ILEDprod2}
\begin{split}
&\|\bbP_c\psi\|_{LE[t_1,t_2]}+\sup_{t_1\leq t\leq t_2}\|\bbP_c\psi(t)\|_E\\
&\lesssim R_{\ctf}^{2+\frac{\alpha}{2}}\big( \|\bbP_c\psi(t_1)\|_E+\|\bbP_cg\|_{L_t^1L_x^2[t_1,t_2]}+\|\Dar \calS_1 f\|_{LE^\ast[t_1,t_2]}\big)\\
&\quad +\|\jap{\partial_t}\bbP_cf\|_{LE^\ast[t_1,t_2]}+\sum_{j=1}^3\|\angles{\psi}{Z_j}\|_{L^2_t[t_1,t_2]}.
\end{split}
\end{align} 
\end{proposition}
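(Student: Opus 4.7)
The obstruction to running the standard multiplier-based ILED scheme directly on $(-\partial_t^2+\uH)\psi=g+f$ is that the kernel elements $\nu^j=\jap{\rho}^{-2}\Theta^j$ of $\uH$ decay only as $\jap{\rho}^{-2}$ and hence just fail to lie in the spatial dual local energy space. The projection $\bbP_c$ only removes the exponentially decaying eigenfunction $\fybar_\mu$ (which sits in $\calS_0$), so the $\ell=1$ sector still carries the obstruction. I would decompose $\psi=\sum_j \calS_j\psi$ and treat the sectors $j\neq 1$ directly, while treating $\calS_1\psi$ after conjugation by the Darboux transform $\Dar$, which by \eqref{eq:Darprod1} annihilates each $\nu^j$.

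\textbf{Sectors $j\neq 1$.} On these sectors (with $\fybar_\mu$ removed by $\bbP_c$ in the radial case), the restriction of $\uH$ has no $L^2$-kernel, and by the preceding lemma no threshold resonance either. Combined with the fast decay $|V|\lesssim \jap{\rho}^{-6}$ and the fact that the only trapped null geodesic lives at the collar $\rho=0$, one can run the same Morawetz-plus-energy multiplier argument as in the higher-dimensional version of the product case in \cite{LuOS1} (adapted from \cite{MMT1,MST}): a Morawetz multiplier tuned to the effective potential for each fixed angular mode (with the usual degeneracy at $\rho=0$ recovered by commuting $\partial_t$ and rerunning the argument, which is the origin of the $\jap{\partial_t}$), together with an energy estimate with multiplier $\partial_t$ for the $g$ source. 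This yields
\begin{equation*}
\sum_{j\neq 1}\bigl(\|\calS_j\bbP_c\psi\|_{LE}+\sup_t\|\calS_j\bbP_c\psi(t)\|_E\bigr)\lesssim \|\bbP_c\psi(t_1)\|_E+\|\bbP_c g\|_{L_t^1L_x^2}+\|\jap{\partial_t}\bbP_c f\|_{LE^\ast},
\end{equation*}
with no loss in $R_\ctf$.

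\textbf{Sector $j=1$ via Darboux.} Set $v:=\calS_1\Dar\psi=\nu_0\partial_\rho(\nu_0^{-1}\calS_1\psi)$. A direct conjugation computation shows that $v$ satisfies a wave equation $(-\partial_t^2+\widetilde H)v=\calS_1\Dar(g+f)$, where $\widetilde H$ is the conjugation of $\uH|_{\calS_1}$ by the first-order operator $\nu_0\partial_\rho\nu_0^{-1}$; the new potential $\widetilde V$ still decays rapidly, and by construction $\widetilde H$ has no $L^2$-kernel and no threshold resonance on $\calS_1$. Thus the same multiplier scheme used in the previous paragraph applies to $v$ and produces
\begin{equation*}
\|v\|_{LE}+\sup_t\|v(t)\|_E \lesssim \|v(t_1)\|_E+\|\calS_1\Dar g\|_{L_t^1L_x^2}+\|\jap{\partial_t}\calS_1\Dar f\|_{LE^\ast}.
\end{equation*}
To transfer this back to $\calS_1\psi$, invert the Darboux transform by solving $\partial_\rho(\nu_0^{-1}\calS_1\psi)=\nu_0^{-1}v$ in $\rho$. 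The one-dimensional space of homogeneous solutions is exactly $\mathrm{span}\{\nu^j\}$, and the corresponding coefficient is determined by testing against the cutoff kernel elements $Z_j=\chi_{R_\ctf}\nu^j$; this is precisely where the $\|\angles{\psi}{Z_j}\|_{L_t^2}$ terms enter. The particular solution obtained by integrating $\nu_0^{-1}v$ against the weight $\nu_0$ is bounded in $LE$ by $\|v\|_{LE}$ after absorbing the weight mismatch on the cutoff scale $R_\ctf$, with a total loss of $R_\ctf^{2+\alpha/2}$. Re-expressing $\calS_1\Dar g$ and $\calS_1\Dar f$ via \eqref{eq:Darprod1} and combining with the previous paragraph gives \eqref{eq:ILEDprod2}.

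\textbf{Main obstacle.} The delicate point is the last step: keeping the $R_\ctf$-losses sharp and confined to the right terms on the right-hand side of \eqref{eq:ILEDprod2}. Concretely, one must (i) decouple the $\calS_1$ part of $f$ (which is estimated in the Darboux-transformed norm $\|\Dar\calS_1 f\|_{LE^\ast}$ and carries the $R_\ctf^{2+\alpha/2}$ weight) from the $\calS_{j\neq 1}$ part (which remains in $\|\jap{\partial_t}\bbP_c f\|_{LE^\ast}$ without loss), (ii) pin down the kernel coefficient purely through $\angles{\psi}{Z_j}$ rather than through a spectral projection, so that no extra powers of $R_\ctf$ leak onto the $\jap{\partial_t}\bbP_c f$ term, and (iii) verify that the Darboux inversion produces no spurious boundary contributions at the two asymptotic ends $\rho\to\pm\infty$, which uses the absence of $\calS_1$ threshold resonances at those ends.
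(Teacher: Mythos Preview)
Your approach has a genuine derivative-loss gap in the $\ell=1$ sector. By applying the Darboux transform directly to the full equation for $\calS_1\psi$, you obtain on the right-hand side the quantities $\|\calS_1\Dar g\|_{L^1_tL^2_x}$ and $\|v(t_1)\|_E=\|\Dar\calS_1\psi(t_1)\|_E$. Since $\Dar$ on $\calS_1$ is the first-order operator $\nu_0\partial_\rho\nu_0^{-1}$, both of these carry one more spatial derivative than the terms $\|\bbP_c g\|_{L^1_tL^2_x}$ and $\|\bbP_c\psi(t_1)\|_E$ actually appearing in \eqref{eq:ILEDprod2}. The sentence ``re-expressing $\calS_1\Dar g$ via \eqref{eq:Darprod1}'' does not remove this derivative; there is no bound $\|\Dar g\|_{L^2_x}\lesssim\|g\|_{L^2_x}$.

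The paper avoids this by inserting a near-far decomposition \emph{before} the Darboux step. One writes $\psi_1=\psi_{1,\far}+\psi_{1,\near}$ where $\psi_{1,\far}$ solves $(-\partial_t^2+H_1+V_\far)\psi_{1,\far}=g_1$ with the \emph{same} initial data as $\psi_1$, and $V_\far$ is a rapidly decaying potential chosen so that $V-2\jap{\rho}^{-2}+V_\far$ is repulsive. This auxiliary operator has no kernel, so $\psi_{1,\far}$ satisfies ILED directly in terms of $\|\psi_1(t_1)\|_E+\|g_1\|_{L^1_tL^2_x}$, with no Darboux and no $R_\ctf$-loss. The remainder $\psi_{1,\near}$ then has \emph{zero} initial data and source $V_\far\psi_{1,\far}+f_1$; only to $\psi_{1,\near}$ is Lemma~\ref{lem:shred1} (the Darboux step) applied. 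Since $V_\far$ has compact support and fast decay, $\|\Dar(V_\far\psi_{1,\far})\|_{LE^\ast}$ is controlled by the already-established $\|\psi_{1,\far}\|_{LE}$, and the vanishing initial data kills $\|\Dar\psi_{1,\near}(t_1)\|_E$. This is precisely what localizes the $R_\ctf^{2+\alpha/2}$ loss to the terms shown in \eqref{eq:ILEDprod2} while keeping $g$ and the data at base regularity.

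A smaller point: after projecting to a fixed harmonic the problem is effectively radial, so the $LE$ norm has no trapping degeneracy at $\rho=0$; this is why the Darboux source term in \eqref{eq:ILEDprod2} is $\|\Dar\calS_1 f\|_{LE^\ast}$ without the $\jap{\partial_t}$, separated from the $\|\jap{\partial_t}\bbP_c f\|_{LE^\ast}$ term coming from the $j\neq 1$ sectors.
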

\begin{remark}\label{rem:LEDproductrhsdivision}
We have written the right-hand side in \eqref{eq:ILEDprod1} as a sum of two terms for the following reason. To estimate the nonlinearity we will be able to use the norm $L_t^1L_x^2$ in our applications. However, for the source terms which involve a linear contribution from the parameters, we need to use the $LE^\ast$ norm. For this purpose, since the multiplier for the energy estimate is $\partial_t$ and in view of the degeneracy at $\rho=0$ in the $LE$ norm, we need to perform an integration by parts in time to move $\partial_t$ from $\phi$ to the source term. This is how the term $\|\partial_t\bbP_cf\|_{LE^\ast[t_{1}, t_{2}]}$ in the second line of \eqref{eq:ILEDprod2} arises. See the proof of \cite[Proposition~2.3]{LuOS1}, as well as the proof of Lemma~\ref{lem:ILEDprodj}, for the details of this argument.
\end{remark}
We write
\begin{align*}
\begin{split}
\psi= \sum_{j=0}^\infty\psi_j,\qquad \psi_j=\calS_j\psi.
\end{split}
\end{align*}
Since $\calS_j$ commutes with $\Box+V$, with $g_j$ and $f_j$ 
defined similarly as above,
\begin{align*}
\begin{split}
(-\partial_t^2+H_j)\psi_j=g_j+f_j,
\end{split}
\end{align*}
where
\begin{align*}
\begin{split}
H_j=\Delta_\rho-\frac{j(j+1)}{\jap{\rho}^2}+V, \qquad  \Delta_\rho= \frac{1}{\sqrt{|g|}}\partial_\rho\big(\sqrt{|g|}(g^{-1})^{\rho\rho}\partial_\rho\big).
\end{split}
\end{align*}
In particular, $H_1=\Delta_\rho-\frac{2}{\jap{\rho}^2}+V$. To prove Proposition~\ref{prop:ILEDprod1} we prove \eqref{eq:ILEDprod2} with $\psi$, $g$ replaced by $\psi_{\neq1}$, $g_{\neq1}$ and by $\psi_1$, $g_1$. Here $\psi_{\neq1}=\sum_{j\neq1}\calS_j\psi_j$ and similarly for $g_{\neq1}$.  The difficulty with $\psi_1$ is the presence of slowly decaying eigenfunctions for $H_1$. We start with the easier case of $\psi_{\neq1}$. Notice that the operators $H_j$, $j\neq1$, have trivial kernel (and in fact only $H_0$ has a positive eigenfunction) so the last term on the right-hand side of \eqref{eq:ILEDprod2} is unnecessary for $j\neq1$.
\begin{lemma}\label{lem:ILEDprodj}
$\psi_{\neq1}$ satisfies
\begin{align}\label{eq:ILEDprodj}
\begin{split}
\|\bbP_c\psi_{\neq1}\|_{LE[t_1,t_2]}+\sup_{t_1\leq t\leq t_2}\|\bbP_c\psi_{\neq1}(t)\|_E&\lesssim \|\bbP_c\psi_{\neq1}(t_1)\|_E+\|\bbP_cg_{\neq1}\|_{L_t^1L_x^2[t_1,t_2]}\\
&\quad+\|\jap{\partial_t}\bbP_cf_{\neq1}\|_{LE^\ast[t_1,t_2]}.
\end{split}
\end{align} 
\end{lemma}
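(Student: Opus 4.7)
The plan is to follow the template of \cite[Proposition~2.3]{LuOS1} (the higher-dimensional analogue), since the obstruction specific to $n=3$, namely the slow decay of the kernel elements $\nu^i$, only enters the $j=1$ sector and has been isolated by separating $\psi_{\neq 1}$. The key structural facts I would exploit are: $\varphi_\mu$ is radial, so $\mathbb{P}_c \psi_j = \psi_j$ for all $j \neq 0$; the operators $H_j$ and $\calS_j$ commute with $\Box+V$, so each angular sector can be analyzed separately; and, by the preceding lemma, $H_0$ has no threshold resonance, while for $j\geq 2$ the added centrifugal potential $-j(j+1)\jap{\rho}^{-2}$ pushes $H_j$ to be negative definite.

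First I would derive the energy identity by multiplying $(-\partial_t^2+H_j)(\mathbb{P}_c\psi_j) = \mathbb{P}_c g_j + \mathbb{P}_c f_j$ by $\partial_t \mathbb{P}_c \psi_j$ and integrating over $[t_1,t_2]\times\barcalC$. The $\mathbb{P}_c g_j$ contribution is absorbed directly by $\|\mathbb{P}_c g_j\|_{L^1_t L^2_x}\sup_t \|\partial_t \mathbb{P}_c\psi_j\|_{L^2_x}$. To pair $\mathbb{P}_c f_j$ against $\partial_t \mathbb{P}_c \psi_j$ in a norm compatible with the degeneracy $\rho\jap{\rho}^{-(3+\alpha)/2}$ appearing in the $LE$ definition, I would integrate by parts in $t$ to move $\partial_t$ onto $f_j$; this is precisely what produces the $\|\jap{\partial_t}\mathbb{P}_c f_j\|_{LE^*}$ term on the right-hand side of \eqref{eq:ILEDprodj}.

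Next I would run a Morawetz/virial argument with a multiplier of the form $\big(F(\rho)\partial_\rho + G(\rho)\big)\mathbb{P}_c\psi_j$, where $F$ and $G$ are radial weights adapted to the catenoid: $F$ vanishes at the collar $\rho=0$ to accommodate the trapped null geodesic (mirrored in the $\rho\jap{\rho}^{-\cdot}$ degeneracies in the $LE$ norm), $F$ grows to generate the spatial $\partial_\rho$, angular, and zeroth-order pieces of $\|\cdot\|_{LE_x}$, and $G$ is tuned to handle the zeroth-order terms produced by commutators with $V$ and by the conformal factors in $\sqrt{|g|}$. The resulting bulk integral is
\[
\int F'\,|\partial_\rho \mathbb{P}_c\psi_j|^2 + (\text{angular and zeroth-order terms})\,\jap{\rho}^{-\cdot} - \tfrac12\int \big(\Delta_\rho F - 2 F V' + \cdots\big) |\mathbb{P}_c\psi_j|^2,
\]
and coercivity of this quadratic form is the crux. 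For $j\geq 2$ the centrifugal potential gives pointwise coercivity for a standard choice of $F,G$. For $j=0$ restricted to $\mathbb{P}_c$, coercivity comes from the spectral gap obtained by removing $\varphi_\mu$ together with the no-threshold-resonance property just established, exactly as in \cite[Proposition~2.3]{LuOS1}.

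Combining the Morawetz estimate with a small multiple of the energy estimate (to reabsorb the energy contributions coming out of the Morawetz identity) yields \eqref{eq:ILEDprodj} for each fixed $j\neq 1$; summing over $j\neq 1$ using Parseval on $\bbS^2$ and using that $\calS_j$ commutes with all operators in sight closes the proof. The main obstacle I anticipate is the coercivity check for $j=0$ on the range of $\mathbb{P}_c$: one must quantitatively rule out that $-H_0|_{\mathbb{P}_c}$ has any mass near the threshold, which is precisely the role of the no-threshold-resonance statement proved above and of the asymptotic flatness of the catenoid metric used to match the multiplier $F(\rho)\partial_\rho$ to the flat Morawetz vectorfield for $|\rho|\to\infty$.
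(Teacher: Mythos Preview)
Your approach is correct and essentially the same as the paper's: both invoke \cite[Proposition~2.3]{LuOS1}, relying on the facts that the slowly-decaying kernel elements $\nu^i$ live entirely in the $j=1$ sector (so $\brk{\psi_{\neq 1},\fybar_i}=0$ automatically) while $\fybar_\mu$ decays exponentially and thus sits in the spatial part of $LE^\ast$.

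The one place where the paper adds detail beyond your sketch is the treatment of the cross term $\int_{t_1}^{t_2}\int \chi(\rho)\,\bbP_c f_{\neq 1}\,\partial_t\bbP_c\psi_{\neq 1}\,\ud x$ near $\rho=0$. After integrating by parts in $t$ as you propose, boundary terms of the form $\sup_t\|\bbP_c f_{\neq 1}(t)\|_{L^2_x(\{|\rho|\leq 1\})}\,\|\bbP_c\psi_{\neq 1}(t)\|_{L^2_x(\{|\rho|\leq 1\})}$ appear, and $\sup_t\|f\|_{L^2_x}$ is \emph{not} directly controlled by the spacetime norm $\|f\|_{LE^\ast}$. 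The paper handles this by splitting into $|t_2-t_1|\leq 1$ (direct H\"older, using $\|f\|_{L^1_tL^2_x}\aleq |t_2-t_1|^{1/2}\|f\|_{LE^\ast}$) and $|t_2-t_1|\geq 1$ (a calculus identity bounding $\sup_t|\bbP_c f_{\neq 1}|^2$ by $\|f\|_{L^2_t}\|\partial_t f\|_{L^2_t}+|t_2-t_1|^{-1}\|f\|_{L^2_t}^2$). This is a standard maneuver, but it is the reason both $\|\bbP_c f_{\neq 1}\|_{LE^\ast}$ and $\|\partial_t\bbP_c f_{\neq 1}\|_{LE^\ast}$ are needed on the right-hand side, and you should make it explicit when writing out the argument.
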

\begin{proof}
Note that $\angles{\psi_{\neq1}}{\fybar_i}=0$ for $i=1,2,3$. Moreover, $\fybar_\mu$ decays exponentially at spatial infinity so $\|\jap{\rho}^{\frac{1+\alpha}{2}}\fybar_\mu\|_{L^2_x}$ is finite (this is the spatial part of the $LE^\ast$ norm). With these observations, the proof is the same as that \cite[Proposition~2.3]{LuOS1}. Indeed, we obtain the following bound by repeating the proof of the last two estimates in the statement of \cite[Proposition~2.3]{LuOS1}:
\begin{align*}
\|\bbP_c\psi_{\neq1}\|_{LE[t_1,t_2]}+\sup_{t_1\leq t\leq t_2}\|\bbP_c\psi_{\neq1}(t)\|_E
&\lesssim \|\bbP_c\psi_{\neq1}(t_1)\|_E+\|\bbP_cg_{\neq1}\|_{L_t^1L_x^2[t_1,t_2]} \\
&\quad+\nrm{\bbP_{c} f_{\neq 1}}_{LE^{\ast}[t_{1}, t_{2}]} + \abs*{\int_{t_{1}}^{t_{2}} \int \chi(\rho) \bbP_{c} f_{\neq 1} \rd_{t} \bbP_{c} \psi_{\neq 1} \, \ud x},
\end{align*}
where $\chi(\rho)$ is a smooth cutoff supported in $\set{\abs{\rho} \leq 1}$. For the last term, we claim that
\begin{align*}
\abs*{\int_{t_{1}}^{t_{2}} \int \chi(\rho) \bbP_{c} f_{\neq 1} \rd_{t} \bbP_{c} \psi_{\neq 1} \, \ud x}
&\leq \dlt \left(\nrm{\bbP_{c} \psi_{\neq 1}}_{LE[t_{1}, t_{2}]} + \sup_{t_{1} \leq t \leq t_{2}} \nrm{\bbP_{c} \psi_{\neq 1}(t)}_{E}\right) \\
&\quad + C_{\dlt} \left(\nrm{\chi(\rho) \bbP_{c} f}_{LE^{\ast}[t_{1}, t_{2}]} + \nrm{\chi(\rho) \rd_{t} \bbP_{c} f}_{LE^{\ast}[t_{1}, t_{2}]} \right),
\end{align*}
for every $\dlt > 0$. With such an estimate, we may immediately complete the proof by taking $\dlt > 0$ sufficiently small to absorb the contribution of $\bbP_{c} \psi_{\neq 1}$ into the left-hand side.

To prove the claim, we divide into two cases. First, when $\abs{t_{2} - t_{1}} \leq 1$, we simply observe that
\begin{align*}
\abs*{\int_{t_{1}}^{t_{2}} \int \chi(\rho) \bbP_{c} f_{\neq 1} \rd_{t} \bbP_{c} \psi_{\neq 1} \, \ud x} 
&\leq \nrm{\bbP_{c} f_{\neq 1}(t)}_{L^{1}_{t} L^{2}_{x}[t_{1}, t_{2}]} \sup_{t_{1} \leq t \leq t_{2}} \nrm{\rd_{t} \bbP_{c} \psi_{\neq 1}(t)}_{L^{2}} \\
&\aleq \abs{t_{2} - t_{1}} \nrm{\bbP_{c} f_{\neq 1}(t)}_{LE^{\ast}[t_{1}, t_{2}]} \sup_{t_{1} \leq t \leq t_{2}} \nrm{\bbP_{c} \psi_{\neq 1}(t)}_{E}.
\end{align*}
Since $\abs{t_{2} - t_{1}} \leq 1$, the desired estimate follows from Cauchy-Schwarz. 

Consider now the second case, when $\abs{t_{2}-t_{1}} \geq 1$. We integrate $\rd_{t}$ by parts and estimate
\begin{align*}
\abs*{\int_{t_{1}}^{t_{2}} \int \chi(\rho) \bbP_{c} f_{\neq 1} \rd_{t} \bbP_{c} \psi_{\neq 1} \, \ud x}
&\leq \sup_{t_{1} \leq t \leq t_{2}} \nrm{\bbP_{c} f_{\neq 1}(t)}_{L^{2}_{x}(\set{\abs{\rho} \leq 1})} \nrm{\bbP_{c} \psi_{\neq 1}(t)}_{L^{2}_{x}(\set{\abs{\rho} \leq 1})} \\
&\quad + \abs*{\int_{t_{1}}^{t_{2}} \int \chi(\rho) \rd_{t} \bbP_{c} f_{\neq 1} \bbP_{c} \psi_{\neq 1} \, \ud x}.
\end{align*}
The last term can be estimated as in the proof of \cite[Proposition~2.3]{LuOS1} by 
\begin{equation*}
\abs*{\int_{t_{1}}^{t_{2}} \int \chi(\rho) \rd_{t} \bbP_{c} f_{\neq 1} \bbP_{c} \psi_{\neq 1} \, \ud x}
\leq \dlt \nrm{\bbP_{c} \psi_{\neq 1}}_{LE[t_{1}, t_{2}]}^{2} + C_{\dlt} \nrm{\rd_{t} f}_{LE^{\ast}[t_{1}, t_{2}]}^{2},
\end{equation*}
which is acceptable. It remains to estimate the first term on the right-hand side. We begin with the following simple calculus identity on $[t_{1}, t_{2}]$:
\begin{equation*}
	\sup_{t_{1} \leq t \leq t_{2}} \abs{\bbP_{c} f_{\neq 1}(t)}^{2}
	\leq \nrm{\bbP_{c} f}_{L^{2}[t_{1}, t_{2}]} \nrm{\rd_{t} \bbP_{c} f}_{L^{2}[t_{1}, t_{2}]} + \frac{1}{t_{2} - t_{1}} \int_{t_{1}}^{t_{2}} \abs{\bbP_{c} f_{\neq 1}(t)}^{2} \, \ud t.
\end{equation*}
Multiplying by $\chi(\rho)$ and integrating in $x$, we obtain
\begin{equation*}
\sup_{t_{1} \leq t \leq t_{2}} \nrm{\bbP_{c} f_{\neq 1}(t)}_{L^{2}_{x}(\set{\abs{\rho} \leq 1})}^{2} \aleq \nrm{\bbP_{c} f_{\neq 1}}_{LE^{\ast}[t_{1}, t_{2}]} \nrm{\rd_{t} \bbP_{c} f_{\neq 1}}_{LE^{\ast}[t_{1}, t_{2}]} + \frac{1}{t_{2}-t_{1}} \nrm{\bbP_{c} f_{\neq 1}}_{LE^{\ast}[t_{1}, t_{2}]}^{2}.
\end{equation*}
Since $\abs{t_{2} - t_{1}} \geq 1$, the desired estimate follows from Cauchy-Schwarz.
 \qedhere
\end{proof}
Recalling that $\calY_1$ is three dimensional, with $\nu^i$ belonging to orthogonal subspaces of $\calY_1$, we perform a further decomposition $\psi_1=\psi_{1,1}+\psi_{1,2}+\psi_{1,3}$, with $\psi_{1,j}=\angles{\psi_1}{\Theta^j}_{L^2(\bbS^2)}\Theta^j$. Using a similar decomposition for $g_1$ and $f_1$, we see that $\psi_{1,j}$ satisfy the radial equations
\begin{align}\label{eq:shreduced1}
\begin{split}
(-\partial_t^2+\Delta_\rho-\frac{2}{\jap{\rho}^2}+V)\psi_{1,j}=g_{1,j}+f_{1,j},\qquad j=1,2,3.
\end{split}
\end{align}
Our task has therefore reduced to studying ILED for the radial wave equation \eqref{eq:shreduced1}, where we note that the operator $H_1:=\Delta_\rho-\frac{2}{\jap{\rho}^2}+V$ has no negative eigenvalues and a simple zero eigenvalue with eigenfunction $\nu_0:=\jap{\rho}^{-2}$. The following lemma is the key ingredient of the proof.
\begin{lemma}\label{lem:shred1}
Suppose the radially symmetric function $\Psi$ satisfies $-\partial_t^2\Psi+H_1\Psi=G$ for $t\in [t_1,t_2]$. Let $Z_0=\chi_{R_\ctf}\nu_0$ where  $\chi_{R_\ctf}\equiv 1$ for $|\uprho|\leq R_\ctf$ and $\chi_{R_\ctf}\equiv0$ for $|\uprho|\geq 2R_\ctf$. Then
\begin{align}\label{eq:shILEDtemp1}
\begin{split}
&\|\jap{\rho}^{-\frac{5+\alpha}{2}}\Psi\|_{L^2_{t,x}[t_1,t_2]}\\
&\lesssim R_\ctf^{2+\frac{\alpha}{2}} \big(\|\Dar \Psi (t_1)\|_E+\|\Dar G\|_{LE^\ast[t_1,t_2]}\big)+\|\angles{\Psi}{Z_0}\|_{L^2[t_1,t_2]},
\end{split}
\end{align}
where $\Dar G := \nu_0\partial_\uprho(\nu_0^{-1}G)$.
\end{lemma}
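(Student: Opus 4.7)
The strategy is to use a Darboux transform to kill the zero mode of $H_{1}$, prove ILED for the transformed function, then invert the transform. Define $v := \Dar \Psi = \nu_{0}\partial_{\rho}(\nu_{0}^{-1}\Psi)$. Since $\nu_{0}$ spans $\ker H_{1} \cap L^{2}$ and $H_{1}$ is formally self-adjoint with respect to $\sqrt{|g|}\,d\rho$, the standard intertwining identity $\Dar\circ H_{1} = \tilde H_{1} \circ \Dar$ holds for a Schr\"odinger-type operator $\tilde H_{1}$, and $v$ satisfies
\[
-\partial_{t}^{2} v + \tilde H_{1} v = \Dar G.
\]
The operator $\tilde H_{1}$ is strictly positive without zero eigenvalue or threshold resonance. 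Indeed, $H_{1}\geq 0$ by Allegretto--Piepenbrink (since $\nu_{0}>0$ lies in its kernel) and this transfers to $\tilde H_{1}$; the $L^{2}$-kernel of $\tilde H_{1}$ is trivial because $\ker\Dar$ is spanned precisely by $\nu_{0}$; and the absence of a threshold resonance is inherited from the argument in the preceding lemma (the perturbative analysis for $\ell\geq 1$ applies verbatim to $H_{1}$, and Darboux partners share the same continuous spectrum).

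\textbf{ILED for $v$, then inversion.} Since $\tilde H_{1}$ is strictly positive with no obstructions, the ILED argument used in the proof of Lemma~\ref{lem:ILEDprodj} (with the spectral projection $\bbP_{c}$ replaced by the identity) applies directly to $v$ and yields
\[
\|v\|_{LE[t_{1},t_{2}]} + \sup_{t_{1}\leq t\leq t_{2}}\|v(t)\|_{E} \lesssim \|\Dar\Psi(t_{1})\|_{E} + \|\Dar G\|_{LE^{\ast}[t_{1},t_{2}]}.
\]
To recover $\Psi$ from $v$, we integrate the ODE $\partial_{\rho}(\nu_{0}^{-1}\Psi) = \nu_{0}^{-1}v$ starting at $\rho = 0$:
\[
\Psi(t,\rho) = \nu_{0}(\rho)\int_{0}^{\rho}\nu_{0}^{-1}(\rho')v(t,\rho')\,d\rho' + c(t)\,\nu_{0}(\rho).
\]
The constant-of-integration $c(t)$ corresponds to the kernel of $\Dar$ and is pinned down by pairing with $Z_{0}$: since $\angles{\nu_{0}}{Z_{0}}$ is a positive constant bounded below uniformly in $R_{\ctf}$ for $R_{\ctf}$ large (using $\nu_{0}\in L^{2}$), one obtains
\[
|c(t)|\lesssim \bigl|\angles{\Psi(t)}{Z_{0}}\bigr| + \Bigl|\Bigl\langle \nu_{0}\int_{0}^{\rho}\nu_{0}^{-1}v\,d\rho',\,Z_{0}\Bigr\rangle\Bigr|,
\]
which contributes the $\|\angles{\Psi}{Z_{0}}\|_{L^{2}_{t}[t_{1},t_{2}]}$ term on the right-hand side of \eqref{eq:shILEDtemp1}. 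To bound the weighted $L^{2}_{t,x}$-norm of the particular-solution piece, we use $|\nu_{0}^{-1}(\rho')| = \jap{\rho'}^{2} \leq R_{\ctf}^{2}$ on $\supp Z_{0}\subseteq \{|\rho|\leq 2R_{\ctf}\}$, together with Cauchy--Schwarz against the LE-norm weight $\jap{\rho'}^{-(5+\alpha)/2}$ of $v$; a careful bookkeeping of weights (taking into account the radial volume measure $\sqrt{|g|}\,d\rho\approx\jap{\rho}^{2}\,d\rho$) produces the prefactor $R_{\ctf}^{2+\alpha/2}$ in front of both $\|\Dar\Psi(t_{1})\|_{E}$ and $\|\Dar G\|_{LE^{\ast}}$.

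\textbf{Main obstacle.} The delicate step will be the inversion, where we must control the $R_{\ctf}^{2+\alpha/2}$ loss. In the inner region $\{|\rho|\leq R_{\ctf}\}$ one has to balance the polynomial growth of $\nu_{0}^{-1}(\rho')$ against the weak LE-decay weights of $v$, while in the outer region the rapid decay of $\nu_{0}(\rho) = \jap{\rho}^{-2}$ must absorb the potentially large tail of $\int_{0}^{\rho}\nu_{0}^{-1}v\,d\rho'$. Achieving this while keeping the exponent at exactly $2+\alpha/2$ (rather than a higher power of $R_{\ctf}$) requires a careful dyadic decomposition of the integral representation, together with a tight use of the LE-norm structure on $v$; this is the main quantitative content of the argument and is the place where the specific decay rate $\jap{\rho}^{-2}$ of the zero mode $\nu_{0}$ enters explicitly.
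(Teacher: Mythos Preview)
Your approach is essentially the same as the paper's: Darboux transform to kill the zero mode, ILED for the transformed variable, then invert and control the $R_{\ctf}$ loss via pairing with $Z_0$. The paper's execution differs in one substantive way worth noting. You assert the intertwining $\Dar\circ H_1 = \tilde H_1\circ\Dar$ directly, but $H_1 = \Delta_\rho - 2\langle\rho\rangle^{-2} + V$ is not a flat Schr\"odinger operator --- $\Delta_\rho$ carries the metric weights $\sqrt{|g|}$ and $(g^{-1})^{\rho\rho}$ --- so the textbook factorization does not apply verbatim. The paper handles this cleanly by first conjugating to the line via $\Phi = |g|^{1/4}\Psi$, which turns $H_1$ into an honest Sturm--Liouville operator $L_1$; the Darboux factorization $L_1 = -D^*D$ is then standard, the partner $L_2 = -DD^*$ has no eigenvalues by the usual supersymmetry argument (if $L_2\varphi = \lambda\varphi$ then $L_1(D^*\varphi) = \lambda D^*\varphi$, forcing $\lambda = 0$ and $D^*\varphi = c y_0$, a contradiction), and conjugating back gives the transformed equation for $u = \sqrt{(g^{-1})^{\rho\rho}}\,\Dar\Psi$. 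This is your $v$ up to a bounded smooth factor, so the two arguments converge. For the inversion step you anticipate a delicate dyadic argument, but the paper dispatches it with Schur's test applied to the integral kernel in the representation $\Psi = c_0\nu_0 + \nu_0\int_0^\rho \nu_0^{-1}u\sqrt{g_{\rho\rho}}\,d\rho'$; the $R_{\ctf}^{2+\alpha/2}$ factor arises entirely from the subsequent pairing with $Z_0$ to isolate $c_0$, not from the integral operator itself.
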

\begin{proof}
The idea is to perform a Darboux transformation to arrive at a new operator with no eigenvalues. We begin by conjugating the equation to the line. Let $\Phi=|g|^{\frac{1}{4}}\Psi$. Then $\Phi$ satisfies 
\begin{align*}
\begin{split}
(-\partial_t^2+L_1)\Phi=H,\qquad L_1=\partial_\rho((g^{-1})^{\rho\rho}\partial_\rho)-\frac{2}{\jap{\rho}^2}+V-\frac{\partial_\rho((g^{-1})^{\rho\rho}\partial_\rho |g|^{\frac{1}{4}})}{|g|^{\frac{1}{4}}},
\end{split}
\end{align*}
where $H=|g|^{\frac{1}{4}} G$. Recalling that $\nu_0=\jap{\rho}^{-2}$ is the unique eigenfunction of $H_1$, we see that $y_0=|g|^{\frac{1}{4}}\nu_0$ is the unique eigenfunction of $L_1$. Moreover, $L_1$ admits the decomposition
\begin{align*}
L_1=-D^\ast D,\qquad D:=\sqrt{(g^{-1})^{\rho\rho}}y_0\partial_\rho y_0^{-1},\quad D^\ast := -y_0^{-1}\partial_\rho \sqrt{(g^{-1})^{\rho\rho}}y_0.
\end{align*}
Letting $L_2 = -D D^\ast$ we see that $w=D\Phi$ satisfies
\begin{align*}
\begin{split}
(-\partial_t^2+L_2)w=DH.
\end{split}
\end{align*}
Note that $L_2$ has no eigenvalues, because if $L_2\varphi=\lambda\fy$ then $L_1 (D^\ast \fy)=\lambda D^\ast \fy$, which implies that $\lambda=0$ and $D^\ast \fy=cy_0$. But then
\begin{align*}
\begin{split}
c\|y_0\|_{L^2_{\ud \rho}}^2=\angles{D^\ast \fy}{y_0}_{L^2_{\ud \rho}}=\angles{\fy}{Dy_0}_{L^2_{\ud \rho}}=0,
\end{split}
\end{align*}
which is a contradiction. We now conjugate back to three dimensions by letting $u=|g|^{-\frac{1}{4}}w$, so that with $h=|g|^{-\frac{1}{4}}DH$,
\begin{align}\label{eq:DarPsi1}
\begin{split}
(-\partial_t^2+\Delta_\rho +\tilV)u= h.
\end{split}
\end{align}
Here $\Delta_\rho=|g|^{-\frac{1}{2}}\partial_\rho((g^{-1})^{\rho\rho}|g|^{\frac{1}{2}}\partial_\rho\cdot)$, and $\tilV$ is a smooth radial potential with $|\tilV|\lesssim \jap{\rho}^{-4}$. Moreover, the operator $\Delta_\rho+\tilV$ now has no eigenvalues (otherwise, by conjugation, $L_2$ would have an eigenvalue). As in Lemma~\ref{lem:ILEDprodj} we conclude that
\begin{align}\label{eq:Darlemtemp1}
\begin{split}
\|u\|_{LE[t_1,t_2]}+\sup_{t_1\leq t\leq t_2}\|u(t)\|_{E}\lesssim \|u(t_1)\|_{E}+ \|h\|_{LE^\ast[t_1,t_2]}.
\end{split}
\end{align}
Note that here on the right-hand side we see $\|h\|_{LE^\ast}$ instead of $\|\jap{\partial_t}h\|_{LE^\ast}$ because we are dealing with a radial problem so there is no degeneracy at $\rho=0$ in the $LE$ norm. Indeed, the $\partial_\rho$ derivative is already controlled without a degeneracy in the $LE$ norm and to obtain a non-degenerate bound on $\partial_t$ in the interior, we can simply multiply the equation by $\chi_{\lesssim 1}u$, with $\chi_{\lesssim1}$ radial and supported in $\{|\rho|\lesssim 1\}$, to obtain
\begin{align*}
\begin{split}
\|\partial_t u\|_{L^2_{t,x}([t_1,t_2]\times\{|\rho|\lesssim1\})}\lesssim \|\partial_\rho u\|_{L^2([t_1,t_2]\times\{|\rho|\lesssim1\})}+\|h\|_{LE^\ast[t_1,t_2]}+\|u(t_1)\|_E.
\end{split}
\end{align*}
It remains to relate \eqref{eq:Darlemtemp1} estimate to $\Psi$. Tracking back we have $u=|g|^{-\frac{1}{4}}D(|g|^{\frac{1}{4}}\Psi)=\sqrt{(g^{-1})^{\rho\rho}}\Dar \Psi$, which implies that for some time dependent constant $c_0$,
\begin{align}\label{eq:Psinu0temp1}
\begin{split}
\Psi(\rho)=c_0\nu_0(\rho)+\nu_0(\rho)\int_0^\rho \nu_0^{-1}(\rho')u(\rho')\sqrt{g_{\rho\rho}(\rho')}\ud\rho'.
\end{split}
\end{align}
From Schur's test, and since $h=|g|^{-\frac{1}{4}}D(|g|^{\frac{1}{4}}G)=\sqrt{(g^{-1})^{\rho\rho}}\nu_0\partial_\rho(\nu_0^{-1}G)$,
\begin{align*}
\begin{split}
\|\jap{\rho}^{-\frac{5+\alpha}{2}}\Psi\|_{L^2_{t,x}[t_1,t_2]}&\lesssim \|u\|_{LE[t_1,t_2]}+\|c_0\|_{L^2_t[t_1,t_2]}\\
&\lesssim \|\Dar \Psi(t_1) \|_E+\|\Dar G\|_{LE^\ast[t_1,t_2]}+\|c_0\|_{L^2_t[t_1,t_2]}.
\end{split}
\end{align*}
To estimate $c_0$ we can pair \eqref{eq:Psinu0temp1} with $Z_0$ to conclude that
\begin{align*}
\begin{split}
\|c_0\|_{L^2[t_1,t_2]}\lesssim \|\angles{\Psi}{Z_0}\|_{L^2[t_1,t_2]}+R_\ctf^{2+\frac{\alpha}{2}}\|u\|_{LE[t_1,t_2]},
\end{split}
\end{align*}
which combined with the previous estimate gives \eqref{eq:shILEDtemp1}.
\end{proof}
We are now ready to prove Proposition~\ref{prop:ILEDprod1}.
\begin{proof}[Proof of Proposition~\ref{prop:ILEDprod1}]
We extend $\psi$ to be a solution of the homogeneous wave equation for $t\notin[t_1,t_2]$, and drop the restriction to $[t_1,t_2]$ in all norms. By Lemma~\ref{lem:ILEDprodj}, it suffices to consider $\psi_1$. For this we perform a near-far decomposition as follows. Let $\psi_{1,\far}$ be the solution to
\begin{align}\label{eq:ILEDpsi1far1}
\begin{split}
(-\partial_t^2+H_1+V_\far)\psi_{1,\far}=g_1,\qquad (\psi_{1,\far}(t_1),\partial_t\psi_{1,\far}(t_1))=(\psi_{1}(t_1),\partial_t\psi_{1}(t_1)),
\end{split}
\end{align}
where the spherically symmetric potential $V_\far$ is chosen so that $|V_\far|\lesssim \jap{\rho}^{-6}$ and $V-2\jap{\rho}^{-2}+V_\far$ is repulsive potential in the sense that it is negative and increasing. It follows as in Lemma~\ref{lem:ILEDprodj} that 
\begin{align*}
\begin{split}
\|\psi_{1,\far}\|_{LE}+\sup_{t}\|\psi_{1,\far}(t)\|_E\lesssim \|\psi_1(t_1)\|_{E} +\|g_1\|_{L_t^1L_x^2}.
\end{split}
\end{align*}
Next, observe that $\psi_{1,\near}=\psi_1-\psi_{1,\far}$ satisfies
\begin{align}\label{eq:Darpsi1neartemp1}
\begin{split}
(-\partial_t^2+H_1)\psi_{1,\near}=V_\far \psi_{1,\far}+f_1,\qquad (\psi_{1,\near}(t_1),\partial_t\psi_{1,\near}(t_1))=(0,0).
\end{split}
\end{align}
For $\tilR$ is sufficiently large, we claim that
\begin{equation}\label{eq:psi1prodtemp1}
\|\psi_{1,\near}\|_{LE}+\sup_{t}\|\psi_{1,\near}\|_E\lesssim \|\psi_1(t_1)\|_{E}+\|f_1\|_{LE^\ast}+\|g_1\|_{L_t^1L_x^2}+\|\psi_{1,\near}\|_{L^2_{t,x}(\{|\rho|\leq \tilR\})}.
\end{equation}
 
Indeed this follows from a multiplier argument as in the proof of \eqref{eq:Darlemtemp1} for $\psi_1$ and noting that $\psi_{1,\near}=\psi_1-\psi_{1,\far}$. To derive the estimate for $\psi_1$ we treat $V\psi_1$ as an error in the right-hand side for both the energy and ILED estimates. The decay of $V$ shows that the corresponding error in the ILED estimate can be bonded by $\|\psi_1\|_{L^2_{t,x}(\{|\rho|\leq \tilR\})}$, which in view of the estimate for $\psi_{1,\far}$ is bounded by the right-hand side of \eqref{eq:psi1prodtemp1}. For the contribution of $\int\chi_{\{|\rho|\leq\tilR\}}V\psi_{1}^2\sqrt{|g|}\ud\omega\ud\rho$ in the energy estimate we have noted that (see \cite[Lemma~7.7]{LuOS1} for a similar argument in a different context)
\begin{align*}
\begin{split}
\int\chi_{\{|\rho|\leq\tilR\}}V\psi_{1}^2\sqrt{|g|}\ud\omega\ud\rho=\int\chi_{\{|\rho|\leq\tilR\}}\partial_\rho(\rho)V\psi_{1}^2\sqrt{|g|}\ud\omega\ud\rho\leq \tilde{\delta} \|\psi_{1}\|_{E}^2+C_{\tilde\delta}\|\rho\psi_1\|_{L^2_x(\{|\rho|\leq \tilR\})}^2,
\end{split}
\end{align*}
where $\tilde{\delta}$ is a small constant. The first term can be absorbed by the energy norm while for the second term we have
\begin{align*}
\begin{split}
\|\rho\psi_1(t_2)\|_{L^2_x(\{|\rho|\leq \tilR\})}^2&\lesssim \|\psi_1(t_1)\|_{E}^2+\int_{t_1}^{t_2}\int\chi_{\{|\rho|\leq \tilR\}}\rho^2\psi_{1}\partial_t\psi_{1}\sqrt{|g|}\ud\omega\ud\rho\ud t\\
&\leq  \|\psi_1(t_1)\|_{E}^2+ \tilde{\epsilon}\|\psi_{1}\|_{LE}^2+C_{\tilde{\epsilon}}\|\psi_{1}\|_{L^2_{t,x}(\{|\rho|\leq \tilR\})}^2,
\end{split}
\end{align*}
where $\tilde{\epsilon}$ is a small constant, so that the first term can be absorbed by the $LE$ norm. Returning to \eqref{eq:psi1prodtemp1}, we need to deal with the last term on the right-hand side. 
By \eqref{eq:Darpsi1neartemp1} and Lemma~\ref{lem:shred1}, and in view of the established bounds on $\psi_{1,\far}$ and the fast decay of $V_\far$, we conclude that
\begin{align}\label{eq:DarPN0psinear1temp1}
\begin{split}
\|\psi_{1,\near}\|_{L^2_{t,x}(\{|\rho|\leq \tilR\})}&\lesssim R_\ctf^{2+\frac{\alpha}{2}}(\|\psi_1(t_1)\|_{E}+\|\Dar f_1\|_{LE^\ast}+\|g_1\|_{L_t^1L_x^2})+\sum_{j=1}^3\|\angles{\psi_{1,\near}}{Z_j}\|_{L^2_t}.
\end{split}
\end{align}
Finally, to estimate $\|\angles{\psi_{1,\near}}{Z_j}\|_{L^2_t}$ we observe that this term is bounded by
\begin{align*}
\begin{split}
\|\angles{\psi_{1,\near}}{Z_j}\|_{L^2_t}=\|\angles{\psi_1-\psi_{1,\far}}{Z_j}\|_{L^2_t}\lesssim \|\angles{\psi_{1}}{Z_j}\|_{L^2_t}+R_\ctf^{1+\frac{\alpha}{2}}\|\psi_{1,\far}\|_{
LE}.
\end{split}
\end{align*}
Since $Z_j=\calS_1Z_j$, this gives the desired estimate.
\end{proof}

We next state some elliptic and coercivity estimates for $\Hbar$. These will be used in the context of deriving decay for the solution from the decay of its time derivatives. We start with a definition, where we recall that $\partial_\Sigma$ denotes derivatives of size one.
\begin{definition} \label{def:w-Sob}
For $1 \leq p \leq \infty$, $s \in \bbZ_{\geq 0}$ and $\gmm \in \bbR$, define
\begin{align*}
	\nrm{\psi}_{\ell^{p}_{r} \calH^{s, \gmm}}
	= \sum_{\abs{\alp} \leq s} \nrm{2^{k\gmm} \nrm{2^{k\abs{\alp}} \rd_\Sigma^{\alp} \psi}_{L^{2}(A_{k})}}_{\ell^{p}(k\in\bbZ_{\geq0})},
\end{align*}
where $A_{k} = \set{(\rho, \omg) \in \ucalC : 2^{k} \leq \abs{\rho} \leq 2^{k+1}}$ for $k \geq 1$, and $A_{0} = \set{(\rho, \omg) \in \ucalC : \abs{\rho} \leq 2}$. 
\end{definition}
The elliptic and coercivity estimates are given in the next proposition.
\begin{proposition} \label{prop:uH}
Suppose $\uH \phi=f_1+f_2$. Let $\uZ_i=\chi_{R_\ctf}\fybar_i$, $i=\mu,1,2,3$ where~$\chi_{R_\ctf}(\cdot)=\chi(\cdot/R_\ctf)$ and $\chi$ is a radial cutoff to the region $\{|\rho|\leq1\}$.
Then for $s \in \bbZ_{\geq 0}$, we have
\begin{equation} \label{eq:uH-wSob}
	\nrm{\phi}_{\ell^{\infty} \calH^{s+2, -\frac{3}{2}}} \aleq \|f_1\|_{\ell^{1} \calH^{s, \frac{1}{2}}} +R_\ctf^2\|\Dar \calS_1 f_1\|_{\ell^{1} \calH^{s, \frac{1}{2}}} + R_\ctf^2\nrm{f_2}_{\ell^{1} \calH^{s, \frac{1}{2}}} + \sum_{i=\mu,1,2,3} \abs{\brk{\phi, \uZ_{i}}}.	
\end{equation}
For $s\in(1,\frac{3}{2})$, we have
\begin{equation}\label{eq:shardy1}
\|\jap{\uprho}^{-s}\phi\|_{L^2}\lesssim R_\ctf^{s-\frac{1}{2}} \|\Hbar\phi\|_{L^2}^{s-1}\|\partial_\Sigma\phi\|_{L^2}^{2-s}+R_\ctf^{2s-1} \|\Hbar\phi\|_{L^2}+R_\ctf^{\frac{1}{2}}|\angles{\phi}{\uZ_\mu}|+\sum_{i=1}^3 \abs{\brk{\phi, \uZ_{i}}}.
\end{equation}
Moreover,
\begin{align}\label{eq:energycoer1}
\begin{split}
R_\ctf^\frac{1}{2}|\angles{-\Hbar\phi}{\phi}|^{\frac{1}{2}}+R_\ctf^{\frac{1}{2}}|\angles{\phi}{\uZ_\mu}|+\sum_{i=1}^3 \abs{\brk{\phi, \uZ_{i}}}\gtrsim \|\partial_\Sigma\phi\|_{L^2}.
\end{split}
\end{align}
\end{proposition}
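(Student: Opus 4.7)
The plan is to split $\phi=\sum_\ell \calS_\ell \phi$ into spherical harmonics, noting that $\uH$ commutes with $\calS_\ell$ and restricts to a one-dimensional radial operator $H_\ell = \Delta_\rho -\ell(\ell+1)\jap{\rho}^{-2}+V$ on each sector. Outside a compact set, the metric on $\ucalC$ is a $\jap{\rho}^{-4}$-small perturbation of the two flat ends, and $V=O(\jap{\rho}^{-6})$, so the backbone of \eqref{eq:uH-wSob} is the weighted $L^2$ resolvent estimate for $(-\Delta_{\bbR^3})^{-1}$ realized as convolution with the Newton potential, which is exactly the statement that $\ell^1\calH^{s,\frac{1}{2}}\to \ell^\infty\calH^{s+2,-\frac{3}{2}}$ is bounded (a dyadic Hardy-Rellich bound). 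Interior regularity is handled by classical elliptic estimates on the compact collar region, and the two ends are patched together by a standard cutoff/commutator argument.

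On the sector $\ell\geq 2$, the angular repulsive term $\ell(\ell+1)\jap{\rho}^{-2}$ dominates $V$ and $H_\ell$ is strictly positive; no projections are needed and the bound follows from the perturbative argument with no $R_\ctf$ loss, contributing only the first term $\|f_1\|_{\ell^1\calH^{s,\frac{1}{2}}}$ (and the analogous $R_\ctf^2\|f_2\|$ contribution). On the sector $\ell=0$, the unique exponentially decaying positive eigenfunction $\fybar_\mu$ of $H_0$ is removed by writing $\calS_0\phi = c_\mu \fybar_\mu + \fybar_\mu^\perp$ and estimating $|c_\mu|\lesssim |\brk{\phi,\uZ_\mu}|$ up to an exponentially small tail lost by replacing $\fybar_\mu$ with $\uZ_\mu=\chi_{R_\ctf}\fybar_\mu$. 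The orthogonal complement $\fybar_\mu^\perp$ lies in the spectral subspace where $H_0$ is invertible with nice weighted bounds, giving the claimed estimate with an $O(1)$ prefactor in front of $|\brk{\phi,\uZ_\mu}|$.

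The main obstacle, and the source of all $R_\ctf^2$ losses, is the sector $\ell=1$: each component $\psi_{1,j}=\brk{\calS_1\phi,\Theta^j}_{L^2(\bbS^2)}\Theta^j$ satisfies the radial equation $H_1\psi_{1,j}=(f_1+f_2)_{1,j}$, and $H_1$ has the slowly decaying kernel $\nu_0=\jap{\rho}^{-2}$ which is not in the dual weighted space. We invoke the Darboux framework from the proof of Lemma~\ref{lem:shred1}: the intertwining identity $\calS_1\Dar f = \nu_0\partial_\rho(\nu_0^{-1}\calS_1 f)$ conjugates $H_1$ to an operator $\tilH_1$ with trivial kernel and potential $\tilV=O(\jap{\rho}^{-4})$, for which the perturbative weighted estimate applies directly to $u=\Dar\psi_{1,j}$ with data $\Dar(f_1)_{1,j}$, explaining the appearance of $\Dar\calS_1 f_1$ on the right-hand side with no $R_\ctf$ loss. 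Reconstructing $\psi_{1,j}$ requires integrating the Darboux relation back, which introduces the kernel mode via an integration constant $c_{1,j}\nu_0\Theta^j$; testing against $\uZ_j=\chi_{R_\ctf}\nu^j$ determines $c_{1,j}$, and the factor $R_\ctf^2$ arises because the reconstruction integral $\nu_0(\rho)\int_0^\rho \nu_0^{-1}u\,\sqrt{g_{\rho\rho}}\,d\rho'$ must be controlled pointwise in the $\ell^\infty\calH^{s+2,-\frac{3}{2}}$ norm up to scale $R_\ctf$, exactly as in the computation leading to \eqref{eq:shILEDtemp1}. For $f_2$, no Darboux-based cancellation is available, so one inverts $H_1$ directly after first subtracting the $\nu_0$-kernel component, a procedure that costs $R_\ctf^2$ by the same mechanism.

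Estimate \eqref{eq:shardy1} follows by real interpolation between the $L^2$ bound (which is \eqref{eq:uH-wSob} at $s=0$ read with weight $-\frac{3}{2}$) and the Dirichlet-type bound $\|\partial_\Sigma\phi\|_{L^2}\lesssim \|\sqrt{-\uH}\phi\|_{L^2}+\text{projection terms}$, using the standard weighted interpolation $\|\jap{\rho}^{-s}\phi\|_{L^2}\lesssim \|\jap{\rho}^{-\frac{3}{2}}\phi\|_{\ell^\infty L^2}^{s-1}\|\partial_\Sigma\phi\|_{L^2}^{2-s}$ valid for $s\in(1,\frac{3}{2})$, combined with the coercivity \eqref{eq:energycoer1} to convert $\|\uH\phi\|_{L^2}$ appearances into the factors of the right-hand side. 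Finally, \eqref{eq:energycoer1} is the usual Poincar\'e-Hardy coercivity on $\ucalC$: up to the finite-dimensional obstruction spanned by $\fybar_\mu,\fybar_1,\fybar_2,\fybar_3$, the quadratic form $-\brk{\uH\phi,\phi}=\int(|\nabla\phi|^2-V\phi^2)$ bounds $\|\partial_\Sigma\phi\|_{L^2}^2$ below modulo a compact local $L^2$ error, which is recovered from the projections $\brk{\phi,\uZ_\mu}$ and $\brk{\phi,\uZ_i}$ with an $R_\ctf^{\frac{1}{2}}$ prefactor accounting for the truncation of the kernel eigenfunctions to scale $R_\ctf$.
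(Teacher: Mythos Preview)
Your approach for \eqref{eq:uH-wSob} and \eqref{eq:energycoer1} is essentially correct and shares the key mechanism with the paper: the Darboux transform on the $\ell=1$ sector, reconstruction via $\psi_{1,j}=c_{1,j}\nu_0+\nu_0\int_0^\rho\nu_0^{-1}u$, and the $R_\ctf^2$ loss from testing the integration constant against $\uZ_j$. The structural difference is that the paper does a \emph{near--far} decomposition first: it solves $\uH_\far\phi_\far=f_2$ with a modified potential $V_\far$ that kills the discrete spectrum (so $\phi_\far$ is estimated by a patched Euclidean argument with no projections), and only then introduces spherical harmonics on $\phi_\near=\phi-\phi_\far$, which satisfies $\uH\phi_\near=(V-V_\far)\phi_\far+f_1$ with a compactly supported right-hand side. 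Your organization---spherical harmonics on $\phi$ directly---can also be made to work, but the paper's ordering cleanly separates the weighted far-field mapping estimate (which is insensitive to the kernel) from the finite-dimensional obstruction analysis, and avoids having to justify weighted invertibility of $H_1$ on $\nu_0^\perp$ from scratch for the $f_2$ piece.

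Your treatment of \eqref{eq:shardy1} is underdeveloped. The claimed ``standard weighted interpolation'' $\|\jap{\rho}^{-s}\phi\|_{L^2}\lesssim\|\jap{\rho}^{-3/2}\phi\|_{\ell^\infty L^2}^{s-1}\|\partial_\Sigma\phi\|_{L^2}^{2-s}$ is not an off-the-shelf inequality, and reading \eqref{eq:uH-wSob} at $s=0$ gives a bound in terms of $\|f_i\|_{\ell^1\calH^{0,1/2}}$, not $\|\uH\phi\|_{L^2}$, so the passage to the stated right-hand side is not immediate. The paper instead reuses the near--far decomposition: for $\phi_\far$ it invokes an interpolation estimate of the form $\|\jap{\rho}^{-s}\phi_\far\|_{L^2}\lesssim\|\uH\phi\|_{L^2}^{s-1}\|\partial_\Sigma\phi_\far\|_{L^2}^{2-s}$ (interpolating between first-order Hardy and a second-order Rellich-type bound, as in \cite[Lemma~8.12]{LuOS1}), and then controls $\phi_\near$ via \eqref{eq:energycoer1} and Hardy, with a small parameter $\delta=\delta_0 R_\ctf^{-s+1/2}$ chosen to absorb the cross term. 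This is where the specific $R_\ctf$ powers in \eqref{eq:shardy1} come from; your sketch does not account for them.
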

\begin{proof}
Starting with \eqref{eq:energycoer1} note that $\phi^\perp=\phi-\sum_{i=\mu,1,2,3}\angles{\phi}{\fybar_i}\fybar_i$ satisfies (see for instance \cite[Theorem~1.3]{StuartHiggs} for a proof of such an estimate)
\begin{align*}
\begin{split}
\mu^2\angles{\phi}{\fybar_\mu}^2+\angles{-\Hbar \phi}{\phi}= \angles{-\Hbar \phi^\perp}{\phi^\perp}\gtrsim \|\partial_\Sigma\phi^\perp\|_{L^2}^2.
\end{split}
\end{align*}
On the other hand 
\begin{align*}
\begin{split}
\|\partial_\Sigma\phi\|_{L^2}\lesssim \|\partial_\Sigma\phi^\perp\|_{L^2}+\sum_{i=\mu,\dots,3}|\angles{\phi}{\fybar_i}|,
\end{split}
\end{align*}
and by a Hardy type estimate and in view of the definition of $\uZ_i$,
\begin{align*}
\begin{split}
\sum_{i=\mu,\dots,3}|\angles{\phi}{\fybar_i}|\lesssim \sum_{i=\mu,\dots,3}|\angles{\phi}{\uZ_{i}}|+\sum_{i=\mu,\dots,3}|\angles{\phi^\perp}{\uZ_{i}}|\lesssim \sum_{i=\mu,\dots,3}|\angles{\phi}{\uZ_{i}}|+R_\ctf^{\frac{1}{2}}\|\partial_\Sigma\phi^\perp\|_{L^2}.
\end{split}
\end{align*}
A similar argument, and the fact that $|\angles{\phi^\perp}{\uZ_\mu}|\leq e^{-\tilde{\mu}R_\ctf}\|\partial_\Sigma\phi^\perp\|_{L^2}$ for a suitable $\tilde{\mu}>0$, shows that
\begin{align*}
\begin{split}
|\angles{\phi}{\fybar_\mu}|\lesssim e^{-\tilde{\mu}R_\ctf}\sum_{j=1}^3|\angles{\phi}{\fybar_j}|+e^{-\tilde{\mu}R_\ctf}|\angles{-H\phi}{\phi}|^{\frac{1}{2}}+|\angles{\phi}{\uZ_\mu}|.
\end{split}
\end{align*}
Putting these estimates together completes the proof of \eqref{eq:energycoer1}. For \eqref{eq:shardy1} we choose $V_\far$ such that it is zero inside a large compact set and agrees with $V$ outside a larger compact set. Moreover, since $\Delta_\barcalC$ has purely absolutely continuous spectrum, by choosing the compact set on which $V_\far$ vanishes large enough, we can ensure that $\Hbar_\far=\Delta_\barcalC+V_\far$ has no eigenvalues. Then a unique solution $\phi_\far$ to $\Hbar_\far\phi_\far=\Hbar\phi$ exists by the Fredholm alternative, and arguing as in the proof of equations (8.34) and (8.35) in \cite[Lemma~8.12]{LuOS1},
\begin{equation}\label{eq:shardytemp1}
\|\jap{\uprho}^{-s}\phi_\far\|_{L^2}\lesssim \|\Hbar\phi\|_{L^2}^{s-1}\|\partial_\Sigma\phi_\far\|_{L^2}^{2-s}\lesssim\|\Hbar\phi\|_{L^2}^{s-1}\|\partial_\Sigma\phi\|_{L^2}^{2-s}+\delta^{-1}\|\Hbar\phi\|_{L^2}+\delta\|\partial_\Sigma\phi_\near\|_{L^2},
\end{equation}
where $\phi_\near = \phi-\phi_\far$, and $\delta\ll1$ is to be chosen. Since $\Hbar\phi_\near = (V-V_\far)\phi_\far$, by Hardy's inequality 
\begin{align*}
\begin{split}
\|\jap{\uprho}^{-s}\phi_\near\|_{L^2}&\lesssim \|\jap{\uprho}^{-1}\phi_\near\|_{L^2}+ \|\partial_\Sigma\phi_\near\|_{L^2}.
\end{split}
\end{align*}
On the other hand, by \eqref{eq:energycoer1}, and with $\delta'\ll1$,
\begin{align*}
\begin{split}
\|\partial_\Sigma\phi_\near\|_{L^2}&\lesssim C_{\delta'}R_\ctf^{\frac{1}{2}}\|\jap{\rho}^{-s}\phi_\far\|_{L^2}+R_\ctf^{\frac{1}{2}}|\angles{\phi-\phi_\far}{\uZ_\mu}|+\sum_{i=1}^3 |\angles{\phi-\phi_\far}{\uZ_i}|
+\delta'\|\jap{\rho}^{-1}\phi_\near\|_{L^2}\\
&\lesssim C_{\delta'}R_\ctf^{s-\frac{1}{2}}\|\jap{\rho}^{-s}\phi_\far\|_{L^2}+R_\ctf^{\frac{1}{2}}|\angles{\phi}{\uZ_\mu}|+\sum_{i=1}^3 |\angles{\phi}{\uZ_i}|+\delta'\|\jap{\rho}^{-1}\phi_\near\|_{L^2}.
\end{split}
\end{align*}
Combining with \eqref{eq:shardytemp1} and choosing $\delta=\delta_0R_\ctf^{-s+\frac{1}{2}}$ with $\delta_0\ll1$ gives \eqref{eq:shardy1}. The proof of \eqref{eq:uH-wSob} follows a similar outline. Define $V_\far$, $\phi_\far$, and $\phi_\near$ as before, but with $\uH_\far\phi_\far= f_2$. We claim that
\begin{align}\label{eq:uH-wSob-temp1}
\begin{split}
\|\phi_\far\|_{\ell^\infty\calH^{s+2,-\frac{3}{2}}}\lesssim \|f_2\|_{\ell^1\calH^{s,\frac{1}{2}}}.
\end{split}
\end{align}
Indeed, as in the proof of \cite[Lemma~8.12]{LuOS1} using the coordinates $(\rho,\omega)$ we first consider
\begin{align*}
\begin{split}
\Hbar_\euc \phi_\euc=\chi f_2
\end{split}
\end{align*}
where $\chi$ is a cutoff to the large $\rho$ region (with a similar construction at the other asymptotic end). Here $\Hbar_\euc$ is an operator defined using the $(\rho,\omega)$ coordinates which agrees with the Euclidean Laplacian  $\Delta_\euc$ inside a large ball $\{0\leq \rho\leq R_{\mathrm{large}}\}$ and agrees with $\Hbar_\far$ for $\rho\geq2R_{\mathrm{large}}$. By the existing theory for $\Delta_\euc$ (see \cite[Section~7]{LuOh}) $\phi_\euc$ satisfies 
\begin{align*}
\begin{split}
\|\phi_\euc\|_{\ell^\infty\calH_\euc^{s+2,-\frac{3}{2}}}\lesssim \|\chi f_2\|_{\ell^1\calH_\euc^{s,\frac{1}{2}}},
\end{split}
\end{align*}
where the spaces $\calH_\euc^{s,\gamma}$ are defined as before but with respect to the Euclidean measure and derivatives. Here we have treated $\Hbar_\euc-\Delta_\euc$ perturbatively as $\|(\Hbar_\euc-\Delta_\euc)\phi_\euc\|_{\ell^1\calH_\euc^{s,\frac{1}{2}}}\ll \|\phi_\euc\|_{\ell^\infty\calH_\euc^{s+2,-\frac{3}{2}}}$.  Let $\psi_\euc:=\tilchi \phi_\euc$, where $\tilchi$ is another cutoff to the large $\rho$ region such that $\tilchi\chi=\chi$. Since $\psi_\euc$ is supported in the large $\rho$ region, using the coordinates $(\rho,\omega)$ we can again view it as a function on $\barcalC$. It follows from the previous estimate on $\phi_\euc$ that
\begin{align*}
\begin{split}
\|\psi_\euc\|_{\ell^\infty\calH^{s+2,-\frac{3}{2}}}\lesssim  \| f_2\|_{\ell^1\calH^{s,\frac{1}{2}}}.
\end{split}
\end{align*}
Now $\psi_\cat:=\phi_\far-\psi_\euc$ satisfies $\Hbar_\far\psi_\cat = (1-\chi)f_2-[\Hbar_\far,\tilchi]\phi_\euc-\tilchi(\Hbar_\far-\Hbar_\euc)\phi_\euc$, where the right-hand side is compactly supported. Moreover, by the same computation as for \eqref{eq:energycoer1},
\begin{align}\label{eq:psicattemp1}
\begin{split}
\|\partial_\Sigma u\|_{L^2}+\|\jap{\rho}^{-1}u\|_{L^2}\lesssim \|\jap{\rho}\Hbar_\far u\|_{L^2}
\end{split}
\end{align}
for any function $u$. Applying this with $u=\psi_\cat$ and using the earlier estimate for $\phi_\euc$ gives
\begin{align*}
\begin{split}
\sum_{|\alpha|\leq1}\nrm{2^{-\frac{3}{2}k} \nrm{2^{|\alpha|k}\partial^\alpha_\Sigma u}_{L^{2}(A_{k})}}_{\ell^{\infty}(k\in\bbZ_{\geq0})}&\lesssim \|\jap{\uprho}^{-1}u\|_{L^2}+ \|\partial_\Sigma u\|_{L^2}
\lesssim\| f_2\|_{\ell^1\calH^{s,\frac{1}{2}}}.
\end{split}
\end{align*} 
Starting from this estimate, we can prove the higher order estimates for $\psi_\cat$ inductively by writing the equation for $\psi_\cat$ as 
\begin{align*}
\begin{split}
\Delta_\barcalC \psi_\cat = (1-\chi)f_2-[\Hbar_\far,\tilchi]\phi_\euc-\tilchi(\Hbar_\far-\Hbar_\euc)\phi_\euc-V \psi_\cat,
\end{split}
\end{align*}
commuting $\chi_{\lesssim 1}\partial_\Sigma^{k-1}$ and $\chi_{\gtrsim1}\uprho^{k-\frac{3}{2}}\partial_\Sigma^{k-1}$, and using elliptic estimates. We conclude that $$\|\psi_\cat\|_{\ell^\infty\calH^{s+2,-\frac{3}{2}}}\lesssim  \| f_2\|_{\ell^1\calH^{s,\frac{1}{2}}}.$$
Combining this and the earlier estimate on $\psi_\euc$ gives \eqref{eq:uH-wSob-temp1}. Returning to $\phi_\near=\phi-\phi_\far$ note that $\Hbar \phi_\near = (V-V_\far)\phi_\far+f_1$. It follows by the same proof as for \eqref{eq:uH-wSob-temp1} that
\begin{align*}
\begin{split}
\|\phi_\near\|_{\ell^\infty\calH^{s+2,-\frac{3}{2}}}\lesssim   \|f_1\|_{\ell^1\calH^{s,\frac{1}{2}}}+\|f_2\|_{\ell^1\calH^{s,\frac{1}{2}}}+\|V\phi_\near\|_{\ell^1\calH^{s,\frac{1}{2}}}.
\end{split}
\end{align*}
To estimate $\|V\phi_\near\|_{\ell^1\calH^{s,\frac{1}{2}}}$ we decompose into spherical harmonics, and apply a Darboux transformation on the space of first harmonics where $\uZ_i$ belong. An analogous process is carried out in the proof of Proposition~\ref{prop:ILEDprod1}, specifically Lemma~\ref{lem:shred1}, so we will be brief here.  We start by decomposing 
\begin{align*}
\begin{split}
\phi_\near = \calS_1\phi_\near+\calS_{\neq1}\phi_\near,
\end{split}
\end{align*}
where $\calS_{\neq1}\phi_\near =\sum_{j\neq1}\calS_j \phi_\near$. As in the proof of \eqref{eq:energycoer1}, for any function $u$ (note that the kernel of $\Hbar$ belongs entirely to the space of first spherical harmonics, while the positive eigenvalue is radial)
\begin{align}\label{eq:psicattemp2}
\begin{split}
\|\partial_\Sigma\calS_{\neq1}u\|_{L^2}+\|\jap{\rho}^{-1}\calS_{\neq1}u\|_{L^2}\lesssim |\angles{u}{\uZ_\mu}|+\|\jap{\rho}\calS_{\neq1}\Hbar u\|_{L^2}.
\end{split}
\end{align}
 Using a near-far decomposition similar to what was done for $\psi_\cat$ and $\psi_\euc$ above, and using \eqref{eq:psicattemp2} in place of \eqref{eq:psicattemp1}, we can then show that
\begin{align*}
\begin{split}
\|V\calS_{\neq 1}\phi_\near\|_{\ell^1\calH^{s,\frac{1}{2}}}\lesssim \|\calS_{\neq1}\phi_\near\|_{\ell^\infty\calH^{s+2,-\frac{3}{2}}}\lesssim \| \calS_{\neq1}f_1\|_{\ell^1\calH^{s,\frac{1}{2}}}+ \| f_2\|_{\ell^1\calH^{s,\frac{1}{2}}}+|\angles{\phi}{\uZ_\mu}|.
\end{split}
\end{align*}
For $\calS_1\phi_\near$ observe that $(\uH_{\mathrm{rad}}-2\jap{\uprho}^{-2})\calS_1\phi_\near= \calS_1\uH\phi_\near$, where $\uH_{\mathrm{rad}}$ denotes the radial part of $\uH$.  We further decompose $\calS_1\phi_\near=\sum_{j=1}^3\calS_{1,j}\phi_\near$, with $\calS_{1,j}$ denoting the projection on $\calY_{1,j}$. Since the argument is the same for $j=1,2,3$, by an abuse of notation we suppress the $j$ dependence and denote the radial part of $\calS_{1,j}\phi_\near$ by $\calS\phi_{\near}$. After applying the Darboux transformation, we find that (see the proof of Lemma~\ref{lem:shred1} for the details of this computation)
\begin{align*}
\begin{split}
\tilH \Dar\calS\phi_\near = \calS\Dar \uH\phi_\near,
\end{split}
\end{align*}
where $\tilH$ is asymptotically Euclidean and has no discrete spectrum. It follows from a similar computation as for \eqref{eq:energycoer1} that  $\tilH$ satisfies the coercivity estimate 
\begin{align}\label{eq:psicattemp3}
\begin{split}
\|\partial_\uprho u\|_{L^2}+\|\jap{\uprho}^{-1}u\|_{L^2}\lesssim \|\jap{\uprho}\tilH u\|_{L^2}
\end{split}
\end{align}
for any function $u$. Arguing as before but using \eqref{eq:psicattemp3} in place of \eqref{eq:psicattemp1} we get 
\begin{align}\label{eq:fancySobolevDartemp0}
\begin{split}
\|\Dar \calS \phi_\near\|_{\ell^\infty \calH^{s+2,-\frac{3}{2}}}\lesssim \|\calS\Dar \uH\phi_\near\|_{\ell^2\calH^{s,\frac{1}{2}}}.
\end{split}
\end{align}
On the other hand
\begin{align}\label{eq:fancySobolevDartemp1}
\begin{split}
\calS\phi_\near(\uprho)=c_0\nu_0(\uprho)+\nu_0(\uprho)\int_0^\uprho \nu_0^{-1}(y)\Dar \calS\phi_\near(y)\ud y,
\end{split}
\end{align}
for some constant $c_0$. It follows from the decay of $V$ and \eqref{eq:fancySobolevDartemp0} that 
\begin{align*}
\begin{split}
\|V\calS\phi_\near\|_{\ell^1\calH^{s,\frac{1}{2}}}\lesssim |c_0|+\|\calS\Dar \uH\phi_\near\|_{\ell^2\calH^{s,\frac{1}{2}}}.
\end{split}
\end{align*}
Finally, to estimate $c_0$, we integrate \eqref{eq:fancySobolevDartemp1} against $\uZ_i$ (or more precisely $\calS\calZ_i$) to get
\begin{align*}
\begin{split}
|c_0|&\lesssim \sum_i |\angles{\phi_\near}{\uZ_i}|+R_\ctf^2\|\calS\Dar \uH\phi_\near\|_{\ell^2\calH^{s,\frac{1}{2}}}\\
&\lesssim \sum_i |\angles{\phi}{\uZ_i}|+R_\ctf\|\phi_\far\|_{\ell^\infty \calH^{s+2,-\frac{3}{2}}}+ R_\ctf^2\|\calS\Dar \uH\phi_\near\|_{\ell^2\calH^{s,\frac{1}{2}}}.
\end{split}
\end{align*}
Combining with the earlier estimate on $\phi_\far$ this completes the proof of \eqref{eq:uH-wSob}.
\end{proof}

\section{The main profile, the source term, and the equations of motion}\label{sec:profile1}
\subsection{Equations of motion}\label{sec:eqnsofmotion}
Here we recall from \cite{LuOS1} the main conclusions of the first order formulation of the HVMC equation, and various local and global coordinates and the form of the equation in these coordinates. The corresponding derivations from \cite{LuOS1} are independent of the dimension so we will not reproduce the proofs. In the remainder of this section we will assume that the curves $\xi$ and $\ell$ are given and satisfy $|\ell|,|\dotxi|, |\dotell|\ll1$. In the next section, after describing the modified profile, we will set up an implicit function theorem that guarantees the existence of such parameters.
\subsubsection{The first order formulation}\label{subsubsec:firstorder} Recalling the setup and notation from sections~\ref{subsec:Riemcat} and~\ref{subsec:mainprofile}, we start with the parameterization
\begin{align}\label{eq:Psiwpdef1}
\begin{split}
\Psi_\wp(t,\rho,\omega)=(t,\xi+\gamma^{-1}P_\ell F(\rho,\omega)+P_\ell^\perp F(\rho,\omega))
\end{split}
\end{align}
of the profile in the flat region $\calC_\flatt:=\{\sigma_\temp\geq X^0+\delta_1\}$. The almost normal vector $N$ in \eqref{eq:Ndef1} can be calculated to be $N=(0,|A_{-\ell}\nu|^{-2}A_{-\ell}\nu)^\intercal$. With this definition we write
\begin{align*}
\begin{split}
\Phi=\Psi_\wp+\psi N.
\end{split}
\end{align*}
We also introduce the notation
\begin{align*}
\begin{split}
h_{\mu\nu}:=\bfeta(\partial_\mu\Psi_\wp,\partial_\nu\Psi_\wp)\Big|_{\substack{\dot{\ell} = 0 \\\dot{\xi} = \ell}},\qquad 0\leq \mu,\nu\leq 3,
\end{split}
\end{align*}
by which we mean that every appearance of $\dotell$, respectively $\dotxi$, is replaced by $0$, respectively $\ell$,  in calculating $\partial\Psi_\wp$.  Similarly, for $0\leq \mu,\nu\leq 3$, let
\begin{align*}
\begin{split}
g_{\mu\nu}:=\bfeta(\partial_\mu \Phi,\partial_\nu\Phi),\qquad k_{\mu\nu}=\bfeta(\partial_\mu\Psi_\wp,\partial_\nu\Psi_\wp).
\end{split}
\end{align*}
We define the momentum variable $\dotpsi$ as
\begin{align}\label{eq:psidotdef1}
\begin{split}
\dotpsi:=\bfeta(\sqrt{|g|}(g^{-1})^{0\nu}\partial_\nu\Phi,N)-\bfeta(\sqrt{|h|}(h^{-1})^{00}(1,\ell),N)-B\psi,
\end{split}
\end{align}
where 
\begin{align*}
\begin{split}
B\psi:=\bfeta(\frac{\partial}{\partial\psi}\sqrt{|g|}(g^{-1})^{0\nu}\partial_\nu\Phi\Big|_{\substack{\dot{\ell} = 0 \\\dot{\xi} = \ell}},N).
\end{split}
\end{align*}
See \cite[Section~3.2]{LuOS1} for how we arrive at this definition. Let
\begin{align*}
\begin{split}
M:=\pmat{-\frac{ (h^{-1})^{0j} }{ (h^{-1})^{00} } \partial_j&\frac{1}{\sqrt{|h|}(h^{-1})^{00}}\\-\sqrt{|h|}L&-\partial_j\frac{(h^{-1})^{j0}}{(h^{-1})^{00}}},
\end{split}
\end{align*}
where
\begin{align*}
\begin{split}
L:=\frac{1}{\sqrt{|h|}}\partial_j(\sqrt{|h|}(\hbar^{-1})^{jk}\partial_k)+|\secondff|^2, \qquad (\hbar^{-1})^{jk}:=(h^{-1})^{jk}-\frac{(h^{-1})^{j0}(h^{-1})^{0k}}{(h^{-1})^{00}},
\end{split}
\end{align*}
and let
\begin{align}\label{eq:K1storder1}
\begin{split}
\vecK=\pmat{K\\\dotK}=\pmat{-\bfeta((\dotell\cdot\nabla_\ell+(\dotxi-\ell)\cdot\nabla_\xi)\Psi_\wp,N)\\-\bfeta(\sqrt{|h|}(h^{-1})^{00}(0,\dotell),N)}.
\end{split}
\end{align}
It then follows from the analysis in \cite[Sections 3.1--3.4]{LuOS1} (specifically equation (3.24) and (3.45) with slightly different notation) that if $\vecpsi:=(\psi,\dot\psi)^\intercal$ and $\wp$ are sufficiently small, then
\begin{align}\label{eq:firstorder1}
\begin{split}
(\partial_t-M)\vecpsi=\vecK+\vecf=\vecF_1,
\end{split}
\end{align}
where $\vecf=(f,\dotf)^{\intercal}$ satisfies
\begin{align}\label{eq:vecfbound1}
\begin{split}
\vecf=\calO(\dotell\wp, \dotwp^2,(\partial_\Sigma^{\leq 2}\vecpsi)^2).
\end{split}
\end{align}
Equation \eqref{eq:firstorder1} is the first order formulation of the equations of motion in the interior. For the purpose of the implicit function theorem determining the parameters and the modulation equations, it is useful to also obtain the equation that results from pairing \eqref{eq:firstorder1} with the truncated generalized eigenfunctions of $M$. To this end, recall from \cite[Section 3.5]{LuOS1} that the (generalized) zero eigenfunctions of $M$ have the forms
\begin{align*}
\begin{split}
\vecfy_i=\pmat{\fy_i\\\dotfy_i},\quad \vecfy_{3+i}=\pmat{\fy_{3+i}\\\dotfy_{3+i}},\quad i=1,2,3,
\end{split}
\end{align*}
where
\begin{align*}
\begin{split}
\fy_i=\nu^i+\calO(|\ell|^2), \quad\dotfy_i=\calO(|\ell|),\quad \fy_{3+i}=\calO(|\ell|),\quad \dotfy_{3+i}=\sqrt{|h|}(h^{-1})^{00}\nu^i+\calO(|\ell|).
\end{split}
\end{align*}
These satisfy
\begin{align*}
\begin{split}
M\vecfy_i=0,\qquad M\vecfy_{3+i}=\vecfy_i,\quad i=1,2,3.
\end{split}
\end{align*}
Correspondingly we have the truncated eigenfunctions $\vecZ_j:=\chi \vecfy_j$, $j=1,\dots,6$, where $\chi$ is a smooth compactly supported cut-off function that satisfies $\chi(\rho)=1$ for $|\rho|\leq R_\ctf$ and $\chi(\rho)=0$ for $|\rho|\geq 2R_\ctf$. Here $R_\ctf$ is a large constant that we use to denote the radius at which we cutoff approximate eigenfuntions. To define the orthogonality conditions later on, we define the \emph{symplectic pairing}
\begin{align*}
\begin{split}
\vecbfOmega\equiv \vecbfOmega(\vecpsi)=(\bfOmega(\vecpsi,\vecZ_1),\dots,\bfOmega(\vecpsi,\vecZ_{6})),
\end{split}
\end{align*}
where for $\vecu_i=(u_i,\dotu_i)$, $i=1,2$,
\begin{align}\label{eq:bfOmegadef1}
\begin{split}
\bfOmega(\vecu_1,\vecu_2)=\int (u_1\dotu_2-\dotu_1 u_2)\ud \omega\,\ud\rho.
\end{split}
\end{align}
Note that in general $\bfOmega(M\vecu_1,\vecu_2)=-\bfOmega(\vecu_1,M \vecu_2)$. From \eqref{eq:firstorder1} and the analysis of \cite[Section 3.6]{LuOS1} (specifically equation (3.30), but with slightly different notation), it follows that if $\vecpsi$ and $\wp$ are sufficiently small, then
\begin{align}\label{eq:bfOmega1}
\begin{split}
\partial_t\vecbfOmega+\vecN = \pmat{D+R&R\\R&D+R}\dotwp+\vecH=:\vecF(\partial_\Sigma^{\leq 2}\vecpsi,\ell,\dotwp),
\end{split}
\end{align}
where $R=\vecO(\partial_\Sigma^{\leq2}\vecpsi,\ell,\dotwp)$, $\vecH=\calO((\partial_\Sigma^{\leq2}\vecpsi)^2)$, $\vecN=(\bfOmega(\vecpsi,M\vecZ_1),\dots,\bfOmega(\vecpsi,M\vecZ_{6}))$, and $D$ is an invertible matrix with entries 
\begin{align*}
\begin{split}
d_{ij}=\int\chi\nu^i\nu^j\sqrt{|h|}(h^{-1})^{00}\ud\rho\,\ud\omega\simeq\begin{cases}1,\quad&i=j\\o_{R_\ctf,\ell}(1),\quad&i\neq j\end{cases}.
\end{split}
\end{align*}
By the (calculus) implicit function theorem there is a function $\vecG$ such that for small $x$, $y$, $w$, and $q$, we have $q=\vecF(x,y,w)$ if and only if $w=\vecG(x,y,q)$. Moreover, $\vecG$ satisfies the estimate
\begin{align}\label{eq:vecGbound1}
\begin{split}
|\vecG(x,y,q)|\lesssim |q|+|x|^2.
\end{split}
\end{align}
It follow from \eqref{eq:bfOmega1} that
\begin{align}\label{eq:bfOmega2}
\begin{split}
\dotwp=\vecG(\partial_\Sigma^{\leq 2}\vecpsi,\ell,\partial_t\vecbfOmega+\vecN).
\end{split}
\end{align}
Equation \eqref{eq:bfOmega2} will be our starting point for imposing the orthogonality conditions and deriving the modulation equations. 

To deal with the contribution of the growing mode, we need to further decompose $\vecpsi$. Recalling that $\uH\varphibar_\mu=\mu^2\varphibar_\mu$, let 
\begin{align*}
\begin{split}
\vecZ_\pm:= c_\pm(\chi\varphibar_\mu,\mp \mu\sqrt{|h|}\vert_{\ell=0}\chi\varphibar_\mu)^\intercal,
\end{split}
\end{align*}
where $\chi$ is as above and the normalization constants $c_\pm$ are chosen so that
\begin{align*}
\begin{split}
\bfOmega(\vecZ_{+},\vecZ_{-})=-\bfOmega(\vecZ_{-},\vecZ_{+})=1.
\end{split}
\end{align*}
The vectors $\vecZ_\pm$ satisfy
\begin{align*}
\begin{split}
M\vecZ_\pm=\pm\mu\vecZ_\pm+\calE_\pm,
\end{split}
\end{align*}
where the errors $\calE_\pm$ consist of terms that are supported around $|\rho|\simeq R_\ctf$ and are exponentially decaying in $R_\ctf$, or have additional smallness in terms of the parameter $\ell$. Given functions $a_\pm(t)$, we then decompose $\vecpsi$ as
\begin{align}\label{eq:psiphi1}
\begin{split}
\vecpsi = \vecphi+a_{+}\vecZ_{+}+a_{-}\vecZ_{-}.
\end{split}
\end{align}
As with $\wp$, the coefficients $a_\pm$ will be determined later by imposing suitable orthogonality conditions. For now we mention that in view of \eqref{eq:firstorder1} and the decomposition above,
\begin{equation}\label{eq:apm1}
\frac{\ud}{\ud t}(e^{-\mu t}a_{+})=-\frac{\ud}{\ud t}\big(e^{-\mu t}\bfOmega(\vecphi,\vecZ_{-})\big)-e^{-\mu t}F_{+},\qquad\frac{\ud}{\ud t} (e^{\mu t}a_{-})=\frac{\ud}{\ud t}\big(e^{\mu t}\bfOmega(\vecphi,\vecZ_{+})\big)+e^{\mu t}F_{-},
\end{equation}
where
\begin{equation}\label{eq:F+1}
-F_{+}=\bfOmega(\vecF_1,\vecZ_{-})-\bfOmega(\vecphi,M\vecZ_{-}+\mu\vecZ_{-})+a_{+}\bfOmega(M\vecZ_{+}-\mu\vecZ_{+},\vecZ_{+})+a_{-}\bfOmega(M\vecZ_{-}+\mu\vecZ_{-},\vecZ_{-})
\end{equation}
and
\begin{equation}\label{eq:F-1}
-F_{-}=\bfOmega(\vecF_1,\vecZ_{+})-\bfOmega(\vecphi,M\vecZ_{+}-\mu\vecZ_{+})+a_{+}\bfOmega(M\vecZ_{+}-\mu\vecZ_{+},\vecZ_{+})+a_{-}\bfOmega(M\vecZ_{-}+\mu\vecZ_{-},\vecZ_{+}).
\end{equation}
\subsubsection{The second order equation}\label{subsubsec:2ndordereq} We introduce a number of coordinates systems and write the corresponding second order equation satisfied by $\phi$. We will use the words ``interior" and ``exterior" to refer to the interior and complement of a large compact set, respectively.

{\bf{Interior and exterior coordinates.}} In the interior we will work with the coordinates $(t,\rho,\omega)$ used above. In the exterior we want to parameterize the VMC surface as a graph over the asymptotic hyperplanes of the catenoid, so we start by parameterizing the profile as such a graph. Let $x^0(\sigma,x')$ be defined by the requirement that $(x^0(\sigma,x'),x')\in \barbsUpsigma_\sigma$. Then, in $\calC_\hyp$
\begin{align*}
\begin{split}
x^0=\sigma-R+\sqrt{|x'-\xi|^2+1}
\end{split}
\end{align*}
and in $\calC_\flatt$
\begin{align*}
\begin{split}
x^0=\sigma.
\end{split}
\end{align*}
The expression for $x^0$ in the transition region $\{|X^0-\sigma_\temp(X)|<\delta_1\}$ depends on the choice of the smoothed out minimum function $\frakm$ (see Section~\ref{subsec:mainprofile}). With this definition of $x^0$, we define $\scQ$ by the requirement that $(x^0(\sigma,x'),x',\scQ(x^0(\sigma,x'),x'))\in\Sigma_\sigma$, that is, so that the map 
\begin{align*}
\begin{split}
(\sigma,x')\mapsto (x^0(\sigma,x'),x',\scQ(x^0(\sigma,x'),x'))
\end{split}
\end{align*}
is a parameterization of the profile in the exterior region $\{|x'|\gg1\}$.
To derive a more explicit expression for $\scQ$ we define the non-geometric polar coordinates $(\tau,r,\theta)$ by 
\begin{align*}
\begin{split}
\tau=\sigma\quad\mand \quad x'=\xi(\sigma)+r\Theta(\theta).
\end{split}
\end{align*}
It follows that
\begin{align}\label{eq:exttaurcoords1}
\begin{split}
\begin{cases}x^0=\tau-R+\jap{r}\\x'=\xi(\tau)+r\Theta(\theta)\end{cases}\mathrm{~in~} \calC_\hyp\qquad
\end{split}
\end{align}
and
\begin{align*}
\begin{split}
\begin{cases}x^0=\tau\\x'=\xi(\tau)+r\Theta(\theta)\end{cases}\mathrm{~in~}\calC_\flatt.
\end{split}
\end{align*}
A small computation then shows that in $\calC_\hyp$ our parameterization becomes (see \cite[Section~4.1]{LuOS1}, specifically equation (4.4) where we use a slightly different notation)
\begin{align*}
\begin{split}
(\tau,r,\theta)\mapsto (\tau-R+\jap{r},\xi(\tau)+r\Theta(\theta),Q(rA_\ell\Theta(\theta)-\gamma \jap{r}\ell)),
\end{split}
\end{align*}
where $Q(y)=Q(|y|)$ is a parameterization of the Riemannian catenoid satisfying the ODE
\begin{align*}
\begin{split}
Q''(\tilr)+\frac{2}{\tilr}Q'(\tilr)-(1+(Q'(\tilr))^2)^{-1}(Q'(\tilr))^2Q''(\tilr)=0.
\end{split}
\end{align*}

{\bf{Non-geometric global coordinates.}} As shown in \cite[Section~4]{LuOS1} one can also define \emph{non-geometric global coordinates} $(\uptau,\uprho,\upomega)$, with $\uptau=t=\tau$, which agree with $(\tau,r,\theta)$ in $\calC_\hyp$ and with $(t,\rho,\omega)$ in $\calC_\flatt$. If $\Psi_\glbl$ denotes the parameterization of the profile in these global coordinates, and $\Psi_{\glbl,\alpha}:=\partial_\alpha \Psi_{\glbl} \Big|_{ \substack{\dot{\ell} = 0 \\\dot{\xi} = \ell} }$, we define
\begin{align*}
\begin{split}
h_{\alpha\beta}=\bfeta(\Psi_{\glbl,\alpha},\Psi_{\glbl,\beta}).
\end{split}
\end{align*}
In the interior this definition agrees with the one in Section~\ref{subsubsec:firstorder} and in the exterior, and with $m$ denoting the Minkowski metric, $Q_{\alpha}:=\partial_\alpha Q \Big|_{ \substack{\dot{\ell} = 0 \\\dot{\xi} = \ell} }$, and $Q^\alpha=(m^{-1})^{\alpha\beta}Q_\beta$, we have
\begin{align*}
\begin{split}
h_{\alpha\beta}= m_{\alpha\beta}+Q_\alpha Q_\beta,\quad (h^{-1})^{\alpha\beta}=(m^{-1})^{\alpha\beta}-\frac{Q^\alpha Q^\beta}{1+Q^\gamma Q_\gamma},\quad |h|=|m|(1+Q^\alpha Q_\alpha).
\end{split}
\end{align*}
Then as in \cite[Section~4]{LuOS1}, the equation satisfied by $\phi$ can be written as (recall that $\phi$ is obtained from $\psi$ by subtracting the contribution of the growing mode; cf. \eqref{eq:psidef1} and \eqref{eq:psiphi1})
\begin{align}\label{eq:phicalG1}
\begin{split}
\calP \phi = \sum_{i=0}^3\calG_i, 
\end{split}
\end{align}
where $\calG_i$ denotes term of order $j$ in $\phi$ (with $\calG_1\equiv0$), and the linear operator $\calP$ satisfies
\begin{align}\label{eq:calPdef1}
\begin{split}
\calP = \calP_h+\calO(\jap{\uprho}^{-2}\dotwp^{\leq 2})\partial^{2} +\calO(\jap{\uprho}^{-2}\dotwp^{\leq 2})\partial +\calO(\jap{\uprho}^{-6}\dotwp),
\end{split}
\end{align}
where
\begin{equation}\label{eq:calPhdef1}
\calP_h=\frac{1}{\sqrt{|h|}}\partial_\mu \big(\sqrt{|h|} (h^{-1})^{\mu\nu}\partial_\nu\big)+ V.
\end{equation}
The term $\calG_1$ in \eqref{eq:phicalG1} is zero because the linear terms are included in $\calP$. However, the \emph{source term} $\calG_0$ is nonzero because in general $\Psi_\glbl$ is not an exact solution of the HVMC equation. This source term can be decomposed as
\begin{align*}
\begin{split}
\calG_0= g_\main+ g_\pert,
\end{split}
\end{align*}
where
\begin{align}\label{eq:calG01}
\begin{split}
g_\main= \calO(\dotwp\jap{\uprho}^{-3})\quad\mand\quad g_\pert=\calO(\dotwp^{\leq 3}\jap{\uprho}^{-4}).
\end{split}
\end{align}
For future reference we also introduce the notation $\calP_h^\stat$ for the spatial part of $\calP_h$:
\begin{align}\label{eq:calPhstat1}
\begin{split}
\calP_h^\stat:=\frac{1}{\sqrt{|h|}}\partial_i \big(\sqrt{|h|} (h^{-1})^{ij}\partial_j\big)+ V,
\end{split}
\end{align}
where $i,j$ range over the spatial variables $(\uprho,\upomega)$. The eigenfunctions of $\calP_h^\stat$ will be denoted by $\fy_i^\stat$, $i=\mu,1,2,3$. These can be obtained by passing to the global geometric coordinates introduced below, for the parameters fixed at any given time, and using the expression for $\fybar_i$.
\begin{remark}\label{rem:calG01} The proof of \eqref{eq:calG01} is contained in \cite[Lemma~8.5]{LuOS1}. The only fact that needs to be pointed out is that in this proof, the main contribution comes from $(\partial_\uprho+\frac{1}{\uprho})\partial_\uptau Q$. The leading order part of $\partial_\uptau Q$ is~$\calO(\dotwp\jap{\uprho}^{-1})$, but since $(\partial_\uprho+\frac{1}{\uprho})\uprho^{-1}=0$, we do not see any $\calO(\jap{\uprho}^{-2})$ terms in $g_\main$. On the other hand, terms of the order $\calO(\dotwp \jap{\uprho}^{-3})$ already appear in the expression for $\Box_m$ in the $(\uptau,\uprho,\upomega)$ coordinates in the exterior. 
\end{remark} 
In the proof of ILED in Section~\ref{sec:ILED} we will need to work with $\calP$ with coefficients fixed at a given time. The following properties of the fixed time operator, which are discussed in \cite[Section~4.2.3]{LuOS1} are relevant for this purpose. Supose $|\dotwp^{\leq 2}|\lesssim \epsilon \uptau^{-\gamma-1}$ for some $\gamma>1$ (this will be the case in our bootstrap argument), and write the operator $\calP$ as
\begin{align*}
\begin{split}
\calP\psi=\uppi_q^{\mu\nu}\partial^2_{\mu\nu}\psi+\uppi_l^\mu\partial_\mu\psi+\uppi_c\psi.
\end{split}
\end{align*}
For a given time value $t_2$, let
\begin{align*}
\begin{split}
&\pibar_q^{\mu\nu}=(h^{-1})^{\mu\nu}\vert_{\uptau=t_2}, \quad \pibar_l^\nu=|h|^{-\frac{1}{2}}\partial_\mu(|h|^{\frac{1}{2}}(h^{-1})^{\mu\nu})\vert_{\uptau=t_2},\quad \pibar_c=V\vert_{\substack{\dot{\ell} = 0 \\ \dot{\xi} = \ell\\ \uptau=t_2}},\\
& \ringpi_q=\uppi_q-\pibar,\quad \ringpi_l=\uppi_l-\pibar,\quad \ringpi_c=\uppi_c-\pibar,
\end{split}
\end{align*}
and correspondingly decompose $\calP$ as $\calP=\calP_0+\calP_\pert$ with
\begin{align}\label{eq:calPP0Ppertdecomp1}
\begin{split}
\calP_0=\pibar_q^{\mu\nu}\partial^2_{\mu\nu}+\pibar_l^\mu\partial_\mu+\pibar_c, \qquad \calP_\pert=\ringpi_q^{\mu\nu}\partial^2_{\mu\nu}+\ringpi_l^\mu\partial_\mu+\ringpi_c.
\end{split}
\end{align}
Then $|\ringpi_q|, |\ringpi_l|, |\ringpi_c|\lesssim \jap{\uptau}^{-\gamma}$, and, with $y$ denoting rectangular coordinates with respect to the spatial variables~$(\uprho,\uptheta)$,
\begin{align}\label{eq:Ppertcoeffbound1}
\begin{split}
\sup_y(\jap{\uprho}^{2}|\ringpi_q^{\uptau\uptau}|+|\ringpi_q^{\uptau y}|+|\ringpi_q^{yy}|)\lesssim \epsilon \jap{\uptau}^{-\gamma}. 
\end{split}
\end{align}
Moreover, in the hyperboloidal part of the foliation, $\calP_\pert$ has the more precise structure
\begin{align*}
\begin{split}
\ringa\partial_\uptau(\partial_\uprho+\frac{1}{\uprho})+\ringa^{\mu\nu}\partial_{\mu\nu}+\ringb^\mu \partial_\mu +\ringc,
\end{split}
\end{align*}
where,
\begin{align}\label{eq:calPP0Ppertdecomp2}
\begin{split}
&|\ringa|, |\ringa^{yy}|\lesssim \epsilon \jap{\uptau}^{-\gamma}, \quad  |\partial_y\ringa^{yy}|, |\ringa^{\uptau y}|, |\ringb^y|\lesssim \epsilon\jap{\uptau}^{-\gamma}\uprho^{-1},\quad |\ringa^{\uptau\uptau}|, |\ringb^\uptau|\lesssim \epsilon \jap{\uptau}^{-\gamma}\uprho^{-2},\\
&|\ringc|\lesssim \epsilon\jap{\uptau}^{-\gamma} \uprho^{-4}.
\end{split}
\end{align} 

{\bf{Renormalized graph formulation.}} In the exterior region where the almost normal vector $N$ is parallel to $\frac{\partial}{\partial X^{4}}$, we will often use a renormalized parameterization of the solution as a graph over the limiting hyperplanes. To wit, in the notation introduced above, the vector $N$ in this region is given by 
\begin{align}\label{eq:sgraphdef1}
\begin{split}
N=s\frac{\partial}{\partial X^4},\qquad s = (1+(m^{-1})^{\mu\nu}Q_\mu Q_\nu)^{\frac{1}{2}},
\end{split}
\end{align}
so introducing $\fy=s\phi$ we see that $u=Q+\fy$ satisfies the graph formulation of the mean curvature equation
\begin{align*}
\begin{split}
\frac{1}{\sqrt{|m|}}\partial_\mu\Big(\frac{\sqrt{|m|}(m^{-1})^{\mu\nu}\partial_\nu u}{\sqrt{1+(m^{-1})^{\alpha\beta}\partial_\alpha u\partial_\beta u}}\Big)=0.
\end{split}
\end{align*}
The advantage of this formulation is that the nonlinear terms are easy to calculate. Moreover, in view of the decay of derivatives of $Q$, estimates on either of $\fy$ and $\phi$ can be easily translated into estimates on the other one. In terms of $\fy$ the equation for $u$ above can be written as (here the subscript $g$ stands for \emph{graph})
\begin{align}\label{eq:calPgcalP1}
\begin{split}
\calP_g\fy=s\calP\phi=\sum_{i=0}^3\calG_{g,i},\qquad \calP=\calP_g+s^{-1}[\calP_g,s],
\end{split}
\end{align}
where $\calG_{g,i}$ is of order $i$ in $\fy$ (note that $\calG_{g,0}=s\calG_0$ and $\calG_{g,1}=0$) and
\begin{align}\label{eq:callP1}
\begin{split}
\callP_g&=\Box_m-(1+\nabla^\alpha Q \nabla_\alpha Q)^{-1}(\nabla^\mu Q)(\nabla^\nu Q)\nabla_{\mu}\nabla_\nu-2(1+\nabla^\alpha Q \nabla_\alpha Q)^{-1}(\nabla^{\mu}\nabla^\nu Q)(\nabla_\mu Q)\nabla_\nu\\
&\quad+2(1+\nabla^\alpha Q\nabla_\alpha Q)^{-2}(\nabla^\nu Q)(\nabla^\mu Q)(\nabla^\lambda Q)(\nabla_\lambda \nabla_\mu Q)\nabla_\nu.
\end{split}
\end{align}
In \eqref{eq:callP1} the notation $\nabla$ is used for the covariant derivative with respect to $m$. It follows that
\begin{align}\label{eq:callP2}
\begin{split}
\callP_g\psi&=\Box_m\psi+\Err_{\callP_g}(\psi),
\end{split}
\end{align}
where for some symmetric bounded  $p_2$ and bounded $p_1$,
\begin{align}\label{eq:ErrcallP1}
\begin{split}
\Err_{\callP_g}\psi&= \jap{r}^{-4}p_2^{\mu\nu}\partial^2_{\mu\nu}\psi+\jap{r}^{-5}p_1^\mu \partial_\mu\psi.
\end{split}
\end{align}

{\bf{Geometric global coordinates.}} The last set of coordinates we use are the \emph{geometric global coordinates} $(\tiluptau,\tiluprho,\tiluptheta)$, which are defined with respect to fixed values of the parameters $\ell\equiv\ellbar$, $\xi\equiv\xibar+\tau\ellbar$. These fixed values are usually taken to be the value of the parameters at a fixed $\tau$. The advantage of these coordinates is that in them the operator $\calP_0$ introduced in \eqref{eq:calPP0Ppertdecomp1} is of product form. Specifically, in these coordinates $\calP_0$ takes the form (see \cite[Section 4.2.4]{LuOS1})
\begin{align}\label{eq:tilDelta1}
\begin{split}
\calP_0=-\partial_{\tiluptau}^2+\tilDelta+V(\tilrho),
\end{split}
\end{align}
where, with $\ringsDelta$ denoting the Laplacian on the round sphere $\bbS^{2}$,
\begin{align}\label{eq:tilDelta2}
\begin{split}
\tilDelta=\frac{\sqrt{1+\jap{\tiluprho}^2}}{\jap{\tiluprho}^{3}}\partial_\tiluprho(\jap{\tiluprho}\sqrt{1+\jap{\tiluprho}^2}\partial_\tiluprho)+\frac{1}{\jap{\tiluprho}^2}\ringsDelta.
\end{split}
\end{align}

{\bf{The Darboux transform.}} We introduce the following definition for the Darboux transform for reference in the proof of ILED in Section~\ref{sec:ILED}. Consider the geometric global coordinates $(\tiluptau,\tiluprho,\tiluptheta)$ defined with respect to a fixed time $t_2$. Let $\calS_j$ denote the $L^2$ projection (with respect to the standard metric on $\bbS^{2}$) onto the space $\calY_j$ of the eigenfuctions of $\ringsDelta$ with eigenvalue $-j(j+1)$. Note that the zero  eigenfunctions $\nu^j(\tiluprho,\tiluptheta)$ of $\tilDelta+V(\tiluprho)$ (see \eqref{eq:tilDelta1} and \eqref{eq:tilDelta2}) satisfy $\nu^j(\tiluprho,\tiluptheta)= \nu_0(\tiluprho)\tilde{\nu}^j(\tiluptheta)$ where $\nu_0=\jap{\tiluprho}^{-2}$ and $\tilde{\nu}^j\in\calY_{1,j}$ and $\calY_1=\oplus_{j=1}^3\calY_{1,j}$ as usual. Given a function $f(\tiluptau,\tiluprho,\tiluptheta)$ we define $\Dar f$ by
\begin{align}\label{eq:Dardef1}
\calS_j\Dar f (\tiluptau,\tiluprho,\tiluptheta)=\begin{cases}
\calS_jf (\tiluptau,\tiluprho,\tiluptheta),\qquad &j\neq 1\\
\nu_0(\tiluprho)\partial_\tiluprho(\nu_0^{-1}(\tiluprho)\calS_1f(\tiluptau,\tiluprho,\tiluptheta)),\qquad &j=1
\end{cases}.
\end{align}
Now suppose $f\equiv f(\uprho,\uptheta)$ is a function given in the non-geometric global coordinates and supported in the flat region $\{|\tiluprho|< R\}$. As shown in \cite[equation~(4.4)]{LuOS1}, in this region the geometric and non-geometric global coordinates are related as
\begin{align*}
\begin{split}
\tiluptau=\gammabar^{-1}\uptau-\ellbar\cdot F(\uprho,\uptheta),\quad \tiluprho=\uprho,\qquad \tiluptheta =\uptheta.
\end{split}
\end{align*}
It follows that the definition $\Dar f$ is independent of the time $t_2$ with respect to which the coordinates $(\tiluptau,\tiluprho,\tiluptheta)$ are defined, and is given by
\begin{align*}
\calS_j\Dar f (\uprho,\uptheta)=\begin{cases}
f (\uprho,\uptheta),\qquad &j\neq 1\\
\nu_0(\uprho)\partial_\uprho(\nu_0^{-1}(\uprho)\calS_1f(\uprho,\uptheta)),\qquad &j=1
\end{cases},
\end{align*}
where the $L^2(\bbS^{n-1})$ projections $\calS_j$ are defined with respect to the coordinates $(\uprho,\uptheta)$. In our applications, the Darboux transform will be applied to the main part of $\calG_0$ from the source term in the interior. This can be written (see Lemma~\ref{lem:calG01}) as $\dotwp^{(\leq2)}(\uptau)\fybar_i(\uprho,\uptheta)$, $i=1,2,3$. It follows from the foregoing discussion that $\Dar(\dotwp^{(\leq2)}\fybar_i)=\dotwp^{(\leq2)}\Dar\fybar_i+(|\ell|\dotwp^{(\leq3)})\fybar_i$, where the second term comes from the commutator $[\Dar,\dotwp^{(\leq2)}]$, and $\Dar\fybar_i$ vanishes in the flat region of the foliation.
\subsection{Structure of the source term}
Here we provide a more detailed description of the structure of the source term $\calG_0$ in \eqref{eq:calPdef1}. As we already discussed in equation \eqref{eq:calG01} and Remark~\ref{rem:calG01}, the source term is bounded by $|\dotwp||\uprho|^{-3}$ for large $|\uprho|$. More precisely (see Remark~\ref{eq:calPdef1}), this holds for $|\uprho|> R$, that is, outside the transition region, where the foliation is asymptotically null. However, since we will use the inverse powers of $R_\ctf$ to close some of our estimates, it is important to have a size estimate for the source term in the region $\{|\uprho|\leq R\}$ as well. The structure we need for $\calG_0$ is summarized in the following lemma.
\begin{lemma}\label{lem:calG01}
The source term $\calG_0$ in \eqref{eq:calG01} satisfies the following properties, where the implicit constants in the $\calO$ terms are independent of $R$ and $R_\ctf$ unless indicated by a subscript:
\begin{enumerate}
\item In the region $\{|\uprho|\leq R+C\}$,
\begin{align*}
\begin{split}
\calG_0= \calO(\jap{\uprho}^{-2}\dotwp^{(\geq 1)})+\calO_R(|\ell|\dotwp^{(\geq 1)}).
\end{split}
\end{align*}
\item For $|\uprho|\geq R$, 
\begin{align*}
\begin{split}
\calG_0=\calO( |\dotwp^{(\geq 1)}||\uprho|^{-3}).
\end{split}
\end{align*}
\item For $|\uprho|\leq R+C$,
\begin{align*}
\begin{split}
\Dar \calG_0= \calO_R(|\ell|\dotwp^{(\geq 1)})+\chi_{\{R-C\leq\cdot\leq R+C\}}\calO(|\uprho|^{-2}\dotwp^{(\geq 1)}).
\end{split}
\end{align*}
\end{enumerate}
\end{lemma}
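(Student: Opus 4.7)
\textbf{Proof proposal for Lemma~\ref{lem:calG01}.} The underlying principle is that $\calG_0$ measures the failure of $\Psi_{\glbl}$ to satisfy HVMC, so when $\dotell = 0$ and $\dotxi = \ell$ (i.e.\ $\dotwp = 0$), the profile parameterizes a fixed boosted-translated Lorentzian catenoid and therefore $\calG_0 \equiv 0$. My plan is to expand $\calG_0$ in $\dotwp$ in each of the three regions (flat, transition, hyperboloidal) using the explicit parameterizations from Section~\ref{subsubsec:2ndordereq}, then extract the leading spatial behavior. The calculation is essentially the one carried out in \cite[Lemma~8.5]{LuOS1}, so the main task is to upgrade it to keep track of the distinction between the translation and boost contributions and of the region structure; the new content is really just (1) and (3).

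For (1), I would work in the flat region $\calC_\flatt$ with the parameterization \eqref{eq:Psiwpdef1} and compute $\calP_h\Psi_\wp$ along with the second-fundamental-form contributions. The resulting scalar source is, by its construction via the first-order system, closely tied to $\vecK$ in \eqref{eq:K1storder1}: the principal part comes from $(\dotxi-\ell)\cdot\nabla_\xi\Psi_\wp$ and $\dotell\cdot\nabla_\ell\Psi_\wp$ paired against $N$. At $\ell=0$ we have $-\bfeta(\partial_{\xi^i}\Psi_\wp,N) = \nu^i = \Theta^i/\jap{\rho}^2$, giving the translation contribution of size $\jap{\uprho}^{-2}(\dotxi-\ell)$. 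For the boost direction, $\partial_{\ell^j}(\gamma^{-1}P_\ell F+P_\ell^\perp F)|_{\ell=0}$ vanishes identically (the expression is even in $\ell$ to this order), so the boost contribution to the scalar source is $\calO(|\ell|)\dotell$, which is exactly the $\calO_R(|\ell|\dotwp^{(\geq 1)})$ error. All cross terms involving products of $\ell$ with $\dotwp$, as well as the contribution from converting $K,\dotK$ to a scalar source via the inversion of the first-order system, fit into one of the two error classes. In the transition region the same bound holds since there the flat and hyperboloidal forms interpolate smoothly; the $R$-dependent constants arise simply from derivatives of the cutoff $\chi$ associated with the transition.

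For (2), in $\{|\uprho|\geq R\}$ I would switch to the renormalized graph formulation \eqref{eq:calPgcalP1}, where $\calG_{g,0}=s\,\calG_0$. Since $Q$ satisfies the Riemannian minimal surface equation at frozen $(\xi,\ell)$, applying $\calP_g$ to the $\uptau$-dependent $Q$ produces a source whose highest-weight piece involves $(\rd_\uprho+\uprho^{-1})\rd_\uptau Q$. Using $\rd_\uptau Q = \calO(\dotwp\jap{\uprho}^{-1})$ together with the identity $(\rd_\uprho+\uprho^{-1})\uprho^{-1}=0$ cancels the putative $\jap{\uprho}^{-2}$ term, as observed in Remark~\ref{rem:calG01}. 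What remains is the genuine $\calO(\dotwp\jap{\uprho}^{-3})$ contribution coming from the subleading terms in $\Box_m$ in asymptotically null coordinates, together with the perturbative $\Err_{\callP_g}$ from \eqref{eq:ErrcallP1}, whose extra factor $\jap{r}^{-4}$ is harmless.

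For (3), this is where the construction of $\Dar$ pays off. Decomposing $\calG_0 = \chi_{\{|\uprho|<R-C\}}\calG_0 + \chi_{\{R-C\leq|\uprho|\leq R+C\}}\calG_0$, the leading piece of the first summand is of the form $(\dotxi-\ell)^i\,\nu^i(\uprho,\uptheta) + \calO_R(|\ell|\dotwp)$ by (1); since $\nu^i$ lives entirely in the first spherical harmonic with radial profile $\nu_0=\jap{\uprho}^{-2}$, the definition \eqref{eq:Dardef1} gives $\Dar(\dotwp^{(\leq 2)}\nu^i)=\dotwp^{(\leq 2)}\nu_0\rd_\uprho(\nu_0^{-1}\nu_0\tilde\nu^i) = 0$, with the commutator $[\Dar,\dotwp^{(\leq 2)}]$ producing only a $\dotwp^{(\leq 3)}$ term accompanied by a factor of $|\ell|$ from the geometric coordinate change (cf.\ the discussion following \eqref{eq:Dardef1}). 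This leaves only the $\calO_R(|\ell|\dotwp)$ error. In the transition annulus $\{R-C\leq|\uprho|\leq R+C\}$, applying $\Dar$ costs at most one derivative in $\uprho$; since $\calG_0$ itself is $\calO(\dotwp\jap{\uprho}^{-2})\cap\calO(\dotwp\jap{\uprho}^{-3})$ there (from (1) and (2) matched on the overlap), the Darboux transform is bounded by $\calO(|\uprho|^{-2}\dotwp)$, localized by the cutoff as claimed. I expect the main technical obstacle to be bookkeeping the transition region carefully and verifying that every $R$-dependent or $\ell$-dependent cross term either cancels or fits in the asserted error classes; this is really where using the first-order description in parallel with the second-order one to exploit the identifications of the kernel eigenfunctions is essential.
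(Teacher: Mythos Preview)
Your proposal is essentially correct and follows the same approach as the paper: item (2) via the graph formulation and the cancellation in Remark~\ref{rem:calG01}, item (1) via the first-order source $\vecK$ in \eqref{eq:K1storder1} identifying the leading part as $\dotwp\,\fybar_i$, and item (3) via the design property $\Dar\fybar_i=0$ in the flat region. One small correction to your reasoning for the transition annulus in (3): the reason the bound there is only $\calO(|\uprho|^{-2}\dotwp)$ rather than $\calO(|\uprho|^{-3}\dotwp)$ is not that ``$\Dar$ costs one $\uprho$-derivative'' (which would actually \emph{improve} decay), but that in the transition region the geometric radial coordinate $\tiluprho$ used to define $\Dar$ mixes with $\uptau$, so $\Dar$ acquires a $\partial_\uptau$ component which does not improve spatial decay when applied to $\calG_0=\calO(\jap{\uprho}^{-2}\dotwp^{(\geq1)})$. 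This is the mechanism the paper highlights, and it is what you should record.
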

\begin{proof}
The exterior estimate in $\{|\uprho|> R\}$ was already discussed above. See Remark~\ref{rem:calG01}. For the interior, the main observations are that except for terms that come with a factor of $\ell$ or decay exponentially, the source term is simply a linear combination of $\partial_\uptau(\dotxi\fybar_i)$, $i=1,2,3$ (see \eqref{eq:K1storder1} and \eqref{eq:firstorder1}), and by design $\Dar\fybar_i=0$ for $|\uprho|\leq R$. See also the discussion following \eqref{eq:Dardef1} above. The reason the spatial decay is only $|\uprho|^{-2}$ (instead of $|\uprho|^{-3}$) in the transition region $\{R-C\leq|\uprho|\leq R+C\}$ is that in this region $\Dar$ contains $\partial_\uptau$ derivatives as well, and these do not improve the spatial decay.
\end{proof}
\section{Modified profile construction}\label{sec:modprof}
In this section we introduce a correction to the main profile from Section~\ref{subsec:mainprofile}. Its purpose is to remove the slowly decaying (in $\uprho$) error $g_\main$ from \eqref{eq:calG01}. To see the need for removing this error, observe (for instance by integration along characteristics) that a solution to the flat wave equation
\begin{align*}
\begin{split}
\Box_m \phi = \jap{\uptau}^{-\alpha}\jap{\uprho}^{-3},
\end{split}
\end{align*}
with $\alpha$ sufficiently large, will have decay $\jap{\uptau}^{-2}$ which is not twice integrable. Therefore, since $\phi$ appears linearly in the equation for $\dotwp$, and since we need a twice integrable decay rate for $\dotwp$, we need to make sure that the contribution of $g_\main$ does not enter the modulation equations. 

Let
\begin{align}\label{eq:gextdef1}
\begin{split}
g_\Ext=\chi_1\chi_2g_\main
\end{split}
\end{align}
where $\chi_1(\uprho)$ is a cutoff function supported in the region $\{|\uprho|\geq \uprho_0\}$, $\uprho_0\gg1$, and $\chi_2(\uptau)$ is a smooth cutoff which is equal to one for $\uptau\geq -2$ and equal to zero for $\uptau\leq -3$.
The profile correction $P$ is constructed as the solution to (recall the definition of $\calP_h$ from \eqref{eq:calPhdef1})
\begin{align*}
\begin{split}
\calP_h P = g_\Ext+\mathrm{acceptable~error}.
\end{split}
\end{align*}
The cutoffs in the definition of $g_\Ext$ are introduced so that the profile does not enter the modulation equations which are determined in the interior. The reason we cannot define $P$ to be an exact solution of $\calP_hP=g_\Ext$ is that in this case we have no way to ensure that $P$ satisfies suitable orthogonality conditions that lead to decay of $P$. Therefore, the main task is to modify the right-hand side of the equation $\calP_hP=g_\Ext$ in a way that leads to decay for $P$ and that the resulting errors in the equation for $$\varepsilon:=\phi-P$$ allow for a twice integrable decay rate for $\varepsilon$.

Turning to the details, let $P$ be the solution, with vanishing initial data, to the equation (recall that $(\uptau,\uprho,\upomega)$ denote the non-geometric global coordinates)
\begin{align}\label{eq:Pdef1}
\begin{split}
\calP_h P = g_\Ext+ \txtg_1+\partial_\uptau\txtg_2,
\end{split}
\end{align}
where $\txtg_1$ and $\txtg_2$ are to be specified. 
The remainder $\varepsilon=\phi-P$ satisfies 
\begin{align}\label{eq:varepeq1}
\begin{split}
\calP\varepsilon=(\calP\phi-g_\Ext)-\txtg_1-\partial_{\uptau}\txtg_2-(\calP-\calP_h)P.
\end{split}
\end{align}
To determine $\txtg_1$ and $\txtg_2$ let $W_i$ and $Y_i$, $i\in\{\mu,1,2,3\}$, be suitable proxies, to be specified, for the eigenfunctions $\fy_i^\stat$ for $\calPstato$ (recall \eqref{eq:calPhstat1}). Testing \eqref{eq:Pdef1} against $Y_i$ we get (recall that $\Sigma_c$ denotes the slice $\{\uptau=c\}$)
\begin{align}\label{eq:txtgtemp1}
\begin{split}
&\int_{\Sigma_\uptau}\Big((h^{-1})^{0\nu}(Y_i\partial_\nu P -P\partial_\nu Y_i )-\txtg_2 Y_i\Big)\sqrt{|h|}\ud x\Big\vert_{\uptau_1}^{\uptau_2}\\
&=\int_{\uptau_1}^{\uptau_2}\int_{\Sigma_{\uptau}}\Big((g_\Ext+\txtg_1)Y_i-P\calP_hY_i-\frac{\partial_\uptau(\sqrt{|h|}Y_i)}{\sqrt{|h|}}\txtg_2\Big)\sqrt{|h|}\ud x \ud\uptau,
\end{split}
\end{align}
where $\ud x=\ud\upomega\ud\uprho$. Similarly, integrating the identity
\begin{align*}
\begin{split}
\frac{1}{\sqrt{|h|}}\partial_\nu\big(\sqrt{|h|}\frac{(h^{-1})^{0\nu}}{(h^{-1})^{00}}PW_i\big)=\frac{1}{\sqrt{|h|}}PW_i\partial_\nu\big(\sqrt{|h|}\frac{(h^{-1})^{0\nu}}{(h^{-1})^{00}}\big)+\frac{(h^{-1})^{0\nu}}{(h^{-1})^{00}}(W_i\partial_\nu P+P\partial_\nu W_i)
\end{split}
\end{align*}
we get
\begin{align}\label{eq:txtgtemp2}
\begin{split}
&\int_{\Sigma_{\uptau}}PW_i\sqrt{|h|}\ud x\Big\vert_{\uptau_1}^{\uptau_2}\\
&=\int_{\uptau_1}^{\uptau_2}\int_{\Sigma_{\uptau}}\Big(\frac{1}{\sqrt{|h|}}PW_i\partial_\nu\big(\sqrt{|h|}\frac{(h^{-1})^{0\nu}}{(h^{-1})^{00}}\big)+\frac{(h^{-1})^{0\nu}}{(h^{-1})^{00}}(W_i\partial_\nu P+P\partial_\nu W_i)
\Big)\sqrt{|h|}\ud x\ud\uptau.
\end{split}
\end{align}
Comparing \eqref{eq:txtgtemp1} and \eqref{eq:txtgtemp2} with the goal of achieving the orthogonality condition 
\begin{align}\label{eq:Porthtemp1}
\begin{split}
\int_{\Sigma_{\uptau}}PW_i\sqrt{|h|}\ud x=0
\end{split}
\end{align}
leads to the requirements
\begin{align}\label{eq:txtgtemp3alt}
\begin{split}
&\int_{\Sigma_{\uptau}}\txtg_2 Y_i\sqrt{|h|}\ud x \\
&= \int_{\Sigma_{\uptau}}\Big[(h^{-1})^{0\nu}((Y_i+\frac{W_i}{(h^{-1})^{00}})\partial_\nu P+P(\frac{1}{(h^{-1})^{00}}\partial_\nu W_i-\partial_\nu Y_i))\\
&\phantom{= \int_{\Sigma_{\uptau}}\Big[}+\frac{1}{\sqrt{|h|}}PW_i\partial_\nu\big(\sqrt{|h|}\frac{(h^{-1})^{0\nu}}{(h^{-1})^{00}}\big)
\Big]\sqrt{|h|}\ud x
\end{split}
\end{align}
and
\begin{align}\label{eq:txtgtemp4}
\begin{split}
\int_{\Sigma_{\uptau}}\txtg_1 Y_i\sqrt{|h|}\ud x=\int_{\Sigma_{\uptau}}\Big[P\calP_hY_i-g_\Ext Y_i+\frac{\partial_\uptau(\sqrt{|h|}Y_i)}{\sqrt{|h|}}\txtg_2\Big]\sqrt{|h|}\ud x.
\end{split}
\end{align}
However, to avoid any potential loss of regularity, it is convenient to introduce extra smoothing in time in the definition of $\txtg_2$.\footnote{We could also introduce smoothing for $\txtg_1$ but this is not necessary as the only potential loss of regularity occurs in estimating $\RbfT\partial_\uptau g_2$ in the energy estimate, where the extra $\RbfT$ comes from the degeneracy of the $LE$ norm due to trapping. See Propositions~\ref{prop:LED1} and~\ref{prop:EILEDfinal1} below.} This is similar to the smoothing used for the parameters $\wp$ in \cite[Section~3]{LuOS1}, which we will also use in defining our parameters $\wp$ in Section~\ref{subsec:wpeq} below. It will lead to a modification of the orthogonality condition \eqref{eq:Porthtemp1} for $P$ that will still be acceptable for our estimates. Let $k_P\in C^\infty_c(\bbR)$ be a smooth bump function supported in $[0,1]$ with $\int_0^1k_P(s) \ud s=1$, and let $S$ be the smoothing operator (for $h(t)$ defined for~$t\geq -1$)
\begin{align}\label{eq:Sdef1}
\begin{split}
(S_Ph)(t)=\int_{\bbR}\chi_{[-1,\infty)}(s)h(s)k_P(t-s)\ud s,\qquad t\geq -1.
\end{split}
\end{align}
It follows that $S_P-I=\frac{\ud}{\ud t}\tilS_P$ with 
\begin{align}\label{eq:tilSdef1}
\begin{split}
(\tilS_P h)(t)=\int_\bbR\chi_{[-1,\infty)}(s)h(s)\tilk_P(t-s)\ud s,\qquad t\geq-1,
\end{split}
\end{align}
where $\tilk_P(r)=0$ for $r<0$ and $\tilk_P(r):=-\int_r^\infty k_P(s)\ud s$ for $r\geq0$. It will become clear (see \eqref{eq:Porth1} below) that it is beneficial to assume that $k_P$ satisfies the moment condition
\begin{align}\label{eq:kPmoment1}
\begin{split}
\int_0^1 sk_P(s)\ud s=0.
\end{split}
\end{align}
It then follows that
\begin{align*}
\begin{split}
(S_P-I)=\frac{\ud}{\ud t}\tilS_P=\frac{\ud^2}{\ud t^2}\tiltilS_P,
\end{split}
\end{align*}
where 
\begin{align*}
\begin{split}
(\tiltilS_Ph)(h)=\int_{\bbR}\chi_{[-1,\infty)}(s)h(s)\tiltilk_P(t-s)\ud s,\qquad t\geq -1,
\end{split}
\end{align*}
and $\tiltilk$ with $\supp\tiltilk_P\subseteq [0,1]$ is given by
\begin{align*}
\begin{split}
\tiltilk_P(r)=\begin{cases}\int_r^\infty(s-r) k(s)\ud s,&\qquad r\geq0 \\0,&\qquad r\leq 0\end{cases}.
\end{split}
\end{align*}
Indeed, the moment condition implies that
\begin{align*}
\begin{split}
\tiltilk_P(r)&:=\int_0^r\tilk_P(s)\ud s  = -\int_0^r(1-\int_0^sk_P(s')\ud s')\ud s=-r+\int_0^r(r-s')k_P(s')\ud s'\\
&=\int_r^\infty (s-r)k_P(s')\ud s'.
\end{split}
\end{align*}
We then replace the condition \eqref{eq:txtgtemp3alt} by the requirement that
\begin{align}\label{eq:txtgtemp3}
\begin{split}
&\int_{\Sigma_{\uptau}}\txtg_2 Y_i\sqrt{|h|}\ud x \\
&= S_P\int_{\Sigma_{\uptau}}\Big[(h^{-1})^{0\nu}((Y_i+\frac{W_i}{(h^{-1})^{00}})\partial_\nu P+P(\frac{1}{(h^{-1})^{00}}\partial_\nu W_i-\partial_\nu Y_i))\\
&\phantom{= \int_{\Sigma_{\uptau}}\Big[}+\frac{1}{\sqrt{|h|}}PW_i\partial_\nu\big(\sqrt{|h|}\frac{(h^{-1})^{0\nu}}{(h^{-1})^{00}}\big)
\Big]\sqrt{|h|}\ud x
\end{split}
\end{align}
Assuming $P$ is zero initially, it follows that the orthogonality condition \eqref{eq:Porthtemp1} for $P$ gets replaced by
\begin{align}\label{eq:Porth1}
\begin{split}
\int_{\Sigma_{\uptau}}PW_i\sqrt{|h|}\ud x&=-\tiltilS_P\partial_\uptau \int_{\Sigma_{\uptau}}\Big[(h^{-1})^{0\nu}((Y_i+\frac{W_i}{(h^{-1})^{00}})\partial_\nu P+P(\frac{1}{(h^{-1})^{00}}\partial_\nu W_i-\partial_\nu Y_i))\\
&\phantom{=-\tiltilS_P\partial_\uptau  \int_{\Sigma_{\uptau}}\Big[}+\frac{1}{\sqrt{|h|}}PW_i\partial_\nu\big(\sqrt{|h|}\frac{(h^{-1})^{0\nu}}{(h^{-1})^{00}}\big)
\Big]\sqrt{|h|}\ud x.
\end{split}
\end{align}
Note that the reason we required $k_P$ in the definition of $S_P$ to satisfy the moment condition \eqref{eq:kPmoment1} is to have the derivative $\partial_\uptau$ on the right-hand side of \eqref{eq:Porth1} above. 
To ensure the conditions \eqref{eq:txtgtemp4} and \eqref{eq:txtgtemp3} we let 
\begin{align}\label{eq:giorthdef1}
\begin{split}
\txtg_i = \sum_{j=\mu,1,2,3}c_{i,j}(\uptau)\chi \fy_j^\stat,\qquad i=1,2,
\end{split}
\end{align}
so that \eqref{eq:txtgtemp3} and \eqref{eq:txtgtemp4} become an invertible linear system for the coefficients $c_{i,j}$, provided $Y_i$ and $W_i$ are approximations to $\fy_i^\stat$. Here $\chi$ is a cutoff to a large compact region in the interior which is equal to one for $|\uprho|\leq R_\ctf/3$ and equal to zero for $|\uprho|\geq 2R_\ctf/3$. Note that in view of \eqref{eq:txtgtemp3}, in terms of decay in $\uptau$, $c_{2,j}$ are at best comparable to a small multiple of $P$ (which, as explained earlier, cannot decay faster than $\uptau^{-2}$). However, only $\partial_\uptau c_{2,j}$, not $c_{2,j}$, appear as errors in equation \eqref{eq:Pdef1}, and we can expect $\partial_{\uptau}$ to gain extra decay. For $c_{1,j}$ the only term on the right-hand side of \eqref{eq:txtgtemp4} that a priori may not have better decay than $P$ is the term  involving $\calP_h Y_i$. To extract extra decay for this term we define $Y_i=\chi_{\lesssim \uptau} \fy_i^\stat$, where $\chi_{\lesssim \uptau}$ is a cutoff to the region $\{|\uprho|\lesssim \uptau\}$. That is, we define
\begin{align}\label{eq:WYdef1}
\begin{split}
&W_\mu:=Y_\mu:=\chi_{\lesssim \uptau} \fy_\mu^\stat;\qquad W_i:=\chi_{\leq R_\ctf}\fy_i^\stat\mand Y_i:=\chi_{\lesssim \uptau} \fy_i^\stat,\qquad i=1,2,3,
\end{split}
\end{align}
with $\chi_{\leq R_\ctf}$ denoting a cutoff to the region $\{|\uprho|\leq R_\ctf\}$ for a large constant $R_\ctf$. Note that we have chosen $W_\mu=Y_\mu$ to obtain a homogeneous ODE for $\int_{\Sigma_{\uptau}}PW_\mu\sqrt{|h|}\ud x$ with zero initial conditions. Although $W_\mu$ is not compactly supported, we will be able to use this orthogonality condition in view of the exponential decay of $\fy_\mu$. We summarize our construction in the following proposition and with $P$ defined there, we let $\varepsilon:=\phi-P$. 
\begin{proposition}\label{prop:propdefP1}
Given smooth parameters $\ell$ and $\xi$ defined for $\uptau \in[0,\uptau_0]$ and satisfying\footnote{There is nothing special this decay rate and the only reason we have made this choice is to be consistent with our bootstrap assumptions later.}
\begin{align*}
\begin{split}
|\dotwp^{(k)}|\leq C_k \epsilon \uptau^{-\frac{9}{4}+\frac{\kappa}{2}},\quad \forall  k\geq 1,
\end{split}
\end{align*} 
let $g_\Ext$ be as in \eqref{eq:gextdef1} and $W_i$ and $Y_i$ as in \eqref{eq:WYdef1}. Then for $\uptau\in[-1,\uptau_0]$ there exist a smooth $P$, $g_1$, $g_2$, and $c_{ij}$ such that $g_i$, $c_{ij}$ satisfy \eqref{eq:giorthdef1}, \eqref{eq:txtgtemp3}, and \eqref{eq:txtgtemp4}, and $P$ satisfies \eqref{eq:Pdef1} with vanishing data at $\uptau=0$. 
\end{proposition}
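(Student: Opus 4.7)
The plan is to view \eqref{eq:Pdef1} together with \eqref{eq:txtgtemp3}--\eqref{eq:txtgtemp4} and the ansatz \eqref{eq:giorthdef1} as a coupled linear system for the unknowns $\bigl(P,\,(c_{1,j})_j,\,(c_{2,j})_j\bigr)$, and to solve it by a Picard iteration on the wave equation for $P$. The two sets of coefficients are determined at each time $\uptau$ by inverting algebraic linear systems that depend linearly and causally on $P$, and the remaining task is to solve a linear wave equation with a lower-order, non-local right-hand side.

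First I would check that the algebraic systems for $(c_{i,j}(\uptau))_{j\in\{\mu,1,2,3\}}$ are non-degenerate. Substituting \eqref{eq:giorthdef1} into \eqref{eq:txtgtemp3} and \eqref{eq:txtgtemp4} yields, at each time $\uptau$, two $4\times 4$ linear systems whose coefficient matrix has entries
\[
	A_{ij}(\uptau) = \int_{\Sigma_\uptau} \chi\, \fy_j^\stat\, Y_i\, \sqrt{|h|}\, \ud x, \qquad i,j\in\{\mu,1,2,3\}.
\]
Since $Y_\mu = \chi_{\lesssim\uptau}\fy_\mu^\stat$ is an exponentially localized copy of $\fy_\mu^\stat$ and $Y_i = \chi_{\lesssim\uptau}\fy_i^\stat$ ($i=1,2,3$) agrees with $\fy_i^\stat$ on the support of $\chi$, the matrix $A(\uptau)$ is a small perturbation of a diagonal matrix with order-one entries (using that the eigenfunctions $\fy_j^\stat$ of $\calPstato$ are mutually orthogonal up to $O(|\ell|)$ and the truncation error $O(e^{-cR_\ctf})$). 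Thus $A(\uptau)$ is uniformly invertible provided $R_\ctf$ is sufficiently large and $|\ell|$ is small. Solving the systems expresses $c_{1,j}(\uptau)$ and $c_{2,j}(\uptau)$ as bounded linear functionals of $P(\cdot)$ and $\partial_\nu P(\cdot)$ restricted to times in $[\uptau-1,\uptau]$ (through $S_P$).

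With the algebraic step at hand I would run a Picard iteration. Set $P^{(0)}\equiv 0$ and, given $P^{(n)}$, determine $c_{i,j}^{(n+1)}$ by the algebraic systems, define $\txtg_i^{(n+1)}=\sum_j c_{i,j}^{(n+1)}\chi\,\fy_j^\stat$, and solve
\[
	\calP_h P^{(n+1)} = g_\Ext + \txtg_1^{(n+1)} + \partial_\uptau \txtg_2^{(n+1)}, \qquad P^{(n+1)}\big|_{\uptau=0}=\partial_\uptau P^{(n+1)}\big|_{\uptau=0}=0,
\]
on $[-1,\uptau_0]$ (backwards from $\uptau=0$ to $\uptau=-1$ and forwards to $\uptau=\uptau_0$). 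This is a standard linear Cauchy problem for $\calP_h$ with compactly supported-in-space source; existence, uniqueness, and smooth dependence on the data follow from the standard energy theory for $\calP_h$. Taking differences $P^{(n+1)}-P^{(n)}$ and using the energy estimate together with the linear-in-$(P,\partial_\nu P)$ bounds on $\txtg_1$ and $\partial_\uptau\txtg_2$ gives a Gronwall inequality of the form $\|P^{(n+1)}-P^{(n)}\|_E(\uptau)\leq C\int \|P^{(n)}-P^{(n-1)}\|_E$, which is a contraction on short intervals; linearity then allows propagation to the full interval $[-1,\uptau_0]$. Smoothness of the limit in all variables is obtained by commuting $\partial_\uptau$ and $\partial_\Sigma$ through the iteration, using that $g_\Ext$, $\chi$, $W_i$, $Y_i$, $h$ are smooth.

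The key point to watch is the presence of $\partial_\uptau\txtg_2$ on the right-hand side: a naive computation of $\partial_\uptau\txtg_2$ would cost one time derivative on the integrand in \eqref{eq:txtgtemp3}, which includes $\partial_\nu P$ and would thus involve $\partial_\uptau^2 P$, spoiling the Picard scheme. This is precisely the role of the smoothing operator $S_P$: because $\partial_\uptau\circ S_P = S_P\circ\partial_\uptau + (\text{boundary contribution at } s=-1)$ and $S_P$ has a compactly supported kernel in $s$, $\partial_\uptau\txtg_2$ can be written as $S_{P}$ (with a shifted kernel) applied to a quantity linear in $(P,\partial_\nu P)$, with no loss of regularity. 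The moment condition \eqref{eq:kPmoment1} is what permits the replacement \eqref{eq:Porth1} with a \emph{derivative} on the right-hand side, which was the motivation for smoothing in the first place. Finally, although $W_\mu=Y_\mu$ is not compactly supported, the exponential spatial decay of $\fy_\mu^\stat$ makes all the pairings $\int P\, W_\mu\sqrt{|h|}\ud x$ and the analogous integrals in \eqref{eq:txtgtemp3}--\eqref{eq:txtgtemp4} absolutely convergent, so the iteration proceeds as above.
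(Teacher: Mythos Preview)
Your proposal is correct and follows essentially the same approach as the paper: a Picard iteration for the linear system $(P,c_{1,j},c_{2,j})$, closed on short time intervals and then propagated step by step, with the key observation that the contraction constant is independent of the size of $P$ at the start of each step (in the paper this is phrased via the decomposition $P=\tilP+\tiltilP$, but linearity of $P\mapsto\txtg_i$ gives the same conclusion for your direct iteration on $P$). Your remark that $\partial_\uptau S_P$ is simply convolution with the smooth compactly supported kernel $k_P'$, so that $\partial_\uptau\txtg_2$ depends only on $(P,\partial_\nu P)$ and no regularity is lost, is exactly the mechanism that makes the scheme close; the paper leaves this implicit.
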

\begin{proof}
Since \eqref{eq:Pdef1} is linear, the only difficulty is the dependence of $g_i$ on $P$ through \eqref{eq:txtgtemp3} and \eqref{eq:txtgtemp4}. To deal with this we iteratively construct $P$ in small time steps of length $\uptau_\ast\ll1$, and use the smallness of $\uptau_\ast$ to close the existence iteration scheme in each step. Turning to the details let $\tilP$ be the solution with vanishing data to
\begin{align*}
\begin{split}
\calP_h \tilP=g_\Ext
\end{split}
\end{align*}
on $[-1,\uptau_\ast]$. Next we claim that if $\uptau_\ast$ is sufficiently small, then there is a solution $\tiltilP$ on $[0,\uptau_\ast]$ with zero initial data to
\begin{align*}
\begin{split}
\calP_h \tiltilP=\txtg_1+\partial_\uptau\txtg_2,
\end{split}
\end{align*}
where $\txtg_j$ are defined so that \eqref{eq:txtgtemp4} and \eqref{eq:txtgtemp3} hold with $P$ replaced by $\tilP+\tiltilP$ for $\uptau\geq0$ and by $P$ for $\uptau<0$. To see this we set up a Picard iteration by solving
\begin{align*}
\begin{split}
\calP_h \tiltilP_j=\txtg_{1,j}+\partial_\uptau\txtg_{2,j},
\end{split}
\end{align*}
where $g_{i,j}$ are defined by \eqref{eq:txtgtemp4} and \eqref{eq:txtgtemp3} with $P$ replaced by $\tilP+\tiltilP_{j-1}$ for $\uptau\geq0$ and by $\tilP$ for $\uptau<0$. To close the iteration, we need to prove uniform Sobolev estimates for $\tiltilP_j$ and the differences $\tiltilP_i-\tiltilP_j$. This is achieved by the energy estimates for $\calP_h$ coming from multiplying the equation by $\partial_\uptau \tiltilP_j$ (see for instance Section~\ref{sec:ILED}, specifically, Proposition~\ref{prop:LED1} for a more elaborate version). Here we note that $\calP_h$ is almost stationary in the sense that the coefficients depend on $\uptau$ only through $\ell$ and $\xi$ which satisfy the estimates in the statement of the proposition. Moreover, the contribution of the lower order terms in $\calP_h$ as well as the part of $\txtg_1$ and $\txtg_2$ that depend on $\tiltilP_{j-1}$ can be made small by choosing $\uptau_\ast$ small. Note that the smallness of $\uptau_\ast$ is independent of the size of $\tilP$. It follows that $P=\tilP+\tiltilP$ satisfies \eqref{eq:Pdef1} on $[0,\uptau_\ast]$. 

Now suppose we have constructed $P$ satisfying \eqref{eq:Pdef1} on $[0,\uptau_1]$, with $\uptau_1<\uptau_0$.
We will use the same scheme as above to extend $P$ to $\min\{\uptau_0,\uptau_1+\uptau_\ast\}$. For simplicity of notation we assume that $\uptau_1+\uptau_\ast\leq \uptau_0$. First we let $\tilP$ be the solution to 
\begin{align*}
\begin{split}
\calP_h \tilP=g_\Ext
\end{split}
\end{align*}
on $[\uptau_1,\uptau_1+\uptau_\ast]$, with data at $\uptau=\uptau_1$ induced by $P(\uptau_1)$. Next we construct $\tiltilP_2$ as the solution with zero data at $\uptau=\uptau_1$ to
\begin{align*}
\begin{split}
\calP_h \tiltilP=\txtg_1+\partial_\uptau\txtg_2,
\end{split}
\end{align*} 
where again $\txtg_1$, $\txtg_2$ are defined by \eqref{eq:txtgtemp4} and \eqref{eq:txtgtemp3} with $P$ replaced by $\tilP+\tiltilP$. Arguing as above, $\tiltilP$ can be defined on $[\uptau_1,\uptau_1+\uptau_\ast]$ provided $\uptau_\ast$. Note that as mentioned above the smallness of $\uptau_\ast$ is independent of the size of $\tilP$, and hence of the size of $P(\uptau_1)$, so the size of the new existence interval for $\tiltilP$ is always the fixed small number $\uptau_\ast$. It follows that $P=\tilP+\tiltilP$ satisfies \eqref{eq:Pdef1} as desired.
\end{proof}
\begin{definition}\label{def:refprof1}
Given parameters $\ell$ and $\xi$ satisfying the conditions in Proposition~\ref{prop:propdefP1} we define $P$ to be the solution to \eqref{eq:Pdef1} from Proposition~\ref{prop:propdefP1}  and let $\varepsilon:=\phi-P$. 
\end{definition}
\begin{remark}
Note that by Proposition~\ref{prop:propdefP1} the conditions
\eqref{eq:giorthdef1}, \eqref{eq:txtgtemp3}, and \eqref{eq:txtgtemp4} are satisfied. In view of the discussion preceding the statement of Proposition~\ref{prop:propdefP1}, this implies that $P$ satisfies the orthogonality condition~\eqref{eq:Porth1}.
\end{remark}
Returning to \eqref{eq:txtgtemp3}--\eqref{eq:giorthdef1}, we derive an expression for $\partial_\uptau c_{2,j}$, $j=1,2,3$,  which reveals their smallness more clearly. In what follows we write 
\begin{align*}
\begin{split}
X_j=\chi\varphi_j^\stat,\quad j=\mu,1,2,3,
\end{split}
\end{align*}
in \eqref{eq:giorthdef1} and use the convention that repeated indices are summed over $\{\mu,1,2,3\}$. We also write $\angles{f}{g}=\int_{\Sigma_\uptau}fg\sqrt{|h|}\ud x$. Then $\angles{g_2}{Y_i}=\angles{X_j}{Y_i} c_{2,j}$. Therefore, with the notation $A_{ji}= \angles{X_j}{Y_i}$ and $A^{-1}_{ji}$ denoting the components of the inverse matrix,
\begin{align*}
\begin{split}
\partial_\uptau c_{2,j}=A^{-1}_{ij}\partial_\uptau\angles{g_2}{Y_i}+(\partial_\uptau A^{-1}_{ij})\angles{g_2}{Y_i}.
\end{split}
\end{align*}
Plugging in \eqref{eq:txtgtemp3} for $\angles{g_2}{Y_i}$ we get
\begin{align}\label{eq:c2prime1}
\begin{split}
\partial_\uptau c_{2,j}=(\partial_\uptau A^{-1}_{ij})\angles{g_2}{Y_i}+A^{-1}_{ij}S_P(\mrI_i+\secondff_i),
\end{split}
\end{align}
where
\begin{align}\label{eq:Iic1}
\begin{split}
\mrI_i&= \partial_\uptau\int_{\Sigma_\uptau}\big(\frac{1}{(h^{-1})^{00}}+1\big)W_i(h^{-1})^{0\nu}\partial_\nu P \sqrt{|h|}\ud x+\partial_\uptau\int_{\Sigma_\uptau}PW_i\partial_\nu\big(\frac{(h^{-1})^{0\nu}}{(h^{-1})^{00}}\sqrt{|h|}\big)\ud x\\
&\quad+\partial_\uptau\int_{\Sigma_\uptau}P(h^{-1})^{0\nu}\big[\big(\frac{1}{(h^{-1})^{00}}+1\big)\partial_\nu W_i-\partial_\nu(Y_i+W_i)\big]\sqrt{|h|}\ud x\\
&\quad+\int_{\Sigma_\uptau}\partial_\uptau(Y_i-W_i)(h^{-1})^{0\nu}\partial_\nu P \sqrt{|h|}\ud x,
\end{split}
\end{align}
and
\begin{align*}
\begin{split}
\secondff_i = \int_{\Sigma_\uptau}(Y_i-W_i)\partial_\uptau((h^{-1})^{0\nu}\sqrt{|h|}\partial_\nu P)\ud x.
\end{split}
\end{align*}
This already has the right form for proving improved pointwise bounds on $\varepsilon$ in Section~\ref{sec:tails}. But, for the ILED estimate for $P$ itself, we need to write $\secondff_i$ in a different form. Recalling equation \eqref{eq:Pdef1} for $P$, we have
\begin{align*}
\begin{split}
\secondff_i&= \int_{\Sigma_\uptau}(h^{-1})^{k\nu}\partial_k(Y_i-W_i)\partial_\nu P\sqrt{|h|}\ud x+\int_{\Sigma_\uptau}(Y_i-W_i)(g_\ext+\txtg_1)\sqrt{|h|}\ud x\\
&\quad +\int_{\Sigma_\uptau}(Y_i-W_i)c_{2,k}\partial_\uptau X_k\sqrt{|h|}\ud x+\int_{\Sigma_\uptau}(Y_i-W_i)X_k\sqrt{|h|}\ud x \partial_\uptau c_{2,k}\\
&=:\widetilde{\secondff}_i+\angles{Y_i-W_i}{X_k}\partial_{\uptau}c_{2,k},
\end{split}
\end{align*}
where
\begin{align}\label{eq:tilIIic1}
\begin{split}
\widetilde{\secondff}_i&= \int_{\Sigma_\uptau}(h^{-1})^{k\nu}\partial_k(Y_i-W_i)\partial_\nu P\sqrt{|h|}\ud x+\int_{\Sigma_\uptau}(Y_i-W_i)(g_\ext+\txtg_1)\sqrt{|h|}\ud x\\
&\quad +\int_{\Sigma_\uptau}(Y_i-W_i)c_{2,k}\partial_\uptau X_k\sqrt{|h|}\ud x.
\end{split}
\end{align}
Plugging back into \eqref{eq:c2prime1} we arrive at the following equation for $\partial_\uptau c_{2,j}$,
\begin{align}\label{eq:c2prime2}
\begin{split}
\partial_\uptau c_{2,i}- A_{ij}^{-1}S_P\angles{Y_i-W_i}{X_k}\partial_\uptau c_{2,k}= A_{ij}^{-1}S_P(\mrI_i+\widetilde{\secondff}_i)+(\partial_\uptau A^{-1}_{ij})\angles{g_2}{Y_i}.
\end{split}
\end{align}

We end by introducing two related versions of the profile $P$ in the first order formulation from Section~\ref{subsubsec:firstorder} and in the exterior renormalized graph formulation from  Section~\ref{subsubsec:2ndordereq}. In the first order formulation we will use the following definition for $\vecP$:
\begin{align}\label{eq:vecPdef1}
\begin{split}
\vecP_\wp\equiv\vecP= \pmat{P\\\dotP}:=\pmat{P\\ \sqrt{|h|}(h^{-1})^{0\nu}\partial_\nu P}.
\end{split}
\end{align}

In the renormalized graph formulation from Section~\ref{subsubsec:2ndordereq}, using the notation from \eqref{eq:sgraphdef1}, we define $P_g=sP$. It follows from \eqref{eq:calPgcalP1} that
\begin{align}\label{eq:upvarepsilondef1}
\begin{split}
\boldsymbol\upvarepsilon:=\fy-P_g=s\varepsilon
\end{split}
\end{align}
satisfies
\begin{align*}
\begin{split}
\calP_g\boldsymbol\upvarepsilon=s\calP\varepsilon,
\end{split}
\end{align*}
where $\calP\varepsilon$ is given in \eqref{eq:varepeq1}.

\section{Main decomposition and smoothing of modulation parameters}\label{sec:finaldecomp}

In this section we use the implicit function theorem to define the parameters $\wp$ and $a_\pm$. As a consequence of the definition, we derive ODEs describing their evolution. As in \cite{LuOS1}, the construction is designed so that the parameter are smooth (more precisely, so that their higher derivatives can be controlled in terms of only finitely many derivatives of the decaying component of the perturbation). The main difference with \cite{LuOS1} is that, since the profile correction $\vecP_\wp$ (see \eqref{eq:vecPdef1}) depends on $\dotwp$, we have to be careful that the implicit function theorem can be set up properly without loss of regularity. 
\subsection{Determination of $\wp$}\label{subsec:wpeq} We start with the determination of $\wp$ for which we use equation \eqref{eq:bfOmega2}. Let $\vectilupphi$ be defined as
\begin{align*}
\begin{split}
\vecpsi=\vecP_\wp+\vectilupphi,
\end{split}
\end{align*}
and according let
\begin{align*}
\begin{split}
\vecbfOmega= \vecbfOmega_P+\vecbfOmega_\tilupphi,\qquad \vecN=\vecN_P+\vecN_\tilupphi.
\end{split}
\end{align*}
Let $k\in C^\infty_c(\bbR)$ be a smooth bump function supported in $[0,1]$, and let $S$ be the smoothing operator\footnote{We could also use the same definition as in \eqref{eq:Sdef1} and \eqref{eq:tilSdef1} but since the moment condition $\int_0^1k(s)\ud s=0$ is no longer needed, we have distinguished the choices of the smoothing operators.} (for $h(t)$ defined for $t\geq -1$)
\begin{align*}
\begin{split}
(Sh)(t)=\int_{\bbR}\chi_{[-1,\infty)}(s)h(s)k(t-s)\ud s,\qquad t\geq -1.
\end{split}
\end{align*}
It follows that $S-I=\frac{\ud}{\ud t}\tilS$ with 
\begin{align*}
\begin{split}
(\tilS h)(t)=\int_\bbR\chi_{[-1,\infty)}(s)h(s)\tilk(t-s)\ud s,\qquad t\geq-1,
\end{split}
\end{align*}
where $\tilk(r)=0$ for $r<0$ and $\tilk(r):=-\int_r^\infty k(s)\ud s$ for $r\geq0$.
To motivate the final orthogonality condition, we want to choose $\vecbfOmega_\tilupphi$ so that $\wp$ satisfies\footnote{To understand the process better, consider the simplified situation when the first order formulation leads to an equation of the form $(\partial_t-M)\vecpsi=\vecF_0+\vecF_1$, where $\vecF_0$ contains the contribution of $\dotwp$ and $\vecF_1$ is the nonlinearity. Assuming time independence of $\vecZ$ and using the notation from the text, this leads to $\vecbfOmega(\vecF_0,\vecZ)=\partial_t\vecbfOmega_\tilupphi+\partial_t\vecbfOmega_P+\vecN-\vecbfOmega(\vecF_1,\vecZ)$. In view of this, the required orthogonality conditions are $\vecbfOmega_\tilupphi=\tilS(\vecN-\vecbfOmega(\vecF_1,\vecZ))-(I-S)\vecbfOmega_P$, or equivalently $\vecbfOmega(\vecPhi-\vecPsi_\wp,\vecZ)=\tilS(\vecN-\vecbfOmega(\vecF_1,\vecZ))+S\vecbfOmega_P$, which imply $\vecbfOmega(\vecF_0,\vecZ)=S(\vecbfOmega((\partial_t-M)\vecP,\vecZ)+\vecN_\tilupphi-\vecbfOmega(\vecF_1,\vecZ))$ (here we have in mind that $\vecbfOmega(\vecF_0,\vecZ)\approx \dotwp$).}
\begin{align}\label{eq:parODE1}
\begin{split}
\dotwp=\vecG(S\partial_\Sigma^{\leq 2}\vecpsi,\ell,S(\vecN_\tilupphi+\vecN_P+\partial_t\vecbfOmega_P)-\beta \vecomega)
\end{split}
\end{align}
for some constant $\beta>0$ and with $\vecomega$ to be determined below (see the discussion leading to equation (3.34) in \cite{LuOS1} for the motivation for introducing $\vecomega$). Comparing with \eqref{eq:bfOmega2}, and defining
\begin{align*}
\begin{split}
\vecF_\omega(\partial_\Sigma^{\leq2}\vecpsi,\ell,S(\vecN+\partial_t\vecbfOmega_P)-\beta\vecomega)&=\vecF(0,\ell,\vecG(0,\ell,S(\vecN+\partial_t\vecbfOmega_P)-\beta\vecomega))\\
&\quad-\vecF(\partial_\Sigma^{\leq2}\vecpsi,\ell,\vecG(S\partial_\Sigma^{\leq2}\vecpsi,\ell,S(\vecN+\partial_t\vecbfOmega_P)-\beta\vecomega)),
\end{split}
\end{align*}
so that 
\begin{align}\label{eq:Fomegaquad1}
\begin{split}
|\vecF_\omega|\lesssim |\partial_\Sigma^{\leq2}\vecpsi|(|S\partial_\Sigma^{\leq2}\vecpsi|+|S(\vecN+\partial_t\vecbfOmega_P)-\beta\vecomega|),
\end{split}
\end{align}
we get
\begin{align*}
\begin{split}
\partial_t\vecbfOmega_\tilupphi=(S-I)\vecN+\partial_t(S-I)\vecbfOmega_P-\beta\vecomega-\vecF_\omega.
\end{split}
\end{align*}
To achieve this orthogonality we impose a further decomposition
\begin{align}\label{eq:orth1}
\begin{split}
\vecbfOmega_\tilupphi=\vecUpomega+\vecomega,
\end{split}
\end{align}
such that
\begin{align}\label{eq:vecomegaODE1}
\begin{split}
\partial_t\vecUpomega=(S-I)(\vecN+\vecF_\omega)+\partial_t(S-I)\vecbfOmega_P,\qquad \partial_t\vecomega+\beta\vecomega=-S\vecF_\omega.
\end{split}
\end{align}
With this preparation we are ready to set up the implicit function theorem to define $\wp$. Let $\Phi$ be a regular (say $C^5$) solution of the HVMC, and consider $C^2$ curves $\xi(t)$ and $\ell(t)$ defined on some time interval $J=[0,\tau_0]$ in the domain of definition of $\Phi$. Let $\Psi_\wp$ denote the profile in \eqref{eq:Psiwpdef1}, and define $\psi_\wp:=\bfeta(\Phi-\Psi_\wp,n_\wp)$, where we use the notation introduced in Section~\ref{subsec:mainprofile}. Similarly, we define $\dotpsi_\wp$ as in \eqref{eq:psidotdef1} and let $\vecpsi_\wp=(\psi_\wp,\dotpsi_\wp)$. We extend $\xi$, $\ell$, and $\partial_\Sigma^{\leq2}\vecpsi_\wp$ to $-1\leq t\leq0$ and define $\vecomega$ to be the unique solution to
\begin{align}\label{eq:vecomegaODE2}
\begin{cases}
\vecomega(t)=-\int_0^te^{-\beta(t-s)}S\vecF_\omega(\partial_\Sigma^{\leq 2}\vecpsi_\wp,\ell,S\vecN-\vecomega)\ud s,\qquad &t\geq 0\\
\vecomega(t)=0,\qquad&t<0
\end{cases}.
\end{align}
The existence of a unique solution $\vecomega$ follows from the quadratic estimate \eqref{eq:Fomegaquad1} on $\vecF_\omega$ and a fixed point argument. Let
\begin{align}\label{eq:vecUpomegadef1}
\begin{split}
\vecUpomega=\tilS(\vecN+\vecF_\omega)+(S-I)\vecbfOmega_P,
\end{split}
\end{align}
and define
\begin{align}\label{eq:Upsilondef1}
\begin{split}
\vec\Upsilon(\Phi,\xi,\ell):=\vecbfOmega(\vecpsi_\wp)-\vecbfOmega_P-\vecUpomega-\vecomega=\vecbfOmega(\vecpsi_\wp)-\tilS(\vecN+\vecF_\omega)-S\vecbfOmega_P-\vecomega.
\end{split}
\end{align}
Observe that for $\Psi_0(t,\rho,\omega)=(t,F(\rho,\omega))$ we have $\vec\Upsilon(\Psi_0,0,0)=0$. We want to view $\vec\Upsilon$ as a map into $C^2(J)$ and verify that the Fr\'echet derivative $D_{(\xi,\ell)}\vec\Upsilon(\Psi_0,0,0)$ is invertible. We could then conclude from the implicit function theorem that given a solution $\Phi$, there are $C^2$ curves $\xi(t)$ and $\ell(t)$ for which $\vec\Upsilon(\Phi,\xi,\ell)=0$. Tracing back the definitions, we get that the parameters satisfy the ODEs \eqref{eq:parODE1}. In order to execute this plan, we should first check that $\vec\Upsilon$ indeed defines a $C^2$ curve. To see this, observe that by the smoothing property of $S$, the last two terms on the right-hand side of \eqref{eq:Upsilondef1} are in fact smooth. The first term $\vecbfOmega(\vecpsi_\wp)$ is $C^2$ because we have assumed that $\xi$ and $\ell$ are $C^2$ functions. So it remains to consider $\tilS(\vecN+\vecF_\omega)$. Note that $\vecP$ is defined in terms of one derivative of the parameters. Since $\tilS$ is smoothing of order one (recall that $\frac{\ud}{\ud t}\tilS=S-I$), it follows that $\frac{\ud^2}{\ud t^2}\tilS\vecN$ depends on two derivatives of $\wp$, and since $\wp$ was assumed to be $C^2$, this concludes the proof that $\vec\Upsilon$ defines a $C^2$ curve. The invertibility of the Fr\'echet derivative $D_{(\xi,\ell)}\vec\Upsilon(\Psi_0,0,0)$ follows from the same argument as in \cite[Section~3.6]{LuOS1}, so we will not reproduce the proof. The only point that needs extra attention is the smallness of the contribution from the terms involving $\vecP$. But this smallness follows from the smallness of the source term, which is in turned guaranteed by choosing $\uprho_0$ and $R_\ctf$ (see \eqref{eq:gextdef1} and \eqref{eq:WYdef1}) sufficiently large.
\subsection{Determination of $a_\pm$}\label{subsec:apm} We now assume that the parameters $\xi$ and $\ell$ are already determined and continue to fix $a_\pm$. The choice of parameters $\xi$ and $\ell$, gives a decomposition of the local solution $\Phi$ to the HVMC equation in first order formulation as $\vecPhi=\vecPsi_\wp+\vecpsi$. We will impose orthogonality conditions for the additional decomposition $\vecpsi=\vecphi+a_{+}\vecZ_{+}+a_{-}\vecZ_{-}$ (see \eqref{eq:psiphi1}) so as to obtain smoothing in the equations for $a_\pm$ in \eqref{eq:apm1}. More precisely, we have to incorporate the contribution of the refined profile $\vecP_\wp$ as well, for which we write
\begin{align}\label{eq:vecvarepdef1}
\begin{split}
\vecphi=\vecP_\wp+\vecvarepsilon.
\end{split}
\end{align}
Note that comparing with our earlier definition of $\vectilupphi$ we have
\begin{align}\label{eq:vecvarepdef2}
\begin{split}
\vecvarepsilon=\vectilupphi-a_{+}\vecZ_{+}-a_{-}\vecZ_{-}.
\end{split}
\end{align}
Plugging into \eqref{eq:apm1} we arrive at
\begin{align}\label{eq:apm2}
\begin{split}
\frac{\ud}{\ud t}(e^{\mp\mu t}a_{\pm})=\mp\frac{\ud}{\ud t}\big(e^{\mp\mu t}\bfOmega(\vecvarepsilon,\vecZ_{\mp})+e^{\mp\mu t}\bfOmega(\vecP_\wp,\vecZ_{\mp})\big)\mp e^{\mp\mu t}F_{\pm}.
\end{split}
\end{align}
Parameter smoothing for these equations can be achieved by requiring the orthogonality conditions
\begin{align}\label{eq:orthpm1}
\begin{split}
&\bfOmega(\vecvarepsilon,\vecZ_{-})=e^{\mu t}\tilS(e^{-\mu t}F_{+})-e^{\mu t}(I-S)(e^{-\mu t}\bfOmega(\vecP_\wp,\vecZ_{-})),\\
&\bfOmega(\vecvarepsilon,\vecZ_{+})=e^{-\mu t}\tilS(e^{\mu t}F_{-})-e^{-\mu t}(I-S)(e^{\mu t}\bfOmega(\vecP_\wp,\vecZ_{+})),
\end{split}
\end{align}
which, in view of \eqref{eq:apm1}, lead to
\begin{align}\label{eq:apmfinal1}
\begin{split}
&\frac{\ud}{\ud t}(e^{-\mu t}a_{+})=-S(e^{-\mu t}F_{+}+\partial_t(e^{-\mu t}\bfOmega(\vecP_\wp,\vecZ_{-})))=:-S(e^{-\mu t}\tilF_{+}),\\ 
&\frac{\ud}{\ud t}(e^{\mu t}a_{-})=S(e^{\mu t}F_{-}+\partial_t(e^{\mu t}\bfOmega(\vecP_\wp,\vecZ_{+})))=:S(e^{\mu t}\tilF_{-}).
\end{split}
\end{align}
Note that for the purpose of the implicit function theorem used to define the parameters, we replace $\vecphi$ appearing in the definition of $F_{\pm}$ in \eqref{eq:F+1} and \eqref{eq:F-1} by $\vecphi=\vecP_\wp+\vecvarepsilon$. On the other hand, to estimate $a_{\pm}$ we will rearrange terms to be able to use the equation for $\vecP_\wp$. More precisely, we can write
\begin{align}\label{eq:F+2}
\begin{split}
\tilF_{+}&=F_{+}+e^{\mu t}\partial_t(e^{-\mu t}\bfOmega(\vecP_\wp,\vecZ_{-})) \\
&= -\bfOmega(\vecF_1,\vecZ_{-})+\bfOmega(\vecvarepsilon,M\vecZ_{-}+\mu\vecZ_{-})-a_{+}\bfOmega(M\vecZ_{+}-\mu\vecZ_{+},\vecZ_{+})\\
&\quad-a_{-}\bfOmega(M\vecZ_{-}+\mu\vecZ_{-},\vecZ_{-})+\bfOmega((\partial_t-M)\vecP_\wp,\vecZ_{-})
\end{split}
\end{align}
and
\begin{align}\label{eq:F-2}
\begin{split}
\tilF_{-}=F_{-}+e^{-\mu t}\partial_t(e^{\mu t}\bfOmega(\vecP_\wp,\vecZ_{+}))&=-\bfOmega(\vecF_1,\vecZ_{+})+\bfOmega(\vecvarepsilon,M\vecZ_{+}-\mu\vecZ_{+})-a_{+}\bfOmega(M\vecZ_{+}-\mu\vecZ_{+},\vecZ_{+})\\
&\quad-a_{-}\bfOmega(M\vecZ_{-}+\mu\vecZ_{-},\vecZ_{+})+\bfOmega((\partial_t-M)\vecP_\wp,\vecZ_{+}).
\end{split}
\end{align}
To arrive at the orthogonality conditions \eqref{eq:orthpm1} we argue as follows. First, given $\vecpsi(t)$ and $C^2$ curves $\xi$, $\ell$, $a_\pm(t)$ defined on some time interval $J=[0,\tau_0]$ we extend them to $-1\leq t\leq0$. We then define
\begin{align*}
\begin{split}
\Upsilon_\mu(\vecpsi,a_{+},a_{-},\xi,\ell)=\pmat{\bfOmega(\vecpsi-a_{+}\vecZ_{+}-a_{-}\vecZ_{-},\vecZ_{-})-e^{\mu t}\tilS(e^{-\mu}F_{+})-e^{\mu t} S (e^{-\mu t}\bfOmega(\vecP_\wp,\vecZ_{-}))\\\bfOmega(\vecpsi-a_{+}\vecZ_{+}-a_{-}\vecZ_{-},\vecZ_{+})+e^{-\mu t}\tilS(e^{\mu t}F_{-})-e^{-\mu t}S(e^{\mu t}\bfOmega(\vecP_\wp,\vecZ_{+}))}.
\end{split}
\end{align*}
Observe that $\Upsilon_\mu(0,\dots,0)=0$. Therefore, to be able to use the implicit function theorem to achieve the orthogonality conditions \eqref{eq:orthpm1} it suffices to show that the Fr\'echet derivative $D_{(a_{+},a_{-})}\Upsilon_\mu(0,\dots,0)$ is invertible.  Note that in view of the presence of $\vecP_\wp$ in the definition of $\Upsilon_\mu$, one has to check that $\Upsilon_\mu$ is map to $C^2$, but this holds by the same considerations as for $\Upsilon$ in the definition of the parameters $\wp$ above. Invertibility of $D_{(a_{+},a_{-})}\Upsilon_\mu(0,\dots,0)$ holds by the same exact argument as in \cite[Section~3.7]{LuOS1} and the same considerations as for the parameters $\wp$ above for the contribution of $\vecP_\wp$.
 
\section{The bootstrap argument and the proof of the main theorem}\label{sec:bootstrap}
In this section we set up the overall bootstrap argument. The main results are stated in Propositions~\ref{prop:bootstrappar1},~\ref{prop:bootstrapphi1}, and~\ref{prop:boostrapvareptail}, whose proofs will occupy most of the remainder of the paper. In the final part of this section we will prove Theorem~\ref{thm:main} assuming the validity of Propositions~\ref{prop:bootstrappar1},~\ref{prop:bootstrapphi1}, and~\ref{prop:boostrapvareptail}.

We assume the existence of parameters $\xi,\ell, a_{\pm}$, defined on a $\uptau$ interval $[0,\tau_f)$, and a parameterization \eqref{eq:psidotdef1} on this interval. With the refined profile $P$ given by Definition~\ref{def:refprof1} we assume that the orthogonality conditions \eqref{eq:orth1} and \eqref{eq:orthpm1} are satisfied. Our bootstrap assumptions consist of a \emph{trapping assumption} for $a_{+}$ and a series of decay estimates $a_{\pm}$, $\dotwp$, $\varepsilon$, and $P$. The trapping assumption is for the second derivative of $a_{+}$ as we do not need improvements in decay for higher derivatives. But to to apply a topological selection argument for the growing mode, the estimate needs to be stated at the level of $a_{+}$ itself. For this reason, we have used the equation to express $\ddot{a}_{+}$ in terms of $a_{+}$ in the trapping estimate which can be stated as (see \eqref{eq:F+1} and \eqref{eq:F+2} for the expression of $\tilF_{+}$ and \eqref{eq:psiphi1}, \eqref{eq:vecvarepdef1}, \eqref{eq:vecvarepdef2} for the relation between $\vecvarepsilon$ and $\vecphi$)
\begin{equation} \label{eq:a+trap}
|\mu(\mu a_{+}(\tau)-e^{\mu \tau}S(e^{-\mu \tau}\tilF_{+}))-e^{\mu \tau}S(e^{-\mu \tau}\dottilF_{+})|\leq C_\trap\delta_\wp \epsilon \jap{\uptau}^{-3}.
\end{equation}
Here, it should be understood that every appearance of $\dota_{\pm}$ in $\dottilF_{+}$ on the right-hand side should be replaced by 
\begin{align*}
\begin{split}
\dota_{\pm}\quad\to\quad \pm\mu a_{\pm}\mp e^{\pm\mu t}S(e^{-\mu t}\tilF_{\pm}).
\end{split}
\end{align*}
Our remaining bootstrap assumptions are that the following estimates hold for all $\tau,\sigma_1,\sigma_2\in[0,\tau_f)$ where we use $\delta_\wp$ to denote a constant that  is small in terms of $\wp$ or inverse powers of $R_\ctf$ (recall from Section~\ref{subsec:prelimvfs} that $\tilpartial_\Sigma$ denotes size one tangential derivatives $\partial_\Sigma$ or $\jap{\uprho}^{-1} \RbfT$ and that in the exterior $X$ denotes any of the vectorfields $\tilr L$, $\Omega$, or $T$ from Appendix~\ref{subsec:appMink} equation~\eqref{eq:VFdef0}):%
\begin{align}
&|a_{+}|\leq 2 C(R_\ctf^2 R^2) \delta_{\wp}\epsilon \tau^{-\frac{9}{4}+\kappa}.\label{eq:a+b1}\\
&|a_{-}-a_{-}(0)e^{-\mu\tau}|\leq 2C (R_\ctf^2 R^2) \delta_{\wp} \epsilon\tau^{-\frac{9}{4}+\kappa}.\label{eq:a-b1}\\
&|\dotwp|\leq 2C (R_\ctf^2 R^2) \delta_{\wp}\epsilon \tau^{-\frac{9}{4}+\kappa}.\label{eq:wpb1}\\
&|a_{+}^{(k)}|\leq 2 C_{k} \delta_{\wp}\epsilon \tau^{-\frac{9}{4}+\frac{\kappa}{2}}, \quad \forall  k\geq 1.\label{eq:a+b2}\\
&|a_{-}^{(k)}-(-\mu)^ka_{-}(0)e^{-\mu \tau}|\leq 2C_k \delta_{\wp} \epsilon\tau^{-\frac{9}{4}+\frac{\kappa}{2}},\quad \forall  k\geq 1.\label{eq:a-b2}\\
&|\dotwp^{(k)}|\leq 2C_k \delta_{\wp}\epsilon \tau^{-\frac{9}{4}+\frac{\kappa}{2}},\quad \forall  k\geq 2.\label{eq:wpb2}\\
&\|\chi_{\leq R}\partial_\Sigma \partial^{k}\RbfT^j\varepsilon\|_{L^2(\Sigma_\tau)}+\|\chi_{\geq R} \tilpartial_\Sigma X^{k}\RbfT^j\varepsilon\|_{L^2(\Sigma_\tau)}\leq 2C_{j,k} \epsilon\tau^{-1-j},\quad 0\leq j\leq \NT,\nonumber\\
&\phantom{\|\chi_{\leq R}\partial_\Sigma \partial^{k}\RbfT^j\varepsilon\|_{L^2(\Sigma_\tau)}+\|\chi_{\geq R} \tilpartial_\Sigma X^{k}\RbfT^j\varepsilon\|_{L^2(\Sigma_\tau)}\leq 2C_{j,k} \epsilon\tau^{-1-j},\quad}k+3j \leq M_\varepsilon-2.\label{eq:Tjphienergyb1}\\
&\|\chi_{\leq R}\partial_\Sigma \partial^{k}\RbfT^jP\|_{L^2(\Sigma_\tau)}+\|\chi_{\geq R} \tilpartial_\Sigma X^{k}\RbfT^jP\|_{L^2(\Sigma_\tau)}\leq 2C_{j,k} \epsilon\tau^{-1-j},\quad 0\leq j\leq \NT,\nonumber\\ 
&\phantom{\|\chi_{\leq R}\partial_\Sigma \partial^{k}\RbfT^jP\|_{L^2(\Sigma_\tau)}+\|\chi_{\geq R} \tilpartial_\Sigma X^{k}\RbfT^jP\|_{L^2(\Sigma_\tau)}\leq 2C_{j,k} \epsilon\tau^{-1-j},\quad}k+3j \leq M_P-2.\label{eq:TjPenergyb1}\\
&\|\chi_{\geq R}r^{p} (\partial_r+\frac{1}{r}) X^{k}\RbfT^j\varepsilon\|_{L^2(\Sigma_\tau)}\leq 2C_{j,k} \epsilon\tau^{-1-j+\frac{p}{2}},\quad 0\leq p\leq 2,~0\leq j\leq \NT,~ k+3j \leq M_\varepsilon-2.\label{eq:Tjphienergyb2}\\
&\|\chi_{\geq R}r^{p} (\partial_r+\frac{1}{r}) X^{k}\RbfT^j P\|_{L^2(\Sigma_\tau)}\leq 2C_{j,k} \epsilon\tau^{-1-j+\frac{p}{2}},\quad 0\leq p\leq 2,~0\leq j\leq \NT,~ k+3j \leq M_P-2.\label{eq:TjPenergyb2}\\
&\| \chi_{\leq R}\partial^k\RbfT^j\varepsilon\|_{L^2(\Sigma_\tau)}+\|\jap{r}^{-\frac{3}{2}+\frac{\kappa}{2}}(\chi_{\geq R}X^k\RbfT^j\varepsilon)\|_{L^2(\Sigma_\tau)}\leq 2C_{j,k} \epsilon\tau^{-\frac{3}{2}-j+\frac{\kappa}{2}},\quad 0\leq j \leq 1,\nonumber\\
&\phantom{\| \chi_{\leq R}\partial^k\RbfT^j\varepsilon\|_{L^2(\Sigma_\tau)}+\|\jap{r}^{-\frac{3}{2}+\frac{\kappa}{2}}(\chi_{\geq R}X^k\RbfT^j\varepsilon)\|_{L^2(\Sigma_\tau)}\leq 2C_{j,k} \epsilon\tau^{-\frac{3}{2}-j+\frac{\kappa}{2}},\quad}0\leq k+3j\leq M_\varepsilon-5.\label{eq:dphiL2b1}\\
&\| \chi_{\leq R}\partial^k\RbfT^jP\|_{L^2(\Sigma_\tau)}+\|\jap{r}^{-\frac{3}{2}+\frac{\kappa}{2}}(\chi_{\geq R}X^k\RbfT^jP)\|_{L^2(\Sigma_\tau)}\leq 2C_{j,k} \epsilon\tau^{-\frac{3}{2}-j+\frac{\kappa}{2}},\nonumber\\
&\phantom{\| \chi_{\leq R}\partial^k\RbfT^jP\|_{L^2(\Sigma_\tau)}+\|\jap{r}^{-\frac{3}{2}+\frac{\kappa}{2}}(\chi_{\geq R}X^k\RbfT^jP)\|_{L^2(\Sigma_\tau)}}0\leq j \leq 1,~0\leq k+3j\leq M_P-5.\label{eq:dPL2b1}\\
&|\RbfT^j\varepsilon|+\chi_{\geq R}|X^k\RbfT^j\varepsilon|\leq 2C_{j,k}\epsilon\tau^{-\frac{5}{4}-j+\frac{\kappa}{2}}, \quad0\leq j \leq 1,~ 0\leq k+3j\leq M_\varepsilon-7,\label{eq:varepptwiserpb1}\\
&|\RbfT^jP|+\chi_{\geq R}|X^k\RbfT^jP|\leq 2C_{j,k}\epsilon\tau^{-\frac{5}{4}-j+\frac{\kappa}{2}}, \quad 0\leq j \leq 1,~0\leq k+3j\leq M_P-7.\label{eq:Pptwiserpb1}\\
&|\partial^k\RbfT^j\varepsilon|+\chi_{\geq R}|\partial^{k-m}X^m\RbfT^j\varepsilon|\leq 2C_{j,k}\epsilon\tau^{-\frac{3}{2}-j+\frac{\kappa}{2}}, \quad0\leq j \leq 1,~1\leq k\leq M_\varepsilon-8,\nonumber\\
&\phantom{|\partial^k\RbfT^j\varepsilon|+\chi_{\geq R}|\partial^{k-m}X^m\RbfT^j\varepsilon|\leq 2C_{j,k}\epsilon\tau^{-\frac{5}{4}-j+\frac{\kappa}{2}},\quad}m<k,~0\leq k+3j\leq M_\varepsilon-8.\label{eq:varepptwiserpb2}\\
&|\partial^k\RbfT^j P|+\chi_{\geq R}|\partial^{k-m}X^m\RbfT^j P|\leq 2C_{j,k}\epsilon\tau^{-\frac{3}{2}-j+\frac{\kappa}{2}}, \quad0\leq j \leq 1,~1\leq k\leq M_P-8,\nonumber\\
&\phantom{|\partial^k\RbfT^j\varepsilon|+\chi_{\geq R}|\partial^{k-m}X^m\RbfT^j\varepsilon|\leq 2C_{j,k}\epsilon\tau^{-\frac{5}{4}-j+\frac{\kappa}{2}},\quad}m<k,~0\leq k+3j\leq M_P-8.\label{eq:Pptwiserpb2}\\
&\chi_{\geq R}|\partial^{k-m}X^m\RbfT^j\varepsilon|\leq 2C_{j,k}\epsilon\jap{r}^{-\frac{1}{2}}\tau^{-1-j+\frac{\kappa}{2}}, \quad0\leq j \leq 1,~1\leq k\leq M_\varepsilon-8,\nonumber\\
&\phantom{|\partial^k\RbfT^j\varepsilon|+\chi_{\geq R}|\partial^{k-m}X^m\RbfT^j\varepsilon|\leq 2C_{j,k}\epsilon\jap{r}^{-\frac{1}{2}}\tau^{-1-j+\frac{\kappa}{2}},\quad}m<k,~0\leq k+3j\leq M_\varepsilon-8.\label{eq:varepptwiserpb3}\\
&\chi_{\geq R}|\partial^{k-m}X^m\RbfT^j P|\leq 2C_{j,k}\epsilon\jap{r}^{-\frac{1}{2}}\tau^{-1-j+\frac{\kappa}{2}}, \quad0\leq j \leq 1,~1\leq k\leq M_P-8,\nonumber\\
&\phantom{|\partial^k\RbfT^j\varepsilon|+\chi_{\geq R}|\partial^{k-m}X^m\RbfT^j\varepsilon|\leq 2C_{j,k}\epsilon\jap{r}^{-\frac{1}{2}}\tau^{-1-j+\frac{\kappa}{2}},\quad}m<k,~0\leq k+3j\leq M_P-8.\label{eq:Pptwiserpb3}\\
&\chi_{\geq R}|\partial^{k-m}X^m\RbfT^j\varepsilon|\leq 2C_{j,k}\epsilon\jap{r}^{-1}\tau^{-\frac{1}{2}-j+\frac{\kappa}{2}}, \quad0\leq j \leq 1,~1\leq k\leq M_\varepsilon-8,\nonumber\\
&\phantom{|\partial^k\RbfT^j\varepsilon|+\chi_{\geq R}|\partial^{k-m}X^m\RbfT^j\varepsilon|\leq 2C_{j,k}\epsilon\jap{r}^{-1}\tau^{-\frac{1}{2}-j+\frac{\kappa}{2}},\quad}m<k,~0\leq k+3j\leq M_\varepsilon-8.\label{eq:varepptwiserpb4}\\
&\chi_{\geq R}|\partial^{k-m}X^m\RbfT^j P|\leq 2C_{j,k}\epsilon\jap{r}^{-1}\tau^{-\frac{1}{2}-j+\frac{\kappa}{2}}, \quad0\leq j \leq 1,~1\leq k\leq M_P-8,\nonumber\\
&\phantom{|\partial^k\RbfT^j\varepsilon|+\chi_{\geq R}|\partial^{k-m}X^m\RbfT^j\varepsilon|\leq 2C_{j,k}\epsilon\jap{r}^{-1}\tau^{-\frac{1}{2}-j+\frac{\kappa}{2}},\quad}m<k,~0\leq k+3j\leq M_P-8.\label{eq:Pptwiserpb4}\\
&|\partial^k\varepsilon|+\chi_{\geq R}|X^k\varepsilon|\leq 2C_{k}(R_\ctf^2R^2)\epsilon \tau^{-\frac{9}{4}+\kappa},\quad 0\leq k\leq M_\varepsilon-\tilC_\varepsilon.\label{eq:varepptwisetailb1}
\end{align}
Here $\tilC_\varepsilon$ is an absolute constant that corresponds to the vectorfield regularity we lose in obtaining the improved pointwise estimate \eqref{eq:varepptwisetailb1}. We have not attempted to optimize this constant. See the proofs of Proposition~\ref{prop:boostrapvareptail} and Lemma~\ref{eq:Boxmhugens1}.
\begin{remark}\label{rem:varepbotstrap1}
Note that in the pointwise bounds \eqref{eq:varepptwisetailb1} and \eqref{eq:a+b1}--\eqref{eq:wpb1} we have used $\kappa$ instead of $\frac{\kappa}{2}$. The reason is that in deriving the improved bound \eqref{eq:varepptwisetailb1}, which is used in deriving \eqref{eq:a+b1}--\eqref{eq:wpb1}, we will use the improved estimate for $\RbfT\varepsilon$ from \eqref{eq:varepptwiserpb1}, but will lose a factor of $\log\tau$ because of a slow $r$ decay. See the proof of Proposition~\ref{prop:boostrapvareptail} in Section~\ref{sec:tails}. We expect that by a more careful analysis we could avoid this loss, but the current estimate is sufficient for closing the bootstrap.

\end{remark}
\begin{remark}\label{rem:a+bootstrap}
The trapping assumption \eqref{eq:a+trap} is stated at the level of the $\NT^{\mathrm{nd}}$ derivative $a_{+}$, because we prove improved decay for up to two time derivatives of $\varepsilon$. The assumption \eqref{eq:a+trap} is used to close the corresponding bootstrap assumptions. See Lemma~\ref{lem:orthTk} and its proof.
\end{remark}
\begin{remark}\label{rem:a-bootstrap}
The small constant $\delta_\wp$ is important for the following reason. The parameters $\dotwp$, $a_{\pm}$ and $\varepsilon$ depend linearly on each other. So to break the circularity in estimating $\varepsilon$ in terms of $\dotwp$, $a_{\pm}$ and vice versa, we need to use the fact that the linear dependence of the parameters on $\varepsilon$ is because we have introduced a cutoff in imposing orthogonality conditions. Therefore the estimates for the parameters come with the extra smallness $\delta_\wp$ compared with the corresponding estimates for $\varepsilon$. Also note that for $a_{-}$ we have separated the contribution of the initial data which evolves simply as an exponential. This linear evolution is completely independent of $\varepsilon$ and therefore its contribution to the estimates for $\varepsilon$ can be handled by choosing the bootstrap constants in the estimates for $\varepsilon$ sufficiently large.
\end{remark}
\begin{remark}\label{rem:MpMvarep}
The reason we are allowed to commute a higher number, $M_P$, of derivatives with the equation for $P$ compared with $M_\varepsilon$ for the equation for $\varepsilon$ is that $P$ depends on $\varepsilon$ only through the parameters $\wp$ which are more regular than $\varepsilon$ (in view of the smoothing introduced in their definition; see Section~\ref{subsec:wpeq}).This allows us to treat the quasilinear terms in $P$ as lower order when deriving top order energy estimates for $\varepsilon$. Alternatively we could have commuted an equal number of derivatives and worked with $P$ and $\phi$ for the derivation of energy estimates.
\end{remark}
We close our bootstrap assumptions in a few steps. First, in Proposition~\ref{prop:bootstrappar1}, we close the bootstrap assumptions for the parameters, but with a suboptimal rate for $|\ddot{a}|$ in \eqref{eq:a+trap}. We then use this in Proposition~\ref{prop:bootstrapphi1} to improve the bootstrap bounds on $\varepsilon$ and $P$ that come from the $r^p$ energy method, specifically, \eqref{eq:Tjphienergyb1}--\eqref{eq:Pptwiserpb4}.  The improved pointwise estimate  \eqref{eq:varepptwisetailb1} is closed in Proposition~\ref{prop:boostrapvareptail}. Finally, in the proof of Theorem~\ref{thm:main} we show that the initial data and parameters can be chosen such that the trapping assumption \eqref{eq:a+trap} is satisfied.
\begin{proposition}\label{prop:bootstrappar1}
Suppose the estimates \eqref{eq:a+trap}--\eqref{eq:varepptwisetailb1} and orthogonality conditions \eqref{eq:orth1} and \eqref{eq:orthpm1} are satisfied. If $\epsilon$ is sufficiently small and $C, C_{j,k}, C_k$ appearing on the right-hand side of \eqref{eq:a+trap}--\eqref{eq:varepptwisetailb1} are sufficiently large (compared to $C_{trap}$), then the following improved estimates hold:
\begin{align} 
&|a_{+}|\leq C \delta_{\wp}(R_\ctf^2R^2) \epsilon\tau^{-\frac{9}{4}+\kappa}.\label{eq:a+1}\\
&|a_{-}|\leq C \delta_{\wp}(R_\ctf^2R^2) \epsilon\tau^{-\frac{9}{4}+\kappa}.\label{eq:a-1}\\
&|\dotwp|\leq C \delta_{\wp}(R_\ctf^2R^2) \epsilon\tau^{-\kappa}.\label{eq:wp1}\\
&|a_{+}^{(k)}|\leq C_{k} \delta_{\wp} \epsilon\tau^{-\frac{9}{4}+\frac{\kappa}{2}}, \quad \forall  k\geq 1.\label{eq:a+2}\\
&|a_{-}^{(k)}|\leq C_k \delta_{\wp} \epsilon\tau^{-\frac{9}{4}+\frac{\kappa}{2}},\quad \forall  k\geq 1.\label{eq:a-2}\\
&|\dotwp^{(k)}|\leq C_k \delta_{\wp} \epsilon\tau^{-\frac{9}{4}+\frac{\kappa}{2}},\quad \forall  k\geq 2.\label{eq:wp2}
\end{align}
\end{proposition}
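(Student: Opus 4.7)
My plan is to close the parameter bounds in three successive steps---$a_-$, $a_+$, and then $\dotwp$---and then obtain the higher-derivative estimates by differentiating the ODEs and exploiting the smoothing operator $S$. Throughout, I work with the decomposition $\vecphi = \vecP_\wp + \vecvarepsilon$ from \eqref{eq:vecvarepdef1}, so that every linear appearance of $\varepsilon$ in the modulation equations \eqref{eq:parODE1}, \eqref{eq:apmfinal1} occurs only through pairings against the compactly supported test vectors $\vecZ_\pm, \vecZ_i$, where the compact support will supply the small factor $\delta_\wp$.

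To bound $\tilF_{\pm}$ from \eqref{eq:F+2}--\eqref{eq:F-2}, the quadratic pieces $\bfOmega(\vecF_1,\vecZ_\mp)$ and the self-interaction terms are immediate from Sobolev embedding on $\supp\vecZ_\pm$ combined with \eqref{eq:wpb1}, \eqref{eq:a+b1}--\eqref{eq:a-b1}, \eqref{eq:varepptwiserpb1}, \eqref{eq:Pptwiserpb1}; the linear pairing $\bfOmega(\vecvarepsilon, M\vecZ_\mp \pm \mu\vecZ_\mp)$ is small via $M\vecZ_\pm \mp \mu\vecZ_\pm = \calE_\pm$ (exponentially small in $R_\ctf$) and \eqref{eq:varepptwisetailb1}. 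The key term is $\bfOmega((\partial_t - M)\vecP_\wp, \vecZ_\mp)$; by \eqref{eq:Pdef1} on $\supp\vecZ_\mp$ this reduces to a pairing against $\txtg_1 + \partial_\tau\txtg_2$, and by the orthogonality \eqref{eq:Porth1} together with the explicit formulas \eqref{eq:giorthdef1}--\eqref{eq:c2prime2} only $\partial_\tau P$ (not $P$ itself) appears linearly, so the twice-integrable decay from \eqref{eq:TjPenergyb1} is available. Integrating $(e^{\mu\tau}a_-)' = S(e^{\mu\tau}\tilF_-)$ forward from $\tau = 0$ and exploiting the damping factor yields \eqref{eq:a-1}. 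For $a_+$, a short computation using the identity $\frac{d}{d\tau}S = S\frac{d}{d\tau}$ shows $\frac{d}{d\tau}[e^{\mu\tau}S(e^{-\mu\tau}\tilF_+)] = e^{\mu\tau}S(e^{-\mu\tau}\dottilF_+)$, so the left-hand side of \eqref{eq:a+trap} is exactly $\ddot a_+$ and trapping reads $|\ddot a_+| \le C_\trap \delta_\wp \epsilon \tau^{-3}$. Rewriting the ODE as $\mu a_+ = \dot a_+ + e^{\mu\tau}S(e^{-\mu\tau}\tilF_+)$ and estimating the right-hand side via \eqref{eq:a+b2} and the $\tilF_+$ bound yields \eqref{eq:a+1}; the improvement \eqref{eq:a+2} on $\dot a_+$ is obtained symmetrically from $\mu\dot a_+ = \ddot a_+ + e^{\mu\tau}S(e^{-\mu\tau}\dottilF_+)$ using trap and a pointwise bound on $\dottilF_+$.

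For $\dotwp$, \eqref{eq:parODE1} and the quadratic bound \eqref{eq:vecGbound1} give $|\dotwp| \lesssim |S(\vecN_\tilupphi + \vecN_P + \partial_t\vecbfOmega_P) - \beta\vecomega| + |S\partial_\Sigma^{\le 2}\vecpsi|^2$. I rewrite $\vecN_P + \partial_t\vecbfOmega_P = \bfOmega((\partial_t - M)\vecP_\wp, \vecZ_i)$ and proceed exactly as above, reducing the $P$-dependence to a linear dependence on $\partial_\tau P$. The linear $\bfOmega(\vecvarepsilon, M\vecZ_i)$ term is bounded by $\|\vecvarepsilon\|_{L^\infty(\supp\vecZ_i)}$ via \eqref{eq:varepptwisetailb1}; the $a_\pm$-contributions use \eqref{eq:a+b1}--\eqref{eq:a-b1}; and the $\vecomega$-term is quadratic by \eqref{eq:vecomegaODE2} and \eqref{eq:Fomegaquad1}. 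This produces \eqref{eq:wp1}. The higher-derivative bounds \eqref{eq:a+2}--\eqref{eq:wp2} follow by differentiating the ODEs $k$ times and transferring all $\partial_\tau$ derivatives onto the convolution kernel of $S$ (using $(Sf)^{(k)} = k^{(k)} \ast f$), so at most two time derivatives ever land on $\varepsilon$ or $P$ themselves, where the higher-order bootstrap estimates \eqref{eq:Tjphienergyb1}--\eqref{eq:TjPenergyb1} on $\RbfT^j\varepsilon$ and $\RbfT^j P$ apply.

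The main obstacle is the apparent circularity between $\dotwp$ and $\varepsilon$: each appears linearly in the bound on the other. Breaking the circularity relies on two structural features from Section~\ref{sec:modprof}. First, every linear pairing $\bfOmega(\vecvarepsilon, \vecZ)$ in the modulation equations carries the factor $\delta_\wp \sim (R_\ctf^2 R^2)^{-1}$, coming from the compact support $\supp\vecZ \subset \{|\uprho| \le R_\ctf\}$ together with the exponential smallness of $\calE_\pm$---this is the mechanism identified in Remark~\ref{rem:a-bootstrap}. Second, the refined profile is engineered via the orthogonality \eqref{eq:Porth1} so that the slowly decaying linear contribution of $P$ to the modulation equations is replaced by a linear contribution of $\partial_\tau P$, which by \eqref{eq:TjPenergyb1} enjoys twice-integrable decay and thereby avoids forcing $\dotwp$ and $a_\pm$ to inherit the slow $\tau^{-5/4+\kappa/2}$ decay of $P$ itself. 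Together these permit the hierarchy $\epsilon \ll \delta_\wp \ll (R_\ctf^2 R^2)^{-1}$ implicit in the hypotheses, and close the bootstrap with improved constants.
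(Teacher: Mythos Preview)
Your overall architecture is correct and matches the paper's, but there is one genuine gap and one conceptual error worth flagging.

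\textbf{The gap.} In the $\dotwp$ estimate you bound ``the linear $\bfOmega(\vecvarepsilon, M\vecZ_i)$ term by $\|\vecvarepsilon\|_{L^\infty(\supp\vecZ_i)}$.'' This works for $i=1,2,3$ (where $M\vecZ_i$ is supported in $\{|\rho|\simeq R_\ctf\}$ and the decay of $\nu^i$ supplies the small factor) and for $\vecZ_\pm$ (exponential smallness of $\calE_\pm$). It fails for $i=4,5,6$: there $M\vecZ_{3+j}=\vecZ_j+\text{small}$, so the main contribution is $\bfOmega(\vecvarepsilon,\vecZ_j)$, and $\|\vecZ_j\|_{L^1(d\omega\,d\rho)}\simeq 1$ carries no smallness in $R_\ctf$. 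The paper handles this term by invoking the orthogonality condition \eqref{eq:orth1}: one writes $\bfOmega(\vecvarepsilon,\vecZ_j)$ in terms of $\vecUpomega_j+\vecomega_j$ (and the $a_\pm$ contributions), notes that $\vecomega$ is quadratic, and then observes that for $j=1,2,3$ the expression \eqref{eq:vecUpomegadef1} for $\vecUpomega_j$ only involves $\tilS\vecN_j$ with $M\vecZ_j\approx 0$, so the recursion terminates. Without this step you cannot extract the $\delta_\wp$ factor needed to close \eqref{eq:wp1}.

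\textbf{The conceptual error.} You write $\delta_\wp\sim(R_\ctf^2R^2)^{-1}$ and propose the hierarchy $\epsilon\ll\delta_\wp\ll(R_\ctf^2R^2)^{-1}$. This is not how the constants work. The small factor $\delta_\wp$ is of order $|\ell|$ or $R_\ctf^{-1+\alpha/2}$ (see Remark~\ref{rem:deltawp1}); the large factor $(R_\ctf^2R^2)$ enters the bootstrap \eqref{eq:varepptwisetailb1} for a separate reason (the Darboux inversion in the ILED, cf.\ Section~\ref{subsec:ILEDintro}). The product $\delta_\wp\cdot(R_\ctf^2R^2)$ is \emph{not} small, which is why \eqref{eq:a+1}--\eqref{eq:wp1} carry both factors explicitly while \eqref{eq:a+2}--\eqref{eq:wp2} do not.

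\textbf{A smaller point.} For the higher-derivative bounds \eqref{eq:a+2}--\eqref{eq:wp2}, transferring all $\partial_\tau$ onto the kernel of $S$ forces you back to the pointwise bound \eqref{eq:varepptwisetailb1} on the undifferentiated $\varepsilon$, which reintroduces the $(R_\ctf^2R^2)$ factor you are trying to avoid. The paper instead passes through Lemma~\ref{lem:orthTk} (which bounds $\|\dotwp^{(k)}\|_{L^2[\tau,2\tau]}$ and $\|a_\pm^{(k)}\|_{L^2[\tau,2\tau]}$ in terms of the $LE$ norms of $\RbfT^k\varepsilon,\RbfT^kP$ from \eqref{eq:Tjphienergyb1}--\eqref{eq:TjPenergyb1}, which carry no large factor) and then uses a one-dimensional Sobolev inequality in time.
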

\begin{proposition}\label{prop:bootstrapphi1}
Suppose the estimates \eqref{eq:a+trap}--\eqref{eq:varepptwisetailb1} and orthogonality conditions \eqref{eq:orth1} and \eqref{eq:orthpm1} are satisfied. If $\epsilon$ is sufficiently small and $C, C_{j,k}, C_k$ appearing on the right-hand side of \eqref{eq:a+trap}--\eqref{eq:varepptwisetailb1} are sufficiently large (compared to $C_{trap}$), then the following improved estimates hold:
\begin{align}
&\|\chi_{\leq R}\partial_\Sigma \partial^{k}\RbfT^j\varepsilon\|_{L^2(\Sigma_\tau)}+\|\chi_{\geq R} \tilpartial_\Sigma X^{k}\RbfT^j\varepsilon\|_{L^2(\Sigma_\tau)}\leq C_{j,k} \epsilon\tau^{-1-j},\quad 0\leq j\leq \NT,\nonumber\\ &\phantom{\|\chi_{\leq R}\partial_\Sigma \partial^{k}\RbfT^j\varepsilon\|_{L^2(\Sigma_\tau)}+\|\chi_{\geq R} \tilpartial_\Sigma X^{k}\RbfT^j\varepsilon\|_{L^2(\Sigma_\tau)}\leq C_{j,k} \epsilon\tau^{-1-j},\quad}k+3j \leq M_\varepsilon-2.\label{eq:Tjphienergy1}\\
&\|\chi_{\leq R}\partial_\Sigma \partial^{k}\RbfT^jP\|_{L^2(\Sigma_\tau)}+\|\chi_{\geq R} \tilpartial_\Sigma X^{k}\RbfT^jP\|_{L^2(\Sigma_\tau)}\leq C_{j,k} \epsilon\tau^{-1-j},\quad 0\leq j\leq \NT,\nonumber\\
&\phantom{\|\chi_{\leq R}\partial_\Sigma \partial^{k}\RbfT^jP\|_{L^2(\Sigma_\tau)}+\|\chi_{\geq R} \tilpartial_\Sigma X^{k}\RbfT^jP\|_{L^2(\Sigma_\tau)}\leq C_{j,k} \epsilon\tau^{-1-j},\quad}k+3j \leq M_P-2.\label{eq:TjPenergy1}\\
&\|\chi_{\geq R}r^{p} (\partial_r+\frac{1}{r}) X^{k}\RbfT^j\varepsilon\|_{L^2(\Sigma_\tau)}\leq C_{j,k} \epsilon\tau^{-1-j+\frac{p}{2}},\quad 0\leq p\leq 2,~0\leq j\leq \NT,\nonumber\\
&\phantom{\|\chi_{\geq R}r^{p} (\partial_r+\frac{1}{r}) X^{k}\RbfT^j\varepsilon\|_{L^2(\Sigma_\tau)}\leq C_{j,k} \epsilon\tau^{-1-j+\frac{p}{2}},\quad}k+3j \leq M_\varepsilon-2.\label{eq:Tjphienergy2}\\
&\|\chi_{\geq R}r^{p} (\partial_r+\frac{1}{r}) X^{k}\RbfT^j P\|_{L^2(\Sigma_\tau)}\leq C_{j,k} \epsilon\tau^{-1-j+\frac{p}{2}},\quad 0\leq p\leq 2,~0\leq j\leq \NT,\nonumber \\
&\phantom{\|\chi_{\geq R}r^{p} (\partial_r+\frac{1}{r}) X^{k}\RbfT^j P\|_{L^2(\Sigma_\tau)}\leq C_{j,k} \epsilon\tau^{-1-j+\frac{p}{2}},\quad}k+3j \leq M_P-2.\label{eq:TjPenergy2}\\
&\| \chi_{\leq R}\partial^k\RbfT^j\varepsilon\|_{L^2(\Sigma_\tau)}+\|\jap{r}^{-\frac{3}{2}+\frac{\kappa}{2}}(\chi_{\geq R}X^k\RbfT^j\varepsilon)\|_{L^2(\Sigma_\tau)}\leq C_{j,k} \epsilon\tau^{-\frac{3}{2}-j+\frac{\kappa}{2}}, \nonumber\\
&\phantom{\| \chi_{\leq R}\partial^k\RbfT^j\varepsilon\|_{L^2(\Sigma_\tau)}+\|\jap{r}^{-\frac{3}{2}+\frac{\kappa}{2}}(\chi_{\geq R}X^k\RbfT^j\varepsilon)\|_{L^2(\Sigma_\tau)}}0\leq j \leq 1,~0\leq k+3j\leq M_\varepsilon-5.\label{eq:dphiL21}\\
&\| \chi_{\leq R}\partial^k\RbfT^jP\|_{L^2(\Sigma_\tau)}+\|\jap{r}^{-\frac{3}{2}+\frac{\kappa}{2}}(\chi_{\geq R}X^k\RbfT^jP)\|_{L^2(\Sigma_\tau)}\leq C_{j,k} \epsilon\tau^{-\frac{3}{2}-j+\frac{\kappa}{2}},\nonumber\\
&\phantom{\| \chi_{\leq R}\partial^k\RbfT^jP\|_{L^2(\Sigma_\tau)}+\|\jap{r}^{-\frac{3}{2}+\frac{\kappa}{2}}(\chi_{\geq R}X^k\RbfT^jP)\|_{L^2(\Sigma_\tau)}}0\leq j \leq 1,~0\leq k+3j\leq M_P-5.\label{eq:dPL21}\\
&|\RbfT^j\varepsilon|+\chi_{\geq R}|X^k\RbfT^j\varepsilon|\leq C_{j,k}\epsilon\tau^{-\frac{5}{4}-j+\frac{\kappa}{2}}, \quad0\leq j \leq 1,~ 0\leq k+3j\leq M_\varepsilon-7.\label{eq:varepptwiserp1}\\
&|\RbfT^jP|+\chi_{\geq R}|X^k\RbfT^jP|\leq C_{j,k}\epsilon\tau^{-\frac{5}{4}-j+\frac{\kappa}{2}}, \quad 0\leq j \leq 1,~0\leq k+3j\leq M_P-7.\label{eq:Pptwiserp1}\\
&|\partial^k\RbfT^j\varepsilon|+\chi_{\geq R}|\partial^{k-m}X^m\RbfT^j\varepsilon|\leq C_{j,k}\epsilon\tau^{-\frac{3}{2}-j+\frac{\kappa}{2}}, \quad0\leq j \leq 1,~1\leq k\leq M_\varepsilon-8,\nonumber\\
&\phantom{|\partial^k\RbfT^j\varepsilon|+\chi_{\geq R}|\partial^{k-m}X^m\RbfT^j\varepsilon|\leq C_{j,k}\epsilon\tau^{-\frac{5}{4}-j+\frac{\kappa}{2}},\quad}m<k,~0\leq k+3j\leq M_\varepsilon-8.\label{eq:varepptwiserp2}\\
&|\partial^k\RbfT^j P|+\chi_{\geq R}|\partial^{k-m}X^m\RbfT^j P|\leq C_{j,k}\epsilon\tau^{-\frac{3}{2}-j+\frac{\kappa}{2}}, \quad0\leq j \leq 1,~1\leq k\leq M_P-8,\nonumber\\
&\phantom{|\partial^k\RbfT^j\varepsilon|+\chi_{\geq R}|\partial^{k-m}X^m\RbfT^j\varepsilon|\leq C_{j,k}\epsilon\tau^{-\frac{5}{4}-j+\frac{\kappa}{2}},\quad}m<k,~0\leq k+3j\leq M_P-8.\label{eq:Pptwiserp2}\\
&\chi_{\geq R}|\partial^{k-m}X^m\RbfT^j\varepsilon|\leq C_{j,k}\epsilon\jap{r}^{-\frac{1}{2}}\tau^{-1-j+\frac{\kappa}{2}}, \quad0\leq j \leq 1,~1\leq k\leq M_\varepsilon-8,\nonumber\\
&\phantom{|\partial^k\RbfT^j\varepsilon|+\chi_{\geq R}|\partial^{k-m}X^m\RbfT^j\varepsilon|\leq C_{j,k}\epsilon\jap{r}^{-\frac{1}{2}}\tau^{-1-j+\frac{\kappa}{2}},\quad}m<k,~0\leq k+3j\leq M_\varepsilon-8.\label{eq:varepptwiserp3}\\
&\chi_{\geq R}|\partial^{k-m}X^m\RbfT^j P|\leq C_{j,k}\epsilon\jap{r}^{-\frac{1}{2}}\tau^{-1-j+\frac{\kappa}{2}}, \quad0\leq j \leq 1,~1\leq k\leq M_P-8,\nonumber\\
&\phantom{|\partial^k\RbfT^j\varepsilon|+\chi_{\geq R}|\partial^{k-m}X^m\RbfT^j\varepsilon|\leq C_{j,k}\epsilon\jap{r}^{-1}\tau^{-\frac{1}{2}-j+\frac{\kappa}{2}},\quad}m<k,~0\leq k+3j\leq M_P-8. \label{eq:Pptwiserp4}
\end{align}
\end{proposition}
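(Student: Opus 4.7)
The plan is to run the $r^{p}$-vectorfield method of Dafermos--Rodnianski as extended by Schlue and Moschidis, adapted to our moving asymptotically null foliation $\bsUpsigma_{\sigma}$, in tandem with the integrated local energy decay (ILED) estimates established in Section~\ref{sec:ILED}, and iterate in the hierarchy of $p$-weights to propagate decay. I will treat the coupled system for $\vecvarepsilon$ (equation~\eqref{eq:varepeq1}) and $\vecP$ (equation~\eqref{eq:Pdef1}) simultaneously, since $P$ feeds into the error on the right-hand side of the equation for $\varepsilon$ through $(\calP-\calP_{h})P$ and through $\partial_{\uptau}\txtg_{2}$, while the forcing $g_{\Ext}$, $\txtg_{1}$, $\txtg_{2}$ driving $P$ is controlled by the parameter derivatives already closed in Proposition~\ref{prop:bootstrappar1} and by $P$ itself through the orthogonality-imposing construction of Section~\ref{sec:modprof}.

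First I would combine the ILED estimate from Section~\ref{sec:ILED} (the nonstationary analogue of Proposition~\ref{prop:ILEDprod1}) with the standard energy identity on $\bsUpsigma_{\sigma}$ to obtain the base-level ($p=0$) bound~\eqref{eq:Tjphienergy1} and~\eqref{eq:TjPenergy1} with $j=0$. On the right-hand side of ILED the dangerous piece is the part of the source term which depends linearly on $\dotwp$ and $\varepsilon$; the circularity here is broken exactly as in the discussion after Lemma~\ref{lem:calG01}, using that $\Dar\calS_{1}\fybar_{i}=0$ in the flat region together with the balance $R_{\ctf}^{2+\alpha/2}R^{-1+\alpha/2}\ll 1$ and $R^{\alpha/2}R_{\ctf}^{-1+\alpha/2}\ll 1$ and the smallness $\delta_{\wp}$ in the bootstraps~\eqref{eq:wpb1}--\eqref{eq:wpb2} on the parameters. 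Next, I would run the $r^{p}$-multiplier $r^{p}L$ on the exterior region $\calC_{\hyp}$ applied to the renormalized variables $\boldsymbol{\upvarepsilon}=s\varepsilon$ and $P_{g}=sP$ satisfying~\eqref{eq:upvarepsilondef1} and~\eqref{eq:calPgcalP1}; the error terms $\Err_{\callP_{g}}$ in~\eqref{eq:ErrcallP1} carry enough $\jap{r}^{-4}$ decay to be absorbed. Starting with $p=0$ and successively taking $p\in(0,2]$, I gain one power of $\tau^{-1/2}$ in the weighted flux at each stage, which combined with the mean-value argument in $\tau$ yields the interior decay $\tau^{-1-j}$ for $k+3j\leq M_{\varepsilon}-2$ in~\eqref{eq:Tjphienergy1}--\eqref{eq:TjPenergy2}. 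The source terms $g_{\main}$, $g_{\pert}$ are controlled by Lemma~\ref{lem:calG01} (whose $\jap{\uprho}^{-3}$ contribution is precisely what has been removed by the modified profile and now appears inside $P$), and $\partial_{\uptau}\txtg_{2}$ in the equation for $\varepsilon$ is estimated via~\eqref{eq:c2prime2}, gaining the extra time derivative which gives integrable decay.

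To commute to higher orders I use the almost stationary vectorfield $\RbfT$ globally and $\tilde{r}L, \Omega, T$ in the exterior; since these commutators of $\calP$ only produce lower-order errors with the correct $\jap{r}$-weights, the resulting inhomogeneous systems can be closed inductively. The pointwise bounds~\eqref{eq:varepptwiserp1}--\eqref{eq:Pptwiserp4} then follow from Klainerman--Sobolev-type embeddings on $\bsUpsigma_{\sigma}^{\hyp}$ (using the vectorfields $X^{k}$) together with the elliptic/coercivity estimates of Proposition~\ref{prop:uH} on $\bsUpsigma_{\sigma}^{\flatt}$ (to pass from $L^{2}$ to $\ell^{\infty}\calH^{s,-3/2}$ and thence to $L^{\infty}$ via the Sobolev embedding $\|u\|_{L^{\infty}}\lesssim\|u\|_{\ell^{\infty}\calH^{s,-3/2}}$, $s>3/2$); the right-hand side of~\eqref{eq:uH-wSob} is controlled by the already-established weighted $L^{2}$ bounds and the $\Dar\calS_{1}$-cancellation handles the main source term in the interior.

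The main obstacle will be the interplay between three slowly-decaying features: (i) the refined profile $P$ itself only decays at the rate $\tau^{-5/4+\kappa/2}$, so it appears with time derivatives wherever it feeds back into the equation for $\varepsilon$, forcing us to commute $\RbfT$ before exploiting the improved decay of $\partial_{\uptau}P$ coming from the $r^{p}$-hierarchy on $P$; (ii) the orthogonality condition~\eqref{eq:Porth1} for $P$ involves the smoothed operator $\tiltilS_{P}$ and hence couples $P$ at time $\uptau$ to $P$ in a time window, which we handle by choosing $k_{P}$ with vanishing first moment~\eqref{eq:kPmoment1} so that the loss is only through $\partial_{\uptau}\txtg_{2}$; and (iii) the linear dependence of $\dotwp$ on $\varepsilon$ through the truncated eigenfunctions, which re-enters the ILED via the right-hand side. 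All three are broken by the double-smallness mechanism described above: the $R_{\ctf}$--$R$ balance, the Darboux cancellation $\Dar\calS_{1}\fybar_{i}=0$ inside $|\uprho|\leq R-C$, and the smoothing gain one power of $\jap{\uptau}^{-1}$ coming from $S-I=\tfrac{d}{dt}\tilS$ and $\tilS-I=\tfrac{d^{2}}{dt^{2}}\tiltilS$, together with the highest-order regularity gap $M_{P}>M_{\varepsilon}$ recorded in Remark~\ref{rem:MpMvarep} which prevents loss of derivatives in the quasilinear top-order estimates.
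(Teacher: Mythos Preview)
Your proposal identifies the correct ingredients---the $r^{p}$ method, ILED, the elliptic estimates of Proposition~\ref{prop:uH}, and Sobolev embeddings---but the mechanism by which decay is extracted is described backwards, and this is not just a stylistic slip.

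You write that the ILED estimate combined with the energy identity yields the ``base-level ($p=0$) bound~\eqref{eq:Tjphienergy1}'' with $j=0$, i.e.~$\tau^{-1}$ decay of the energy. This is false: ILED and the energy identity alone give only \emph{boundedness} of the energy, not decay. You then say that ``starting with $p=0$ and successively taking $p\in(0,2]$'' you gain $\tau^{-1/2}$ at each stage. The Dafermos--Rodnianski hierarchy runs in the opposite direction. The paper (and the method in general) proceeds as follows: first apply Lemma~\ref{lem:rpmult1} with $p=2$ together with Proposition~\ref{prop:EILEDfinal1} (the combined energy--ILED estimate for $\varepsilon$ and $P$, which you do not explicitly invoke) to bound $\scE^{2}_{k,0}+\int\scB^{2}_{k,0}$; then pigeonhole on dyadic intervals to find $\tau_{m}$ with $\scB^{2}_{k,0}(\tau_{m})\lesssim\epsilon^{2}\tau_{m}^{-1}$; since $\scE^{1}\lesssim\scB^{2}$, this feeds back as initial data for the $p=1$ estimate on $[\tau_{m-1},\tau_{m}]$, yielding $\int\scB^{1}\lesssim\epsilon^{2}\tau_{m}^{-1}$; a second pigeonhole gives $E_{k}(\tau_{m})\lesssim\scB^{1}(\tau_{m})\lesssim\epsilon^{2}\tau_{m}^{-2}$, and finally Proposition~\ref{prop:EILEDfinal1} propagates this to all $\tau$. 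The descent $p=2\to p=1\to p=0$ is essential; ascending in $p$ gains nothing.

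A second, smaller point: for the pointwise bounds~\eqref{eq:varepptwiserp1}--\eqref{eq:Pptwiserp2} the paper uses the three-dimensional Gagliardo--Nirenberg inequality $\|\uppsi\|_{L^{\infty}}\lesssim\|\partial_{\Sigma}\uppsi\|_{L^{2}}^{1/2}\|\partial_{\Sigma}^{2}\uppsi\|_{L^{2}}^{1/2}$, interpolating between the energy decay~\eqref{eq:Tjphienergy1} and the improved $L^{2}$ decay~\eqref{eq:dphiL21} (the latter obtained via Proposition~\ref{prop:uH}, specifically~\eqref{eq:shardy1}, not~\eqref{eq:uH-wSob}). Your route through $\ell^{\infty}\calH^{s,-3/2}\hookrightarrow L^{\infty}$ is the one used in the proof of Proposition~\ref{prop:boostrapvareptail}, not here. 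The $r$-weighted bounds~\eqref{eq:varepptwiserp3}--\eqref{eq:Pptwiserp4} come from the Sobolev estimate on spheres combined with the fundamental theorem of calculus in $r$, not from Klainerman--Sobolev.
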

\begin{proposition}\label{prop:boostrapvareptail}
Suppose the estimates \eqref{eq:a+trap}--\eqref{eq:varepptwisetailb1} and orthogonality conditions \eqref{eq:orth1} and \eqref{eq:orthpm1} are satisfied. If $\epsilon$ is sufficiently small and $C, C_{j,k}, C_k$ appearing on the right-hand side of \eqref{eq:a+trap}--\eqref{eq:varepptwisetailb1} are sufficiently large (compared to $C_{trap}$), then the following improved estimate holds:
\begin{align} 
&|\partial^k\varepsilon|+\chi_{\geq R}|X^k\varepsilon|\leq C_{k}(R_\ctf^2R^2)\epsilon \tau^{-\frac{9}{4}+\kappa},\quad 0\leq k\leq M-\tilC_\varepsilon.\label{eq:varepptwisetail1}
\end{align}
\end{proposition}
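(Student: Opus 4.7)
\textbf{Proof proposal for Proposition~\ref{prop:boostrapvareptail}.}
The plan is to improve the pointwise decay rate for $\varepsilon$ from the bootstrap $\tau^{-\frac{5}{4}+\frac{\kappa}{2}}$ of \eqref{eq:varepptwiserp1} to the twice integrable rate $\tau^{-\frac{9}{4}+\kappa}$ claimed in \eqref{eq:varepptwisetail1}. Following the scheme sketched in Section~\ref{subsec:tailintro}, I would divide space into an exterior region $\{|\uprho| \gtrsim \uptau\}$ and an interior region $\{|\uprho| \lesssim \uptau\}$, use a sharp-Huygens-type estimate for the flat wave operator in the former, and the elliptic/fancy-Sobolev estimate of Proposition~\ref{prop:uH} in the latter. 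Throughout, the equation for $\varepsilon$ is the one given in \eqref{eq:varepeq1}, namely
\begin{equation*}
\calP \varepsilon = (\calP \phi - g_\Ext) - \txtg_1 - \rd_\uptau \txtg_2 - (\calP - \calP_h) P,
\end{equation*}
with the nonlinear terms encoded in $\calP\phi = \sum_i \calG_i$ and the source structure from Lemma~\ref{lem:calG01}. The bootstrap assumptions \eqref{eq:varepptwiserp1}--\eqref{eq:Pptwiserp4} and the parameter bounds from Proposition~\ref{prop:bootstrappar1} (proven already) are available, and crucially the estimate for $\RbfT \varepsilon$ there is one derivative better than for $\varepsilon$ itself.

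For the \emph{exterior} region, I would multiply the equation by a cutoff $\chi_{\geq \uprho_\ast}$ and view the difference $\Box_m - \calP$ perturbatively, obtaining
\begin{equation*}
\Box_m (\chi_{\geq \uprho_\ast} \varepsilon) = \chi_{\geq \uprho_\ast} (\hbox{RHS of \eqref{eq:varepeq1}}) + (\Box_m - \calP)(\chi_{\geq \uprho_\ast} \varepsilon) + [\calP, \chi_{\geq \uprho_\ast}] \varepsilon.
\end{equation*}
The right-hand side is supported away from the collar, has integrable-in-$\tau$ size by Lemma~\ref{lem:calG01} together with \eqref{eq:Pptwiserp2}--\eqref{eq:Pptwiserp4} applied to $P$ in $(\calP - \calP_h) P$, and, critically, has pointwise spatial decay strictly faster than $|\uprho|^{-3}$ once the leading $g_\main$ has been subtracted by the construction of Section~\ref{sec:modprof}. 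I would then invoke Lemma~\ref{eq:Boxmhugens1} (inspired by \cite{LuOh}), which uses the three-dimensional sharp Huygens principle to convert such a source into a pointwise estimate of the form $|\chi_{\geq \uprho_\ast}\varepsilon| \lesssim \jap{r}^{-1}\tau^{-\frac{5}{4}+\frac{\kappa}{2}}$, or better, combined with the improved $\RbfT\varepsilon$ decay, something like $\tau^{-\frac{9}{4}+\kappa}$ in the intermediate zone $r \sim \tau$. This already yields a twice integrable rate wherever $|\uprho| \gtrsim \tau$, and a standard commutation with the vectorfields $X^k$ propagates the bound to $k\leq M_\varepsilon - \tilC_\varepsilon$ derivatives.

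For the \emph{interior} region I would apply Proposition~\ref{prop:uH} with $\phi$ replaced by $\chi_{\lesssim \tau}\varepsilon$, moving all $\partial_\uptau$-derivative terms in $\calP(\chi_{\lesssim \tau}\varepsilon)$ to the right-hand side so as to isolate $\uH$ acting on $\chi_{\lesssim \tau}\varepsilon$. The contribution of $\calG_0$ is handled by writing its flat-region leading part as $\dotwp^{(\leq 2)} \fybar_i$ and placing it into $f_1$ in \eqref{eq:uH-wSob}; the Darboux transform annihilates $\fybar_i$ there, so the factor $R_\ctf^2$ in the $\|\Dar \calS_1 f_1\|$ term is only paid against the transitional region $\{R-C \leq |\uprho| \leq R+C\}$ where $g_\main$ has faster spatial decay by Lemma~\ref{lem:calG01}. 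This is where the balance $R^{\alpha/2} R_\ctf^{-1+\alpha/2}, R_\ctf^{2+\alpha/2} R^{-1+\alpha/2} \ll 1$ used throughout the paper gets consumed, exactly as in the proof of ILED. The pointwise term $|\brk{\varepsilon, \uZ_i}|$ in \eqref{eq:uH-wSob} is absorbed via the orthogonality conditions \eqref{eq:orth1}, \eqref{eq:orthpm1} together with the parameter estimates. The commutator $[\calP, \chi_{\lesssim \tau}]$ is supported in the intermediate zone where the exterior bound already gives the improved rate, while the remaining $\partial_\uptau$-derivative contributions are twice integrable by the improved $\RbfT\varepsilon$ decay from \eqref{eq:varepptwiserp1}. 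Finally, Sobolev embedding $\|u\|_{L^\infty} \lesssim \|u\|_{\ell^\infty \calH^{s, -3/2}}$ for $s > 3/2$ converts the output of \eqref{eq:uH-wSob} into the desired pointwise bound.

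The main obstacle, and the reason the proof is nontrivial, is the appearance of $\rd_\uptau \txtg_2$ on the right-hand side of \eqref{eq:varepeq1}: since $\txtg_2$ is only small (not time-decaying) as a multiple of $P$, and $P$ itself decays no faster than $\tau^{-5/4+\kappa/2}$, the quantity $\rd_\uptau \txtg_2$ appears at the \emph{linear} level and must be controlled without any further smallness. This is precisely where I would exploit the fact that $\rd_\uptau P$ has been shown in Proposition~\ref{prop:bootstrapphi1} (see \eqref{eq:Pptwiserp2}) to already enjoy the twice integrable pointwise decay $\tau^{-3/2-1+\kappa/2}$ via the $r^p$-vectorfield estimates; substituting this into the explicit expression \eqref{eq:c2prime2}--\eqref{eq:tilIIic1} for $\rd_\uptau c_{2,j}$ (and hence $\rd_\uptau \txtg_2$) closes the circularity. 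The secondary difficulty of possible logarithmic losses in $\tau$ arising from the exterior Huygens integration (cf.\ Remark~\ref{rem:varepbotstrap1}) is tolerated by the gap between $\kappa/2$ in the bootstrap assumption and $\kappa$ in the improved estimate.
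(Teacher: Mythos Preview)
Your proposal is correct and follows essentially the same approach as the paper's proof: the exterior/interior split with Lemma~\ref{eq:Boxmhugens1} applied to $\chi_{\geq\uprho^\ast}\varepsilon$ (and its $\RbfT$-derivative) for the exterior, then Proposition~\ref{prop:uH} applied to $\chi_{\leq\uptau}\varepsilon$ for the interior, with the Darboux structure of $\calG_0$, the orthogonality conditions, and the already-established decay of $\RbfT P$ used exactly as you describe. The only minor deviation is that for $\rd_\uptau\txtg_2$ the paper works directly from \eqref{eq:txtgtemp3}--\eqref{eq:WYdef1} (differentiating in $\uptau$ and using the $L^2$ energy bounds \eqref{eq:TjPenergy1}, \eqref{eq:dPL21} for $\partial_\uptau\partial_\nu P$) rather than the rearranged form \eqref{eq:c2prime2}--\eqref{eq:tilIIic1} you cite, but either route closes the estimate.
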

Assuming the conclusions of Propositions~\ref{prop:bootstrappar1},~\ref{prop:bootstrapphi1}, and~\ref{prop:boostrapvareptail}, we present the proof of Theorem~\ref{thm:main}.
\begin{proof}[Proof of Theorem~\ref{thm:main}]
Given $(\psi_0,\psi_1)$ we artificially define $\psi$ on the time interval $[-1,0]$ as $\psi(t)=\psi_0+t\psi_1$.  For each $b$ (see the statement of Theorem~\ref{thm:main}) we let $\tau_f(b)$ be the maximal time on which there is a solution parameterized as in \eqref{eq:psidef1} such that the bootstrap assumptions \eqref{eq:a+b1}--\eqref{eq:varepptwisetailb1} and orthogonality conditions \eqref{eq:orth1} and \eqref{eq:orthpm1} are satisfied. By Proposition~\ref{prop:LWP} and the implicit function theorem arguments in Sections~\ref{subsec:wpeq} and~\ref{subsec:apm}, we know that $\tau_f(b)$ is strictly positive for each choice of $b$. We want to show that $\tau_f(b)$ is infinite for some choice of $b$. Suppose not. First we show that \eqref{eq:a+trap} must get saturated, that is, the inequality must be an equality, at $\tau=\tau_f$. Indeed, fix $b$ and let $\tau^\ast\in(0,\infty)$ be such that the bootstrap conditions (including the orthogonality conditions and the parameterization \eqref{eq:psidef1}) are satisfied on $[0,\tau^\ast]$. Suppose \eqref{eq:a+trap} is strict on $[0,\tau^\ast]$. By Propositions~\ref{prop:bootstrappar1} and~\ref{prop:bootstrapphi1} (with $\tau_f$ replaced by $\tau^\ast$) we can improve the bootstrap assumptions \eqref{eq:a+b1}--\eqref{eq:varepptwisetailb1} on $[0,\tau^\ast]$. By Proposition~\ref{prop:LWP} applied with $\ell_0$ and $\xi_0$ fixed at values of $\ell$ and $\xi$ close to $\tau^\ast$, we can extend the solution on an interval of size of order one beyond $\tau^\ast$. By the implicit function theorem as in Sections~\ref{subsec:wpeq} and~\ref{subsec:apm} we can extend $\xi$ and $\ell$ and the parameterization \eqref{eq:psidef1} beyond $\tau^\ast$ such that the orthogonality conditions \eqref{eq:orth1} and \eqref{eq:orthpm1} are still satisfied. Now since \eqref{eq:a+trap} is strict on $[0,\tau^\ast]$, by continuity it is still satisfied on a larger interval. It follows that on this larger interval all the bootstrap conditions are satisfied and hence $\tau_f(b)>\tau^\ast$. This shows that condition \eqref{eq:a+trap} must get saturated at some time.

Arguing by contradiction, assume that $\tau_f(b)$ is finite for every choice of $b$. Let
\begin{align*}
\begin{split}
q(\tau):=\mu(\mu a_{+}(\tau)-e^{\mu \tau}S(e^{-\mu\tau}\tilF_{+}))-e^{\mu \tau}S(e^{-\mu\tau}\dottilF_{+}),\qquad \uplambda(\tau):=C_\trap\delta_\wp \epsilon \jap{\uptau}^{-3},\qquad \uplambda_0:=C_\mathrm{trap}\delta_\wp\epsilon.
\end{split}
\end{align*}
Note that $q=\ddot{a}_{+}$. We claim that if $(\psi_0,\psi_1)$ satisfy the orthogonality condition \eqref{eq:codim1} below, then for each $|q_0|\leq \lambda_0$ there is a choice of $b$ in a neighborhood of zero for which $q(0)=q_0$.  Recall that the orthogonality condition \eqref{eq:orthpm1} determines $a_{+}$ by
\begin{align*}
\begin{split}
a_+(t)=\bfOmega(\vectilupphi,\vecZ_{-})-e^{\mu t}\tilS(e^{-\mu t}\tilF_{+}).
\end{split}
\end{align*}
Define  $\calZ:C^\infty(\barcalC)\times C^\infty(\barcalC)\times I\to \bbR$, where $I$ is a neighborhood of zero in $\bbR$, by
\begin{align*}
\begin{split}
\calZ(\psi_0,\psi_1,b)=q(0).
\end{split}
\end{align*}
Here $q$ is determined using initial data
\begin{align}\label{eq:shootingbdata1}
\begin{split}
 \Phi\vert_{\{t=0\}}=\Phi_0[\epsilon(\psi_0+b\tilvarphi_\mu)]\mand \partial_t\Phi\vert_{\{t=0\}}=\Phi_1[\epsilon(\psi_1-\mu b\tilvarphi_\mu)],
\end{split}
\end{align}
as in the statement of Theorem~\ref{thm:main}. We then restrict attention to $(\psi_0,\psi_1)$ satisfying the codimension one condition
\begin{align}\label{eq:codim1}
\begin{split}
\calZ(\psi_0,\psi_1,0)=0.
\end{split}
\end{align}
Arguing as for the implicit function theorem in Section~\ref{subsec:apm}, we see that $\big|\frac{\partial q_0}{\partial b}\vert_{(\psi_0,\psi_1,0)}\big|\gtrsim1$. Our claim then follows from from the (calculus) implicit function theorem and \eqref{eq:codim1}.
 
It follows that for every such $q_0$ there is $\tau_\trap(q_0)$ and a solution with $q(0)=q_0$ that satisfies $|q(\tau)|<\uplambda(\tau)$ for $\tau<\tau_\trap(q_0)$ and $|q(\tau_\trap(q_0))|=\uplambda(\tau_\trap(q_0))$. We use a standard shooting argument  to derive a contradiction from this. The main observation is that if $\frac{1}{2}\uplambda(\tau)<| q(\tau)|<\uplambda(\tau)$ for some $\tau\leq \tau_f$, then 
\begin{align}\label{eq:outgoing1}
\begin{split}
\frac{\ud}{\ud\tau}q^2(\tau)\geq \mu q^2(\tau).
\end{split}
\end{align}
Indeed, rewriting equation~\eqref{eq:apmfinal1} for $a_{+}$ as
\begin{align*}
\begin{split}
\dot{q}(\tau)= \mu q(\tau)-e^{\mu\tau} S(e^{-\mu \tau}\dottilF_{+}(\tau)),
\end{split}
\end{align*}
and multiplying by $2q(\tau)$, the first term on the right gives $2\mu q^2(\tau)$. For the error term, by the same arguments as in the proofs of Proposition~\ref{prop:bootstrappar1} and Lemma~\ref{lem:orthTk} in Sectin~\ref{sec:ODEanalysis} below,
\begin{align*}
\begin{split}
|e^{\mu\tau} S(e^{-\mu \tau}\dotF_{+}(\tau))|\leq c \uplambda(\tau)< c q(\tau),
\end{split}
\end{align*}
for some $c\ll \mu$, proving \eqref{eq:outgoing1}.  To derive the desired contradiction it suffices to show that the map $\Lambda:(-\uplambda_0,\uplambda_0)\to\{\pm\uplambda_0\}$, $\Lambda(q_0)=q(\tau_\trap(q_0))\jap{\tau_\trap(q_0)}^3$ is continuous. Indeed, by \eqref{eq:outgoing1}, $\Lambda(q_0)=-\uplambda_0$ if $q_0$ is close to $-\uplambda_0$ and $\Lambda(q_0)=\uplambda_0$ if $q_0$ is close to $\uplambda_0$, so the continuity of $\Lambda$ contradicts the intermediate value theorem. By continuous dependence on initial data, continuity of $\Lambda$ follows from that of $\tau_\trap$. Fix $q_0\in(-\uplambda_0,\uplambda_0)$ and let $q$ denote the corresponding solution. By \eqref{eq:outgoing1}, given $\tilde\epsilon>0$ there exists $\tilde\delta\in(0,1)$ such that if $(1-\tilde\delta)\uplambda(\tau)<|q(\tau)|<\uplambda(\tau)$ for some $\tau<\tau_f$, then $|\tau_\trap(q_0)-\tau|<\tilde\epsilon$. Let $\tau_1<\tau_f$ be such that $(1-{\tilde\delta}^2)\uplambda(\tau_1)<|q(\tau_1)|<(1-{\tilde{\delta}}^3)\uplambda(\tau_1)$, and note that if $q_1$ is sufficiently close to $q_0$ then the solution $\tilq$ corresponding to $q_1$ satisfies $(1-\tilde\delta)\uplambda(\tau_1)<|\tilq(\tau_1)|<\uplambda(\tau_1)$, and hence $|\tau_\trap(q_0)-\tau_\trap(q_1)|\leq |\tau_\trap(q_0)-\tau_1|+|\tau_\trap(q_1)-\tau_1|<2\tilde\epsilon$.
\end{proof}
\section{Analysis of modulation equations}\label{sec:ODEanalysis}
In this section we derive estimates for the parameter derivatives as well as $\bfOmega_i(\RbfT^k\varepsilon):=\bfOmega(\RbfT^k\vecvarepsilon,\vecZ_i)$, $i=\pm,1,\dots,6$. Specifically, our goal is to prove Proposition~\ref{prop:bootstrappar1} as well as the following lemma.
\begin{lemma}\label{lem:orthTk}
Suppose the bootstrap assumptions \eqref{eq:a+trap}--\eqref{eq:varepptwisetailb1} hold. If $R_\ctf$ is sufficiently large, then for $k=0,1,2$, $j\geq0$, and $i=\pm,1,\dots,6$,  and any $t_1\leq t_2$, 
\begin{align*}
\begin{split}
&\|\bfOmega_i(\RbfT^{k}\varepsilon)\|_{L^2[t_1,t_2]}+\|a_{\pm}^{(k+j)}\|_{L^2[t_1,t_2]}+\|\dotwp^{(k+1+j)}\|_{L^2[t_1,t_2]}\\
&\lesssim o_{\wp,R_\ctf}(1)(\|\RbfT^k\varepsilon\|_{LE[t_1,t_2]}+\|\RbfT^kP\|_{LE[t_1,t_2]}\\
&\phantom{\lesssim o_{\wp,R_\ctf}(1)(}+\sup_{t_1\leq \tau\leq t_2}(\|\RbfT^k\varepsilon\|_{E(\Sigma_\tau)}+\|\RbfT^kP\|_{E(\Sigma_\tau)}))+o_{\wp,R_\ctf}(1)\epsilon\jap{t_1}^{-3}.
\end{split}
\end{align*}
For $k>2$,
\begin{align*}
\begin{split}
\|\bfOmega_i(\RbfT^{k}\varepsilon)\|_{L^2[t_1,t_2]}&\lesssim o_{\wp,R_\ctf}(1)\epsilon\jap{t_1}^{-3}\\
&\quad +o_{\wp,R_\ctf}(1)\sum_{j=2}^k(\|\RbfT^j\varepsilon\|_{LE[t_1,t_2]}+\|\RbfT^jP\|_{LE[t_1,t_2]}\\
&\phantom{\quad +o_{\wp,R_\ctf}(1)\sum_{j=2}^k(}+\sup_{t_1\leq \tau\leq t_2}(\|\RbfT^j\varepsilon\|_{E(\Sigma_\tau)}+\|\RbfT^jP\|_{E(\Sigma_\tau)})).
\end{split}
\end{align*}
\end{lemma}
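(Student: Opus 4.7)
The starting point is the algebraic identity for $\bfOmega_i(\vecvarepsilon)$ obtained from the orthogonality conditions. Since $\vectilupphi = \vecvarepsilon + a_{+}\vecZ_{+} + a_{-}\vecZ_{-}$ and $\vecUpomega = \tilS(\vecN+\vecF_\omega) + (S-I)\vecbfOmega_P$, for $i\in\{1,\dots,6\}$ I combine \eqref{eq:orth1} and \eqref{eq:vecUpomegadef1} to get
\[
\bfOmega_i(\vecvarepsilon) = \bigl[\tilS(\vecN+\vecF_\omega)\bigr]_i + \bigl[(S-I)\vecbfOmega_P\bigr]_i + \omega_i - a_{+}\bfOmega(\vecZ_{+},\vecZ_i) - a_{-}\bfOmega(\vecZ_{-},\vecZ_i),
\]
and for $i=\pm$, \eqref{eq:orthpm1} expresses $\bfOmega_\pm(\vecvarepsilon)$ directly as $\tilS$ of $F_\mp$ plus an $(I-S)$-remainder in $\bfOmega(\vecP_\wp,\vecZ_\mp)$. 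Every term on the right is either (a) smoothed by $\tilS$ or $S-I$, (b) a parameter multiplied by a small cross-pairing, or (c) the $\omega_i$-quantity satisfying the damped ODE \eqref{eq:vecomegaODE1} with quadratic source $\vecF_\omega$.

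\textbf{Gaining derivatives.} Next I commute $\RbfT^k$ through this identity. The smoothing identity $\partial_t\tilS = S-I$, together with the $L^2_t$-boundedness of $S$, yields $\nrm{\RbfT^k \tilS h}_{L^2_t} \aleq \nrm{\RbfT^{k-1} h}_{L^2_t} + (\text{commutators})$ for $k\geq 1$, where the commutator $[\RbfT,\partial_t]=O(\dotwp)$ is absorbed through the bootstrap. The commutator $[\RbfT^k,\bfOmega(\cdot,\vecZ_i)]$ produces pairings $\bfOmega(\RbfT^j\vecvarepsilon,\RbfT^{k-j}\vecZ_i)$ with $j<k$ and $\RbfT^{k-j}\vecZ_i = O(\dotwp^{(\leq k-j)})$, which is bootstrap-small. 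Inserting the structural expressions \eqref{eq:F+2}, \eqref{eq:F-2} (together with the defining formulas for $\vecN,\vecF_\omega$), the source terms decompose into: (i) pairings $\bfOmega(\vecvarepsilon,\,M\vecZ_j\mp\mu\vecZ_j)$ where $M\vecZ_\pm\mp\mu\vecZ_\pm=\calE_\pm$ is exponentially small in $R_\ctf$ plus $O(\wp)$; (ii) pairings of $\vecP_\wp$ (or $(\partial_t-M)\vecP_\wp$, rewritten via \eqref{eq:Pdef1}) against test functions supported in $\{|\uprho|\leq 2R_\ctf\}$, controlled by local-energy norms of $\RbfT^{\leq k}P$; (iii) the quadratic remainders $\dotwp\cdot\wp$, $\dotwp^2$, $(\partial_\Sigma^{\leq 2}\vecpsi)^2$ from $\vecf,\vecF_1,\vecF_\omega$, where Cauchy--Schwarz with one bootstrap-pointwise factor provides the required decay. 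Each such contribution carries an $o_{\wp,R_\ctf}(1)$ prefactor, coming either from the cutoff support of $\vecZ_i$, from $|\wp|\leq\epsilon$, or from the exponential smallness of $\calE_\pm$.

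\textbf{Closing and treatment of $k>2$.} The parameter $L^2_t$ bounds for $a_\pm^{(k+j)}$ and $\dotwp^{(k+1+j)}$ are obtained by applying $\RbfT^{k+j-1}$ (resp. $\RbfT^{k+j}$) to the modulation ODEs \eqref{eq:apmfinal1} and \eqref{eq:parODE1}; each $S$-occurrence is $L^2$-bounded and the implicit-function bound \eqref{eq:vecGbound1} ensures $\dotwp$ enters linearly plus quadratically. The initial contribution $\epsilon\jap{t_1}^{-3}$ absorbs evaluations at $\uptau=t_1$ of the $\tilS$-tails, the decaying linear remainder $a_-(0)e^{-\mu t_1}$, and $\omega_i(t_1)$ (whose quadratic source yields a cubic-in-decay bound via \eqref{eq:Fomegaquad1}). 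For $k>2$, the same scheme applies, but now $\bfOmega_i(\RbfT^k\vecvarepsilon)$ is fed through the time-differentiated modulation ODEs and thereby expressed as a sum of $\RbfT^j$, $2\leq j\leq k$, of the previously estimated source quantities, giving the second bound of the lemma.

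\textbf{Main obstacle.} The central difficulty is the apparent circularity: the parameter $L^2_t$-norms on the left-hand side also appear on the right through the dependence of $\vecN,\vecF_\omega,F_\pm$ (and of $\tilF_\pm$ via $\vecP_\wp$) on $\dotwp$ and $a_\pm$. This is resolved, as in \cite{LuOS1}, by noting that all such self-referential terms carry the $o_{\wp,R_\ctf}(1)$ prefactor inherited from the cutoff truncation scale $R_\ctf$ and the smallness of $\wp$; after fixing $R_\ctf$ sufficiently large (consistent with $R^{\alpha/2}R_\ctf^{-1+\alpha/2}\ll 1$ from Section~\ref{sec:notation}) they can be absorbed into the left. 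A secondary subtlety is that $\vecP_\wp$ itself depends on $\dotwp^{(\leq 1)}$ through its defining equation \eqref{eq:Pdef1}; handling this requires carefully exploiting that $\tilg_2$ appears only under $\partial_\uptau$, so the linear $\vecP$-dependence in the modulation equations is always of the form $\RbfT\vecP_\wp$, which enjoys the improved bootstrap decay.
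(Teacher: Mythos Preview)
Your overall framework—starting from the orthogonality identities \eqref{eq:orth1}, \eqref{eq:orthpm1}, commuting $\RbfT^k$, exploiting $\partial_t\tilS = S-I$ for derivative gain, and extracting the $o_{\wp,R_\ctf}(1)$ factor from the cutoff scale and smallness of $\wp$—matches the paper and is correct for $\bfOmega_i(\RbfT^k\varepsilon)$, $\dotwp^{(k+1+j)}$, and $a_-^{(k+j)}$.

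However, there is a genuine gap in your treatment of $a_+$. You propose to handle $a_\pm^{(k+j)}$ symmetrically by differentiating \eqref{eq:apmfinal1} and using $L^2$-boundedness of $S$. For $a_-$ this works: the solution formula has kernel $e^{-\mu(t-s)}$ on $[0,t]$, and Schur's test applies. But for $a_+$ the ODE reads $\dot a_+ - \mu a_+ = -e^{\mu t}S(e^{-\mu t}\tilF_+)$; forward integration from $t_1$ produces the growing factor $e^{\mu(t-t_1)}$, and no amount of differentiation or $L^2$-boundedness of $S$ removes it. Invoking the bootstrap bound \eqref{eq:a+b1} directly does not help either, since that bound carries the large factor $(R_\ctf^2 R^2)$ and cannot yield the $o_{\wp,R_\ctf}(1)$ prefactor required on the right-hand side of the lemma.

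The paper resolves this by integrating the twice-differentiated $a_+$ equation \emph{backward} from the final bootstrap time $\tau_f$:
\[
\ddot a_+(t) = q(\tau_f)\,e^{-\mu(\tau_f - t)} + \int_t^{\tau_f} e^{-\mu(s-t)}\, e^{\mu s}S\Big(e^{-\mu s}\tfrac{d^2}{ds^2}\tilF_+(s)\Big)\,ds,
\]
where $q(\tau_f)$ is precisely the quantity controlled by the trapping assumption \eqref{eq:a+trap}. Now the kernel $e^{-\mu(s-t)}$ is decaying and Schur's test gives the $L^2$ bound; higher derivatives are handled by letting the smoothing $S$ absorb excess time derivatives. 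The bounds on $a_+$ and $\dot a_+$ then follow by rearranging $\mu^2 a_+ = \ddot a_+ + \mu e^{\mu t}S(e^{-\mu t}\tilF_+) + e^{\mu t}S(e^{-\mu t}\dot\tilF_+)$. Your proposal makes no use of \eqref{eq:a+trap}, and without it the $a_+$ estimates cannot close.
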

We start with Proposition~\ref{prop:bootstrappar1}, where we assume the result of Lemma~\ref{lem:orthTk}. The proof is for the most part the same as that of [LuOS, Proposition~5.1], so we will be brief in details. The main difference is the presence of the modified profile $P$ and our use of Lemma~\ref{lem:orthTk} to prove \eqref{eq:a+2}, \eqref{eq:a-2}, and \eqref{eq:wp2}.
\begin{proof}[Proof of Proposition~\ref{prop:bootstrappar1}]
We start with estimate \eqref{eq:wp1} for $\dotwp$. Recall that $\wp$ satisfies \eqref{eq:parODE1} where $\vecomega$ is the solution of the second equation in \eqref{eq:vecomegaODE1} given by \eqref{eq:vecomegaODE2}. Since $\vecF_\omega$ is quadratic (see \eqref{eq:Fomegaquad1}), the contribution of $\vecomega$ is a quadratic error (see [LuOS, Lemma~6.1] for further details) and, recalling \eqref{eq:vecGbound1}, the main contribution to $\dotwp$ in \eqref{eq:parODE1} comes from $S(\partial_t\vecbfOmega_P+\vecN)$. Using the notation of Section~\ref{subsec:wpeq}, we consider the contributions of $\vecbfOmega_{\partial_tP}+\vecN_P$ and $\vecN_\tilupphi$ separately. Here we have noted that the difference between $\partial_t\vecbfOmega_P$ and $\vecbfOmega_{\partial_tP}$ is quadratic and can be bounded by the right-hand side of \eqref{eq:wp1} using the bootstrap assumptions. For $\bfOmega(\partial_t\vecP-M\vecP,\vecZ_i)$, we want to use equation \eqref{eq:Pdef1} for $P$. In view of the definition \eqref{eq:vecPdef1}, we have (note that $g_\ext$ and $\vecZ_i$ have disjoint supports)
\begin{align}\label{eq:vecPtemp1}
\begin{split}
(\partial_t-M)\vecP=\pmat{0\\\sqrt{|h|}(\txtg_1+\partial_\uptau\txtg_2)},\qquad \mathrm{in~}\supp \vecZ_i.
\end{split}
\end{align}
Recalling \eqref{eq:bfOmegadef1}, we need to estimate
\begin{align*}
\begin{split}
\int Z_i\sqrt{|h|}(\txtg_1+\partial_\uptau\txtg_2)\ud\omega\ud\rho.
\end{split}
\end{align*}
This integral is bounded by the right-hand side of \eqref{eq:wp1} in view of \eqref{eq:txtgtemp4}, \eqref{eq:txtgtemp3},  \eqref{eq:giorthdef1}, and the bootstrap assumptions, where we also use the support properties of $Y_i$ in \eqref{eq:WYdef1} to estimate $\int P\calP_hY_i\sqrt{|h|}\ud x$ in \eqref{eq:txtgtemp4}. Here in the contribution of $\partial_\tau\txtg_2$ for the term $(h^{-1})^{0\nu}(Y_i+\frac{W_i}{(h^{-1})^{00}})\partial^2_{\tau\nu} P$ we use the energy estimates \eqref{eq:TjPenergyb1} and \eqref{eq:dPL2b1} (see the proof of Proposition~\ref{prop:boostrapvareptail} below for a similar estimate). For the contribution of $N_\tilupphi$ we first write $\vectilupphi=\vecvarepsilon+a_{+}\vecZ_{+}+\vecZ_{-}$. For $a_{\pm}$ we use the bootstrap assumptions and gain extra smallness from the almost vanishing of $\bfOmega(M\vecZ_{i},\vecZ_{\pm})$, $i=1,\dots,6$. For $\bfOmega(\vecvarepsilon,M\vecZ_i)$, $i=1,2,3$, we use the bootstrap assumptions, in particular \eqref{eq:varepptwisetailb1}, and the almost vanishing of $M\vecZ_i$ to estimate
\begin{align*}
\begin{split}
|\bfOmega(\vecvarepsilon,M\vecZ_i)|\lesssim |\ell| (R_\ctf^2R^2)\epsilon \tau^{-\frac{9}{4}+\kappa}+(R_\ctf^2R^2)\epsilon \tau^{-\frac{9}{4}+\kappa}\int_{\{\rho\simeq R_\ctf\}}\rho^{-2}\ud\rho\lesssim \delta_\wp(R_\ctf^2R^2)\epsilon \tau^{-\frac{9}{4}+\kappa}.
\end{split}
\end{align*}
For $i=1,2,3$ the contribution of $\bfOmega(\vecvarepsilon,M\vecZ_{3+i}-\vecZ_{i})$ is estimated similarly, while for $\bfOmega(\vecvarepsilon,\vecZ_i)$ we use the orthogonality conditions \eqref{eq:orth1}. The quadratic error $\vecomega$ in \eqref{eq:orth1} can be estimated as above. For $\vecUpomega_i$, $i=1,2,3$, we rewrite \eqref{eq:vecUpomegadef1} as
\begin{align*}
\begin{split}
\vecUpomega  = \tilS\vecN_\tilupphi+\tilS(\partial_t\vecOmega_P+\vecN_P).
\end{split}
\end{align*}
Note that here we need to bound the first three components of $\vecUpomega$, but these components are bounded exactly as above, where again we use equation \eqref{eq:vecPtemp1} for $\vecP$. This completes the proof of \eqref{eq:wp1}. For \eqref{eq:wp2} we use Lemma~\ref{lem:orthTk} and Sobolev estimates in time. Specifically, note that for any~$\tau$,
\begin{align*}
\begin{split}
|\dotwp^{(k)}(\tau)|\lesssim \tau^{-\frac{1}{2}} \|\dotwp^{(k)}\|_{L^2[\tau,2\tau]}+\tau^{\frac{1}{2}}\|\dotwp^{(k+1)}\|_{L^2[\tau,2\tau]}.
\end{split}
\end{align*}
We can now estimate the right-hand side using Lemma~\ref{lem:orthTk} and the bootstrap assumptions \eqref{eq:Tjphienergyb1} and \eqref{eq:TjPenergyb1}. 
The estimates for $a_{\pm}$ are similar where for $a_{-}$ we write
\begin{align}\label{eq:a-temp1}
\begin{split}
a_{-}(t)= a_{-}(0)e^{-\mu t}+e^{-\mu t}\int_0^t S(e^{\mu s}\tilF_{-}(s))\ud s,
\end{split}
\end{align}
while for $a_{+}$ we use \eqref{eq:a+trap} to express $a_{+}$ in terms of $\tilF_{\pm}$ and a term with better decay (by the assumption~\eqref{eq:a+trap}). See \cite[Proposition~5.1]{LuOS1} for more details. Here we provide some more details on how to estimate the contribution of $\tilF_{-}$ in \eqref{eq:a-temp1}. Recall the expression for $\tilF_{-}$ from \eqref{eq:F-2}. As above for the contribution of $\vecP_\wp$ we use equation~\eqref{eq:vecPtemp1}, where the extra smallness comes from the smallness of $c_j$. For the remaining terms on the right-hand side of \eqref{eq:F-2}, except the first term involving $\vecF_1$, the smallness comes from the smallness of $M\vecZ_{\pm}\mp\mu\vecZ_{\pm}$. For the first term on the right-hand side of \eqref{eq:F-2} recall the definition of $\vecF_1$ from \eqref{eq:firstorder1}. The smallness from the contribution of $\vecf$ comes from estimate \eqref{eq:vecfbound1}, while for $\vecK$ the smallness comes from the almost orthogonality of \eqref{eq:K1storder1} with $\vecZ_\pm$.
\end{proof}
Next, we turn to the proof of Lemma~\ref{lem:orthTk}. This lemma will be needed in absorbing the contribution of the parameters in the energy estimates.
\begin{proof}[Proof of Lemma~\ref{lem:orthTk}]
The proof is for the most part similar to that of Proposition~\ref{prop:bootstrappar1} above, so we only highlight the main differences. The estimates for $\bfOmega_i(\RbfT^k\vecvarepsilon)$ are similar to how we bounded $\bfOmega_i(\vecvarepsilon)$ in the proof of Proposition~\ref{prop:bootstrappar1}, where we use the orthogonality conditions, with the difference that instead of using pointwise bounds on $\varepsilon$, we now use the spatial part of the local energy norm. That is, we estimate
\begin{align*}
\begin{split}
\int_{\{\rho\simeq R_\ctf\}} \epsilon \rho^{-4}\ud V_{\Sigma_\tau}\lesssim R_\ctf^{-1+\frac{\alpha}{2}} \Big(\int_{\{\rho\lesssim R_\ctf\}}\frac{\epsilon^2}{\jap{\rho}^{3+\alpha}}\ud V_{\Sigma_\tau}\Big)^{\frac{1}{2}}.
\end{split}
\end{align*}
The negative power of $R_\ctf$ is the desired small factor. Similarly, in estimating the contribution of $\partial_\uptau\txtg_2$ in \eqref{eq:vecPtemp1} we use \eqref{eq:Iic1},~\eqref{eq:tilIIic1}, and \eqref{eq:c2prime2} instead of \eqref{eq:txtgtemp3}. Note that, using the first order formulation of the equations, to estimate $\bfOmega_i(\RbfT^k\varepsilon)$ in this way at most $k$ time derivatives, $\RbfT$, of $\varepsilon$ appear on the right-hand side and spatial derivatives can be integrated by parts to $\vecZ_i$. Thus, even though $\tilS$ is not (infinitely) smoothing, there is no loss of regularity in this process. With the estimates for  $\bfOmega_i(\RbfT^k\varepsilon)$ in hand, the estimates for $\dotwp^{(k)}$ and $a_{-}^{(k)}$ follow as in the proof of Proposition~\ref{prop:bootstrappar1} and with similar modifications as above. For $a_{+}$ we have to argue a bit differently and use the trapping assumption~\eqref{eq:a+trap}. Here, for $\ddot a_{+}$ and any $t \leq \tau_f$ we write (using the notational convention introduced below \eqref{eq:a+trap})
\begin{align}\label{eq:atemp1}
\begin{split}
\ddot a_{+}(t) &= \big(\mu(\mu a_{+}(\tau_f)-e^{\mu \tau_f}S(e^{-\mu \tau_f}F_{+}(\tau_f)))-e^{\mu \tau_f}S(e^{-\mu \tau_f}\dotF_{+}(\tau_f)\big)e^{-\mu(\tau_f-t)}\\
&\quad+\int_t^{\tau_f}e^{-\mu(s-t)}(e^{\mu s} S(e^{-\mu s}\frac{\ud^2}{\ud s^2} \tilF_{+}(s)))\ud s.
\end{split}
\end{align}
The desired estimate then follows by applying Schur's test. For the higher order derivatives, we simply differentiate \eqref{eq:atemp1} and absorb any excess time derivatives by the smoothing operator $S$. For $a_{+}$ we rearrange the equation for $\ddot a_{+}$ as
\begin{align*}
\begin{split}
\mu^2 a_{+}(t)=\ddot a_{+}(t)+\mu(e^{\mu t}S(e^{-\mu t}\tilF_{+}))+e^{\mu t}S(e^{-\mu t}\dot \tilF_{+}(t)),
\end{split}
\end{align*}
and use the estimates we have already established for $\ddot a_{+}$. Similarly, the estimate for $\dota_{+}$ follows from differentiating this relation.
\end{proof}
\begin{remark}\label{rem:deltawp1}
An inspection of the proofs of Proposition~\ref{prop:bootstrappar1} and Lemma~\ref{lem:orthTk} reveals that the small constants $\delta_\wp$ and $o_{\wp,R_\ctf}(1)$ depend on $|\ell|$ and $R_\ctf$ in the following way: either there is smallness of order $O(|\ell|)$ or the smallness is $O(R_\ctf^{-1+\frac{\alpha}{2}})$.
\end{remark}
\section{Uniform energy bound and integrated local energy decay}\label{sec:ILED}
We continue to use the notation $L^p_x$ for $L^p(\Sigma_\uptau)$ in this section. Our goal is to prove energy and ILED estimates for the equation (written in the non-geometric global coordinates $(\uptau,\uprho,\uptheta)$; see also \eqref{eq:calPdef1})
\begin{align}\label{eq:genILEDeqn1}
\begin{split}
\calP \uppsi = f_1+f_2.
\end{split}
\end{align}
Here $\calP$ is given in \eqref{eq:calPdef1} and \eqref{eq:calPhdef1}. It is assumed to satisfy the properties described in Section~\ref{subsubsec:2ndordereq}, in particular~\eqref{eq:calPP0Ppertdecomp1} and~\eqref{eq:tilDelta1}. $\uppsi$, $f_1$, and $f_2$ are functions defined on $\Sigma_{t_1}^{t_2}:=\cup_{\tau=t_1}^{t_2}\Sigma_\tau$ for some $t_1<t_2$. In our applications, $f_2$ is the main part of the source term in the interior which has the structure $\dotwp^{(\leq2)}\fybar_i$, where $\fybar_i$ are the eigenfunctions of $\uH$. See Lemma~\ref{lem:calG01}. $f_1$ contains the nonlinearity as well as the part of the source term has extra smallness of order $O(|\ell|)$ in the interior. To use the decay of the parameter derivatives $\dotwp$, we will need to place $f_2$, and the part of $f_1$ which contains the source term, in $L^p_\uptau X$ spaces with $p\geq 2$, $X$ being some (possibly weighted) $L^2_x$ space. To get sufficient spatial decay for $f_2$ we will perform some integration by parts in the proof of the energy estimates, and use the Darboux transform introduced in Section~\ref{subsubsec:2ndordereq} (see \eqref{eq:Dardef1}) in the proof of the ILED estimate. These considerations are analogous to the case of the corresponding estimates on the product catenoid which we studied in Proposition~\ref{prop:ILEDprod1}. The main result of this section is stated in Proposition~\ref{prop:LED1} below. Before stating this proposition we recall and define some necessary notation. For the orthogonality conditions, since the statements in this section is for the general linear equation~\eqref{eq:genILEDeqn1}, we use the following linear substitutes for our $\bfOmega_k$ (see Remark~\ref{rem:orthcomparison1} below):
\begin{align*}
\begin{split}
&\bfUpomega_k(\uppsi)(c)=-\int_{\{\uptau= c\}}Z_kn^\alpha\partial_\alpha \uppsi \sqrt{|h|}\ud y,\quad \bfUpomega_{3+k}(\uppsi(c))=\int_{\{\uptau= c\}}\uppsi Z_kn^\alpha\partial_\alpha \uptau \sqrt{|h|}\ud y,\quad k=1,2,3,\\
&\bfUpomega_{\pm\mu}(\uppsi)(c)=\int_{\{\uptau= c\}}(\pm\mu \uppsi Z_\mu \partial_\alpha \tiluptau-Z_\mu \partial_\alpha \uppsi)n^\alpha \sqrt{|h|}\ud y.
\end{split}
\end{align*} 
Here $y$ denotes the spatial variables $(\uprho,\uptheta)$ on $\Sigma_c$ and $n$ denotes the normal vector with respect to $h$. For the energy we use the notation
\begin{align}\label{eq:standardenergydef1}
\begin{split}
&E[\uppsi](\uptau)\equiv\|\uppsi\|_{E(\Sigma_\uptau)}^2\\
&:=\int_{\Sigma_\uptau}\Big[\chi_{\leq \tilR}(|\partial\uppsi|^2+\jap{\uprho}^{-2}|\uppsi|^2)\sqrt{|h|}+\chi_{\geq \tilR}(|\partial_\Sigma \uppsi|^2+\uprho^{-2}|\RbfT\uppsi|^2+\uprho^{-2}|\uppsi|^2) \Big]\ud y.
\end{split}
\end{align}
Here $\chi_{\geq \tilR}$ is a cutoff function supported in $\calC_\hyp$, for some fixed large $\tilR\gg1$, and $\chi_{\leq \tilR}=1-\chi_{\geq \tilR}$. The local energy norm and its dual on any (space-time) region $\calR$ of the domain of definition of $\uppsi$ is defined by
\begin{align}\label{eq:standardLEdef1}
\begin{split}
&\|\uppsi\|_{LE(\calR)}^2:=\int_{\calR}\chi_{\leq \tilR} ((\uprho\tilpsi)^2+(\uprho\partial \tilpsi)^2+( \partial_\uprho\tilpsi)^2)\sqrt{|h|}\ud y+\int_{\calR}\chi_{\geq \tilR}( \uprho^{-3-\alpha}\uppsi^2+\uprho^{-1-\alpha}(\partial\uppsi)^2)\sqrt{|h|}\ud y,\\
&\|f\|_{LE^\ast(\calR)}^2:=\int_{\calR}\chi_{\leq \tilR} f^2\ud V+\int_{\calR}\chi_{\geq \tilR}\uprho^{1+\alpha}f^2\sqrt{|h|}\ud y.
\end{split}
\end{align}
Here $0<\alpha\ll 1$ is a fixed small positive number. We use the notation
$$
\|\uppsi\|_{L^pL^q(\Sigma_{t_1}^{t_2})}=\Big(\int_{t_1}^{t_2}\|\uppsi\|_{L^q(\Sigma_\uptau)}^p\ud \tau\Big)^{\frac{1}{p}},
$$
with the usual modificaion when $p=\infty$. When $p=q$ we simply write $\|\uppsi\|_{L^p(\Sigma_{t_1}^{t_2})}$, and similarly with $\Sigma_{t_1}^{{t_2}}$ replaced by any other region. 
\begin{proposition}\label{prop:LED1}
Suppose $\uppsi$ satisfies $\calP\uppsi=f_1+f_2$. For any $t_1<t_2$, $\uppsi$ satisfies the estimate
\begin{align}\label{eq:ILEDlingeneral1}
\begin{split}
\|\uppsi\|_{LE(\Sigma_{t_1}^{t_2})}+\sup_{t_1\leq t\leq t_2}\|\uppsi\|_{E(\Sigma_{t})}
&\lesssim \sum_{k\in\{\pm\mu,1,\dots,6\}}\|\bfUpomega_k(\uppsi)\|_{L^2([t_1,t_2])}+\|\jap{\RbfT}f_2\|_{LE^\ast(\Sigma_{t_1}^{t_2})}\\
&\quad+R_\ctf^{2+\frac{\alpha}{2}}(\|f_1\|_{L^1 L^2(\Sigma_{t_1}^{t_2})}+\|\uppsi\|_{E(\Sigma_{t_1})}+\|\Dar \calS_1f_2\|_{LE^\ast(\Sigma_{t_1}^{t_2})}).
\end{split}
\end{align}
\end{proposition}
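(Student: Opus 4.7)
The plan is to adapt the product-catenoid ILED of Proposition~\ref{prop:ILEDprod1} to the moving catenoid. I would first fix a reference time $t_\ast\in[t_1,t_2]$ and decompose $\calP=\calP_0+\calP_\pert$ as in \eqref{eq:calPP0Ppertdecomp1}, where $\calP_0$ uses the values of the parameters at $t_\ast$. In the geometric global coordinates $(\tiluptau,\tiluprho,\tiluptheta)$ associated to $t_\ast$, the operator $\calP_0$ takes the product form \eqref{eq:tilDelta1} in the flat region. The coefficients of $\calP_\pert$ satisfy the decay bounds \eqref{eq:Ppertcoeffbound1}--\eqref{eq:calPP0Ppertdecomp2}, and by the bootstrap bound $|\dotwp^{(\leq 2)}|\lesssim \epsilon\uptau^{-\gamma-1}$ with $\gamma>1$ their contribution to any energy or Morawetz identity for $\calP_0$ is a small perturbation that can be absorbed into the left-hand side of \eqref{eq:ILEDlingeneral1} once $\epsilon$ is sufficiently small. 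A standard near-far split then reduces the problem to two regions: in the hyperboloidal region, $\calP_h$ is a mild perturbation of $\Box_m$ and the standard timelike and Morawetz multipliers (adapted to moving asymptotically null leaves as in \cite{LuOS1}) yield $LE$-control, picking up the exterior contributions of $f_1,f_2$ directly in $LE^\ast$.

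The nontrivial content sits in the flat region, where after projection onto spherical harmonics the reduced radial operator $H_1=\Delta_\tiluprho-2\jap{\tiluprho}^{-2}+V$ carries a zero eigenfunction $\nu_0=\jap{\tiluprho}^{-2}$ which does not lie in the spatial part of $LE^\ast$. Here I would follow the Darboux scheme of Lemma~\ref{lem:shred1}: apply the transform $\Dar$ of \eqref{eq:Dardef1} to $\calS_1\uppsi$, prove ILED for the transformed quantity via the repulsive-potential argument (Lemma~\ref{lem:ILEDprodj} for $\calS_{\neq 1}\uppsi$ and Lemma~\ref{lem:shred1} for $\calS_1\uppsi$), and recover $\calS_1\uppsi$ by integrating the Darboux relation in $\tiluprho$. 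The time-dependent constant of integration is controlled by pairing against $Z_0\simeq \chi_{R_\ctf}\nu_0$, which is precisely the source of the loss $R_\ctf^{2+\alpha/2}$ on the right-hand side and explains why $\Dar\calS_1 f_2$ rather than $\calS_1 f_2$ appears there; the pairings with $Z_0$ and with the first-order generalized eigenfunctions $Z_k$, $Z_{\pm\mu}$ are absorbed into the orthogonality defects $\bfUpomega_k(\uppsi)$ on the right.

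The source terms require two separate treatments, as in Remark~\ref{rem:LEDproductrhsdivision}. The term $f_1$ pairs with the energy multiplier $\partial_\tiluptau\uppsi$, yielding the $L^1L^2$-bound. The term $f_2$ is placed in $LE^\ast$ via the Morawetz multiplier; since this multiplier degenerates at the collar trapped set, for the energy-estimate component I would integrate by parts in time to move $\partial_\tiluptau$ from $\uppsi$ onto $f_2$, producing the weight $\jap{\RbfT}$ on $f_2$. Gluing the interior and exterior estimates via cutoffs then yields \eqref{eq:ILEDlingeneral1}, with commutator errors small because the transition occurs at $|\uprho|\simeq R\gg 1$. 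The main obstacle is the bookkeeping of the $R_\ctf^{2+\alpha/2}$ loss: it is large enough that it cannot be absorbed for generic $f_2$, and closure of the scheme (as outlined in Section~\ref{subsec:ILEDintro} and exploited in Section~\ref{sec:ODEanalysis}) ultimately relies on the balance $R_\ctf^{2+\alpha/2}R^{-1+\alpha/2}\ll 1$ together with the cancellation $\Dar \calS_1 \fybar_i=0$ in the flat region given by Lemma~\ref{lem:calG01}(3), which keeps $\|\Dar\calS_1 f_2\|_{LE^\ast}$ under control when $f_2$ comes from the source term of \eqref{eq:varepeq1}.
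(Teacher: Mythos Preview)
Your outline captures the right endgame (spherical-harmonic projection plus Darboux on the $\calS_1$ component, with the $R_\ctf^{2+\alpha/2}$ loss coming from inverting $\Dar$), but the step where you absorb $\calP_\pert$ directly is a genuine gap. The paper notes explicitly that the coefficients of $\calP_\pert$ have $\uptau$-decay but \emph{not} $\uprho$-decay (see the sentence preceding the second near-far split in the proof). In the energy/Morawetz identity this is survivable, but the projection $\calS_1$ and the Darboux transform $\Dar$ commute with $\calP_0$, not with $\calP$. Thus when you apply Lemma~\ref{lem:shred1} to $\calS_1\uppsi$ you acquire a source $\Dar\calS_1(\calP_\pert\uppsi)$, which involves \emph{second} derivatives of $\uppsi$ with coefficients that are only $O(\epsilon\uptau^{-\gamma})$ and carry no spatial decay; this is not bounded by $\|\uppsi\|_{LE}$ and cannot be absorbed.

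The paper handles this by a two-layer decomposition that you have collapsed into one. First a near-far split is done for the \emph{full} operator $\calP$ (not $\calP_0$): $\uppsi_\far$ solves $(\calP-V_\far)\uppsi_\far=f_1$ and enjoys ILED directly, while $\uppsi_\near$ satisfies ILED up to an $L^2$ error on a fixed compact set $K$. That compact error is then split by a \emph{time-frequency} projection $P_{\leq N_0}$; high frequencies are small by the $\partial_\uprho$ nondegeneracy of the $LE$ norm (this is also where trapping is dealt with, a point your outline omits). Since $\calP_0$ has $\uptau$-independent coefficients, $P_{\leq N_0}$ commutes with it, and only then is a second near-far split performed with respect to $\calP_0$: the piece $\uppsi_{\near,\far}$ absorbs the source $\calP_\pert\uppsi_\near$ (via \cite[Lemma~7.8]{LuOS1}, using that $\calP_0-V_\far$ has no eigenvalues and the $\uptau$-decay of $\calP_\pert$), and the remaining piece $\uppsi_{\near,\near}$ solves a genuine $\calP_0$-equation on which the product-catenoid Darboux argument applies cleanly. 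Your proposal needs both the frequency localization and this second decomposition to make the $\calP_\pert$ contribution tractable.
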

\begin{remark}\label{rem:orthcomparison1}
For the purpose of the linear estimate \eqref{eq:ILEDlingeneral1} we only need $\|\bfUpomega_k(\uppsi)\|_{L^2([t_1,t_2])}$, $k=\mu,1,2,3$, on the right-hand side. But, in practice these are controlled using our orthogonality conditions which also involve $k=-\mu,4,5,6$. Also as mentioned above $\bfUpomega_k(\uppsi)$ are proxies for our orthogonality conditions in terms of $\bfOmega_k(\uppsi)$, where $\uppsi$ corresponds to the perturbation $\varepsilon$. In our applications it will be easy to pass from one to the other, because the difference $\|\bfUpomega_k(\uppsi)-\bfOmega_k(\uppsi)\|_{L^2[t_1,t_2]}$ is bounded by a small factor of order $O(|\ell|)$ times the $LE$ norm of $\psi$, which can be absorbed in the left-hand side in the estimate.
\end{remark}
\begin{proof}[Proof of Proposition~\ref{prop:LED1}]
For $\tilM>0$ sufficiently large, let $V_\far=\tilM V_\temp $ where $V_\temp$ is a compactly supported potential with $\supp V_\temp\subseteq \{|\uprho|\leq \Rled\ll R_\ctf \}$ and
\begin{align*}
\begin{split}
(\sgn\uprho)\partial_\uprho V_\temp(\uprho)\leq -v_0<0 \mathrm{~for~}|\uprho|\leq \Rled/2,\quad (\sgn\uprho)\partial_\uprho V_\temp(\uprho)\leq 0\mathrm{~for~all~}|\uprho|.
\end{split}
\end{align*}
Let $\uppsi_\far$ be the solution of
\begin{align*}
\begin{split}
(\calP-V_\far)\uppsi_\far=f_1,\qquad (\uppsi_\far,\partial_\uptau\uppsi_\far)\vert_{\Sigma_{t_1}}=(\uppsi,\partial_\uptau\uppsi)\vert_{\Sigma_{t_1}}.
\end{split}
\end{align*}
By an identical argument as in \cite[Lemma~7.6]{LuOS1} (see also \cite[Proposition~7.1]{LuOS1} but treating the contribution of $f_{1} \bfT \uppsi_{\far}$ as in the proof of Lemma~\ref{lem:ILEDprodj}), $\uppsi_\far$ satisfies
\begin{align*}
\begin{split}
\|\uppsi_\far\|_{LE(\Sigma_{t_1}^{t_2})}\lesssim \|\uppsi\|_{E(\Sigma_{t_1})}+\|f_1\|_{L^1L^2(\Sigma_{t_1}^{t_2})}+\|\jap{\RbfT}f_1\|_{LE^\ast(\Sigma_{t_1}^{t_2})}.
\end{split}
\end{align*}
Here we have added a multiple of the energy estimate for $\uppsi_\far$. Next, observe that $\uppsi_\near:=\uppsi-\uppsi_\far$ satisfies $\calP\uppsi_\near=f_2-V_\far\uppsi_\far$. As in \cite[Lemma~7.6]{LuOS1}, it follows by the same argument as above that with $K_{t_1}^{t_2}=[t_1,t_2]\times K$, $K$ a compact region in $\{|\uprho|\leq \Rled\}$,
\begin{align*}
\begin{split}
\|\uppsi_\near\|_{LE(\Sigma_{t_1}^{t_2})}&\lesssim \|\uppsi\|_{E(\Sigma_{t_1})}+\|f_1\|_{L^1 L^2(\Sigma_{t_1}^{t_2})}+ \|\jap{\RbfT}f_2\|_{LE^\ast(\Sigma_{t_1}^{t_2})}+\|\uppsi_\near\|_{L^2(K_{t_1}^{t_2})}.
\end{split}
\end{align*}
This follows by adding the ILED estimate for $\uppsi_\far$ and the corresponding one, with an $L^2$ error in $K_{t_1}^{t_2}$, for $\uppsi$. See the proof of \eqref{eq:psi1prodtemp1} in the proof of Proposition~\ref{prop:ILEDprod1} for a similar argument and how we treat the contribution of $V\uppsi$. To deal with the $L^2$ error in $K_{t_1}^{t_2}$ we use the operator with frozen coefficients in the coordinates $(\uptau,\uprho,\uptheta)$ as in Section~\ref{subsubsec:2ndordereq} (see~\eqref{eq:calPP0Ppertdecomp1}). That is, we write
\begin{align*}
\begin{split}
\calP= \calP_0+\calP_\pert,
\end{split}
\end{align*}
with $\calP_0$ having $\uptau$ independent coefficients, frozen at $\uptau=t_2$. See~\eqref{eq:calPP0Ppertdecomp1}. We also introduce the time frequency projection
\begin{align*}
\begin{split}
P_{\leq N_0}u(\uptau) = \int_{\bbR}2^{N_0}\chi(2^{N_0}\uptau')u(\uptau-\uptau')\ud \uptau', \qquad P_{>N_0}u = u-P_{\leq N_0}u,
\end{split}
\end{align*}
where $N_0$ is a fixed large number and $\hat\chi(\hat\uptau):=\int_{\bbR}e^{-i\uptau\hat\uptau}\chi(\uptau)\ud \uptau$ is a cutoff to the region $\{|\hat\uptau|\lesssim 1\}$. As in \cite{LuOS1}, to define the frequency projection on $\uppsi_\near$ we first extend the coefficients of $\calP$ globally in time and extend $\uppsi_\near$ by requiring it to solve a homogeneous equation for $\uptau\notin [t_1,t_2]$. See the paragraph preceding \cite[Lemma~7.7]{LuOS1} for more details. By \cite[Lemma~7.7]{LuOS1},
\begin{align*}
\begin{split}
\|P_{>N_0}\uppsi_\near\|_{L^2(K_{t_1}^{t_2})}\leq o_{N_0}(1)\|\uppsi_\near\|_{LE},
\end{split}
\end{align*}
where $LE$ denotes the global in time norm. Therefore, by choosing $N_0$ sufficiently large, it remains to control $\|P_{\leq N_0}\uppsi_\near\|_{L^2(K_{t_1}^{t_2})}$. Note that the coefficients of $\calP_\pert$ have $\uptau$ decay but not necessarily $\uprho$ decay, so their contribution needs to be handled carefully. For this we introduce another near-far decomposition $\uppsi_{\near,\far}$ and $\uppsi_{\near,\near}$ which are solutions to the following equations in $\Sigma_{t_1}^{t_2}$ (with suitable modifications for $\uptau\notin[t_1,t_2]$, see \cite[equations (7.21) and (7.22)]{LuOS1})
\begin{align*}
\begin{split}
(\calP_0-V_\far)\uppsi_{\near,\far}= -\calP_\pert \uppsi_\near-V_\far \uppsi_\far,\qquad \calP_0\uppsi_{\near,\near}=f_2-V_\far\uppsi_{\near,\far}.
\end{split}
\end{align*}
Since $V_\far$ and the coefficients of $\calP_0$ are independent of $\uptau$, the frequency projection $P_{\leq N_0}$ commutes with $\calP_0$ and $\calP_0-V_\far$. It follows from \cite[Lemma~7.8]{LuOS1} that (see~\eqref{eq:Ppertcoeffbound1} and~\eqref{eq:calPP0Ppertdecomp2})
\begin{align*}
\begin{split}
\|P_{\leq N_0}\uppsi_{\near,\far}\|_{L^2(\bbR \times K)}&\lesssim \|f_1\|_{L^1L^2(\Sigma_{t_1}^{t_2})}+\|\jap{\RbfT}f_2\|_{LE^\ast(\Sigma_{t_1}^{t_2})}\\
&\quad+\epsilon \big(\sup_{t_1\leq t\leq t_2}\|\uppsi\|_{E(\Sigma_{t})}+\|\uppsi_\near\|_{LE(\Sigma_{t_1}^{t_2})}\big).
\end{split}
\end{align*}
Finally we turn to the equation 
\begin{align*}
\begin{split}
\calP_0 P_{\leq N_0}\uppsi_{\near,\near}= P_{\leq N_0}f_2 -P_{\leq N_0}V_\far\uppsi_{\near,\far},
\end{split}
\end{align*}
whose analysis is where our proof differs from that of \cite[Proposition~7.2]{LuOS1}. Let $(\tiluptau,\tiluprho,\tiluptheta)$ be the geometric global coordinates in which $\calP_0$ takes the product form $-\partial_\tiluptau^2+\tilDelta+V(\tiluprho)$. See \eqref{eq:tilDelta1}. Proceeding as in Section~\ref{sec:ILEDproduct}, let $\calS_j$ denote the spherical harmonic projections as introduced there, but with respect to the coordinates $(\tiluprho,\tiluptheta)$. Note that since $K_{t_1}^{t_2}$ is contained in $\{|\uprho|\leq \Rled\}$ it suffices to estimate the $LE$ norm of $P_{\leq N_0}\uppsi_{\near,\near}$ in $\{\tilt_1\leq \tiluptau\leq \tilt_2\}$,  which by definition is the smallest infinite rectangle in the $(\tiluptau,\tiluprho,\tiluptheta)$ coordinates that contains $K_{t_1}^{t_2}$. See the figure below.
\begin{center}
\begin{tikzpicture}[scale=1,transform shape]
  \draw[->] (0,-0.25) -- (0,2) node[right] {$\uptau$};
  \draw[name path= C, red, very thick,decorate] (-1,0.5) -- (1,0.5) node[right] {$\uptau=t_1$};
  \draw[name path = D, red, very thick,-,decorate]  (-1,1) node[left] {$\uptau=t_2$}-- (1,1) ;
  \draw[red,very thick] (1,1) -- (1,0.5);
  \draw[red, very thick] (-1,1) -- (-1,0.5) (-0.5,0.76) node{$K_{t_1}^{t_2}$};
    \tikzfillbetween[of=C and D]{red, opacity=0.1};
    \coordinate  (A) at (-3,2.25);
\coordinate  (B) at (1,1);
\coordinate  (C) at (3,0.75);
\draw[name path=O, thick,blue] plot [smooth] coordinates { (A) (B) (C) };
    \coordinate  (D) at (-3,1.2);
\coordinate  (E) at (-1,0.5);
\coordinate  (F) at (3,-0.25);
\draw[name path = U, thick,blue] plot [smooth] coordinates { (D) (E) (F) };
\node[right] at (C) {$\Blue{\tiluptau=\tilt_2}$};
\node[right] at (F) {$\Blue{\tiluptau=\tilt_1}$};
\tikzfillbetween[of=O and U]{blue, opacity=0.1};
\end{tikzpicture}
\end{center}
Let
\begin{align*}
\begin{split}
\uppsi_1:=\calS_1P_{\leq N_0}\uppsi_{\near,\near},\quad \uppsi_{\neq1}:=\sum_{j\neq1}\calS_jP_{\leq N_0}\uppsi_{\near,\near}.
\end{split}
\end{align*}
The contribution of $\uppsi_{\neq1}$ can be estimated as in Lemma~\ref{lem:ILEDprodj} (see also \cite[equation~(7.24)]{LuOS1} for a similar computation). For the contribution of $\uppsi_1$ we use the Darboux transform exactly as in Lemma~\ref{lem:shred1} to get the desired estimate. The only remaining issue is to relate the last term on the right-hand side of \eqref{eq:shILEDtemp1}, which is now with respect to the $(\tiluptau,\tiluprho,\tiluptheta)$, to our orthogonality conditions. But this is done in the same manner as in \cite[Proposition~7.2]{LuOS1}. Indeed, the last term on the right-hand side of \eqref{eq:shILEDtemp1} corresponds exactly to $\tilbfOmegabar_k(\uppsi_1)$ in the notation of \cite[equation~(7.25)]{LuOS1}. This is then related to $\bfUpomega_k$ in the same way as in the argument treating equations (7.26) and (7.27) in \cite{LuOS1}.
\end{proof}
\section{Vector field method}\label{sec:VF}
In this section we prove various energy estimates for $\varepsilon$ and $P$  and their derivatives, and use these to derive preliminary pointwise estimates for $\RbfT^k\varepsilon$ and $\RbfT^kP$. For some of these estimates we will work  with the conjugated variable $\bsupvarepsilon$ introduced in \eqref{eq:upvarepsilondef1}. In view of \eqref{eq:sgraphdef1}, estimates for $\bsupvarepsilon$ are transferable to estimates on $\varepsilon$. Our starting point is the boundedness of the energy and local energy norms of $\varepsilon$ and $P$. We will use the following notation for the higher order energy and local energy norms (recall~\eqref{eq:standardenergydef1} and~\eqref{eq:standardLEdef1}):
\begin{align*}
\begin{split}
&\|\varepsilon\|^2_{LE_k[\tau_1,\tau_2]}:= \|\chi_{\lesssim 1}\partial^k\varepsilon\|^2_{LE[\tau_1,\tau_2]}+\|\chi_{\gtrsim 1}X^k\varepsilon\|^2_{LE[\tau_1,\tau_2]},\\
&\|\varepsilon(\tau)\|_{E_k}^2\equiv \|\varepsilon\|_{E_k(\Sigma_\tau)}^2:=\|\chi_{\lesssim 1}\partial^k\varepsilon\|^2_{E(\Sigma_\tau)}+\|\chi_{\gtrsim 1}X^k\varepsilon\|^2_{E(\Sigma_\tau)}.
\end{split}
\end{align*}
\begin{proposition}\label{prop:EILEDfinal1}
If the bootstrap assumptions \eqref{eq:a+trap}--\eqref{eq:varepptwisetailb1} are satisfied and $\epsilon$ is sufficiently small, then for $j=0,1,2$
\begin{align}\label{eq:EILEDfinal1}
\begin{split}
&\sup_{\tau_1\leq \tau\leq \tau_2}\|\RbfT^j\varepsilon(\tau)\|_{E_k}^2+\|\RbfT^j\varepsilon\|_{LE_k[\tau_1,\tau_2]}^2+\sup_{\tau_1\leq \tau\leq \tau_2}\|\RbfT^jP(\tau)\|_{E_k}^2+\|\RbfT^jP\|_{LE_k[\tau_1,\tau_2]}^2\\
&\lesssim \sum_{i\leq k} (\|\RbfT^j\varepsilon(\tau_1)\|_{E_i}^2+\|\RbfT^jP(\tau_1)\|_{E_i}^2)+\epsilon^2\bfupsigma(\tau_1),\qquad k\leq M_\varepsilon-2,
\end{split}
\end{align}
where $\bfupsigma(\tau_1)=\tau_1^{-2-2j}$ if $k+3j\leq M_\varepsilon-2$ and $\bfupsigma(\tau_1)=1$ otherwise. For $P$, the estimates remain valid in the range where $M_\varepsilon$ is replaced by $M_P$.
\end{proposition}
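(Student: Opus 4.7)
The plan is to apply Proposition~\ref{prop:LED1} to $\RbfT^j \varepsilon$ and $\RbfT^j P$ (and to their commutations with spatial derivatives), treating the right-hand side and the orthogonality corrections carefully. First I would derive the equation satisfied by $\RbfT^j \varepsilon$ by commuting $\RbfT^j$ with \eqref{eq:varepeq1} and likewise for $\RbfT^j P$ with \eqref{eq:Pdef1}. Since $\RbfT$ is almost stationary (in non-geometric global coordinates, $\RbfT = \partial_\uptau$), the commutator $[\RbfT, \calP]$ produces only lower order terms whose coefficients decay in $\uptau$ by the bootstrap assumptions on $\dotwp^{(\leq \cdot)}$. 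For the higher spatial commutations one proceeds by induction on $k$, using $\partial_\Sigma$ in the interior and the Minkowski-adapted vectorfields $X \in \{\tilr L, \Omega, T\}$ in the exterior; the relevant commutator identities are available via Appendix~\ref{subsec:appMink} and \eqref{eq:callP2}--\eqref{eq:ErrcallP1}.

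The next step is to decompose the right-hand side as $f_1 + f_2$ in the form required by Proposition~\ref{prop:LED1}. Here $f_2$ captures the main interior source term having the structure $\dotwp^{(\leq 2+j)} \fybar_i(\uprho,\uptheta)$ coming from $\calG_0$, and everything else goes into $f_1$. By Lemma~\ref{lem:calG01} together with the discussion following \eqref{eq:Dardef1}, the quantity $\Dar \calS_1 f_2$ either vanishes identically in the flat region or is supported in the transition region $\{R - C \leq |\uprho| \leq R + C\}$ with spatial decay $|\uprho|^{-2}$, producing the crucial gain $\|\Dar \calS_1 f_2\|_{LE^\ast} \lesssim R^{-1+\frac{\alpha}{2}} \|\dotwp^{(\leq 2+j)}\|_{L^{2}_\uptau}$, while $\|\jap{\RbfT} f_2\|_{LE^\ast}$ directly absorbs a full weight of $\dotwp^{(\leq 3+j)}$. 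Meanwhile $f_1$ collects the nonlinear contributions $\calG_2 + \calG_3$ from \eqref{eq:phicalG1} (bounded in $L^1_\uptau L^2_x$ via the pointwise bootstrap \eqref{eq:varepptwiserpb1}--\eqref{eq:Pptwiserpb4}), the quasilinear perturbation $(\calP - \calP_h) P$ whose coefficients carry factors $\dotwp^{(\leq 2)}$ by \eqref{eq:calPdef1}, and the modified-profile corrections $\txtg_1$ and $\partial_\uptau \txtg_2$ whose smallness is guaranteed by \eqref{eq:txtgtemp4}, \eqref{eq:c2prime2} and the construction of Section~\ref{sec:modprof}.

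Then I would handle the orthogonality contributions. After identifying $\bfUpomega_k(\RbfT^j \varepsilon)$ with $\bfOmega_k(\RbfT^j \vecvarepsilon)$ modulo errors of order $O(|\ell|)$ times $\|\RbfT^j \varepsilon\|_{LE}$ (as in Remark~\ref{rem:orthcomparison1}), Lemma~\ref{lem:orthTk} bounds these in $L^{2}[\tau_1, \tau_2]$ by $o_{\wp, R_\ctf}(1)$ times the sum of the $LE$ and $\sup E$ norms of $\RbfT^i \varepsilon$ and $\RbfT^i P$ for $i \leq \max(j, 2)$, plus a decaying data contribution $o_{\wp, R_\ctf}(1)\, \epsilon \jap{\tau_1}^{-3}$. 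These $LE$ and $E$ contributions can then be absorbed into the left-hand side of \eqref{eq:EILEDfinal1} provided $\epsilon$ is small and $R_\ctf$ is large. The $\epsilon^2 \bfupsigma(\tau_1)$ term with $\bfupsigma(\tau_1) = \tau_1^{-2-2j}$ emerges from integrating the bootstrap estimates \eqref{eq:wpb2}, \eqref{eq:a+b2}, \eqref{eq:a-b2} in time; when $k + 3j > M_\varepsilon - 2$, the bootstrap pointwise bounds needed to estimate the nonlinearities at top order are no longer available, and one must settle for $\bfupsigma(\tau_1) = 1$. To close the estimate for $\varepsilon$ and $P$ together, I sum their contributions so that the cross-dependence through $\dotwp$ and through $(\calP - \calP_h)P$ in the equation for $\varepsilon$ is absorbed simultaneously on both sides.

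The main obstacle is the delicate balance between the amplification factor $R_\ctf^{2 + \frac{\alpha}{2}}$ appearing on the right-hand side of \eqref{eq:ILEDlingeneral1} (inherited from the Darboux-based proof, which is forced by the slow $|\uprho|^{-2}$ decay of the kernel elements of $\uH$) and the small constants $o_{\wp, R_\ctf}(1)$ supplied by Lemma~\ref{lem:orthTk}. As recorded in Remark~\ref{rem:deltawp1}, this smallness decomposes as $O(|\ell|) + O(R_\ctf^{-1+\frac{\alpha}{2}})$, and the latter piece alone cannot survive multiplication by $R_\ctf^{2+\frac{\alpha}{2}}$. The rescue comes from two sources: the $O(|\ell|)$ piece (negligible against any power of $R_\ctf$ since $\epsilon \ll R_\ctf^{-\mathrm{any}}$), and the additional $R^{-1+\frac{\alpha}{2}}$ gain produced by $\Dar \calS_1 f_2$ being supported in the transition region. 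The balance condition $R_\ctf^{2+\frac{\alpha}{2}} R^{-1+\frac{\alpha}{2}} \ll 1$ from Section~\ref{subsec:ILEDintro} is what ultimately makes this absorption possible, and propagating it accurately through the commuted equations is the most technical point of the argument.
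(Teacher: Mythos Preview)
Your overall strategy coincides with the paper's: apply Proposition~\ref{prop:LED1} to $\RbfT^j\varepsilon$ and $\RbfT^jP$ simultaneously, use Lemma~\ref{lem:calG01} for the Darboux gain on the source term, invoke Lemma~\ref{lem:orthTk} for the orthogonality terms, and handle higher $k$ in the exterior via the graph formulation. Your discussion of the $R_\ctf^{2+\frac{\alpha}{2}}$ vs.\ $R^{-1+\frac{\alpha}{2}}$ balance is also on target.

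There is, however, one concrete organizational slip in your $f_1/f_2$ split. You place the modified-profile corrections $\txtg_1$ and $\partial_\uptau\txtg_2$ in $f_1$, to be estimated in $L^1_\uptau L^2_x$. This fails for two reasons. First, the natural estimates \eqref{eq:txtgtemp4} and \eqref{eq:c2prime2} control the coefficients $c_{i,j}$ only in $L^2_\uptau$, not $L^1_\uptau$, so passing to $L^1_\uptau$ costs a factor $|\tau_2-\tau_1|^{1/2}$. Second, and more importantly, in \eqref{eq:ILEDlingeneral1} the $\|f_1\|_{L^1L^2}$ term is multiplied by $R_\ctf^{2+\frac{\alpha}{2}}$; since the smallness of $\txtg_i$ is only $O(|\ell|)+O(R_\ctf^{-1+\frac{\alpha}{2}})$, the second piece produces $R_\ctf^{1+\alpha}$ after amplification, which cannot be absorbed. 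Your last paragraph correctly identifies that the $R_\ctf^{-1+\frac{\alpha}{2}}$ smallness alone does not survive the amplification, but the rescue you cite---the Darboux gain---applies to $\Dar\calS_1 f_2$, not to $f_1$.

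The fix is exactly what the paper does: put $\txtg_1$ and $\partial_\uptau\txtg_2$ into $f_2$ (indeed, for the $P$ equation the paper takes $f_1=0$). Then $\|\jap{\RbfT}f_2\|_{LE^\ast}$ carries no $R_\ctf$ amplification, so the bound \eqref{eq:Penergynonlintemp1} with smallness $R^{\frac{\alpha}{2}}R_\ctf^{-1+\frac{\alpha}{2}}\ll 1$ suffices directly; and for $R_\ctf^{2+\frac{\alpha}{2}}\|\Dar\calS_1 f_2\|_{LE^\ast}$ one exploits that $\txtg_i=\sum c_{i,j}\chi\fy_j^{\stat}$ with $\Dar\calS_1(\chi\fybar_j)$ supported on $\partial\chi$, producing the additional decay recorded in \eqref{eq:Dartxtg2temp1}. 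With this correction your argument matches the paper's.
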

\begin{proof}
The proof consists of several steps. First, we use Proposition~\ref{prop:LED1} to establish the desired estimates for $X^k\RbfT^j\varepsilon$ and $X^k\RbfT^j P$ where all the vectorfields $X$ are equal to $\RbfT$. By elliptic theory, this yields the same estimates for size one derivatives applied on $\RbfT^j\varepsilon$ and $\RbfT^jP$, so it remains to consider the weighted derivatives in the exterior region.  Here we can work with the operator $\calP_g$ in the graph formulation in the region $\{|\uprho|\gtrsim1\}$, and use \eqref{eq:higher1}, \eqref{eq:errorhigher1}, and \eqref{eq:vecorder1} to directly prove the desired estimate by similar multiplier estimates as in the proof of Proposition~\ref{prop:LED1}. The resulting errors in the region $\{|\uprho|\simeq 1\}$ are absorbed by the estimates on the size one derivatives on $\RbfT^j\varepsilon$ from the previous step. Except for the first step, the details are the same as in the proof of Proposition 8.8 in \cite{LuOS1}, so we focus on the first step. Here the main new aspect is that we need to use the structure $f_1+\partial_\uptau f_2$ of the inhomogeneous term in Proposition~\ref{prop:LED1} to estimate the source term in the equation for $\varepsilon$ (that is, the terms that are independent of $\varepsilon$).

Turning to the details, we apply Proposition~\ref{prop:LED1} to $\RbfT^jP$ and $\RbfT^j\varepsilon$ and add the resulting estimates to absorb the errors. For this we start with equations \eqref{eq:Pdef1} and \eqref{eq:varepeq1} and observe that the terms on the right-hand side of \eqref{eq:Pdef1} all come with extra smallness, either from the spatial decay and support of $g_\Ext$ or the expressions \eqref{eq:txtgtemp4}, \eqref{eq:txtgtemp3}, and \eqref{eq:giorthdef1}. Also note that the dependence of $P$ on $\varepsilon$ comes only through the parameters $\wp$ in $g_\Ext$. In view of the smoothness of $\wp$ this implies that there is no potential loss of regularity in estimating the higher derivatives of $P$ appearing in the equation for $\varepsilon$ (see also Remark~\ref{rem:MpMvarep}). For \eqref{eq:Pdef1} our goal is to estimate the contribution of $\txtg_1$ and $\partial_\uptau \txtg_2$ by a small multiple of the $LE$ norm of $P$. The contribution of $g_\ext$ is strictly easier because of the spatial decay and support of $g_\ext$. In the notation of Proposition~\ref{prop:LED1} we take $f_1=0$. In view of \eqref{eq:txtgtemp4},~\eqref{eq:Iic1},~\eqref{eq:tilIIic1}, and \eqref{eq:c2prime2}, and analysis similar to the proof of Lemma~\ref{lem:orthTk} shows that
\begin{align}\label{eq:Penergynonlintemp1}
\begin{split}
\|\txtg_1\|_{L^2_\uptau[t_1,t_2]}+\|\partial_\uptau \txtg_2\|_{L^2_{\uptau}[t_1,t_2]}\lesssim (|\ell|+R_\ctf^{-1+\frac{\alpha}{2}})R^{\frac{\alpha}{2}}\|P\|_{LE[t_1,t_2]}.
\end{split}
\end{align}
Since $R^{\frac{\alpha}{2}} R_\ctf^{-1+\frac{\alpha}{2}}\ll1$, this gives the desired smallness for $\|f_2\|_{LE}$ in the first line of \eqref{eq:ILEDlingeneral1}. The term $\|\RbfT f_2\|_{LE}$ is treated by the same argument in view of the regularity considerations discussed above (specifically the smoothness of the parameters $\wp$ and the smoothing operator $S_P$ in \eqref{eq:c2prime2}). 
For the first term on the right-hand side of \eqref{eq:ILEDlingeneral1} we use the orthogonality condition~\eqref{eq:Porth1} and its time derivative. The corresponding term is then bounded in exactly the same way as~\eqref{eq:Penergynonlintemp1} (but without the $R^{\frac{\alpha}{2}}$ factor which came from the spatial support of $\txtg_i$).  Here for the contribution of $W_\mu$ we use the fact that $\fy^\stat_\mu$ is exponentially decaying to compare \eqref{eq:Porth1} with $\bfUpomega_{\pm\mu}$ in Proposition~\ref{prop:LED1}. See Remark~\ref{rem:orthcomparison1}. We also note that in the first term on the right-hand side of \eqref{rem:orthcomparison1}, when $\partial_\uptau$ falls on $\partial_\nu P$ we argue in exactly the same way as we passed from $\secondff_i$ to $\tilde{\secondff}_i$ in \eqref{eq:tilIIic1}. For the second line of \eqref{eq:ILEDlingeneral1}, in view of \eqref{eq:giorthdef1}, by the same argument as for the source term in Lemma~\ref{lem:calG01}, 
\begin{align}\label{eq:Dartxtg2temp1}
\begin{split}
\|\Dar \txtg_1\|_{LE^\ast[t_1,t_2]}+\|\Dar \partial_\uptau \txtg_2\|_{LE^\ast[t_1,t_2]}\lesssim  (R^{-1+\frac{\alpha}{2}}+|\ell|C_{R})\|P\|_{LE[t_1,t_2]}.
\end{split}
\end{align}
Since  $R_\ctf^{2+\frac{\alpha}{2}}R^{-1+\frac{\alpha}{2}}\ll 1$, we gain an overall smallness in $R_\ctf$ and $R$ so the corresponding contributions can be absorbed by the energy and $LE$ norms of $P$. The estimates for $\RbfT^jP$ are similar. Note that there is no loss of regularity coming from $\RbfT f_2$ on the right-hand side of \eqref{eq:ILEDlingeneral1} (which appears because of trapping) when $f_2=\partial_\uptau \RbfT^kg_2$, as any excess time derivatives can be absorbed by the smoothing operator $S_P$ (see \eqref{eq:txtgtemp3}).

For \eqref{eq:varepeq1} our goal is to estimate the contribution of the main source term in the equation for $\RbfT^j\varepsilon$ by $\|\dotwp^{(\geq j)}\|_{L^2_\uptau\cap L^\infty_\uptau}$ and $\|a_\pm^{(\geq j)}\|_{L^2_\uptau\cap L^\infty_\uptau}$. The nonlinearity is simply placed in $L^1L^2[\tau_1,\tau_2]$ as in $f_1$ in Proposition~\ref{prop:LED1}. Estimate \eqref{eq:EILEDfinal1} then follows from the bootstrap assumptions \eqref{eq:a+trap}--\eqref{eq:varepptwisetailb1} as well as Proposition~\ref{lem:orthTk} and Lemma~\ref{prop:bootstrappar1} for the parameters. Returning to the source term for the equation of $\varepsilon$ (the case of $\RbfT^j\varepsilon$ is similar) we recall from Lemma~\ref{lem:calG01} that for $\{|\uprho|\geq R+C\}$ it has the decay $O((|\dotwp^{(\geq1)}|+|a_\pm^{\geq1}|)\jap{\uprho}^{-3})$ so its $LE^\ast$ norm is bounded by a small, of order $O(R^{-1+\frac{\alpha}{2}})$, multiple of the $LE$ norm of $\varepsilon$.  For $\{|\uprho|\leq R+C\}$, denoting the source term by $f_1$ we recall that by Lemma~\ref{lem:calG01}, $\Dar f_2$ either comes with smallness of order $O(|\ell|)$, or that it is $O(|\uprho|^{-2})$ and is supported in $\{R-C\leq |\uprho|\leq R+C\}$. Since 
\begin{align*}
\begin{split}
\|(|\dotwp|+|\dota_{\pm}|)|\rho|^{-2}\|_{LE^\ast(\{R-C\leq |\uprho|\leq R+C\})}\lesssim (\|\dotwp\|_{L^2_\uptau}+\|\dota_{\pm}\|_{L^2_\uptau})R^{-\frac{1-\alpha}{2}},
\end{split}
\end{align*}
the contribution of the source term in $\{|\uprho|\leq R+C\}$ gives smallness of order $O(|\ell|+R^{-\frac{1-\alpha}{2}})$. Also note that the contribution of the source term in $LE^\ast$ coming from $f_2$ in Proposition~\ref{prop:LED1} is estimated in exactly the same way as in \eqref{eq:Penergynonlintemp1}. Since $R^{\frac{\alpha}{2}} R_\ctf^{-1+\frac{\alpha}{2}},R_\ctf^{2+\frac{\alpha}{2}}R^{-1+\frac{\alpha}{2}}\ll 1$, the in $LE^\ast[\tau_1,\tau_2]$ is bounded by a small multiple of the $LE$ norm of $\varepsilon$. 
Finally for the contribution of $\bfOmega_k(\varepsilon)$ we use Lemma~\ref{lem:orthTk} (see also Remark~\ref{rem:orthcomparison1}).
\end{proof}
We are now ready to close the energy estimates for $\varepsilon$ and $P$ and deduce the resulting pointwise estimates. Specifically, we can now prove Proposition~\ref{prop:bootstrapphi1}.
\begin{proof}[Proof of Proposition~\ref{prop:bootstrapphi1}]
The main ingredients of the proof are are the energy estimates in Lemma~\ref{lem:rpmult1} and Proposition~\ref{prop:EILEDfinal1}. As in the proof of Proposition~\ref{prop:EILEDfinal1} we derive the energy estimates for $\varepsilon$ and $P$ in tandem, by adding the corresponding estimates to absorb the errors. In this process we use Proposition~\ref{prop:bootstrappar1} and Lemma~\ref{lem:orthTk} to handle the contribution of the parameters. For a large constant $\tilR$ and $p\in[0,2]$, let
\begin{align*}
\begin{split}
&\scE_{k,j}^p(\tau):=\sum_{\uppsi=\varepsilon,P}\int_{\Sigma_\tau}\chi_{\leq\tilR}|\partial\partial^k\RbfT^j\uppsi|^2\ud V+\sum_{\uppsi=\varepsilon,P}\int_{\Sigma_\uptau}\chi_{\geq \tilR}((L+\frac{1}{r})X^k\RbfT^j\uppsi)^2r^p\ud V,\\
&\scB_{k,j}^p(\tau):=\sum_{\uppsi=\varepsilon,P}\int_{\Sigma_\tau}\chi_{\leq\tilR}|\partial\partial^k\RbfT^j\uppsi|^2\ud V\\
&\phantom{\scB_{k,j}^p(\tau):=}+\sum_{\uppsi=\varepsilon,P}\int_{\Sigma_\uptau}\chi_{\geq \tilR}\big[((L+\frac{1}{r})X^k\RbfT^j\uppsi)^2+r^{-p-\alpha}(TX^k\RbfT^j\uppsi)^2\\
&\phantom{\scE_{k,j}^p(\tau):=+\sum_{\uppsi=\varepsilon,P}\int_{\Sigma_\uptau}\chi_{\geq \tilR}\big[}+(2-p)((r^{-1}X^k\RbfT^j\uppsi)^2+(r^{-1}\Omega X^k\RbfT^j\uppsi)^2)\big]r^{p-1}\ud V.
\end{split}
\end{align*}
Here we consider the range $k+3j\leq M_\varepsilon-2$ in Proposition~\ref{prop:bootstrapphi1}, while for $\uppsi=P$ (but not $\uppsi=\varepsilon$) we allow $k+3j\leq M_P-2$. With this understanding, we simply use $M$ instead of $M_\varepsilon$ and $M_P$ in the remainder of this proof. Applying Lemma~\ref{lem:rpmult1} for $\uppsi=\varepsilon, P$ and  adding a suitable multiple of \eqref{eq:EILEDfinal1} (also at one higher order of $\RbfT$ higher because of the degeneracy of the $LE$ norm at $\uprho=0$) for any $\tau>0$ we get
\begin{align*}
\begin{split}
\sum_{k\leq M-1 }\scE_{k,0}^2(\tau)+\sum_{k\leq M-1}\int_{0}^{\tau}\scB^2_{k,0}(\tau')\ud\tau'\lesssim \sum_{k\leq M}\scE_{k,0}^2(0)\lesssim \epsilon^2.
\end{split}
\end{align*}
Here to estimate the contribution of the right-hand sides of the equations for $P$ and $\varepsilon$, we have used the bootstrap assumptions for the nonlinearities, and Proposition~\ref{prop:EILEDfinal1} for the parameters. Note that the main source term in the equation for $\varepsilon$ has spatial decay $\jap{\uprho}^{-3}$ which allows us to estimate it on the right-hand side of \eqref{eq:rpmult1} with $p=2$. It follows that for a sequence of dyadic times $(\tau_m)$, and for $k\leq M-1$ we have $\scB_{k,0}^2(\tau_m)\lesssim \epsilon^2\tau_m^{-1}$. Since $\scE^1_{k,0}\lesssim \scB_{k,0}^2$, we can again apply Lemma~\ref{lem:rpmult1}, this time on $[\tau_{m-1},\tau_m]$ and argue as above to conclude that
\begin{align*}
\begin{split}
\sum_{k\leq M-2 }\scE_{k,0}^1(\tau_m)+\sum_{k\leq M-2}\int_{\tau_{m-1}}^{\tau_m}\scB^1_{k,0}(\tau)\ud\tau\lesssim \epsilon^2\tau_m^{-1}.
\end{split}
\end{align*}
It follows that for a possibly different dyadic sequence $(\tau_m)$ we have $$\|\varepsilon(\tau_m)\|_{E_j}+\|P(\tau_m)\|_{E_j}\lesssim \scB_{j,0}^1(\tau_m)\lesssim \epsilon^2\tau_m^{-2},\qquad j\leq M-2.$$
Another application of the energy estimate \eqref{eq:EILEDfinal1} proves \eqref{eq:Tjphienergy1} and~\eqref{eq:TjPenergy1} for $j=0$, and  \eqref{eq:Tjphienergy2} and~\eqref{eq:TjPenergy2} for $j=0$ follow from another application of Lemma~\ref{lem:rpmult1} with $p=1$ on the dyadic interval $[\tau_{m-1},\tau_m]$. To prove \eqref{eq:Tjphienergy1}--\eqref{eq:TjPenergy2} with $j=1$, we first observe that in view of equation~\eqref{eq:higher1} and the estimates we have already established, for $k\leq M-3$
\begin{align*}
\begin{split}
\sum_{i\leq k} \scE_{i,1}^2(\tau)\lesssim \epsilon^2\tau^{-2}+\sum_{i\leq k+1}(\|\RbfT\varepsilon(\tau)\|_{E_i}+\|\RbfT P(\tau)\|_{E_i})\lesssim \epsilon^2\tau^{-2}.
\end{split}
\end{align*}
We can now apply Lemma~\ref{lem:rpmult1} on a dyadic intervals $[\tau_{m-1},\tau_m]$, and use the estimate above to get
\begin{align*}
\begin{split}
\sum_{k\leq M-4}\scE_{k,1}^2(\tau_m)+\sum_{k\leq M-4}\int_{\tau_{m-1}}^{\tau_m}\scB^2_{k,1}(\tau)\ud\tau\lesssim \sum_{k\leq M-3}\scE_{k,1}^2(\tau_{m-1})\lesssim \epsilon^2\tau_{m}^{-2}.
\end{split}
\end{align*}
Starting from this estimate to run the same argument as above we arrive at \eqref{eq:Tjphienergy1}--\eqref{eq:TjPenergy2} with $j=1$. The case  $j=2$ follows by repeating this procedure one more time. 

Estimates \eqref{eq:dphiL21} and~\eqref{eq:dPL21} now follow from \eqref{eq:Tjphienergy1}--\eqref{eq:TjPenergy2} and the elliptic estimates from Proposition~\ref{prop:uH}. Here the difference between $\calP$ and $\uH$ is treated in the same way as in Lemma 8.12 and Corollary 8.13 in \cite{LuOS1}. We refer the reader to their proofs for more details. Note that the contribution of $\angles{\phi}{Z_i}$ in Proposition~\ref{prop:uH} for $\phi=\RbfT^j\varepsilon,\RbfT^jP$ are treated by the orthogonality conditions as in the proof of Proposition~\ref{prop:bootstrappar1} and Lemma~\ref{lem:orthTk}. Estimates \eqref{eq:varepptwiserp1}, \eqref{eq:Pptwiserp1}, \eqref{eq:varepptwiserp2}, \eqref{eq:Pptwiserp2} now follow from what has already been established and the Gagliardo-Nirenberg estimate
\begin{align*}
\begin{split}
\|\uppsi\|_{L^\infty(\Sigma_\tau)}\lesssim \|\partial^2_\Sigma\uppsi\|_{L^2(\Sigma_\tau)}^{\frac{1}{2}}\|\partial_\Sigma\uppsi\|_{L^2(\Sigma)}^{\frac{1}{2}}.
\end{split}
\end{align*}
Finally, \eqref{eq:varepptwiserp3} and \eqref{eq:Pptwiserp4} are a consequence of the energy estimates above and the Sobolev estimate on spheres. The reader is referred to \cite{Moschidis1} or the proof of \cite[Proposition~5.2]{LuOS1} for the details of this step. A similar argument is also carried out in the proof of Lemma~\ref{eq:Boxmhugens1} below.
\end{proof}
\section{Improved late time tail bounds}\label{sec:tails}
In this section we prove Proposition~\ref{prop:boostrapvareptail}. This is the only remaining bootstrap estimate, and we can use the conclusions of Propositions~\ref{prop:bootstrappar1} and~\ref{prop:bootstrapphi1}. 
\begin{proof}[Proof of Proposition~\ref{prop:boostrapvareptail}]
We prove \eqref{eq:varepptwisetail1} for $\varepsilon$ itself, and the proof for the derivatives of $\varepsilon$ follows by similar arguments. In the process we use estimates on higher order derivatives of $\varepsilon$ (from Proposition~\ref{prop:bootstrapphi1}) and this is the source of the regularity loss $C_\varepsilon$ in \eqref{eq:varepptwisetail1}. We do not keep track of the numerical value of this constant. Our starting point is equation~\eqref{eq:varepeq1} for $\varepsilon$. We start by using Lemma~\ref{eq:Boxmhugens1} to prove improved decay bounds in the exterior. We will then use Proposition~\ref{prop:uH} to obtain the desired interior bounds. Let $\chi_{\geq \uprho^\ast}$ be a cutoff to the region $\{\uprho\geq \uprho^\ast\}$ where $\uprho^\ast$ is a sufficiently large constant (the region $\{\uprho\leq -\uprho^\ast\}$ is treated similarly). Then $\chi_{\geq \uprho^\ast}\varepsilon$ satisfies 
\begin{equation}\label{eq:varepimpexttemp1}
\Box_m (\chi_{\geq \uprho^\ast}\varepsilon)=(\Box_m-\calP)(\chi_{\geq \uprho^\ast}\varepsilon)+[\calP,\chi_{\geq\uprho^\ast}]\varepsilon+\chi_{\geq\uprho^\ast}\big(\calP\phi-g_\Ext-\txtg_1-\partial_{\uptau}\txtg_2-(\calP-\calP_h)P\big).
\end{equation}
Here, since $\chi_{\geq\uprho^\ast}\varepsilon$ is supported in $\{\uprho\geq \uprho^\ast\}$, we have used the global coordinates to identify $\chi_{\geq\uprho^\ast}\varepsilon$ with a function defined on $\bbR^{1+3}$, and the Minkowski wave operator $\Box_m$ is well-defined when applied to $\chi_{\geq\uprho^\ast}\varepsilon$. 
 In view of the spatial decay of $\calP-\Box_m$, the space and time decay of $\calP-\calP_h$, and the (already established) estimates \eqref{eq:varepptwiserp1}, \eqref{eq:Pptwiserp1}, and \eqref{eq:a+1}--\eqref{eq:wp2}, an application of Lemma~\ref{eq:Boxmhugens1} (specifically \eqref{eq:intchardecay3} and \eqref{eq:intchardecay4} with $\betabar=\frac{1}{4}-\frac{\kappa}{2}$) gives
\begin{align}\label{eq:varepimpexttemp2}
\begin{split}
\chi_{\geq\uprho^\ast}\varepsilon\lesssim \jap{\uprho}^{-1}\jap{\uptau}^{-\frac{5}{4}+\frac{\kappa}{2}}.
\end{split}
\end{align}
Here to estimate $\txtg_1$ and $\partial_\uptau\txtg_2$ we have used \eqref{eq:txtgtemp4}, \eqref{eq:txtgtemp3}--\eqref{eq:giorthdef1}. Similarly, differentiating \eqref{eq:varepimpexttemp1} with respect to $\uptau$ and using Lemma~\ref{eq:Boxmhugens1} (specifically \eqref{eq:intchardecay1} and \eqref{eq:intchardecay2}) we get
\begin{align}\label{eq:varepimpexttemp3}
\begin{split}
\chi_{\geq\uprho^\ast}\RbfT\varepsilon\lesssim \jap{\uprho}^{-1}\jap{\uptau}^{-\frac{9}{4}+\frac{\kappa}{2}}.
\end{split}
\end{align}
Note that \eqref{eq:varepimpexttemp2} already proves \eqref{eq:varepptwisetail1} in the region $\{\uprho\geq \uptau\}$, so it remains to consider $\chi_{\leq \uptau}\varepsilon$, where $\chi_{\leq \uptau}$ is a cutoff to the region $\{|\uprho|\leq \uptau\}$. For this we observe that (recall the notation from \eqref{eq:calPdef1} and \eqref{eq:calPhstat1})
\begin{align*}
\begin{split}
\calP_h^\stat(\chi_{\leq \uptau}\varepsilon)=[\calP,\chi_{\leq \uptau}]\varepsilon-(\calP-\calP_h^\stat)(\chi_{\leq \uptau}\varepsilon)+\chi_{\leq\uptau}\calP\varepsilon,
\end{split}
\end{align*}
where for the last term $\calP\varepsilon$ is given by \eqref{eq:varepeq1}. We now apply Proposition~\ref{prop:uH} on a fixed slice $\Sigma_\uptau$, where we recall from \eqref{eq:tilDelta1} that by a change of coordinate using the global geometric coordinates restricted to $\Sigma_\uptau$, the operator $\calP_h^\stat$ can be transformed into $\uH$. Note that by the Gagliardo-Nirenberg inequality, and with $\chi_k$ a partition of unity subordinate to $A_k$ in Definition~\ref{def:w-Sob}, 
\begin{align*}
\begin{split}
\|f\|_{L^\infty}\lesssim \sup_{k\geq0}\big( \|\chi_kf\|_{L^2}^{\frac{1}{4}}\|\partial^2(\chi_kf)\|_{L^2}^{\frac{3}{4}}\big)\lesssim \|f\|_{\ell^\infty\calH^{2,-\frac{3}{2}}}.
\end{split}
\end{align*}
Therefore, our task is reduced to proving that
\begin{align}\label{eq:varepimpexttemp4}
\begin{split}
&R_\ctf^2\nrm{[\calP,\chi_{\leq \uptau}]\varepsilon-(\calP-\calP_h^\stat)(\chi_{\leq \uptau}\varepsilon)+\chi_{\leq\uptau}\calP\varepsilon-\chi_{\leq R+C}\calG_0}_{\ell^{1} \calH^{0, \frac{1}{2}}} + \sum_{i=1,2,3} \abs{\brk{\varepsilon, \uZ_{i}}}+R_\ctf^{\frac{1}{2}}\abs{\brk{\varepsilon, \uZ_{\mu}}}\\
&+\|\chi_{\leq R+C}\calG_0\|_{\ell^{1} \calH^{0, \frac{1}{2}}}+R_\ctf^2\|\Dar\calS_1\calG_0\|_{\ell^{1} \calH^{0, \frac{1}{2}}}
\end{split}
\end{align}
is bounded by the right-hand side of \eqref{eq:varepptwisetail1}. Here $\calG_0$ denotes the source term as in Lemma~\ref{lem:calG01}. 
For $\sum_{i} \abs{\brk{\varepsilon, \uZ_{i}}}$, this follows as usual by the orthogonality conditions and the bootstrap assumption \eqref{eq:varepptwisetailb1}, where we use largeness of $R_\ctf$ and smallness of $\ell$ to get a constant that is independent of the one in \eqref{eq:varepptwisetailb1}. For instance, note that by \eqref{eq:varepptwisetailb1} and the $\uprho^{-2}$ spatial decay of the zero eigenfunctions (as usual the more slowly decaying parts come with factors of $\ell$)
\begin{align*}
\begin{split}
\int_{\Sigma_{\uptau}} |\varepsilon| |\partial^2 \chi_{\{|\uprho|\leq R_\ctf\}}| |\fy_i^\stat| \ud V_{\Sigma_\uptau}\lesssim R_\ctf^{-1} (R_\ctf^2 R^2)\uptau^{-\frac{9}{4}+\frac{\kappa}{2}}.
\end{split}
\end{align*}
For $\fy_\mu^\stat$ the exponential decay of the eigenfunction gives an improved factor of $R_\ctf$ that compensates for the large factor $R_\ctf^{\frac{1}{2}}$. For the second line in \eqref{eq:varepimpexttemp4} the estimate follows from Lemma~\ref{lem:calG01} by a similar argument as in \eqref{eq:Penergynonlintemp1} and \eqref{eq:Dartxtg2temp1}, and using the smallness $R^{\frac{\alpha}{2}} R_\ctf^{-1+\frac{\alpha}{2}},R_\ctf^{2+\frac{\alpha}{2}}R^{-1+\frac{\alpha}{2}}\ll 1$. Indeed, here we use estimate \eqref{eq:wpb1} on the parameters which already contains a small factor, and the relation above among $R$ and $R_\ctf$ ensures that this smallness does not get compensated by another large factor. It remains to estimate the first norm in \eqref{eq:varepimpexttemp4}, for which we consider a few representative terms. For $\chi_{\leq\uptau}\calP\varepsilon-\chi_{\leq R+C}\calG_0$ we recall equation~\eqref{eq:varepeq1}, where the main contribution is from the first three terms on the right-hand side. For $\txtg_1$ and $\txtg_2$ we notice that these are supported on a single dyadic region, so the $\ell^1$ sum in the $\ell^{1} \calH^{s, -\frac{n}{2}+2}$ norm does not play a role. The corresponding contributions are then bounded using  \eqref{eq:txtgtemp3}--\eqref{eq:WYdef1} (note that in this context we do not  use \eqref{eq:Iic1},~\eqref{eq:tilIIic1},~\eqref{eq:c2prime2}). Here for $\txtg_1$ we also use the decay coming from $\calP_h Y_i$, which comes from the support of $Y_i$. For $\partial_\uptau\txtg_2$ we can use the estimates \eqref{eq:TjPenergy1} and \eqref{eq:dPL21} to estimate the contribution of $(h^{-1})^{00}(Y_i+\frac{W_i}{(h^{-1})^{0\nu}})\partial^2_{\tau\nu} P$. That is,
\begin{align*}
\begin{split}
\int_{\{R_\ctf\leq|\uprho|\leq \uptau\}} |\uprho|^{-2}\partial_j \partial_\uptau P\sqrt{|h|}\ud x \lesssim \|\chi_{|\uprho|\geq R_\ctf} |\uprho|^{-\frac{3-\kappa}{2}}\RbfT P\|_{L^2(\Sigma_\uptau)}\lesssim \uptau^{-\frac{5}{2}+\kappa}.
\end{split}
\end{align*}
and
\begin{align*}
\begin{split}
\int_{\{R_\ctf\leq|\uprho|\leq \uptau\}} |\uprho|^{-2} \partial_\uptau^2 P\sqrt{|h|}\ud x \lesssim \uptau^{\frac{1}{2}}\|\chi_{|\uprho|\geq R_\ctf} |\uprho|^{-1}\RbfT^2 P\|_{L^2(\Sigma_\uptau)}\lesssim \uptau^{-\frac{5}{2}}.
\end{split}
\end{align*}
For $\calP\phi-g_\Ext$ on the right-hand side of \eqref{eq:varepeq1}, the main contribution is from the source terms that depend linearly on the parameter derivatives so we concentrate only on these. In the exterior (that is, for $|\uprho|\geq R+C$), the corresponding source terms have spatial decay $\jap{\uprho}^{-4}$, and their $\ell^{1} \calH^{s, -\frac{n}{2}+2}$ norm can be bounded using \eqref{eq:a+1}--\eqref{eq:wp2}. 
Returning to \eqref{eq:varepimpexttemp4}, we consider the contribution of $(\calP-\calP_h^\stat)(\chi_{\leq \uptau}\varepsilon)$. The main contribution is of the forms $\chi_{\leq \uptau}\partial\RbfT\varepsilon$, for which we use the estimate
\begin{align*}
\begin{split}
|\chi_{\leq \uptau}\partial\RbfT\varepsilon|\lesssim \chi_{\leq \uptau} \uptau^{-\frac{9}{4}+\frac{\kappa}{2}}\jap{r}^{-2}.
\end{split}
\end{align*}
This follows from \eqref{eq:varepimpexttemp3} (with higher angular derivatives commuted) and the equation for $\varepsilon$. We conclude that
\begin{align*}
\begin{split}
\nrm{(\calP-\calP_0)(\chi_{\leq \uptau}\varepsilon)}_{\ell^{1} \calH^{0, \frac{1}{2}}}\lesssim R^2\uptau^{-\frac{9}{4}+\frac{\kappa}{2}}\log\uptau\lesssim R^2\uptau^{-\frac{9}{4}+\kappa}.
\end{split}
\end{align*}
with a constant that is independent of \eqref{eq:varepptwisetailb1}. 
The estimate for the remaining term, $[\calP,\chi_{\leq \uptau}]\varepsilon$, in \eqref{eq:varepimpexttemp4} is similar. Indeed, using \eqref{eq:varepimpexttemp2} and \eqref{eq:varepimpexttemp3} we have
\begin{align*}
\begin{split}
|[\calP,\chi_{\leq \uptau}]\varepsilon|\lesssim \uptau^{-\frac{9}{4}+\frac{\kappa}{2}-2}\chi_{\{|\uprho|\simeq \uptau\}},
\end{split}
\end{align*}
which implies that
\begin{align*}
\begin{split}
\|[\calP,\chi_{\leq \uptau}]\varepsilon\|_{\ell^{1} \calH^{0, \frac{1}{2}}}\lesssim \uptau^{-\frac{9}{4}+\frac{\kappa}{2}}
\end{split}
\end{align*}
with a constant that is independent of \eqref{eq:varepptwisetailb1}, completing the proof.
\end{proof}
\appendix
\section{Minkowski computations}\label{sec:Appendix}
In this section we collect a number of computations on the Minkowski space $(\bbR^{1+3},m)$. In view of the asymptotic flatness of the catenoid metric, these computations are the basis of most of the exterior calculations. Unless otherwise specified, the estimates in this appendix are stated under the bootstrap assumptions \eqref{eq:a+trap}--\eqref{eq:varepptwisetailb1} (in particular \eqref{eq:wpb1} as the parameters enter in the definitions of various operators).
\subsection{Expressions for $\Box_m$ and vectorfields}\label{subsec:appMink}
We start by writing the expression for $m$ in the exterior polar coordinates $(\tau,r,\theta)$ (see \eqref{eq:exttaurcoords1}):
\begin{align}
m&=-\gamma^{-2}\ud\tau\otimes \ud\tau-(1-\jap{r}^{-2}+\Theta\cdot\dotxi)(\ud\tau\otimes\ud r +\ud r\otimes \ud \tau)+r\Theta_a\cdot\dotxi(\ud\tau\otimes \ud\theta^a+\ud \theta^a\otimes \ud\tau)\nonumber\\
&\quad +\jap{r}^{-2} \ud r\otimes \ud r + r^2\ringsg_{ab}\ud\theta^a\otimes \ud \theta^b,\label{eq:mform1}
\end{align}
and
\begin{align}\label{eq:mdvol1}
\begin{split}
| m|^{\frac{1}{2}} = (1-\Theta\cdot\dotxi)r^{n-1}|\ringsg|^{\frac{1}{2}}(1+\jap{r}^{-2}\frac{\gamma^{-2}+(1-(\Theta\cdot\dotxi)^2)}{(1-\Theta\cdot\dotxi)^2}).
\end{split}
\end{align}
In these expressions we have used the notation $\gamma=(1-|\dotxi|^2)^{-\frac{1}{2}}$. The inverse $m^{-1}$ can be calculated as
\begin{align}\label{eq:minvdecomp1}
\begin{split}
m^{-1}=m_0^{-1}+\jap{r}^{-2}m_1,
\end{split}
\end{align}
where 
\begin{align}\label{eq:m02inv1}
\begin{split}
m_0^{-1}&=\frac{-1}{1-\Theta\cdot\dotxi}(\partial_\tau\otimes\partial_r+\partial_r\otimes\partial_\tau)+\frac{1+\Theta\cdot\dotxi}{1-\Theta\cdot\dotxi}\partial_r\otimes\partial_r+\frac{\Theta^a\cdot\dotxi}{r(1-\Theta\cdot\dotxi)}(\partial_r\otimes\partial_{\theta^a}+\partial_{\theta^a}\otimes\partial_r)\\
&\quad+r^{-2}(\ringsg^{-1})^{ab}\partial_{\theta^a}\otimes\partial_{\theta^b},
\end{split}
\end{align}
and $m_1$ is a matrix of size $\callO(1)$. The expression for the wave operator $\Box_m$ is
\begin{align}\label{eq:Boxm1}
\begin{split}
\Box_m\psi&=2(m_0^{-1})^{\tau r}\partial^2_{\tau r}\psi+\frac{n-1}{r}(m_0^{-1})^{\tau r}\partial_\tau\psi+2(m_0^{-1})^{\theta r}\partial^2_{\theta r}\psi\\
&\quad + (m_0^{-1})^{rr}\partial^2_r\psi+\frac{n-1}{r}(m_0^{-1})^{rr}\partial_r\psi+\frac{1}{\sqrt{|m_0|}}\partial_\theta(\sqrt{|m_0|}(m_0^{-1})^{\theta r})\partial_r\psi\\
&\quad
+\frac{1}{r^2}\sDelta_{\bbS^{n-1}}\psi+\frac{1}{r}(m_0^{-1})^{\theta r}\partial_\theta\psi+\frac{\Theta_\theta\cdot\dotxi}{r^2(1+\Theta\cdot\dotxi)}\partial_\theta\psi+\Err_{\Box_m}(\psi),
\end{split}
\end{align}
where we have suppressed the index $a$ in $\theta^a$ and
\begin{align}\label{eq:ErrBoxm1}
\begin{split}
\Err_{\Box_m}\psi&= \jap{r}^{-2}m_1^{\mu\nu}\partial^2_{\mu\nu}\psi+(\callO(r^{-2}|\dotwp|)+\callO(r^{-3}))\partial_\tau\psi+(\callO(r^{-2}|\dotwp|) +\callO(r^{-3}))\partial_r\psi\\
&\quad+\callO((r^{-3}|\dotwp|)+\callO(r^{-4}))\partial_\theta\psi.
\end{split}
\end{align}
We refer the reader to \cite[Section~4.2.2]{LuOS1} for the derivation of these formulas. Next, we want to express $\Box_m$ in terms of geometric vectorfields $\Omega$, $L$, $\Lbar$ and  the geometric radial function $\tilr$. To define these let $(x^0,x')=(\tau+\jap{r},r\Theta+\xi)$ be a point on the hyperboloidal part of $\barbsUpsigma_\tau$ and let $y=(y^0,y')$ be related to $x=(x^0,x')$ by $x=\Lambda_{-\ell}y+(-R,\xi-\tau\ell)$ (see Section~\ref{subsec:mainprofile}). 
We define $L$, $\Lbar$, $\Omega$, and $T$ as the push forward by $\Lambda_{-\ell}$ of the corresponding vectorfields in the $y$ coordinates. That is, 
\begin{equation}\label{eq:VFdef0}
T=\Lambda_{-\ell}\partial_{y^0},\quad L= \Lambda_{-\ell}(\partial_{y^0}+\frac{y^i}{|y'|}\partial_{y^i}),\quad \Lbar = \Lambda_{-\ell}(\partial_{y^0}-\frac{y^i}{|y'|}\partial_{y^i}),\quad \Omega_{jk}=\Lambda_{-\ell}(y^j\partial_{y^k}-y^k\partial_{y^j}).
\end{equation}
Then it can be seen that (see \cite[Section~4.1]{LuOS1})
\begin{equation}\label{eq:VFexpansion1}
\begin{cases}
&T=\callO(1)\partial_\tau+\callO(\dotwp)\partial_r+\callO(\dotwp r^{-1})\partial_a,\\
&L=\callO(r^{-2})\partial_\tau+\callO(1)\partial_r+\callO(r^{-3})\partial_a,\\
&\Lbar = \callO(1)\partial_\tau+\callO(1)\partial_r+\callO(\dotwp r^{-1}+r^{-3})\partial_a,\\
&\Omega_{jk}=\callO(\wp r)\partial_r+\callO(1) \partial_a.
\end{cases},~~
\begin{cases}
&\partial_\tau=\callO(1)L+\callO(1)\Lbar,\\
&\partial_r= \callO(1)L+\callO(r^{-2})\Lbar+\callO( r^{-3})\Omega,\\
&\partial_\theta=\callO(\wp r)L+\callO(1)\Omega+\callO(\wp r^{-1})\Lbar.
\end{cases}
\end{equation}
Similarly, the geometric radial function is defined in terms of the $y$ variables as $\tilr=|y'|$ and satisfies $\tilr =|rA_\ell\Theta-\gamma \jap{r}\ell|=\gamma(1-\Theta\cdot\ell)r+\callO(r^{-1})\simeq r$ and $L\tilr = 1+\calO(r^{-1}\dotwp)$. As proved in \cite[Lemma~8.2]{LuOS1}, and, with $\tilupsi:=\tilr\uppsi$,
\begin{align}\label{eq:BoxVF2}
\begin{split}
\tilr\Box_m\uppsi&= -\Lbar L \tilupsi+\frac{1}{\tilr^2}\sum_{\Omega}\Omega^2\tilupsi+\callO(\dotwp^{\leq 2})L\tilupsi+\callO(\dotwp^{\leq 2} r^{-2})\Lbar\tilupsi+\callO(\dotwp^{\leq 2} r^{-2})\Omega\tilupsi +\callO(\dotwp^{\leq 2}r^{-2})\tilupsi.
\end{split}
\end{align}
Recalling the definition of $\callP_g$ from \eqref{eq:callP1}--\eqref{eq:callP2},  and defining $\tilcalP:=-\Lbar L+\frac{1}{\tilr^2}\sum\Omega^2$ and $\tilcalP_1=LL+\frac{1}{\tilr^2}\sum\Omega^2$, for  any integers $k_1,k_2,k_3\geq0$, and with $k=k_1+k_2+k_3$, we get (similarly one can replace $\callP_g$ by $\Box_m$)
\begin{equation}\label{eq:higher1}
(\tilr L+1)^{k_1}\Omega^{k_2}T^{k_3}(\tilr^{\frac{n-1}{2}}\callP_g \uppsi)=\tilcalP((\tilr L)^{k_1}\Omega^{k_2}T^{k_3}\tilupsi)+\Err_{k_1,k_2,k_3}[\tilupsi],\\
\end{equation}
where
\begin{align}
\Err_{k_1,k_2,k_3}[\tilupsi]&=\sum_{j=0}^{k_1-1}c_{j,k_1}\tilcalP_1((\tilr L)^{j}\Omega^{k_2}T^{k_3}\tilupsi)+\callO(r^{-4})L^2(\tilr L)^{k_1}\Omega^{k_2}T^{k_3}\tilupsi+\callO(r^{-4})\Lbar^2(\tilr L)^{k_1}\Omega^{k_2}T^{k_3}\tilupsi\nonumber\\
&\quad+\callO(r^{-5})\Omega L(\tilr L)^{k_1}\Omega^{k_2}T^{k_3}\tilupsi+\callO(r^{-5})\Lbar\Omega(\tilr L)^{k_1}\Omega^{k_2}T^{k_3}\tilupsi\nonumber\\
&\quad+\callO(\dotwp^{\leq 2k+2}+r^{-5})\sum_{j_1+j_2+j_3\leq k}L(\tilr L)^{j_1}\Omega^{j_2}T^{j_3}\tilupsi\nonumber\\
&\quad+\callO(\dotwp^{\leq 2k+2} r^{-2}+r^{-4})\sum_{j_1+j_2+j_3\leq k}\Lbar(\tilr L)^{j_1}\Omega^{j_2}T^{j_3}\tilupsi\label{eq:errorhigher1}\\
&\quad+\callO(\dotwp^{\leq 2k+2} r^{-2}+r^{-6})\sum_{j_1+j_2+j_3\leq k}\Omega(\tilr L)^{j_1}\Omega^{j_2}T^{j_3}\tilupsi\nonumber\\
&\quad+\callO(\dotwp^{\leq 2k+2} r^{-2}+r^{-5})\sum_{j_1+j_2+j_3\leq k}(\tilr L)^{j_1}\Omega^{j_2}T^{j_3}\tilupsi,\nonumber
\end{align}
for some constants $c_{j,k_1}$ (which are nonzero only if $k_1\geq 1$), with $c_{k_1-1,k_1}=-k_1$. See \cite[Lemma~8.4]{LuOS1} for a proof. The commutators among the vectorfields $\{\Omega,L,\Lbar,T\}$ are computed in \cite[Lemma~8.1]{LuOS1} as
\begin{align}
&[\Lbar,L]= \callO(\dotwp^{\leq 2})L+\callO(\dotwp^{\leq 2}r^{-2})\Lbar+\callO(\dotwp^{\leq 2} r^{-2})\Omega,\quad [\Omega_{ij},\Omega_{k\ell}]=\delta_{[ki}\Omega_{\ell j]},\nonumber\\
&[\Lbar,\Omega]= \callO(\dotwp)\Omega+\callO(\dotwp r) L+\callO(\dotwp)\Lbar,\quad [T,L]=-[T,\Lbar]= \callO(\dotwp^{\leq 2})L+\callO(\dotwp^{\leq 2}r^{-2})\Lbar+\callO(\dotwp^{\leq 2} r^{-2})\Omega,\nonumber\\
&[L,\Omega]=\callO(\dotwp r^{-1})L+\callO(\dotwp r^{-2})\Omega+\callO(\dotwp r^{-2})\Lbar,\quad [T,\Omega]=\callO(\dotwp)\Omega+\callO(\dotwp r) L+\callO(\dotwp)\Lbar.\label{eq:VFcomm1}
\end{align}
We also record the following relation (which is a consequence of \eqref{eq:VFcomm1}) from \cite[Lemma~8.3]{LuOS1} which allows us to change the order of commuted vectorfields. If $X_1,\dots,X_k\in\{\tilr L,\Omega,T\}$ are $k$ vectorfields with $k_1$ factors of $\tilr L$, $k_2$ factors of $\Omega$ and $k_3$ factors of $T$, then for any function $\uppsi$,
\begin{align}\label{eq:vecorder1}
\begin{split}
X_k\dots X_1\uppsi= (\tilr L)^{k_1}\Omega^{k_2} T^{k_3}\uppsi+\sum_{j_1+j_2+j_3\leq k-1}\callO(\dotwp^{\leq 2k-2(j_1+j_2+j_3)})(\tilr L)^{j_1}\Omega^{j_2}T^{j_3}\uppsi.
\end{split}
\end{align}
\subsection{Multiplier identities}\label{seq:apprp}
To state the main $r^p$ multiplier identity, we start by defining the relevant energies. For simplicity of notation we concentrate on one asymptotic end, where $r>0$, but similar relations hold at the other end. Fix $\chi_{\geq \tilR}$ to be a cutoff supported in the region $\{r\geq \tilR\}\subseteq \calC_\hyp$ and let $\chi_{\leq \tilR}=1-\chi_{\geq \tilR}$. In general, we use the notation $\tilupsi=\tilr\uppsi$ for any function $\uppsi$. If $X_1,\dots,X_k$, $k=k_1+k_2+k_3$, are a collection of vectorfields from $\{\tilr L, \Omega, T\}$, with $X_1,\dots X_{k_3}=T$, $X_{k_3+1},\dots X_{k_3+k_2}=\Omega$, and~$X_{k_3+k_2+1},\dots X_k=\tilr L$, we let $\tilupsi_k=X_k\dots X_1\tilupsi$, and if the precise choice of the vectorfields is important we write
$\tilupsi_k=\tilupsi_{k_1,k_2,k_2}$.
For $p\in[0,2]$, the basic $r^p$ boundary and bulk energies are defined as 
\begin{align}\label{eq:rpenergiesdef1}
\begin{split}
&\calE^p_k(\tau)\equiv\calE_k^p[\uppsi](\tau):=\int_{\Sigma_\tau} \chi_{\geq \tilR} \tilr^p (L\tilupsi_k)^2 \ud \theta \ud r,\\
&\calB_{k}^{p}(\tau_1,\tau_2)\equiv\calB_{k}^{p}[\uppsi](\tau_1,\tau_2):=\int_{\tau_1}^{\tau_2}\int_{\Sigma_\tau}\chi_{\geq \tilR}\tilr^{p-1}\big((L\tilupsi_k)^2+\big(\frac{2-p}{\tilr^2}\big)(|\Omega\tilupsi_k|^2+\tilupsi_k^2)\big)\ud S \ud r \ud\tau.
\end{split}
\end{align}
Here $\ud S$ denotes the standard volume form on $\bbS^2$. When there is a need to distinguish between the vectorfields applied to $\tilupsi$ we write $\calE_{k_1,k_2,k_3}^p$ and $\calB_{k_1,k_2,k_3}^p$ for the corresponding energies. We also define the unweighted energy
\begin{align*}
\begin{split}
E_k(\tau)&\equiv E_k[\uppsi](\tau)\\
&:=\int_{\Sigma_\tau}\chi_{\leq \tilR}|\partial\partial^k\uppsi|^2\ud V+\int_{\Sigma_\tau}\chi_{\geq \tilR}(|\partial_\Sigma X^k \uppsi|^2+r^{-2}|TX^k\uppsi|^2+r^{-2}|X^k\uppsi|^2) \ud V.
\end{split}
\end{align*}
The following lemma, whose proof is contained in \cite[Lemma~8.7]{LuOS1} contains the $r^p$ multiplier identity. Note that while the lemma is stated for the operator $\callP_g$, the same result holds with $\callP_g$ replaced by $\Box_m$.
\begin{lemma}\label{lem:rpmult1}
Suppose $\callP_g \uppsi=f$ in $\{r\geq \tilR\}$ and let  $k=k_1+k_2+k_3$ and $p\in [0,2]$. Then for any $\tau_1<\tau_2$, 
\begin{align}\label{eq:rpmult1}
\begin{split}
&\sum_{j\leq k_1}\big(\sup_{\tau\in[\tau_1,\tau_2]}\calE_{j,k_2,k_3}^p(\tau)+\calB_{j,k_2,k_3}^{p-1}(\tau_1,\tau_2)\big)\\
&\leq C\sum_{j\leq k_1}\calE_{j,k_2,k_3}^{p}(\tau_1)+C\sum_{j\leq k_1}\int_{\tau_1}^{\tau_2}\int_{\Sigma_\tau}\chi_{\geq \tilR}\tilr^p \tilf_k (L\tilupsi_{j,k_2,k_3} +\callO(r^{-5})\Omega\tilupsi_{j,k_2,k_3})\ud S \ud r \ud\tau\\
&\quad+C_{\tilR}\int_{\tau_1}^{\tau_2}\int_{\Sigma_\tau}|\partial\chi_{\geq \tilR}|(|\partial_{\tau,x}\tilupsi_k|^2+|\tilupsi_k|^2)\ud S \ud r \ud \tau\\
&\quad+C\sum_{j\leq k}\sup_{\tau\in[\tau_1,\tau_2]}E_j(\tau)+C\delta \sup_{\tau\in[\tau_1,\tau_2]}\sum_{j\leq k}\calE_j^p(\tau)\\
&\quad+C\delta\sum_{j\leq k}\int_{\tau_1}^{\tau_2}\int_{\Sigma_\tau}\chi_{\geq \tilR}(|\partial_{\tau,x}\tilupsi_j|^2+\tilr^{-2}|\tilupsi_j|^2)\tilr^{-1-\alpha}\ud S \ud r \ud\tau.
\end{split}
\end{align}
Here $C$ and $C_{\tilR}$ are large constants constants and $\delta=o(\epsilon)+o(\tilR)$ is a small constant that is independent of $C$ and $C_{\tilR}$.
\end{lemma}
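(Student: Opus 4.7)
The plan is to implement the $r^p$-method of Dafermos--Rodnianski, adapted as in \cite{Schlue1,Moschidis1} and the companion paper \cite{LuOS1}, using the almost-null vectorfields and the decomposition of $\tilr\callP_g$ developed in equations \eqref{eq:BoxVF2}--\eqref{eq:errorhigher1}. The strategy is: (i) establish the identity at the base level $k=0$ by the standard $\tilr^p L$ multiplier acting on the rescaled variable $\tilupsi = \tilr \uppsi$; (ii) commute with strings of vectorfields from $\{\tilr L, \Omega, T\}$ and run the same multiplier argument on the commuted quantities; and (iii) absorb all cross-terms using the bootstrap smallness of $\dotwp$, the asymptotic smallness of coefficients in $r^{-1}$, and a small constant $\dlt$.

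\textbf{Step 1 (Base identity).} First I would write $\tilr \callP_g \uppsi = \tilr f$ in the form \eqref{eq:BoxVF2} and multiply by $\chi_{\geq \tilR}\tilr^p L\tilupsi$. For the principal terms, the identity
\begin{equation*}
-\tilr^{p}(L\tilupsi)(\Lbar L\tilupsi) = -\tfrac{1}{2}\Lbar\bigl(\tilr^{p}(L\tilupsi)^{2}\bigr) + \tfrac{1}{2}(\Lbar \tilr^{p})(L\tilupsi)^{2}
\end{equation*}
together with $\Lbar \tilr^{p} = -p\tilr^{p-1} + \callO(\wp)\tilr^{p-1}$ produces, upon integration over the slab $\cup_{\tau\in[\tau_1,\tau_2]}\bsUpsigma_\tau\cap\{r\geq \tilR\}$ and using Stokes' theorem, the boundary energies $\calE^p_0(\tau_2),\calE^p_0(\tau_1)$ and the bulk term $\tilr^{p-1}(L\tilupsi)^2$ in $\calB^{p-1}_0$. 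The angular term $\frac{1}{\tilr^{2}}\Omega^{2}\tilupsi$, integrated against $\tilr^{p}L\tilupsi$, produces after one integration by parts in $\Omega$ and one in $L$ the contributions $\tilr^{p-3}|\Omega\tilupsi|^2$ and $\tilr^{p-3}\tilupsi^2$ with the favorable sign $(2-p)$ in the bulk (this is where $p\leq 2$ enters). The error terms in \eqref{eq:BoxVF2}, all of the form $\callO(\dotwp^{\leq 2})$ or $\callO(\dotwp^{\leq 2}\tilr^{-2})$ acting on $L\tilupsi$, $\Lbar\tilupsi$, $\Omega\tilupsi$, $\tilupsi$, can be bounded by Cauchy--Schwarz with $\dlt$-small constants and absorbed into the bulk once $\epsilon$ and $\tilR^{-1}$ are small, yielding the terms on the last two lines of \eqref{eq:rpmult1}. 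The localization via $\chi_{\geq\tilR}$ produces the commutator term $[\Lbar L,\chi_{\geq \tilR}]\tilupsi$, which is supported where $|\partial\chi_{\geq\tilR}|\neq 0$; this is accounted for by the $C_{\tilR}$-term.

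\textbf{Step 2 (Commuted equations).} For higher derivatives, the key identity is \eqref{eq:higher1}: commuting $(\tilr L)^{k_1}\Omega^{k_2}T^{k_3}$ through $\tilr^{\frac{n-1}{2}}\callP_g$ gives the same principal operator $\tilcalP = -\Lbar L + \tilr^{-2}\sum\Omega^2$ applied to $\tilupsi_{k_1,k_2,k_3}$, modulo (a) a finite linear combination of lower-$k_1$ terms of the form $\tilcalP_1\tilupsi_{j,k_2,k_3}$ with $j<k_1$, and (b) the error $\Err_{k_1,k_2,k_3}[\tilupsi]$ in \eqref{eq:errorhigher1}. I would then apply the Step~1 multiplier to each commuted quantity $\tilupsi_{j,k_2,k_3}$. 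The $\tilcalP_1$ terms contribute only to $(L\tilupsi)(LL\tilupsi_{j,k_2,k_3})$ with $j<k_1$, and can be rewritten using $LL = L\Lbar + (\Lbar-L)L + \text{comm.}$; these are handled by induction on $k_1$, which is why the left-hand side of \eqref{eq:rpmult1} carries a sum over $j\leq k_1$.

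\textbf{Step 3 (Absorbing errors).} The terms in $\Err_{k_1,k_2,k_3}$ split into two classes: (i) those with coefficient $\callO(r^{-4})$ or $\callO(r^{-5})$ acting on second derivatives of the top-order commuted quantity, which are controlled by choosing $\tilR$ large and absorbing via $\dlt$-weighted Cauchy--Schwarz into the bulk $\tilr^{p-1}$-weighted norms on the right-hand side of \eqref{eq:rpmult1}; (ii) those with coefficient $\callO(\dotwp^{\leq 2k+2})$ or $\callO(\dotwp^{\leq 2k+2}\tilr^{-2})$ acting on lower-order commuted quantities, which can be absorbed using the bootstrap \eqref{eq:wp1} together with the $\tilr^{-1-\alpha}$-weighted bulk norm on the last line of \eqref{eq:rpmult1}. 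The inhomogeneous term $\tilf_k := (\tilr L+1)^{k_1}\Omega^{k_2}T^{k_3}(\tilr\callP_g\uppsi - \tilcalP\tilupsi - \Err)$ reproduces exactly the source $\tilr^p \tilf_k \cdot L\tilupsi_{j,k_2,k_3}$ appearing in the second line of \eqref{eq:rpmult1} (including the $\callO(r^{-5})\Omega\tilupsi_{j,k_2,k_3}$ contribution coming from the angular integration by parts).

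\textbf{Main obstacle.} The delicate point is the induction on $k_1$, driven by the lower-order $\tilcalP_1$ contributions in \eqref{eq:errorhigher1}. Since $\tilcalP_1 = LL + \tilr^{-2}\sum\Omega^2$ differs from $\tilcalP$ only by the sign of the $\Lbar L$ term, when the multiplier $\tilr^p L$ is applied the resulting bulk contribution has no definite sign and must be integrated by parts in $L$ to produce boundary terms on $\bsUpsigma_{\tau_1},\bsUpsigma_{\tau_2}$ plus commutator errors with $\tilr^p$. These are then re-expressed as $\calE^p_{j,k_2,k_3}$ with $j<k_1$ (permissible because the left-hand side sums over $j\leq k_1$) and as $\calB^{p-1}$ terms absorbable by the induction hypothesis. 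Keeping the powers of $\tilR$, $\wp$ and the book-keeping of which error lands in which norm on the right-hand side consistent across the induction is the main technical burden; everything else is routine Gronwall and Cauchy--Schwarz.
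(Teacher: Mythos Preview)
Your proposal is correct and follows the approach of the referenced proof in \cite[Lemma~8.7]{LuOS1}: the paper does not reprove this lemma but relies on the companion paper, and the ingredients it records here (equations \eqref{eq:BoxVF2}, \eqref{eq:higher1}, \eqref{eq:errorhigher1}, \eqref{eq:VFcomm1}) are precisely the ones you use, in the same way. One small comment: your handling of the $\tilcalP_1$ terms in Step~2 via ``$LL = L\Lbar + (\Lbar-L)L + \text{comm.}$'' is a bit garbled; the cleaner observation is that $LL\tilupsi_{j,k_2,k_3} = \tilr^{-1}L\tilupsi_{j+1,k_2,k_3} + \callO(\tilr^{-1})L\tilupsi_{j,k_2,k_3}$ since $\tilr L$ is one of the commutation vectorfields, so for $j<k_1$ the $\tilcalP_1$ contribution is directly controlled by the $\calE^p$ and $\calB^{p-1}$ quantities at index $j+1\leq k_1$, which is why the sum over $j\leq k_1$ closes.
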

\subsection{Decay estimates for $\Box_m\uppsi=f$}\label{sec:Appendixtail}
In this section we use Lemma~\ref{lem:rpmult1} and the sharp Huygens principle to prove late time tail decay estimates for the forward solution\footnote{By this, we mean the solution vanishes for sufficiently negative $\tau$.} to $\Box_m\uppsi=f$ in terms of decay properties of $f$. These estimates will be used in the proof of the improved decay estimate \eqref{eq:varepptwisetail1} for $\varepsilon$ in Section~\ref{sec:tails}. In our applications the functions of interest will always be localized to the exterior by means of a cutoff, and using global coordinates we will always view the equation $\Box_m\uppsi=f$ \emph{globally} as an equation on $\bbR^{1+3}$.  In this context, we view the coordinates $(\tau,r,\theta)$ as defined globally by \eqref{eq:exttaurcoords1}, and the vectorfields $\{\Lbar,L,T,\Omega\}$ to be defined globally by \eqref{eq:VFdef0}--\eqref{eq:VFexpansion1}. We use $\Sigma_\tau$ to denote the constant $\tau$ hypersurfaces and $\partial_\Sigma$ to denote size one derivatives that are tangential to $\Sigma_\tau$. The desired decay estimates are provided in the following lemma, where we continue to use the notation from Section~\ref{seq:apprp}.
\begin{lemma}\label{eq:Boxmhugens1}
Let $\uppsi$ be the forward solution to  $\Box_m\uppsi=f$, and suppose $f$ is supported in $\{\tau\geq0\}$ and satisfies the following estimates
\begin{align*}
\begin{split}
| f|+|\chi_{\{r\lesssim 1\}}\partial f|+|\chi_{\{r\gtrsim1\}}Xf|\lesssim \jap{r}^{-4}\jap{\tau}^{-2-\betabar}
\end{split}
\end{align*}
for some $\betabar\in(0,\frac{1}{2}]$ and any $X\in\{\Omega,T,rL\}$. Then $\uppsi$ satisfies the estimates
\begin{align}\label{eq:intchardecay1}
\begin{split}
|\uppsi|\lesssim \jap{\tau}^{-2-\betabar}.
\end{split}
\end{align}
If in addition
\begin{align*}
\begin{split}
|\chi_{\{r\lesssim 1\}}\partial^2 f|+|\chi_{\{r\gtrsim1\}}X^2f|\lesssim \jap{r}^{-4}\jap{\tau}^{-2-\betabar}
\end{split}
\end{align*}
then
\begin{align}\label{eq:intchardecay2}
\begin{split}
|\uppsi|\lesssim \jap{r}^{-\frac{1}{2}} \jap{\tau}^{-2-\betabar}\quad\mand \quad |\uppsi|\lesssim \jap{r}^{-1} \jap{\tau}^{-\frac{7}{4}-\frac{\betabar}{2}}.
\end{split}
\end{align}
Similarly, with $\betabar$ as above, if
\begin{align*}
\begin{split}
| f|+|\chi_{\{r\lesssim 1\}}\partial f|+|\chi_{\{r\gtrsim1\}}Xf|\lesssim \jap{r}^{-4}\jap{\tau}^{-1-\betabar}
\end{split}
\end{align*}
then
\begin{align}\label{eq:intchardecay3}
\begin{split}
|\uppsi|\lesssim \jap{\tau}^{-1-\betabar},
\end{split}
\end{align}
and if in addition
\begin{align*}
\begin{split}
|\chi_{\{r\lesssim 1\}}\partial^2 f|+|\chi_{\{r\gtrsim1\}}X^2f|\lesssim \jap{r}^{-4}\jap{\tau}^{-1-\betabar}
\end{split}
\end{align*}
then
\begin{align}\label{eq:intchardecay4}
\begin{split}
|\uppsi|\lesssim \jap{r}^{-1} \jap{\tau}^{-1-\betabar}.
\end{split}
\end{align}
\end{lemma}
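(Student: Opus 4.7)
The proof combines the sharp Huygens principle in $\bbR^{1+3}$ with the $r^p$-method of Lemma~\ref{lem:rpmult1} and Sobolev embedding on $\bbS^{2}$. Since $\Box_m$ is just the Minkowski wave operator (and the moving polar coordinates $(\tau, r, \theta)$ differ from a standard Minkowski frame only through the bounded Lorentz factor $\Lambda_{-\ell}$ and translation by $\xi$, which by \eqref{eq:wpb1} are harmless), the forward solution admits the classical representation
\begin{equation*}
\uppsi(\tau,x) = \frac{1}{4\pi} \int_{\bbR^3} \frac{f(\tau - |x-y|, y)}{|x-y|} \, \ud y = \frac{1}{4\pi}\int_0^{\tau} \rho \int_{\bbS^{2}} f(\tau - \rho, x + \rho\omega) \, \ud S(\omega) \, \ud \rho.
\end{equation*}

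The interior time-decay bounds \eqref{eq:intchardecay1} and \eqref{eq:intchardecay3} follow by inserting the hypothesis $|f| \lesssim \jap{r}^{-4} \jap{\tau}^{-2-\betabar}$ (resp.\ $\jap{\tau}^{-1-\betabar}$) into this formula. Splitting the $\rho$-integral at $\rho = \tau/2$: in the near region the factor $\jap{\tau-\rho}^{-2-\betabar} \lesssim \jap{\tau}^{-2-\betabar}$ comes out and leaves the integrable quantity $\rho\cdot\jap{x + \rho\omega}^{-4}$; in the far region $\rho \geq \tau/2$, the gain $\rho \cdot \jap{\rho}^{-4} \lesssim \rho^{-3}$ together with unit-mass integrability of $\jap{\tau-\rho}^{-2-\betabar}$ produces a $\jap{\tau}^{-3}$ tail that is more than enough. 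A single application of Sobolev $L^\infty(\bbS^{2}) \hookrightarrow H^{2}(\bbS^{2})$, applied after commuting once or twice with $\Omega$ (permitted by the $Xf$ hypothesis), handles the angular dependence.

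The improved exterior pointwise bounds \eqref{eq:intchardecay2} and \eqref{eq:intchardecay4} are obtained by first running Lemma~\ref{lem:rpmult1} on $\uppsi$ with $p=1$ and $p=2$, after commuting with $T$, $\Omega$, and $\tilr L$ up to two times as allowed by the hypotheses on $Xf$, $X^2 f$. Cauchy-Schwarz against the source-term integrand on the right-hand side of \eqref{eq:rpmult1}, using $\jap{r}^{-4}\jap{\tau}^{-2-\betabar}$ (resp.\ $\jap{\tau}^{-1-\betabar}$), closes the weighted energy bounds with the expected powers of $\jap{\tau}$. I then convert these $L^{2}$ bounds to pointwise by the fundamental theorem of calculus
\begin{equation*}
\tilupsi(\tau, r, \theta) = -\int_r^\infty L\tilupsi(\tau, r', \theta) \, \ud r',
\end{equation*}
combined with Sobolev on $\bbS^{2}$; the factor $\tilr^{p}$ in $\calE^{p}$ yields the claimed $\jap{r}^{-1}$ or $\jap{r}^{-1/2}$ weight. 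The second estimate in \eqref{eq:intchardecay2}, which has weaker $\tau$-decay but $\jap{r}^{-1}$ weight, is then obtained by interpolating the $\jap{\tau}^{-2-\betabar}$ bound from \eqref{eq:intchardecay1} against the $\jap{r}^{-1/2}\jap{\tau}^{-3/2-\cdot}$-type bound produced from $p=1$ via pigeonholing at $\tilr \sim \jap{\tau}$.

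The delicate point, and the main obstacle, is ensuring that the $r^p$ multiplier identity of Lemma~\ref{lem:rpmult1} is truly applied to a Minkowski problem: Lemma~\ref{lem:rpmult1} is stated for $\callP_g$, but the same identity holds with $\callP_g$ replaced by $\Box_m$, as noted in the paragraph preceding its statement, and the resulting error terms $\Err_{\Box_m}$ in \eqref{eq:ErrBoxm1} carry the spatial decay $\jap{r}^{-2}$ or better, which is dominated by the $\jap{r}^{-4}$ decay of $f$ and therefore absorbed without loss in the transition region. The second subtle point is that the $\jap{r}^{-4}$ decay hypothesis is sharp for obtaining the time-decay rate in \eqref{eq:intchardecay1}: a source of only $\jap{r}^{-3}$ decay would produce a logarithmic obstruction, consistent with the failure of twice-integrability discussed in Section~\ref{subsec:tailintro} and the rationale for constructing the refined profile $P$ in Section~\ref{sec:modprof}.
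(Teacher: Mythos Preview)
Your approach has a genuine gap in the step where you derive the $r$-weighted bounds \eqref{eq:intchardecay2} and \eqref{eq:intchardecay4} from the $r^p$ method. Running Lemma~\ref{lem:rpmult1} directly on $\uppsi$ with source $f$ gives, for $p=2$,
\[
\calE_0^2[\uppsi](\tau)\lesssim \|r\tilf\|_{L^1_\tau L^2_x[0,\tau]}^2,
\]
and with $|f|\lesssim\jap{r}^{-4}\jap{\tau}^{-2-\betabar}$ the right-hand side is only \emph{bounded}, not decaying in $\tau$. The usual pigeonhole then yields at best $E[\uppsi](\tau)\lesssim\jap{\tau}^{-2}$ and hence $|\uppsi|\lesssim\jap{\tau}^{-1}$, far short of $\jap{\tau}^{-2-\betabar}$. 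The missing idea is precisely how the paper uses the sharp Huygens principle: fix the evaluation point $x_\ast=(\tau_\ast,r_\ast,\theta_\ast)$, observe from the backward light-cone geometry that only values of $f$ in $\{\jap{r}\gtrsim\tau_\ast-\tau\}$ contribute, and therefore replace $\uppsi$ by the forward solution $\upphi$ of $\Box_m\upphi=\chi_\ast f$ with $\chi_\ast$ a cutoff to that region. The localized source $\chi_\ast f$ then satisfies
\[
\|\chi_\ast f\,\jap{r}\|_{L^1_\tau L^2_x[0,\tau]}^2\lesssim\tau_\ast^{-3},
\]
which is what makes the $r^p$ hierarchy yield $E[\upphi](\tau)\lesssim\jap{\tau}^{-4-2\betabar}$ and hence the claimed pointwise decay via Gagliardo--Nirenberg and the radial Sobolev argument. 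You invoke Huygens only through the Kirchhoff representation, not through this localization step, and the two are not interchangeable for the energy-method part.

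A secondary issue: your Kirchhoff formula is written as $\uppsi(\tau,x)=\frac{1}{4\pi}\int\rho\int_{\bbS^2}f(\tau-\rho,x+\rho\omega)\,\ud S\,\ud\rho$, but the coordinates $(\tau,r,\theta)$ in this appendix are the hyperboloidal ones of \eqref{eq:exttaurcoords1}, so $\tau\neq x^0$ and the retarded time along the cone is not $\tau-\rho$. The paper notes (see the discussion in Section~\ref{subsec:tailintro} and Remark~\ref{rem:intcharcomparison}) that the Kirchhoff/MTT route is a legitimate alternative for \eqref{eq:intchardecay1} and \eqref{eq:intchardecay3}, but that the energy-method approach with Huygens localization was chosen exactly because it handles the moving center $\xi(\tau)$ and the hyperboloidal foliation without having to unwind this coordinate change.
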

\begin{proof}
The proofs of \eqref{eq:intchardecay3} and \eqref{eq:intchardecay4} are similar to those of \eqref{eq:intchardecay1} and \eqref{eq:intchardecay2}, so we only present the argument for  \eqref{eq:intchardecay1} and \eqref{eq:intchardecay2}. Fix a point $x_\ast$ with coordinates $(\tau_\ast,r_\ast,\theta_\ast)$. The key point of the argument is that by the sharp Huygens principle, to estimate $\uppsi(x_\ast)$ we only need the knowledge of $f$ in the region $\{(\tau,r,\theta)\vert \jap{r}\gtrsim \tau_\ast-\tau\}$. To see this observe that $\uppsi(x_\ast)$ is determined by the values of $f$ on the backwards light cone through $x_\ast$, that is, $x=(x^0,x')$ such that $|x_\ast'-x'|=x_\ast^0-x^0$. In the $(\tau,r,\theta)$ coordinates, this implies that
\begin{align*}
\begin{split}
\tau_\ast+\jap{r_\ast}-\tau-\jap{r}=|r_\ast\Theta(\theta_\ast)+\xi(\tau_\ast)-r\Theta(\theta)-\xi(\tau)|\leq \jap{r_\ast}+\jap{r}+|\xi(\tau_\ast)-\xi(\tau)|,
\end{split}
\end{align*}
which, since $|\dotxi|<1$, requires $\jap{r}\gtrsim \tau_\ast-\tau$. It follows that if $\chi_\ast$ is a cutoff to the region $\{\jap{r}\gtrsim \tau_\ast-\tau\}$ and $\upphi$ is the forward solution to $\Box\upphi = \chi_\ast f$, then $\upphi(x_\ast)=\uppsi(x_\ast)$, and it suffices to estimate $\upphi$. For this we go through the usual proof of decay estimates using the $r^p$ multiplier argument, and observe that the decay and support properties of $\chi_\ast f$ give decay, rather than boundedness, of the energy fluxes. The details are as follows. First we apply Lemma~\ref{lem:rpmult1} with $k=0$ and $p=2$. To absorb the contribution of the last three lines of \eqref{eq:rpmult1}, we add a multiple of the energy and ILED estimates for $\Box_m\upphi=\chi_\ast f$, which can be proved in the same way as the corresponding estimates for $\calP$, but now without any eigenfunctions or loss of derivatives due to trapping (see Section~\ref{sec:ILED}). For any $\tau\leq \tau_\ast$ this gives
\begin{align*}
\begin{split}
\calE_{0}^2[\upphi](\tau)+\calB_{0}^{1}[\upphi](0,\tau)\lesssim \|f\chi_\ast\jap{r}\|_{L^1_\tau L^2_x[0,\tau]}^2+\|f\chi_\ast\|_{LE^\ast[0,\tau]}^2,
\end{split}
\end{align*}
where we have used the shorthand notation $\|g\|_{L^1_\tau L^2_x[0,\tau]}=\|\|g(\tau')\|_{L^2(\sqrt{|m|}\ud r\ud\theta)}\|_{L^1_{\ud \tau'}[0,\tau]}$. Using the bound $|\tilf\chi_\ast\jap{r}|^2\lesssim \chi_{\{r\gtrsim \tau_\ast-\tau'\}}\jap{\tau}^{-4-2\betabar}\jap{r}^{-4}$ (recall that $\tilf=\tilr f$) we get
\begin{align}\label{eq:Boxmdecaytemp0}
\begin{split}
\calE_{0}^2[\upphi](\tau)+\calB_{0}^{1}[\upphi](0,\tau)\lesssim \tau_\ast^{-3}\lesssim \jap{\tau}^{-3}.
\end{split}
\end{align}
It follows that we can find dyadic times $\tau_n\leq \tau_\ast$ such that $\calE_{0}^1[\upphi](\tau_n)\lesssim \jap{\tau_n}^{-4}$. Applying Lemma~\ref{lem:rpmult1} again, but with $p=1$ and on $[\tau_{j-1},\tau_j]$, and adding a suitable multiple of the energy and ILED estimates we get
\begin{align*}
\begin{split}
\calE_{0}^1[\upphi](\tau_j)+\|\upphi\|_{L^2_\tau E[\tau_{j-1},\tau_{j}]}^2\lesssim \tau_j^{-4}+\|f\chi_\ast\jap{r}^{\frac{1}{2}}\|_{L^1_\tau L^2_x[\tau_{j-1},\tau_{j}]}^2+\|f\chi_\ast\|_{LE^\ast[0,\tau]}^2\lesssim \tau_j^{-4},
\end{split}
\end{align*}
where for the last estimate we have used the bound $|\tilf\chi_\ast\jap{r}^{\frac{1}{2}}|^2\lesssim \chi_{\{r\gtrsim \tau_\ast-\tau'\}}\jap{\tau}^{-4-2\betabar}\jap{r}^{-5}$, and where we have used the notation
\begin{align*}
\begin{split}
\|\upphi\|_E^2=\|\chi_{\{r\lesssim 1\}}\partial\upphi\|_{L^2_x}^2+\|\chi_{\{r\gtrsim1\}}\partial_\Sigma\upphi\|_{L_x^2}+\|\chi_{\{r\gtrsim1\}}r^{-2}\upphi\|_{L^2_x}^2
\end{split}
\end{align*}
for the energy norm. We conclude that for some, possibly different, sequence of dyadic times $\tau_n\leq \tau_\ast$, $\| \upphi(\tau_n)\|_{E}\lesssim \tau_n^{-\frac{5}{2}}$. Applying the energy estimate on $[\tau_{j-1},\tau_j]$, but  noting that by similar arguments as before we only have the weaker estimate 
\begin{align*}
\begin{split}
\|f\chi_\ast\|_{LE^\ast[0,\tau]}^2 \lesssim \tau_j^{-4-2\betabar},
\end{split}
\end{align*}
we conclude that $\| \upphi(\tau)\|_{E}\lesssim \tau^{-2-\betabar}$ for all $\tau\leq \tau_\ast$. Similarly, after commuting one derivative and using elliptic estimates (see \cite{Moschidis1} or the proof of \cite[Corollary~8.13]{LuOS1}) we get
\begin{align}\label{eq:Boxmdecaytemp1}
\begin{split}
\|\chi_{\{r\lesssim 1\}}\partial^2\upphi(\tau)\|_{L^2_x}+\|\partial_\Sigma X\upphi(\tau)\|_{L^2_x}\lesssim \tau^{-2-\betabar}.
\end{split}
\end{align}
for all $\tau\leq \tau_\ast$. Using the Gagliardo-Nirenberg estimate we get
\begin{align*}
\begin{split}
\|\upphi(\tau_\ast)\|_{L^\infty_x}\lesssim \|\partial_\Sigma\upphi(\tau_\ast)\|_{L^2_x}^{\frac{1}{2}}\|\partial_\Sigma^2\upphi(\tau_\ast)\|_{L^2_x}^\frac{1}{2}\lesssim \jap{\tau_\ast}^{-2-\betabar}.
\end{split}
\end{align*}
This proves \eqref{eq:intchardecay1}. For \eqref{eq:intchardecay2}, which is relevant only for large $r$, we assume that $r_\ast>\tilR$ and observe that by the fundamental theorem of calculus and the trace theorem, for any function $\upvarphi$ on $\{\tau=\tau_\ast\}$ (the integrations are on $\{\tau=\tau_\ast\}$)
\begin{align*}
\begin{split}
\int_{ \{r=r_\ast\}}(r^{\frac{1}{2}}\upvarphi)^2\ud S\lesssim \int_{ \{r=\tilR\}}(r^{\frac{1}{2}}\upvarphi)^2\ud S+\int_{\{\tilR\leq r\leq r_\ast\}}|r^{\frac{1}{2}}\upvarphi||\partial_r(r^{\frac{1}{2}}\upvarphi)|\ud S\ud r\lesssim \|\upvarphi(\tau_\ast)\|_E^2.
\end{split}
\end{align*}
Applying the Sobolev estimate on the non-geometric sphere $\{r=r_\ast\}$ (using \eqref{eq:VFexpansion1} to express angular derivatives in terms of the vectorfields $X$), the trace theorem on $\{r=\tilR\}$, and using the last estimate for $\upvarphi=\Omega^{\leq 2}\upphi$ we get
\begin{align*}
\begin{split}
|r_\ast^{\frac{1}{2}}\upphi(r_\ast)|^2\lesssim \sum_{j\leq 2}(\|\chi_{\{r\gtrsim1\}}X^j\upphi(\tau_\ast)\|_E^2+\|\chi_{\{r\lesssim1\}}\partial^j\upphi(\tau_\ast)\|_E^2).
\end{split}
\end{align*}
Using the assumed higher order decay estimates on $f$, we can repeat the earlier arguments to conclude that the right-hand side above is bounded by $\jap{\tau_\ast}^{-2-\betabar}$, proving the first estimate in \eqref{eq:intchardecay2}. The argument for the second estimate in \eqref{eq:intchardecay2} is similar. For any function $\upvarphi$ on $\{\tau=\tau_\ast\}$ (recall that $\tilupfy=\tilr\upvarphi$),
\begin{align*}
\begin{split}
\int_{ \{r=r_\ast\}}(\tilr\upfy)^2\ud S&\lesssim \int_{ \{r=\tilR\}}(\tilr\upfy)^2\ud S+\int_{\{\tilR\leq r\leq r_\ast\}}|\upfy||\tilr\partial_r(\tilr\upfy)|\ud S\ud r\\
&\lesssim \|\upfy(\tau_\ast)\|_E^2+\Big(\int_{\Sigma_{\tau_\ast}}\chi_{\geq \tilR}(L\tilupfy)^2 r^2\ud S\ud r\Big)^{\frac{1}{2}}\|\upfy(\tau_\ast)\|_E.
\end{split}
\end{align*}
The rest of the argument is the same as before, except that now we estimate $\int_{\Sigma_{\tau_\ast}}\chi_{\geq \tilR}(L\tilupfy)^2 r^2\ud S\ud r$ by $\jap{\tau_\ast}^{-3}$ as in \eqref{eq:Boxmdecaytemp0}. 
\end{proof}
\begin{remark}\label{rem:intcharcomparison}
Note that the estimates in Lemma~\ref{eq:Boxmhugens1} are consistent with what one gets by integration along characteristics (for instance if $f$ is radial or more generally applying Sobolev estimates on $\bbS^2$ and using the positivity of the fundamental solution of $\Box_{\bbR^{1+3}}$ as in \cite{MTT}). In general, if $|f|\lesssim \jap{r}^{-2-\alphabar}\jap{\tau}^{-2-\beta bar}$ satisfies suitable weighted derivative bounds, then similar arguments as in the proof of Lemma~\ref{eq:Boxmhugens1} give the estimate $|\uppsi|\lesssim \jap{\tau}^{-\min\{1+\alphabar,2+\betabar\}}$ , which is again consistent with integration along characteristics. In \eqref{eq:Boxmdecaytemp1}, integration along characteristics suggests the stronger estimate $|\uppsi|\lesssim \jap{r}^{-1}\jap{\tau}^{-2}$, and the deficit is caused by how we implement the $r^p$ energy estimates. A more careful implementation would yield the sharper estimate, where we would instead look at $\Box\upphi_1=\chi_\ast \chi_{\{r\leq \tau\}}f$ and $\Box\upphi_2=\chi_\ast\chi_{\{r\geq \tau\}}f$ and observe that $\uppsi(x
_\ast)=\upphi_1(x_\ast)+\upphi_2(x_\ast)$. Since the sharper estimate is not needed, we have not carried out this argument.
\end{remark}
\bibliographystyle{abbrv}
\bibliography{researchbib}

\begin{thebibliography}{10}

\bibitem{AW20}
L.~Abbrescia and W.~W.~Y. Wong.
\newblock Global nearly-plane-symmetric solutions to the membrane equation.
\newblock {\em Forum Math. Pi}, 8:e13, 71, 2020.

\bibitem{AIT21}
A.~Ai, M.~Ifrim, and D.~Tataru.
\newblock {The time-like minimal surface equation in Minkowski space: low
  regularity solutions}, 2021.

\bibitem{ABBM1}
L.~Andersson, T.~B\"ackdahl, P.~Blue, and S.~Ma.
\newblock {Stability for linearized gravity on the Kerr spacetime}, 2019.

\bibitem{AC2}
A.~Aurilia and D.~Christodoulou.
\newblock Theory of strings and membranes in an external field. {I}. {G}eneral
  formulation.
\newblock {\em J. Math. Phys.}, 20(7):1446--1452, 1979.

\bibitem{AC1}
A.~Aurilia and D.~Christodoulou.
\newblock Theory of strings and membranes in an external field. {II}. {T}he
  string.
\newblock {\em J. Math. Phys.}, 20(8):1692--1699, 1979.

\bibitem{BMP1}
H.~Bahouri, A.~Marachli, and G.~Perelman.
\newblock Blow up dynamics for the hyperbolic vanishing mean curvature flow of
  surfaces asymptotic to the {S}imons cone.
\newblock {\em J. Eur. Math. Soc. (JEMS)}, 23(12):3801--3887, 2021.

\bibitem{B1}
S.~Brendle.
\newblock Hypersurfaces in {M}inkowski space with vanishing mean curvature.
\newblock {\em Comm. Pure Appl. Math.}, 55(10):1249--1279, 2002.

\bibitem{DHR1}
M.~Dafermos, G.~Holzegel, and I.~Rodnianski.
\newblock The linear stability of the {S}chwarzschild solution to gravitational
  perturbations.
\newblock {\em Acta Math.}, 222(1):1--214, 2019.

\bibitem{DHRT1}
M.~Dafermos, G.~Holzegel, I.~Rodnianski, and M.~Taylor.
\newblock {The non-linear stability of the Schwarzschild family of black
  holes}, 2021.

\bibitem{DaHoRoTa2}
M.~Dafermos, G.~Holzegel, I.~Rodnianski, and M.~Taylor.
\newblock Quasilinear wave equations on asymptotically flat spacetimes with
  applications to {K}err black holes.
\newblock 2022.

\bibitem{DR1}
M.~Dafermos and I.~Rodnianski.
\newblock A new physical-space approach to decay for the wave equation with
  applications to black hole spacetimes.
\newblock In {\em X{VI}th {I}nternational {C}ongress on {M}athematical
  {P}hysics}, pages 421--432. World Sci. Publ., Hackensack, NJ, 2010.

\bibitem{DonKrie1}
R.~Donninger and J.~Krieger.
\newblock A vector field method on the distorted {F}ourier side and decay for
  wave equations with potentials.
\newblock {\em Mem. Amer. Math. Soc.}, 241(1142):v+80, 2016.

\bibitem{DKSW}
R.~Donninger, J.~Krieger, J.~Szeftel, and W.~W.~Y. Wong.
\newblock Codimension one stability of the catenoid under the vanishing mean
  curvature flow in {M}inkowski space.
\newblock {\em Duke Math. J.}, 165(4):723--791, 2016.

\bibitem{Ettinger}
B.~Ettinger.
\newblock {\em Well-posedness of the three-form field equation and the minimal
  surface equation in {M}inkowski space}.
\newblock ProQuest LLC, Ann Arbor, MI, 2013.
\newblock Thesis (Ph.D.)--University of California, Berkeley.

\bibitem{F-CS}
D.~Fischer-Colbrie and R.~Schoen.
\newblock The structure of complete stable minimal surfaces in {$3$}-manifolds
  of nonnegative scalar curvature.
\newblock {\em Comm. Pure Appl. Math.}, 33(2):199--211, 1980.

\bibitem{GiKlSz1}
E.~Giorgi, S.~Klainerman, and J.~Szeftel.
\newblock A general formalism for the stability of {K}err.
\newblock 02 2020.

\bibitem{GiKlSz2}
E.~Giorgi, S.~Klainerman, and J.~Szeftel.
\newblock Wave equations estimates and the nonlinear stability of slowly
  rotating {K}err black holes.
\newblock 05 2022.

\bibitem{HHV1}
D.~H\"{a}fner, P.~Hintz, and A.~Vasy.
\newblock Linear stability of slowly rotating {K}err black holes.
\newblock {\em Invent. Math.}, 223(3):1227--1406, 2021.

\bibitem{Hoppe13}
J.~Hoppe.
\newblock Relativistic membranes.
\newblock {\em J. Phys. A}, 46(2):023001, 30, 2013.

\bibitem{HKW1}
P.-K. Hung, J.~Keller, and M.-T. Wang.
\newblock Linear stability of {S}chwarzschild spacetime: decay of metric
  coefficients.
\newblock {\em J. Differential Geom.}, 116(3):481--541, 2020.

\bibitem{JNO15}
R.~L. Jerrard, M.~Novaga, and G.~Orlandi.
\newblock On the regularity of timelike extremal surfaces.
\newblock {\em Commun. Contemp. Math.}, 17(1):1450048, 19, 2015.

\bibitem{KlSz2}
S.~Klainerman and J.~Szeftel.
\newblock {\em Global nonlinear stability of {S}chwarzschild spacetime under
  polarized perturbations}, volume 210 of {\em Annals of Mathematics Studies}.
\newblock Princeton University Press, Princeton, NJ, 2020.

\bibitem{KlSz-GCM1}
S.~Klainerman and J.~Szeftel.
\newblock Construction of {GCM} spheres in perturbations of {K}err.
\newblock {\em Ann. PDE}, 8(2):Paper No. 17, 153, 2022.

\bibitem{KlSz-GCM2}
S.~Klainerman and J.~Szeftel.
\newblock Effective results on uniformization and intrinsic {GCM} spheres in
  perturbations of {K}err.
\newblock {\em Ann. PDE}, 8(2):Paper No. 18, 89, 2022.
\newblock With an appendix by Camillo De Lellis.

\bibitem{KlSz1}
S.~Klainerman and J.~Szeftel.
\newblock Kerr stability for small angular momentum.
\newblock {\em Pure Appl. Math. Q.}, 19(3):791--1678, 2023.

\bibitem{KMM17}
M.~Kowalczyk, Y.~Martel, and C.~Mu\~{n}oz.
\newblock On asymptotic stability of nonlinear waves.
\newblock In {\em S\'{e}minaire {L}aurent {S}chwartz---\'{E}quations aux
  d\'{e}riv\'{e}es partielles et applications. {A}nn\'{e}e 2016--2017}, pages
  Exp. No. XVIII, 27. Ed. \'{E}c. Polytech., Palaiseau, 2017.

\bibitem{KL1}
J.~Krieger and H.~Lindblad.
\newblock On stability of the catenoid under vanishing mean curvature flow on
  {M}inkowski space.
\newblock {\em Dyn. Partial Differ. Equ.}, 9(2):89--119, 2012.

\bibitem{Lin1}
H.~Lindblad.
\newblock A remark on global existence for small initial data of the minimal
  surface equation in {M}inkowskian space time.
\newblock {\em Proc. Amer. Math. Soc.}, 132(4):1095--1102, 2004.

\bibitem{LuOS1}
J.~{L\"uhrmann}, S.-J. {Oh}, and S.~{Shahshahani}.
\newblock {Stability of the Catenoid for the Hyperbolic Vanishing Mean
  Curvature Equation Outside Symmetry}.
\newblock 2022.

\bibitem{Luk1}
J.~Luk.
\newblock On the local existence for the characteristic initial value problem
  in general relativity.
\newblock {\em Int. Math. Res. Not. IMRN}, (20):4625--4678, 2012.

\bibitem{LuOh}
J.~Luk and S.-J. Oh.
\newblock Late time tail of waves on dynamic asymptotically flat spacetimes of
  odd space dimensions.
\newblock 04 2024.

\bibitem{MantonSutcliffe}
N.~Manton and P.~Sutcliffe.
\newblock {\em Topological solitons}.
\newblock Cambridge Monographs on Mathematical Physics. Cambridge University
  Press, Cambridge, 2004.

\bibitem{Marchali1}
A.~Marachli.
\newblock {\em {On the stability of certain minimal surfaces under the
  vanishing mean curvature flow in Minkowski space}}.
\newblock Theses, {Universit{\'e} Paris-Est}, Mar. 2019.

\bibitem{MMT1}
J.~Marzuola, J.~Metcalfe, and D.~Tataru.
\newblock Strichartz estimates and local smoothing estimates for asymptotically
  flat {S}chr\"{o}dinger equations.
\newblock {\em J. Funct. Anal.}, 255(6):1497--1553, 2008.

\bibitem{MST}
J.~Metcalfe, J.~Sterbenz, and D.~Tataru.
\newblock Local energy decay for scalar fields on time dependent non-trapping
  backgrounds.
\newblock {\em Amer. J. Math.}, 142(3):821--883, 2020.

\bibitem{MeTa}
J.~Metcalfe and D.~Tataru.
\newblock Global parametrices and dispersive estimates for variable coefficient
  wave equations.
\newblock {\em Math. Ann.}, 353(4):1183--1237, 2012.

\bibitem{MTT}
J.~Metcalfe, D.~Tataru, and M.~Tohaneanu.
\newblock Price's law on nonstationary space-times.
\newblock {\em Adv. Math.}, 230(3):995--1028, 2012.

\bibitem{Moschidis1}
G.~Moschidis.
\newblock The {$r^p$}-weighted energy method of {D}afermos and {R}odnianski in
  general asymptotically flat spacetimes and applications.
\newblock {\em Ann. PDE}, 2(1):Art. 6, 194, 2016.

\bibitem{NT13}
L.~Nguyen and G.~Tian.
\newblock On smoothness of timelike maximal cylinders in three-dimensional
  vacuum spacetimes.
\newblock {\em Classical Quantum Gravity}, 30(16):165010, 26, 2013.

\bibitem{OlSt}
J.~Oliver and J.~Sterbenz.
\newblock A vector field method for radiating black hole spacetimes.
\newblock {\em Anal. PDE}, 13(1):29--92, 2020.

\bibitem{RodSch1}
I.~Rodnianski and W.~Schlag.
\newblock Time decay for solutions of {S}chr\"{o}dinger equations with rough
  and time-dependent potentials.
\newblock {\em Invent. Math.}, 155(3):451--513, 2004.

\bibitem{Schlue1}
V.~Schlue.
\newblock Decay of linear waves on higher-dimensional {S}chwarzschild black
  holes.
\newblock {\em Anal. PDE}, 6(3):515--600, 2013.

\bibitem{Sh}
D.~Shen.
\newblock Construction of {GCM} hypersurfaces in perturbations of {K}err.
\newblock {\em Ann. PDE}, 9(1):Paper No. 11, 112, 2023.

\bibitem{Stefanov11}
A.~Stefanov.
\newblock Global regularity for the minimal surface equation in {M}inkowskian
  geometry.
\newblock {\em Forum Math.}, 23(4):757--789, 2011.

\bibitem{StuartHiggs}
D.~Stuart.
\newblock Dynamics of abelian {H}iggs vortices in the near {B}ogomolny regime.
\newblock {\em Comm. Math. Phys.}, 159(1):51--91, 1994.

\bibitem{Stuart1}
D.~M.~A. Stuart.
\newblock Modulational approach to stability of non-topological solitons in
  semilinear wave equations.
\newblock {\em J. Math. Pures Appl. (9)}, 80(1):51--83, 2001.

\bibitem{Tam-Zhou}
L.-F. Tam and D.~Zhou.
\newblock Stability properties for the higher dimensional catenoid in
  {$\mathbb{R}^{n+1}$}.
\newblock {\em Proc. Amer. Math. Soc.}, 137(10):3451--3461, 2009.

\bibitem{Tao09}
T.~Tao.
\newblock Why are solitons stable?
\newblock {\em Bull. Amer. Math. Soc.}, 46(1):1--33, 2009.

\bibitem{Tat1}
D.~Tataru.
\newblock Parametrices and dispersive estimates for {S}chr\"{o}dinger operators
  with variable coefficients.
\newblock {\em Amer. J. Math.}, 130(3):571--634, 2008.

\bibitem{Tat2}
D.~Tataru.
\newblock Local decay of waves on asymptotically flat stationary space-times.
\newblock {\em Amer. J. Math.}, 135(2):361--401, 2013.

\bibitem{Weinstein1}
M.~I. Weinstein.
\newblock Modulational stability of ground states of nonlinear
  {S}chr\"{o}dinger equations.
\newblock {\em SIAM J. Math. Anal.}, 16(3):472--491, 1985.

\bibitem{Wong1}
W.~W.~Y. Wong.
\newblock Regular hyperbolicity, dominant energy condition and causality for
  {L}agrangian theories of maps.
\newblock {\em Classical Quantum Gravity}, 28(21):215008, 23, 2011.

\bibitem{Wong2}
W.~W.~Y. Wong.
\newblock {Stability and instability of expanding solutions to the Lorentzian
  constant-positive-mean-curvature flow}, 2014.

\bibitem{Wong17}
W.~W.~Y. Wong.
\newblock Global existence for the minimal surface equation on
  {$\mathbb{R}^{1,1}$}.
\newblock {\em Proc. Amer. Math. Soc. Ser. B}, 4:47--52, 2017.

\bibitem{Wong18}
W.~W.~Y. Wong.
\newblock Singularities of axially symmetric time-like minimal submanifolds in
  {M}inkowski space.
\newblock {\em J. Hyperbolic Differ. Equ.}, 15(1):1--13, 2018.

\end{thebibliography}
\centerline{\scshape Sung-Jin Oh}
\smallskip
{\footnotesize
 \centerline{Department of Mathematics, UC Berkeley}
\centerline{Evans Hall 970, Berkeley, CA 94720-3840, U.S.A., and}
 \centerline{School of Mathematics, KIAS}
\centerline{80 Hoegi-ro, Seoul, 02455, Korea}
\centerline{\email{sjoh@math.berkeley.edu}}
} 

\medskip

\centerline{\scshape Sohrab Shahshahani}
\medskip
{\footnotesize
 \centerline{Department of Mathematics, University of Massachusetts, Amherst}
\centerline{710 N. Pleasant Street,
Amherst, MA 01003-9305, U.S.A.}
\centerline{\email{sshahshahani@umass.edu}}
}

\end{document}